\documentclass{amsart}

\usepackage[utf8]{inputenc}
\usepackage{fullpage}
\usepackage[colorlinks,linkcolor=blue,citecolor=blue]{hyperref}
\usepackage{graphicx}
\usepackage{amsmath}
\usepackage{amsthm}
\usepackage{amsfonts}
\usepackage{amssymb}
\usepackage{bbm}
\usepackage{mathtools}
\usepackage{extarrows}
\usepackage{array}   
\newcolumntype{L}{>{$}l<{$}}

\usepackage{tikz-cd}
\usepackage{pgf,tikz}
\usetikzlibrary{
  knots,
  hobby,
  decorations.pathreplacing,
  shapes.geometric,
  calc
}

\usepackage{mathrsfs}
\usetikzlibrary{arrows}

\newtheorem{Thm}{Theorem}[section]

\newtheorem{Cor}[Thm]{Corollary}
\newtheorem{prop}[Thm]{Proposition}
\newtheorem{lem}[Thm]{Lemma}

\newtheorem{defn}[Thm]{Definition}

\theoremstyle{definition}

\theoremstyle{remark}

\newtheorem{rmk}{Remark}[section]

\usepackage{eucal}
\usepackage{graphicx}
\usepackage{multirow}
\usepackage[all,knot]{xy}

\DeclareMathOperator{\Mod}{Mod}

\DeclareMathOperator{\PGL}{PGL}

\DeclareMathOperator{\SMod}{SMod}
\DeclareMathOperator{\LMod}{LMod}
\DeclareMathOperator{\PMod}{PMod}
\DeclareMathOperator{\End}{End}

\DeclareMathOperator{\Irr}{Irr}
\DeclareMathOperator{\id}{id}
\DeclareMathOperator{\Id}{Id}
\DeclareMathOperator{\obj}{obj}
\DeclareMathOperator{\Obj}{Obj}
\calclayout
\newcommand{\RNum}[1]{\uppercase\expandafter{\romannumeral #1\relax}}

\newcommand{\BZ}{\mathbb{Z}}
\newcommand{\Zn}[1]{\BZ/{#1}\BZ}

\usepackage{graphicx}
\graphicspath{ {./graph/} }

\newcommand{\BB}{\mathcal{B}}

\renewcommand{\AA}{\mathcal{A}}
\newcommand{\CC}{{\mathcal{C}}}

\newcommand{\Hom}{\operatorname{Hom}}

\allowdisplaybreaks[3]
\begin{document}
\title{Braiding structures on categorical multi-Interval Jones-Wassermann subfactor}
\author{Zhengwei Liu and Yuze Ruan}
\address{Z. LIU, Yau Mathematical Sciences Center and Department of Mathematics, Tsinghua University, Beijing, 100084, China}
\address{Yanqi Lake Beijing Institute of Mathematical Sciences and Applications, Huairou District,
Beijing, 101408, China}
\email{liuzhengwei@mail.tsinghua.edu.cn}

\address{Y. Ruan, Yau Mathematical Sciences Center and Department of Mathematics, Tsinghua University, Beijing, 100084, China}
\email{yuzeruan@mail.tsinghua.edu.cn}

\begin{abstract}
 In this paper, we construct braiding structures on the multi-interval Jones-Wassermann subfactor planar algebra associated with any unitary modular fusion category. Utilizing this construction, we provide a new proof of the self-duality of these subfactors. Furthermore, we demonstrate that these braidings induce a projective unitary representation of the balanced superelliptic mapping class group; consequently, these structures effectively encode the non-trivial higher-genus data of the underlying category. As an application of this correspondence, we derive a generalized Verlinde formula as $2$-box Fourier duality of the planar algebra.  
\end{abstract}
\maketitle

\section{Introduction}
The relationship between conformal field theory (CFT) and tensor categories is a central topic in representation theory and mathematical physics. A foundational result establishes that the representation category of a nice CFT yields a modular fusion category (MFC) \cite{Hua05, KLX05}. The converse problem—the reconstruction program—asks whether every MFC arises as the representation category of a CFT. This question was initiated by Vaughan Jones \cite{Jones17} and remains open. Direct algebraic reconstruction is difficult, and current work largely focuses on specific families of categories, such as twisted quantum doubles and Tambara–Yamagami categories \cite{EG22, EG23}.

An alternative perspective is guided by the principle that if the reconstruction program holds, any concrete construction or invariant on the CFT side should possess a purely category-theoretic counterpart. In \cite{LX19}, Liu and Xu realized this by constructing the analogue of Jones–Wassermann subfactors and their underlying planar algebras entirely within the framework of unitary modular fusion categories (UMFCs), providing a categorical generalization of the classic conformal net constructions \cite{LR95, Was98, Xu00, KLM01, KLX05}. By defining a categorical Fourier transformation and proving its invertibility, they established the self-duality of these Jones–Wassermann subfactors using only the data of the underlying category. Moreover, these subfactors are closely related to the permutation orbifold \cite{KLX05}. Following this philosophy, considering a categorical analogue of the permutation orbifold construction becomes a natural step for the reconstruction program. In category theory, this analogue is called permutation gauging \cite{Muger05, CGPW16}, which is formulated via abstract categorical extensions and equivariantization \cite{ENO05, ENO10}. 

However, on the CFT side, concrete constructions of permutation orbifolds \cite{Bantay98, BHS98, Bantay02, KLM01, LX04, KLX05} provide a comparatively explicit picture.  In particular, they suggest a close relationship between the genus-zero data of the permutation-orbifold CFT and the higher-genus data of the original CFT, as expressed by the twisted/untwisted correspondence; see \cite{BS11, Gui21} for topological and geometric interpretations.  Furthermore, the modular $S$-matrix of the cyclic permutation orbifold, can be computed explicitly from the modular data of the original theory \cite{Bantay98, KLX05, BHS98, DRX21, DXY22}, where the formulas depend only on the original $S$- and $T$-matrices.  In contrast, the categorical theory of permutation gauging remains largely existence-theoretic. While \cite{GJ19} proves the existence of the gauged category, by computing the cohomological obstructions developed in \cite{ENO10}, the detailed structures of the gauged theory and their precise relation to the original category remain elusive.

In our previous work \cite{LR24}, we provided an explicit $\Zn2$-permutation gauging for arbitrary modular fusion categories, establishing a concrete correspondence between the braidings of the gauged theory and the higher-genus symmetric mapping class group representations derived from the RT-TQFT of the original category. A central ingredient is the construction of certain braiding structures on the $2$-interval Jones–Wassermann subfactor planar algebra associated with the original category. Therefore, to generalize this construction to cyclic permutation gaugings of arbitrary order $n$, the essential prerequisite is the construction of corresponding braiding structures on the multi-interval Jones–Wassermann subfactor planar algebra.

In this paper, we construct explicit braiding structures on the $m$-interval Jones–Wassermann subfactors planar algebra associated with any UMFCs for $m>2$. These operators enable us to characterize the Fourier transformation, a key operation in \cite{LX19} for the construction of self-dual Jones–Wassermann subfactors, as a composition of braiding operators, thereby providing a new proof of their self-duality (Theorem \ref{thm:Fourier_transform}). More importantly, we demonstrate that these operators induce a projective representation of the balanced superelliptic mapping class group (Theorem \ref{thm:Smod_rep}). The group consists of lifts of parity preserving or parity reversing braids under the balanced cyclic branched cover of degree $m$ \cite{GW17lifting}, this contrast with the $m=2$ case, where the entire braid group admits such a lift \cite{BH73,MW21}. Consequently, the complex higher-genus symmetries of the original UMFC can be systematically analyzed through planar tangles and isotopy identities. As an application we determine the structural constants of the $2$-box convolution product  for multi-interval Jones–Wassermann subfactor planar algebra in terms of a generalized verlinde formula (Theorem \ref{thm:Verlinde_formula}). 

The remainder of this paper is organized as follows. In Section $2$, we provide preliminaries on unitary modular fusion categories, with an emphasis on graphical calculus notations and identities. In Section $3$, we introduce the configuration space and construct the braiding operators acting upon it; we then establish various relations among these operators, providing a new proof of the self-duality of multi-interval Jones–Wassermann subfactors. In Section $4$, we demonstrate that these operators induce a projective unitary representation of the balanced superelliptic mapping class group. Finally, in Section $5$, we explore the interaction between these braiding operators and other algebraic structures, and ultimately derive the structural constants of the 2-box convolution product via a generalized Verlinde formula.

\section*{Acknowledgement}
This work was supported by Beijing Municipal Science \& Technology Commission [Z221100002722017 to Z.L. and Y.R]; Beijing National Science Foundation Key Programs [Z220002 to Z.L.]; China’s National Key R\&D Programmes [2020YFA0713000 to Z.L.].

\section{Preliminaries}
We assume the reader is familiar with the theory of unitary modular fusion categories and the associated graphical calculus. For a detailed treatment, we refer to \cite{BK01, EGNO, Row05f, Tur10}.

Let $\CC$ be a unitary modular fusion category, We begin by fixing our notation and reviewing the essential graphical calculus identities within $\CC$. Throughout this paper, morphisms are represented as string diagrams read from top to bottom.

\subsection{Notations}
\begin{itemize}
    \item $\overline{\ \ }$: the dual functor, $\bar{X},\ \bar{f}$,
    \item $\dagger$:  Involutive antilinear contravariant endofunctor from the unitary structure,   
    \item $d_X$: quantum dimension of the object $X$,
    \item $\mu$: global dimension of $\CC$,  $\mu=\dim(\CC)=\sum_{V\in \Irr(\CC)}d^2_V$,
    \item $\delta$: the positive square root of $\mu$,
    \item $\Omega$: the Kirby colour $\sum_{V\in \Irr(\CC)}d_V V$,
    \item $\theta^{\pm}_X$: the twist for $X$,
    \item $p^{\pm}$: $p^{\pm}=\sum_{V\in \Irr(\CC)}\theta^{\pm}_Vd_V^2$, $p^+p^-=\mu$,
    \item $\eta$: $\eta=\frac{p^+}{\delta},\  \eta^{-1}=\frac{p^-}{\delta}$.
     \item ONB(X): an orthonormal basis of $\Hom_{\CC}(1,X)$ for object $X\in \Obj(\CC
     )$. When the corresponding morphism space is clear from context, we simply denote it by ONB.
    \end{itemize}

\subsection{Graphic calculus notations}
\begin{itemize}
    \item We employ red-colored loops to indicate that the corresponding strand is decorated by the Kirby color $\Omega$. Conversely, a red cup or cap denotes a coloring by the object $\bigoplus_{V\in \Irr(\CC)}d^{\frac{1}{2}}_V V$, and a red vertical strand denotes a coloring by the object $\bigoplus_{V\in \Irr(\CC)} V$. 
    \[
    \includegraphics[width=350pt]{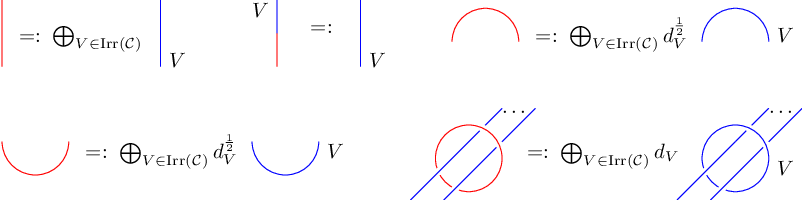}
    \]
    \item We utilize Frobenius reciprocity to construct a basis for $\Hom(X_1\cdots X_k,Y_1\cdots Y_k)$ from a basis of $\Hom(\mathbbm{1},Y_1\cdots Y_k\overline{X_k}\cdots \overline{X}_1)$:
    \[
    \includegraphics[width=320pt]{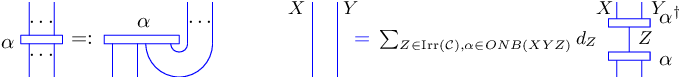}
    \]
    \item We adopt the same convention for $\Theta_1$, $\Theta_2$,$\Theta_{\CC}$ as in \cite{LX19} and \cite{LR24}
    \[
    \includegraphics[width=430pt]{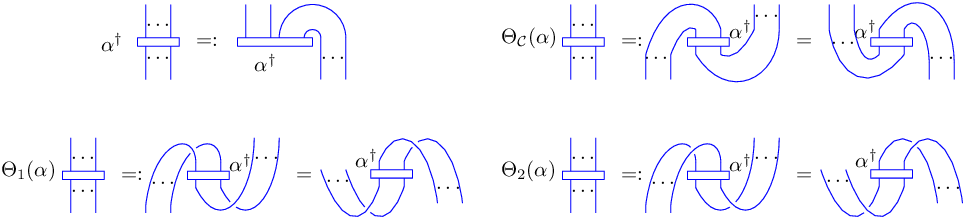}
    \]
    \item In the graphic calculus, similar to that in \cite{LR24}, we extensively use the dotted red loops to indicate their positions and simplify calculations (the underlying identities are justified by the cutting property).
    \begin{equation}\label{eq:cutting_identity}
    \includegraphics[width=400pt]{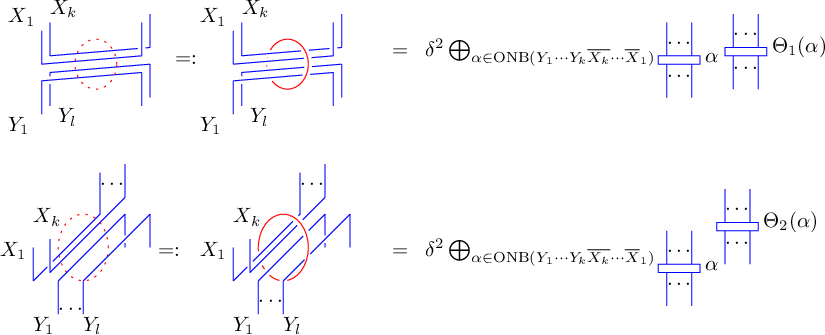}
    \end{equation}
    \item We adopt the same convention for contraction and inclusion:
    \[
    \includegraphics[width=150pt]{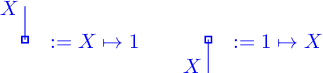}
    \]
    \item We employ the following notations about twists:
\[    
    \includegraphics[width=350pt]{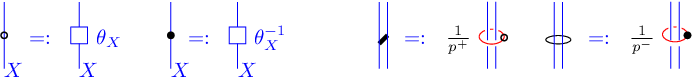}
\]
\end{itemize}

\subsection{Graphic calculus identities}

\begin{itemize}
    \item Twist property: $\theta_{X\otimes Y}=c_{Y,X}c_{X,Y}\theta_X\otimes\theta_Y$.
    \[
    \includegraphics[width=140pt]{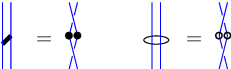}
    \]
    \item Cutting property of $\Omega$ (cf.~\eqref{eq:cutting_identity}). 
    \[
    \includegraphics[width=100pt]{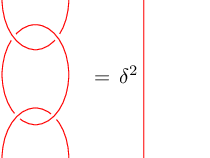}
    \]
    \item Handle slide property of $\Omega$.
    \[
    \includegraphics[width=200pt]{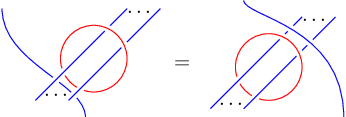}
    \]
\end{itemize}

\section{Braiding operators on the configuration space}
In this section, we review the definition of the configuration spaces and their associated basic operations. Subsequently, we construct the braiding operators and establish various algebraic relations among them, providing a new proof of the self-duality of multi-interval Jones–Wassermann subfactors.

\subsection{Configuration space}
\begin{defn}[{\cite[Sec.~2]{LX19}}]\label{def:cfbasis} Let $\CC$ be a unitary modular fusion category, $X_{i,j},Y_{i,j}\in \obj(\CC)$, we define $X_{-1,j}=X_{m-1,j}:=\mathbbm{1}$, $Y_{0,j}=X_{0,j},Y_{m-1,j}=\overline{X_{m-2,j}}$. Let $a_i\in \Hom_{\CC}(\mathbbm{1}, \bigotimes^{n-1}_{j=0} Y_{i,j})$, $b_{i,j}\in \Hom(\mathbbm{1},\overline{X_{i-1,j}}X_{i,j}\overline{Y_{i,j}})$ and $b_{0,j}=\id_{Y_{0,j}}, b_{m-1,j}=\id_{Y_{m-1,j}}$.
The configuration space $Conf(\CC)_{m,n}$ is the space spanned by the vectors as in Figure \ref{fig:cfbasis}. 
\begin{figure}[h]
    \centering
    \includegraphics[width=0.8\linewidth]{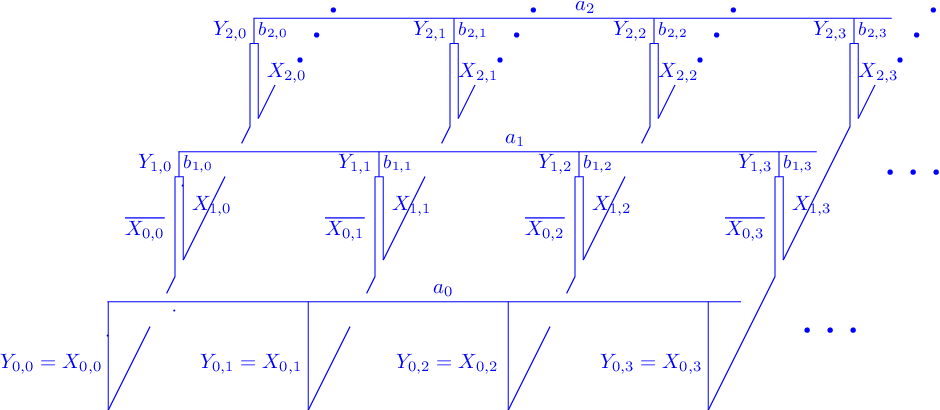}
    \caption{Vectors in configuration space}
    \label{fig:cfbasis}
\end{figure}
\end{defn}

The configurations in $X$ and $Y$ directions are given by the morphisms:
\[
\mathbf{a}_i:=\bigotimes^{n-1}_{j=0} b_{i,j}\circ a_i,\ \ \mathbf{b}_j:=\bigotimes^{m-2}_{i=0} ev_{X_{i,j}}\circ\bigotimes^{m-1}_{i=0}b_{i,j},
\]
which are depicted in Figure \ref{fig:XYConf}.

\begin{figure}
    \centering
    \includegraphics[width=1\linewidth]{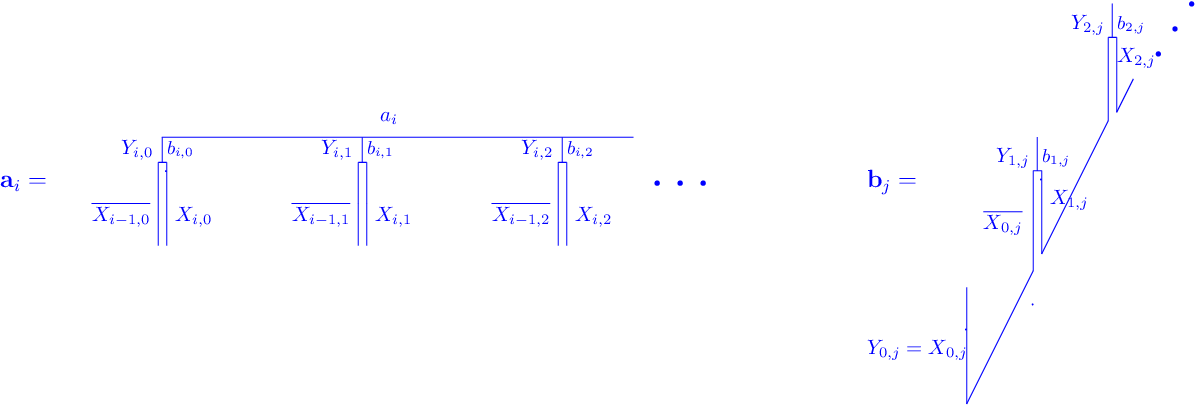}
    \caption{Configurations in $X$ and $Y$ directions}
    \label{fig:XYConf}
\end{figure}

\begin{rmk}
Our definition is a refinement of that in \cite[Sec.~2]{LX19}, enabling a more delicate description of morphisms and simplifying the associated graphical calculus.
\end{rmk}

The inner product on $Conf(\CC)_{m,n}$ is induced from the inner products on the spaces $\Hom_{\CC}(\mathbbm{1}, \bigotimes^{n-1}_{j=0} Y_{i,j})$ and $\Hom_{\CC}(\mathbbm{1},\overline{X_{i-1,j}}X_{i,j}\overline{Y_{i,j}})$. Consequently, the collection of vectors formed by  $a_i\in ONB(\bigotimes^{n-1}_{j=0} Y_{i,j})),\ b_{i,j}\in ONB(\overline{X_{i-1,j}}X_{i,j}\overline{Y_{i,j}})$ and the scaling factor $\prod_{1\leq  i\leq m-2,j}\sqrt{d_{Y_{i,j}}}$ constitutes an orthonormal basis of $Conf(\CC)_{m,n}$, denoted by $ONB(Conf(\CC)_{m,n})$.

Recall the action of $\Theta_2,\Theta_\CC$ on morphism spaces defined in \cite{LX19}, here we define the action of $\tilde{\Theta}_2$ on $\mathbf{a}_i$ as follows, see Figure \ref{fig:Theta_2def}.
\begin{figure}
    \centering
    \includegraphics[width=1\linewidth]{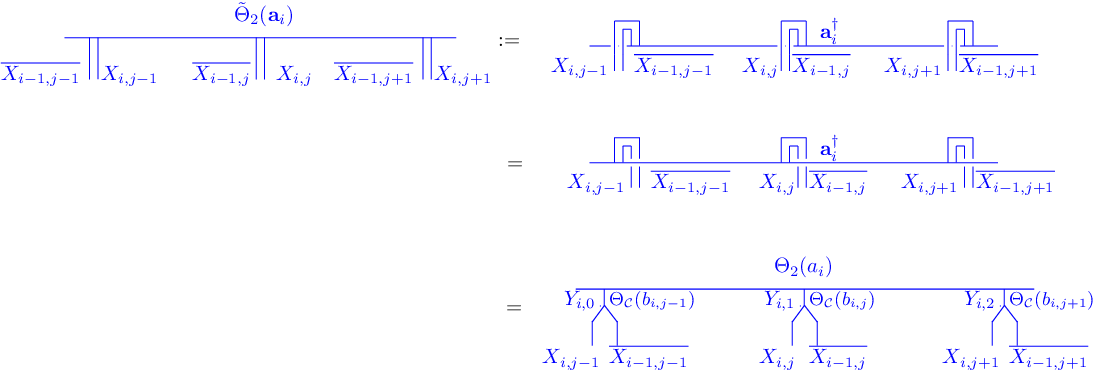}
    \caption{The $\Theta_2$ action on $\mathbf{a}_i$.}
    \label{fig:Theta_2def}
\end{figure}

we have the following lemma,
\begin{lem}
We have the following identities:
\[
\begin{aligned}
 \Theta_2(\mathbf{a}_i)=&\bigotimes^{n-1}_{j=0}c^{-1}_{X_{i,j},\overline{X_{i-1,j}}} \tilde{\Theta}_2(\mathbf{a}_i)=\bigotimes^{n-1}_{j=0}c^{-1}_{X_{i,j},\overline{X_{i-1,j}}} \Theta_{\CC}(b_{i,j})\circ\Theta_2(a_i),\\
 \Theta_{\CC}(\mathbf{b}_{j})=&\bigotimes^{m-2}_{i=0} ev_{\overline{X_{m-2-i,j}}}\circ\bigotimes^{m-1}_{i=0}\Theta_\CC(b_{m-1-i,j}).
\end{aligned}
\]
\end{lem}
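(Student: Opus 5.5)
The plan is to unfold the definitions of $\Theta_2$ and $\Theta_{\CC}$ from \cite{LX19} (recalled in the Preliminaries) and simplify with the twist property and naturality of the braiding. Neither step needs more than local diagram manipulations.

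\emph{First identity.} The morphism $\mathbf{a}_i=\bigotimes_j b_{i,j}\circ a_i$ takes values in $\bigotimes_{j}\overline{X_{i-1,j}}X_{i,j}$. Recall that $\Theta_2$ acts on a morphism $\mathbbm{1}\to\bigotimes_j Z_j$ by bending strands around the output with pairwise braidings, as in the figure for $\Theta_2$. I would split each output block $Z_j=\overline{X_{i-1,j}}\otimes X_{i,j}$ into its two strands.
\begin{enumerate}
\item Using naturality of the braiding, slide the $b_{i,j}$ vertices through the crossings of the $Y_{i,j}$-strands. Each crossing of a $Y_{i,j}$-strand with the other strands then becomes a crossing of the composite strand $\overline{X_{i-1,j}}X_{i,j}$.
\item Use $c_{A,BC}=(\id_B\otimes c_{A,C})(c_{A,B}\otimes\id_C)$ to decompose these crossings. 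Inside the $j$-th block, one residual self-crossing $c_{\overline{X_{i-1,j}},X_{i,j}}$-type piece remains, because $\Theta_2$ treats $\overline{X_{i-1,j}}X_{i,j}$ as a single strand, whereas $\tilde\Theta_2$ in Figure~\ref{fig:Theta_2def} treats the two strands separately.
\item Factor this residual crossing out as $\bigotimes_j c^{-1}_{X_{i,j},\overline{X_{i-1,j}}}$. What remains is by definition $\tilde\Theta_2(\mathbf{a}_i)$.
\end{enumerate}
If the convention in Figure~\ref{fig:Theta_2def} involves twists rather than crossings, I would instead use $\theta_{X\otimes Y}=c_{Y,X}c_{X,Y}\theta_X\otimes\theta_Y$ to convert the block twist into individual twists times a double braiding, and cancel one half against the definition.

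For the second equality, $\tilde\Theta_2$ acts blockwise on each $b_{i,j}$ after sliding it past $a_i$, so $\tilde\Theta_2(\mathbf{a}_i)=\bigotimes_j\Theta_{\CC}(b_{i,j})\circ\Theta_2(a_i)$ by functoriality. Concretely, this means an isotopy moving the $b_{i,j}$ vertices through the rotation applied to $a_i$, with the $Y_{i,j}$ legs contracted.

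\emph{Second identity.} $\Theta_{\CC}$ is the $\pi$-rotation (antilinear or duality-reversing) operation on morphisms $\mathbbm{1}\to W$. It is compatible with composition in the sense that it reverses the order of tensor factors and dualizes. Since $\mathbf{b}_j$ is a horizontal chain of the $b_{i,j}$ glued by evaluations $ev_{X_{i,j}}$, applying $\Theta_{\CC}$ reverses the chain. Thus $b_{i,j}$ goes to position $m-1-i$ and becomes $\Theta_{\CC}(b_{m-1-i,j})$, and each cup $ev_{X_{i,j}}$ becomes $ev_{\overline{X_{m-2-i,j}}}$ under the rotation. I would verify this by checking that $\Theta_{\CC}$ commutes with contraction by evaluation maps (a consequence of pivotality and the zigzag identity), then induct on $m$.

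The main obstacle is the first identity: tracking exactly which crossing survives and its orientation, i.e.\ that it is $c^{-1}_{X_{i,j},\overline{X_{i-1,j}}}$ rather than $c_{\overline{X_{i-1,j}},X_{i,j}}$ or its inverse. To pin this down, I would check the case $n=1$ with trivial $a_i$ directly from the pictures.
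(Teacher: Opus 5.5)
Your proposal matches the paper's argument, which the paper gives in one line (``straightforward graphic calculus''). Your steps spell that calculus out: sliding the $b_{i,j}$ vertices by naturality, comparing the block-wise $\Theta_2$ with the strand-wise $\tilde\Theta_2$ via the hexagon/cabling identities to isolate one crossing per block, and using that the order-reversing operation $\Theta_{\CC}$ reverses the chain $\mathbf{b}_j$ and turns each $ev_{X_{i,j}}$ into $ev_{\overline{X_{i,j}}}$. As you note, the over/under type of the surviving crossing $c^{-1}_{X_{i,j},\overline{X_{i-1,j}}}$ is fixed by the conventions in the defining figures for $\Theta_2$ and $\tilde\Theta_2$, so that is the one point you must read off carefully.
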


\begin{proof}
The identities follow from straightforward graphic calculus.  

\end{proof}
\begin{rmk}
Similarly to $\Theta_2$, We define the action $\tilde{\Theta}_2$ on $Conf(\CC)_{m,n}$ by reflecting the indices in the $Y$-direction.      
\end{rmk}

\begin{defn}
The contraction and inclusion maps defined in \cite{LX19} admit the following graphical interpretation, as illustrated in Figure \ref{fig:Contrac_inclu}.

\begin{figure}
    \centering
    \includegraphics[width=1\linewidth]{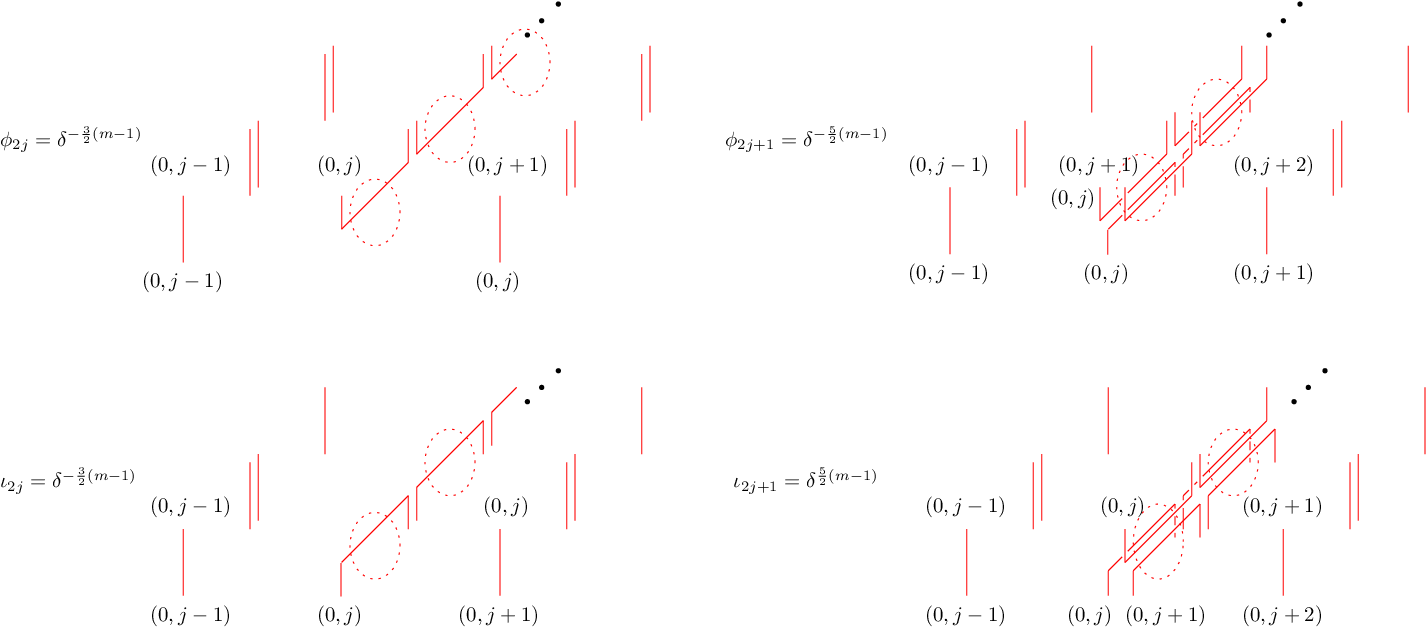}
    \caption{Contraction and inclusion}
    \label{fig:Contrac_inclu}
\end{figure}    
\end{defn}

\begin{defn}
The rotation operators $\rho_1$ and $\rho_2$ are defined as in \cite[Sec.~3]{LX19}, where $\rho_1$ represents a clockwise rotation of $\frac{2\pi}{n}$ about the $Y$-direction, and $\rho_2$ represents a clockwise rotation of $\frac{2\pi}{m}$ about the $X$-direction."   
\end{defn}

\subsection{Braiding operators}
We first define the unitary operator $u\in \End(Conf(\CC)_{m,n})$. 
\begin{defn}
 We define the action of $u$ on $\mathbf{a}_i$ and the vectors in $Conf(\CC)_{m,n}$ as illustrated in Figure \ref{fig:u_def1}, \ref{fig:u_def2}.
 \begin{figure}
     \centering  \includegraphics[width=1\linewidth]{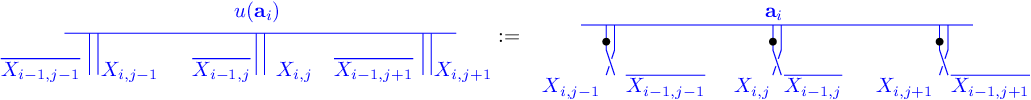}
     \caption{Definition of the action $u$ on $\mathbf{a}_i$}
     \label{fig:u_def1}
 \end{figure}
 \begin{figure}
     \centering
    \includegraphics[width=1.1\linewidth]{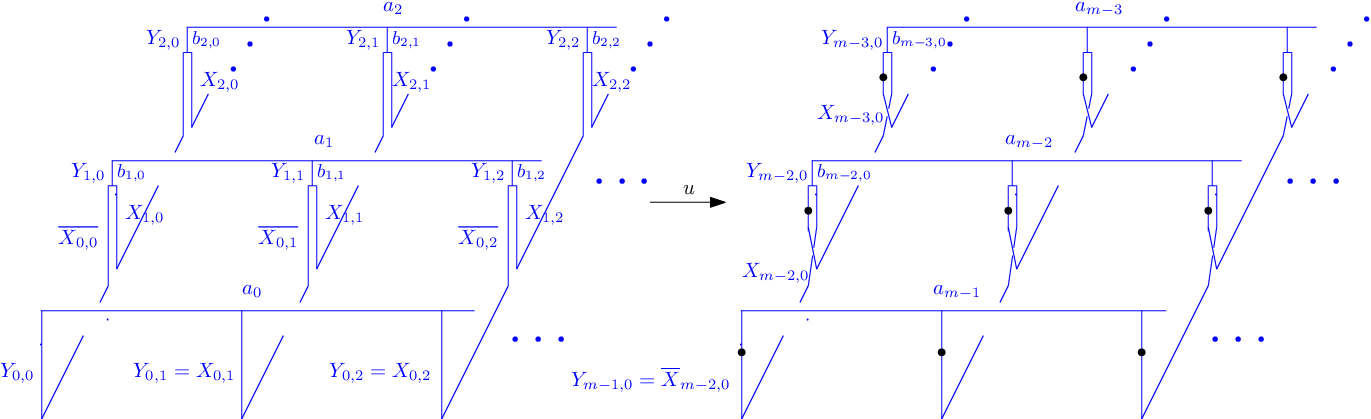}
     \caption{Definition of the action $u$ on $Conf(\CC)_{m,n}$}
     \label{fig:u_def2}
 \end{figure}
\end{defn}

\begin{lem}
 The following identities are immediate:
\begin{align}
u^2(x)&=\prod_{i,j}\theta^{-1}_{Y_{i,j}}x,\\
\Theta_2u&=u^{-1}\Theta_2,    \\
\Theta_2\rho_2&=\rho_2^{-1}\Theta_2.
\end{align}
\end{lem}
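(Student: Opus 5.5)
All three identities will be checked basis vector by basis vector on $ONB(Conf(\CC)_{m,n})$, using the explicit graphical definitions. Every operator involved ($u$, $\Theta_2$, $\rho_2$) acts locally on the components $\mathbf{a}_i$ and $\mathbf{b}_j$. Since the vectors of Figure \ref{fig:cfbasis} are built from the $\mathbf{a}_i$ by the fixed gluing along the $\mathbf{b}_j$, it suffices in each case to verify the identity on a single $\mathbf{a}_i$, or on the $Y$-direction configuration, and then glue.

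For $u^2(x)=\prod_{i,j}\theta^{-1}_{Y_{i,j}}x$, I will apply the picture of Figure \ref{fig:u_def1} twice. By construction, $u$ inserts a half-twist (a crossing of the form $c^{-1}$) on the strands attached to $a_i$, in the $Y$-direction. Applying it twice produces a full negative twist on the bundle of strands $Y_{i,0},\dots,Y_{i,n-1}$ together with the corresponding crossings. By the twist property $\theta_{X\otimes Y}=c_{Y,X}c_{X,Y}\theta_X\otimes\theta_Y$, read backwards, the double braiding combines with the local twists. The full twist on the tensor product then slides through the coupon $a_i\in\Hom(\mathbbm{1},\bigotimes_j Y_{i,j})$ and becomes trivial, since $\theta_{\mathbbm{1}}=1$ and twists are natural. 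What remains are the individual twists $\theta^{-1}_{Y_{i,j}}$ on each strand. Because each $Y_{i,j}$ in a basis vector is simple, these act as the scalars $\theta^{-1}_{Y_{i,j}}$. Taking the product over all $i,j$, including the boundary identifications $Y_{0,j}=X_{0,j}$ and $Y_{m-1,j}=\overline{X_{m-2,j}}$, gives the claimed scalar. The main bookkeeping point is to confirm which strands receive the doubled braiding in Figure \ref{fig:u_def2}, so that no extra $\theta_{X_{i,j}}$ factors survive. I expect these to cancel pairwise between adjacent $\mathbf{a}_i$ along each $\mathbf{b}_j$, because $\theta_X=\theta_{\overline X}$.

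For $\Theta_2u=u^{-1}\Theta_2$, I will use the preceding lemma, which expresses $\Theta_2(\mathbf{a}_i)$ as $\bigotimes_j c^{-1}_{X_{i,j},\overline{X_{i-1,j}}}\Theta_\CC(b_{i,j})\circ\Theta_2(a_i)$. The map $\Theta_2$ is the antilinear reflection of diagrams, and it sends each crossing to its inverse. Reflecting the diagram of $u(\mathbf{a}_i)$ therefore yields the diagram of $u^{-1}$ applied to the reflected vector. The extra crossings $c^{-1}_{X_{i,j},\overline{X_{i-1,j}}}$ in the lemma are untouched by $u$, since $u$ acts on the $Y$-strands, and so they commute past. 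The relation $\Theta_2\rho_2=\rho_2^{-1}\Theta_2$ is the standard fact that a reflection conjugates a rotation by $2\pi/m$ into its inverse. Graphically, $\Theta_2$ reverses the cyclic order of the indices $i$, as in the formula for $\Theta_\CC(\mathbf{b}_j)$ with $m-1-i$, so a clockwise rotation becomes counterclockwise.

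The only real obstacle is the first identity: tracking the twist and crossing factors in $u^2$ precisely, especially at the boundary rows $i=0$ and $i=m-1$, where $b_{i,j}$ is an identity.
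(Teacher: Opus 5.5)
\textbf{There is a genuine gap: the argument for $u^2$ is made for the wrong operator.} Your treatment of $\Theta_2\rho_2=\rho_2^{-1}\Theta_2$ is fine, and the twist property is the right tool for $u^2$. The problem is your model of $u$. You take $u$ to be a half-twist of the $n$ legs $Y_{i,0},\dots,Y_{i,n-1}$ of each $a_i$, and you slide the resulting full twist through the coupon $a_i$.

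Such an operator reverses the column index $j$. Up to orientation, it is the factor $\Delta_n^{\boxtimes m}$ in $\Delta=\Delta_n^{\boxtimes m}u$, which the paper treats as distinct from $u$. The relations $uT_{2i,j}u^{-1}=T_{2m-2-2i,j}$ and $uT_ju^{-1}=T_j$ (Lemma \ref{lem:uT_rels}), together with $\rho_2u=u\rho_2^{-1}$ (Corollary \ref{Cor:rho_2TABu_rels}), show the opposite: $u$ fixes $j$ and reverses the order $i\mapsto m-1-i$ of the $m$ intervals.

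This is why $u$ is defined on $\mathbf{a}_i$ (Figure \ref{fig:u_def1}). Relabelling layer $i$ as layer $m-1-i$ forces $X'_{k,j}=\overline{X_{m-2-k,j}}$, as in the formula for $\Theta_\CC(\mathbf{b}_j)$. So the two legs $\overline{X_{i-1,j}},X_{i,j}$ of $\mathbf{a}_i$ must be exchanged by a crossing at every vertex $b_{i,j}$. Hence $u^2$ is a full twist in the $i$-direction, on the $m$ strands $Y_{0,j},\dots,Y_{m-1,j}$ feeding the $Y$-direction configuration $\mathbf{b}_j$. The coupon to slide it through is $\mathbf{b}_j$, not $a_i$. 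Your scalar comes out right only because $\prod_{i,j}\theta^{-1}_{Y_{i,j}}$ is symmetric in the two families of bundles.

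Separately, your sign does not follow from your own description. By the twist property as stated, a full twist built from $c^{-1}$ crossings acting on a morphism out of $\mathbbm{1}$ produces $\theta^{+1}$ factors, not $\theta^{-1}$.

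\textbf{The step you defer as bookkeeping is the real content, and your proposed resolution fails.} Regard $b_{i,j}$ as a morphism out of $Y_{i,j}$. By the twist property and naturality, a double crossing of $\overline{X_{i-1,j}}$ and $X_{i,j}$ on $b_{i,j}$ contributes $\theta^{\pm1}_{Y_{i,j}}\theta^{\mp1}_{X_{i-1,j}}\theta^{\mp1}_{X_{i,j}}$, with the same exponent on both $X$-edges. An interior edge $X_{i,j}$ receives such a factor from both $b_{i,j}$ and $b_{i+1,j}$. Then $\theta_X=\theta_{\overline X}$ makes these factors add, not cancel.

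What must be shown, from the precise crossings and twist decorations in the definition of $u$, is that the $X$-factors telescope along each chain. That means opposite exponents at the two ends of every interior edge, with the chain closed off by $X_{-1,j}=X_{m-1,j}=\mathbbm{1}$, $Y_{0,j}=X_{0,j}$ and $Y_{m-1,j}=\overline{X_{m-2,j}}$. This is exactly the statement that $u^2$ is the full twist on the legs of $\mathbf{b}_j$.

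\textbf{The same misidentification affects your justification of $\Theta_2u=u^{-1}\Theta_2$.} The crossings $c^{-1}_{X_{i,j},\overline{X_{i-1,j}}}$ in $\Theta_2(\mathbf{a}_i)$ sit on precisely the strand pairs that $u$ braids, so they are not ``untouched by $u$''. The identity still holds, for two reasons. Crossings of the same pair of strands commute, since the two-strand braid group is abelian. And $\Theta_2$ is an antilinear mirror that also reverses $i$, so it turns the crossings and twists of $u$ into their inverses. That is the argument you need to give.
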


\begin{defn}
 The operators $T_{i,j}$ are defined in Figure \ref{fig:ElementaryBraidings}, with their construction depending on the parity of $i$. One can show their unitarity by performing simple graphical calculus.

\begin{figure}[h]
    \centering
    \includegraphics[width=1\linewidth]{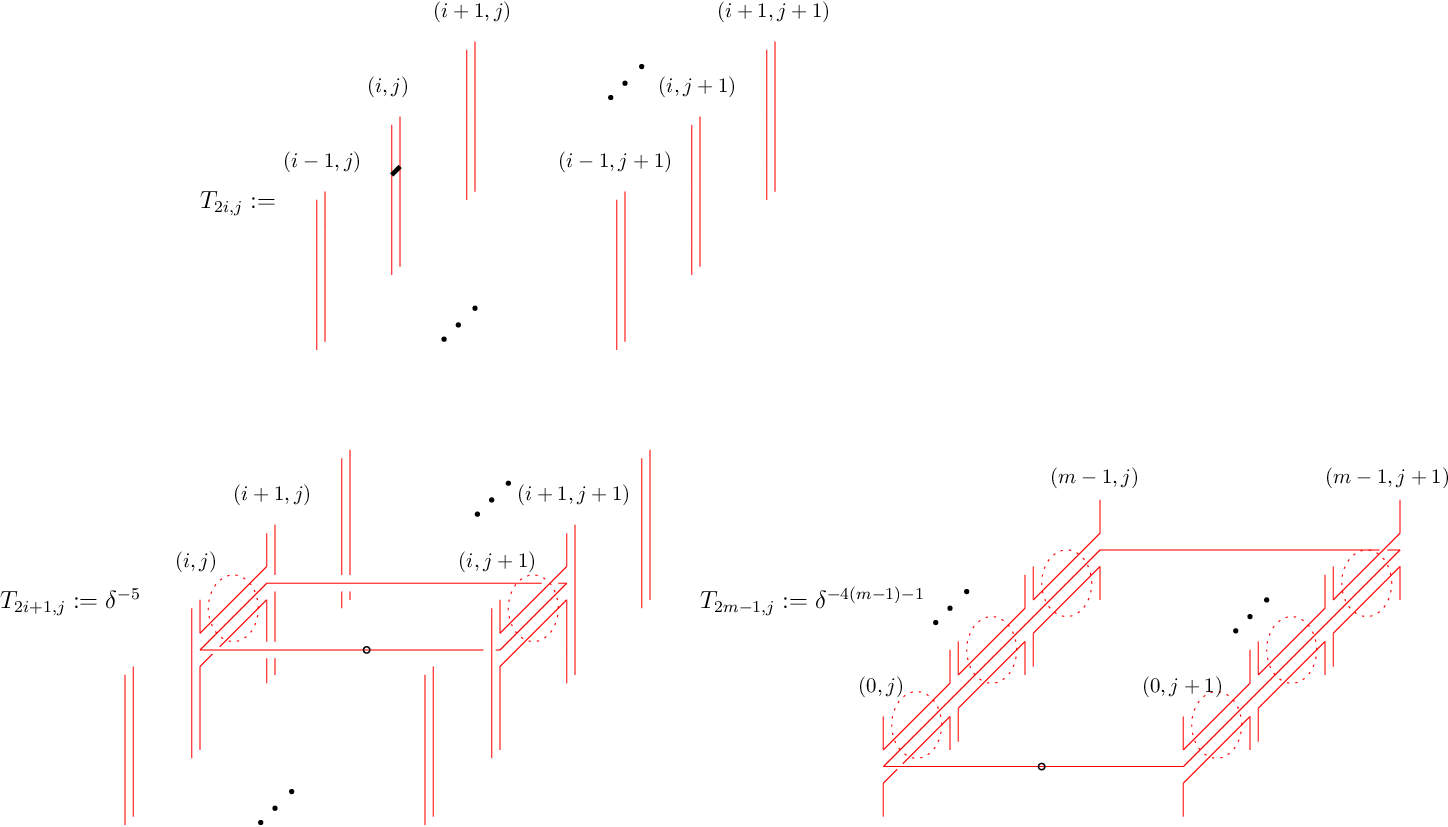}
    \caption{Operators $T_{i,j}$}
    \label{fig:ElementaryBraidings}
\end{figure}
\end{defn}

\begin{prop}\label{prop:T_rels}
The operators $T_{i,j}\in \End(Conf(\CC)_{m,n})$ satisfy following relations:
\begin{align*}
T_{2i+1,j}T_{k,l}T_{2i+1,j}&=T_{k,l}T_{2i+1,j}T_{k,l},&&for \ (k,l)\in \{(2i,j),(2i,j+1),(2i+2,j),(2i+2,j+1)\}\\
T_{2m-1,j}T_{k,l}T_{2m-1,j}&=T_{k,l}T_{2m-1,j}T_{k,l},&&for \ (k,l)\in \{(2m-2,j),(2m-2,j+1),(0,j),(0,j+1)\} \\
T_{i,j}T_{k,l}&=T_{i,j}T_{k,l}. &&otherwise
\end{align*}
\end{prop}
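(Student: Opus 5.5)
We argue diagrammatically. The plan is to view each $T_{i,j}$ as a local operator acting on the vector in Figure~\ref{fig:cfbasis}. For even $i$ (say $i=2i'$) it is supported near the vertex $b_{i',j}$, or the corresponding strand in the $X$-direction. For odd $i=2i'+1$ it is supported on the region between two consecutive rows and two consecutive columns, i.e.\ near the red $\Omega$-loop (cutting circle) separating $b_{i',j},b_{i',j+1}$ from $b_{i'+1,j},b_{i'+1,j+1}$. The indices are cyclic: the second family of braid relations, with $T_{2m-1,j}$ adjacent to $T_{0,\cdot}$ and $T_{2m-2,\cdot}$, is the wrap-around case. We would treat it by first conjugating with the rotation $\rho_2$, once we have checked that $\rho_2 T_{k,l}\rho_2^{-1}=T_{k+2,l}$ (indices mod $2m$). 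This reduces the wrap-around relations to the first family.

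\textbf{Commutation relations.} We first identify the support of each $T_{i,j}$ as a disc in the diagram. When two index pairs are not in the listed adjacency, the supports can be isotoped to be disjoint; the dotted red loops and the cutting identity~\eqref{eq:cutting_identity} let us insert separating circles. The operators then act on tensor-disjoint parts of the diagram, so they commute. A little care is needed when the two discs share a red loop. In that case we use the cutting property to split the Kirby-coloured loop into a sum over simple objects $V$ with $d_V$ weights. This makes the two actions independent for each fixed $V$.

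\textbf{Braid relations.} For $(k,l)$ adjacent to $(2i+1,j)$, each side of $T_{2i+1,j}T_{k,l}T_{2i+1,j}=T_{k,l}T_{2i+1,j}T_{k,l}$ is a single diagram, obtained by stacking the local pictures. The case with $T_{k,l}$ a genuine crossing $c^{\pm1}$ on strands $X_{2i,j}$ and the crossing hidden in $T_{2i+1,j}$ should reduce to the Yang--Baxter equation for the braiding. The real content is when $T_{2i+1,j}$ is a Dehn-twist-type operator around an $\Omega$-loop, implemented by the $\eta$-normalised encircling of a red loop. Then the relation is the standard fact that Dehn twists along curves meeting once satisfy the braid relation. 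Diagrammatically, we would prove it by using the handle slide property of $\Omega$ to slide one loop over the other, and the twist property $\theta_{X\otimes Y}=c_{Y,X}c_{X,Y}\theta_X\otimes\theta_Y$ to redistribute twists. The scalar factors $\eta^{\pm1}$ produced by removing a $\pm1$-framed $\Omega$-loop, via $p^{\pm}$ and $p^+p^-=\mu$, must match on both sides. By symmetry under $\Theta_2$ (using $\Theta_2 u=u^{-1}\Theta_2$ and $\Theta_2\rho_2=\rho_2^{-1}\Theta_2$) and under $j\mapsto j+1$ reflection, it suffices to check one of the four neighbours, say $(2i,j)$.

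\textbf{Expected obstacle.} The hard step is this braid relation between an odd (loop-type) $T$ and an even (crossing- or twist-type) $T$. Both sides give $\Omega$-loops linked with the strands in different orders. Matching them needs a careful sequence of handle slides and must track the normalising scalars exactly, not just up to phase. I would first try to rewrite $T_{2i+1,j}$ as the conjugate of $T_{2i,j}$ by a Fourier-type operator (the $u$/$\Theta$-composite). Then the relation becomes the $ABA=BAB$ identity for the $S$ and $T$ actions on a punctured torus, which is the modular relation $(ST)^3=p^+ S^2/\delta$ in $\CC$.
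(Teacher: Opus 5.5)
Your overall method is the paper's method: local graphical calculus with the cutting property, handle slides over the $\Omega$-loops, the twist property, and checking one representative neighbour before comparing diagrams and coefficients. The gap is your claim that the normalising scalars \emph{must match on both sides}. That claim is false, and it is exactly where your argument would break. For the operators of Figure~\ref{fig:ElementaryBraidings}, the paper's computation ends with
\[
T_{2i+1,j}T_{2i+2,j+1}T_{2i+1,j}=\eta\, T_{2i+2,j+1}T_{2i+1,j}T_{2i+2,j+1},\qquad \eta=\frac{p^+}{\delta}.
\]
Here $\eta$ is a phase that is not $1$ in general. For Fibonacci with $\theta_\tau=e^{4\pi i/5}$ one gets $\eta=e^{7\pi i/10}$.

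So the braid relations between an odd and an even generator hold only up to $\eta$, and the displayed relations must be read projectively. This is also how the proposition is used later: the step justified by Proposition~\ref{prop:T_rels} in \eqref{eq:TA_rels_1} is $T_{1,j}T_{0,j}T_{1,j}^{-1}=\eta^{-1}T_{0,j}^{-1}T_{1,j}T_{0,j}$. Your own closing suggestion already contains this anomaly. Translating $(ST)^3=\eta S^2$ back into the two twist operators gives $aba=\eta^{\pm1}bab$, not $aba=bab$, so the proposal contradicts itself here. For the same reason, the scalar-free Yang--Baxter case you anticipate does not arise: every listed braid relation pairs a loop-type odd generator with an even one, which is where the scalar enters.

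Your treatment of the wrap-around generator $T_{2m-1,j}$ also has a hole. Reducing to the first family via $\rho_2T_{k,l}\rho_2^{-1}=T_{k+2,l}$ assumes a generator-by-generator rotation equivariance that is neither visible nor proved.
\begin{itemize}
\item In Figure~\ref{fig:cfbasis} the rows $i=0$ and $i=m-1$ are special ($X_{-1,j}=X_{m-1,j}=\mathbbm{1}$, $b_{0,j}=\id$, $b_{m-1,j}=\id$). So $\rho_2$ is not a relabelling of the diagram.
\item The paper only ever obtains $[\rho_2,T_j]=[\rho_2,T'_j]=0$ for the products, much later and via $F$ and $F'$ (Corollary~\ref{Cor:rho_2TABu_rels}).
\item Checking the equivariance of each $T_{k,l}$ directly is a graphical computation at least as hard as the wrap-around relations themselves.
\end{itemize}
Likewise, the commutation relations of $T_{2m-1,j}$ do not follow from isotoping supports apart. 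The paper obtains them from handle slides over the vertical red circles, and handle slides are not isotopies. You need either a direct graphical verification of these cases, as in the paper, or an actual proof of the $\rho_2$-equivariance of each $T_{k,l}$.
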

\begin{proof}
The proof follows from the following graphical calculus. The commuting relations for $T_{2m-1,j}$ follow from the handle slide property applied to the vertical red circles, while the remaining commuting relations are straightforward. For the braid relations, it suffices to verify, for example, the following identity:
\begin{equation}\label{eq:braidrelation}
T_{2i+1,j}T_{2i+2,j+1}T_{2i+1,j}=\eta T_{2i+2,j+1}T_{2i+1,j}T_{2i+2,j+1}.    
\end{equation}

First, we simplify the product $T_{2i+1,j}T_{2i+2,j+1}T_{2i+1,j}$ as illustrated in Figure \ref{fig:Proofbraidrel1} (note we omit the scalar factor $\sqrt{X_{i,j}X_{i,j+1}}$ for clarity): the second equality follows from the cutting property, the third from the properties of the configuration space, and applying the required twists.

\begin{figure}[h]
    \centering
    \includegraphics[width=0.9\linewidth]{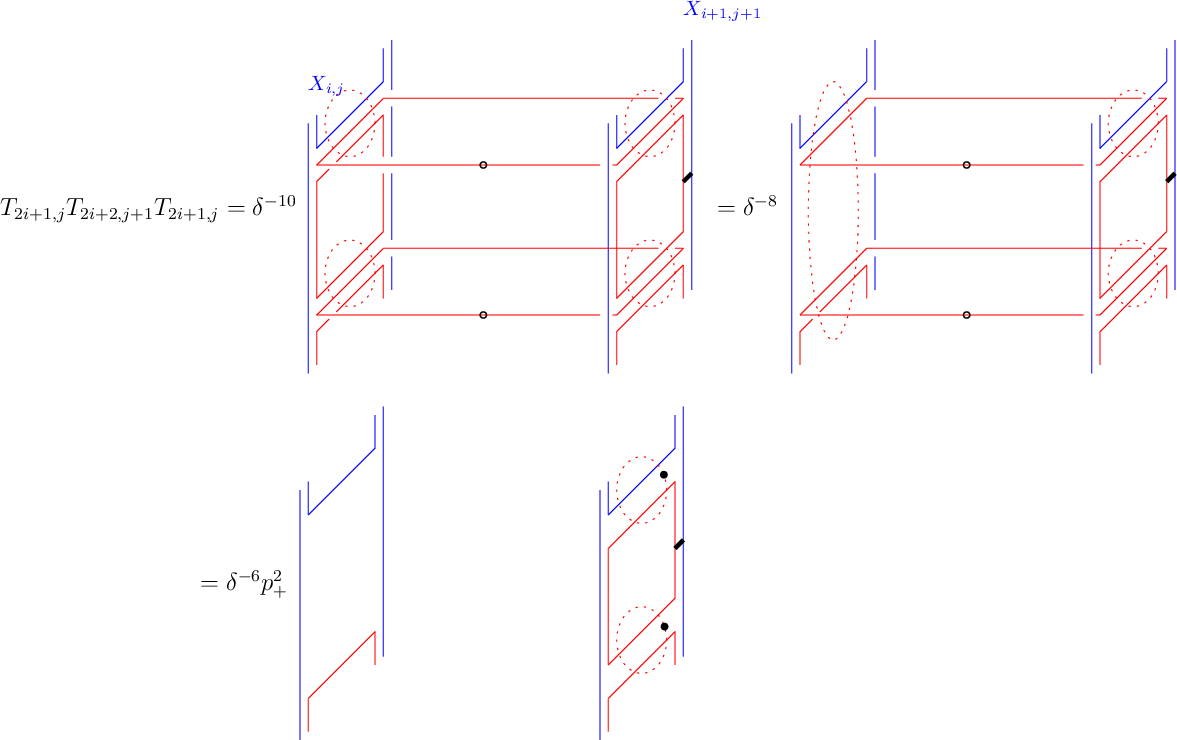}
    \caption{Simplification for $T_{2i+1,j}T_{2i+2,j+1}T_{2i+1,j}$}
    \label{fig:Proofbraidrel1}
\end{figure}

Next, we focus on the right-hand portion of the diagram and perform the graphical calculus shown in Figure \ref{fig:Proofbraidrel2}, repeatedly invoking the twist property.  

\begin{figure}[h]
    \centering
    \includegraphics[width=0.87\linewidth]{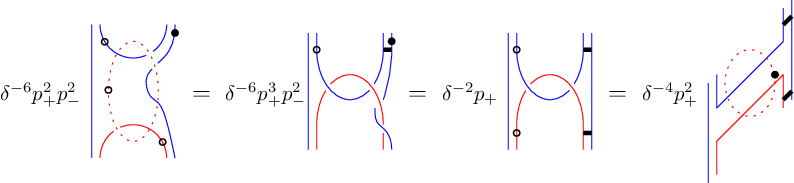}
    \caption{Graphic Calculus}
    \label{fig:Proofbraidrel2}
\end{figure}

Finally, by comparing the resulting diagrams and coefficients, we arrive at the equality (\ref{eq:braidrelation}). 

\begin{figure}[h]
    \centering
    \includegraphics[width=0.9\linewidth]{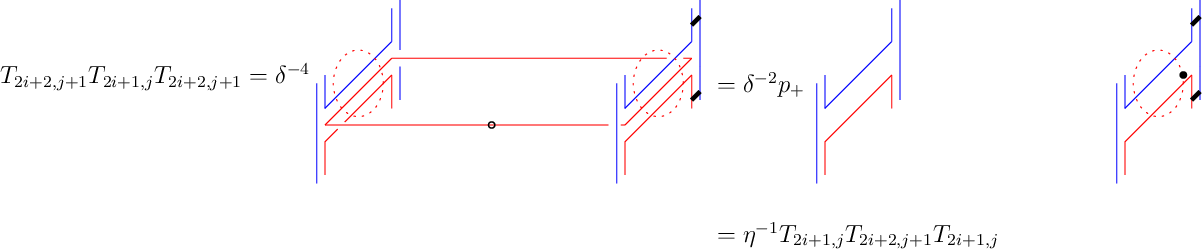}
    \caption{Proof for $T_{2i+1,j}T_{2i+2,j+1}T_{2i+1,j}=\eta T_{2i+2,j+1}T_{2i+1,j}T_{2i+2,j+1}$}
    \label{fig:Proofbraidrel3}
\end{figure}
\end{proof}

\begin{defn}
We define the following two unitary operators. Since the individual factors commute according to Proposition \ref{prop:T_rels}, the order of the product is irrelevant: 
 \[
 T_j:=\prod_{i=0}^{m-1}T_{2i,j},\ \  T'_j:=\prod_{i=0}^{m-1}T_{2i+1,j}.
 \]    
\end{defn}

\begin{lem}\label{lem:ThetaT_rel}
 We have the following identities:
\begin{align}
 \Theta_2T_{2i,j}&=T^{-1}_{2m-2-2i,j}\Theta_2, \label{eq:ThetaT_even}\\   \Theta_2T_{2i+1,j}=T^{-1}_{2m-3-2i,j}&\Theta_2,\  \Theta_2T_{2m-1,j}=T^{-1}_{2m-1,j}\Theta_2.\label{eq:ThetaT_odd}
\end{align}
\end{lem}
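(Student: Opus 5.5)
The plan is to verify the identities on the orthonormal basis $ONB(Conf(\CC)_{m,n})$ by graphical calculus. The key input is that $\Theta_2$ is, up to the explicit correction factors $c^{-1}_{X_{i,j},\overline{X_{i-1,j}}}$ recorded in the preceding lemma, a reflection of the configuration diagram in the $Y$-direction: it sends $\mathbf{a}_i$ to $\mathbf{a}_{m-1-i}$ with $\Theta_\CC$ applied to each $b_{i,j}$. It also sends $\mathbf{b}_j$ to the reversed chain $\bigotimes ev_{\overline{X_{m-2-i,j}}}\circ\bigotimes\Theta_\CC(b_{m-1-i,j})$.

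Geometrically, each $T_{k,j}$ is a local operation on one cell of the $m\times n$ grid. The even ones $T_{2i,j}$ act on the "row" $i$ via a twist or half-braiding on the strands $Y_{i,j},Y_{i,j+1}$. The odd ones $T_{2i+1,j}$ act between rows $i$ and $i+1$ via a red $\Omega$-loop encircling $X_{i,j},X_{i,j+1}$. Under the reflection $i\mapsto m-1-i$, row $i$ goes to row $m-1-i$, so even index $2i$ goes to $2(m-1-i)=2m-2-2i$. The gap between rows $i,i+1$ goes to the gap between rows $m-2-i,m-1-i$, so odd index $2i+1$ goes to $2(m-2-i)+1=2m-3-2i$. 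The wrap-around generator $T_{2m-1,j}$, which involves rows $m-1$ and $0$, is fixed. This accounts for the index bookkeeping in \eqref{eq:ThetaT_even} and \eqref{eq:ThetaT_odd}.

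The inversion $T\mapsto T^{-1}$ comes from the fact that $\Theta_2$ (like $\Theta_\CC$) is antilinear and built from the dagger/dual structure. Mirror-reflecting a diagram turns each over-crossing into an under-crossing and each twist $\theta$ into $\theta^{-1}$, exactly as in the already established relation $\Theta_2u=u^{-1}\Theta_2$. So the main step is this: for each basis vector $x$, draw $\Theta_2(T_{k,j}x)$, push $\Theta_2$ through using the preceding lemma, and compare with $T^{-1}_{k',j}\Theta_2(x)$. Here $T^{-1}_{k',j}$ is obtained from the definition in Figure \ref{fig:ElementaryBraidings} by reversing all crossings and conjugating the scalars, using unitarity $T^{-1}=T^\dagger$. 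Scalars such as $\sqrt{d_{X}}$ and the normalization of red loops are real, so they are unaffected by antilinearity. Phases like $\eta$ become $\eta^{-1}=\bar\eta$, which matches the inverse.

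The main obstacle is the parity dependence in the definition of $T_{k,j}$ together with the correction braidings $c^{-1}_{X_{i,j},\overline{X_{i-1,j}}}$ in $\Theta_2(\mathbf{a}_i)$. When row $i$ is reflected to row $m-1-i$, the roles of $X_{i-1,j}$ and $X_{i,j}$ at that row are swapped, so the corrected picture has extra crossings. These must be absorbed, via naturality of the braiding and the twist property, into the inverted crossings of $T^{-1}$. I would first treat the odd case $T_{2i+1,j}$. There the $\Omega$-loop can be slid across these correction crossings by the handle slide property, leaving only an inverted twist. The even case then reduces to checking that the reflected half-braiding on the $Y$-strands is the inverse one. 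For $T_{2m-1,j}$ I would use the cutting identity \eqref{eq:cutting_identity} to move the red loop across the top/bottom boundary rows, where $b_{0,j},b_{m-1,j}$ are identities. This shows the reflected operator is again $T^{-1}_{2m-1,j}$.
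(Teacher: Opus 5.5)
Your plan matches the paper's proof. Both verify by graphical calculus that the antilinear reflection $\Theta_2$ (which sends row $i\mapsto m-1-i$ and inverts crossings and twists) carries $T_{k,j}$ to the inverse operator at the reflected index: the even case is immediate, while the odd and wrap-around cases need a diagrammatic computation that absorbs the correction braidings $c^{-1}_{X_{i,j},\overline{X_{i-1,j}}}$. The paper draws the case $T_{2i+1,j}$ explicitly in a figure and handles $T_{2m-1,j}$ by the same argument.
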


\begin{proof}
The identity \eqref{eq:ThetaT_even} follows directly. Regarding \eqref{eq:ThetaT_odd}, we prove the first identity in Figure \ref{fig:ThetaT_rel}, and the second follows by a similar argument. 

\begin{figure}
    \centering \includegraphics[width=1.0\linewidth]{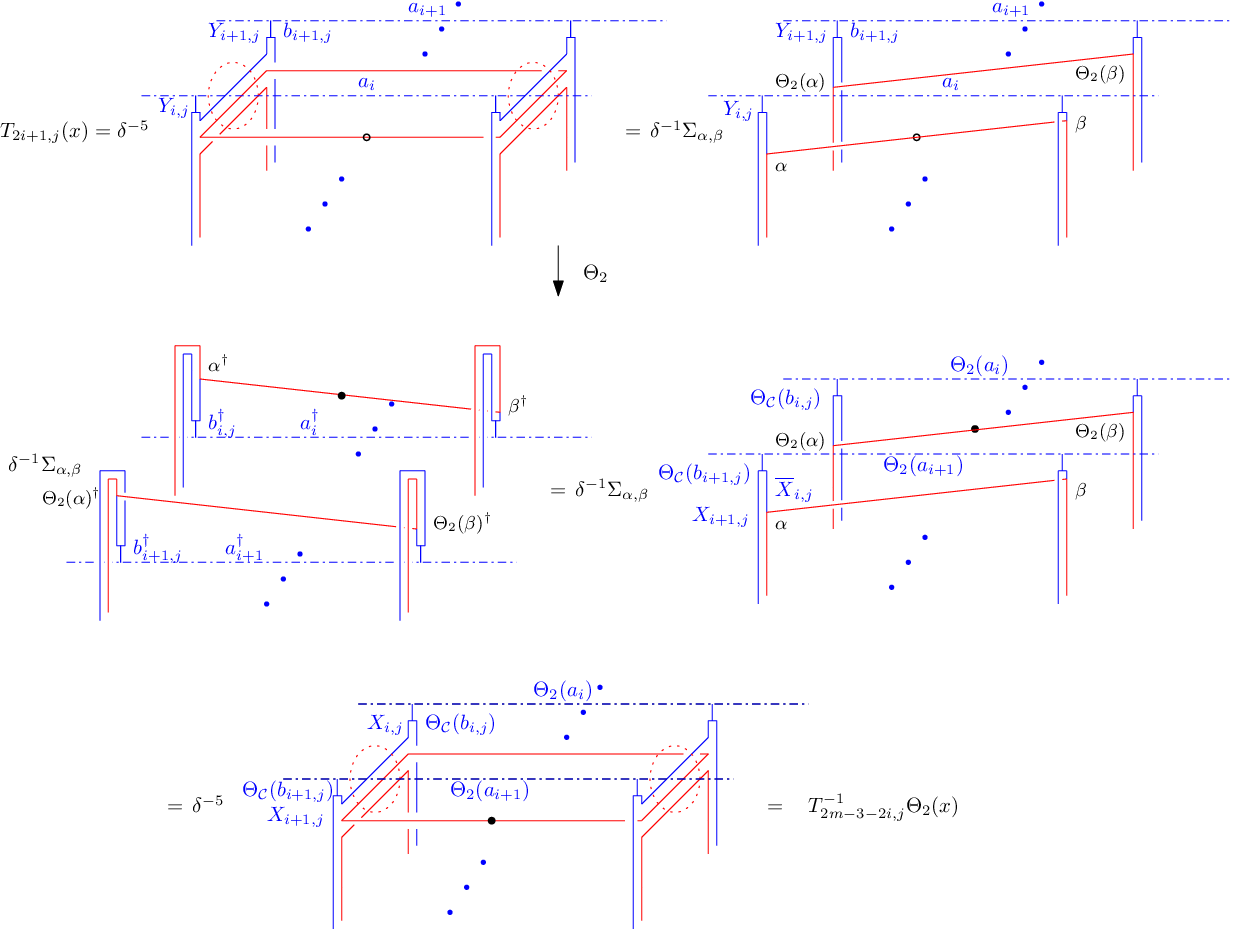}
    \caption{Relation between $T_{i,j}$ and $\Theta_2$}
    \label{fig:ThetaT_rel}
\end{figure}
\end{proof}

\begin{lem}\label{lem:uT_rels}
The following identites hold:
\begin{align}
u^{2}&=\prod^{n-1}_{j=1}T_j,\\
uT_{2i,j}u^{-1}&=T_{2m-2-2i,j},\\
uT_{2i+1,j}u^{-1}&=T_{2m-2-2i,j}T_{2m-4-2i,j+1}T_{2m-3-2i,j}T^{-1}_{2m-4-2i,j+1}T^{-1}_{2m-2-2i,j}\label{eq:uT_rels_2}, \\
uT_ju^{-1}&=T_j.
\end{align}
\end{lem}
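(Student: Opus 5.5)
We verify the four identities of Lemma~\ref{lem:uT_rels} one at a time by diagrammatic calculus on the basis vectors of $Conf(\CC)_{m,n}$. Each operator involved ($u$, $T_{i,j}$) is given by an explicit diagram, so every identity reduces to an isotopy plus the three graphical identities of Section~2: the twist property, the cutting property, and the handle slide. Since all operators are unitary (hence invertible), it suffices to compare both sides on a vector of $ONB(Conf(\CC)_{m,n})$, which we write as in Figure~\ref{fig:u_def2}.

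\textbf{First identity, $u^2=\prod_j T_j$.} By the previous lemma, $u^2$ acts on a basis vector $x$ by the scalar $\prod_{i,j}\theta^{-1}_{Y_{i,j}}$. So it suffices to show that $\prod_j T_j$ acts by the same scalar. For this we use the even operators $T_{2i,j}$ from Figure~\ref{fig:ElementaryBraidings}. Reading these as Dehn twists around strands labelled by $Y_{i,j}$, each $T_{2i,j}$ should act diagonally, by a twist $\theta^{-1}$ on the corresponding $Y$-strand, possibly up to a half-twist contributed by the adjacent $X$-strands. Multiplying over all $i$ and $j$, the contributions from the $X$-strands should cancel in pairs via $\theta_{X\otimes Y}=c_{Y,X}c_{X,Y}\theta_X\otimes\theta_Y$, leaving $\prod_{i,j}\theta^{-1}_{Y_{i,j}}$. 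Along the way we check the index range of the product: the stated lower index $j=1$ must be matched with the convention for the boundary strands $Y_{0,j}$ and $Y_{m-1,j}$.

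\textbf{Second and fourth identities.} The identity $uT_{2i,j}u^{-1}=T_{2m-2-2i,j}$ is read off geometrically. From Figure~\ref{fig:u_def1}, $u$ reverses the $X$-direction indices, $i\mapsto m-1-i$, in the same way as $\Theta_2$ in Lemma~\ref{lem:ThetaT_rel}, but it composes this reversal with a half-twist rather than a mirror image. We therefore conjugate the diagram of $T_{2i,j}$ by $u$, slide the twist through $u$ by naturality of the braiding, and obtain $T_{2m-2-2i,j}$. The relation $uT_ju^{-1}=T_j$ then follows at once, because the map $2i\mapsto 2m-2-2i$ permutes the factors of $T_j$ and these factors commute by Proposition~\ref{prop:T_rels}. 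It also follows directly from $u^2=\prod_j T_j$, since the $T_j$ commute with one another.

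\textbf{Third identity \eqref{eq:uT_rels_2} (main obstacle).} Here conjugating the odd operator $T_{2i+1,j}$ by $u$ does not give a single elementary operator. The half-twist in $u$ carries the curve of $T_{2i+1,j}$ to a curve that winds around the neighbouring curves of $T_{2m-2-2i,j}$ and $T_{2m-4-2i,j+1}$. The planned argument is as follows.
\begin{enumerate}
\item Draw $uT_{2i+1,j}u^{-1}$ explicitly. The red Kirby-coloured loop of $T_{2i+1,j}$, transported through $u$, links the two adjacent $Y$-strands.
\item Use the fact that the even operators act as twists to recognise that this transported loop is the loop of $T_{2m-3-2i,j}$ conjugated by the Dehn twists $T_{2m-2-2i,j}$ and $T_{2m-4-2i,j+1}$.
\item Carry out the conjugation graphically. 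The handle slide property moves the red loop across the twisted strands, and the cutting property removes the extra loops created in the process.
\end{enumerate}
The delicate points are the orientation and sign of each twist, and whether a scalar such as $\eta$ appears, as it did in \eqref{eq:braidrelation}. We will track these by evaluating both sides on a basis vector and comparing the twist factors $\theta_{X_{\cdot,\cdot}}$. As a consistency check, we will compare the result with Lemma~\ref{lem:ThetaT_rel}, using $\Theta_2u=u^{-1}\Theta_2$.
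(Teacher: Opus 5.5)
Your plan follows the paper's route. The paper also treats the first two identities as direct readings of the diagrams. It obtains \eqref{eq:uT_rels_2} from one graphical computation (Figure~\ref{fig:uT_rel}), in which $u$ carries the loop of $T_{2i+1,j}$ to the loop of $T_{2m-3-2i,j}$ conjugated by the two adjacent $Y$-twists. It gets $uT_ju^{-1}=T_j$ from the second identity, as in your main argument. Three points need correcting.

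\textbf{Alternative argument for the fourth identity.} Drop your derivation of $uT_ju^{-1}=T_j$ from $u^2=\prod_k T_k$. Commutativity of the $T_k$ only gives $[u^2,T_j]=0$, not $[u,T_j]=0$.

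\textbf{Index range in the first identity.} Your proposed reconciliation cannot work. The conventions $Y_{0,j}=X_{0,j}$ and $Y_{m-1,j}=\overline{X_{m-2,j}}$ concern the row index $i$. The product $\prod_j T_j$ runs over the column index $j$.
\begin{itemize}
\item The previous lemma gives $u^2$ as the scalar $\prod_{i,j}\theta^{-1}_{Y_{i,j}}$ taken over every column.
\item The column $j=0$ contributes the factor $\prod_i\theta^{-1}_{Y_{i,0}}$, which is nontrivial in general.
\item The paper's later rescaling $u\mapsto\eta^{mn/4}u$, $T_k\mapsto\eta^{m/2}T_k$ is consistent with $u^2=\prod_j T_j$ only if there are $n$ factors $T_j$.
\item The $j=0$ instance of the identity $T_jT_{j+1}u^{-1}T'_ju(T_jT_{j+1})^{-1}=uT'_ju^{-1}$, used in the proof of Proposition~\ref{prop:F'TA_rels}, also needs $T_0$ to appear in $u^2$.
\end{itemize}
So the identity should read $u^2=\prod_{j=0}^{n-1}T_j$; the lower limit $1$ in the statement is a misprint.

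\textbf{The $X$-strand contributions.} Once the even operator $T_{2i,j}$ is read as the twist $\theta^{-1}$ on the $Y_{i,j}$-strand, the first identity is immediate: $\prod_{i,j}T_{2i,j}=\prod_{i,j}\theta^{-1}_{Y_{i,j}}$. The $X$-strand half-twists you anticipate, and their cancellation, play no role. This reading of $T_{2i,j}$ is also what makes the second identity and \eqref{eq:BAT_rels_1} immediate.
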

\begin{proof}
The first two identities are straightforward. The third follows from the graphical calculus illustrated in Figure \ref{fig:uT_rel}, and the final identity is a direct consequence of the second.

\begin{figure}
    \centering
    \includegraphics[width=1\linewidth]{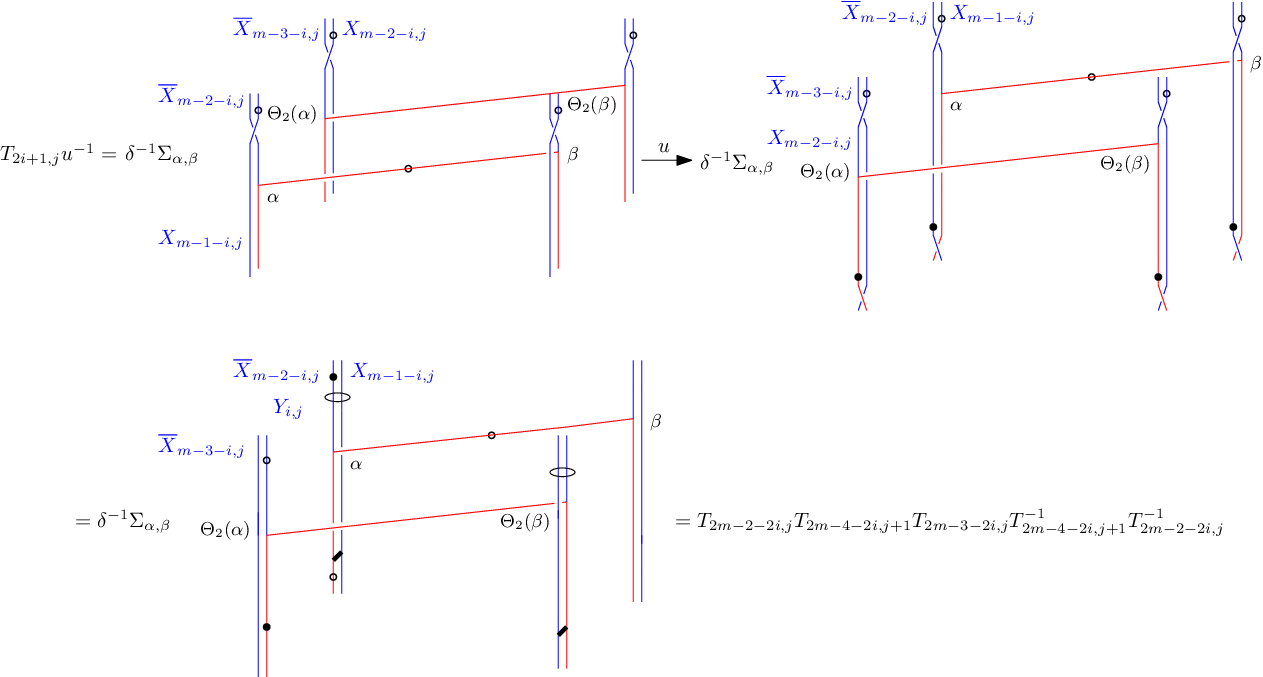}
    \caption{Relation between $u$ and $T_{2i+1,j}$}
    \label{fig:uT_rel}
\end{figure}
\end{proof}

\begin{defn}
We define the following two unitary operators,
\[
\AA_j:=\prod^{2m-2}_{i=0}T_{2m-2-i,j},\ \ \BB_{j}:=T_{0,j+1}\prod^{m-1}_{i=1}T_{2i-1,j}T_{2i,j+1}.
\] 
\end{defn}

The commutativity relations in Proposition \ref{prop:T_rels} give the following Lemma.
\begin{lem}\label{lem:ABTcomm_rels}
The following identities hold:
\begin{align}
[\AA_{j_1},T_{i,{j_2}}]&=0, \ \ (|j_1-j_2|\geq 2\ , \ j_2=j_1-1\ \text{and i even},\ j_1=j_2+1\ 
\text{and i odd})\\
[\BB_{j_1},T_{i,{j_2}}]&=0, \ \ (|j_1-j_2|\geq 2,\  j_2=j_1+1)\\
[\AA_{j_1},\AA_{j_2}]&=[\BB_{j_1},\BB_{j_2}]=0,\ (|j_1-j_2|\geq 2)\\
[\AA_{j_1},\BB_{j_2}]&=0.\ (|j_1-j_2|\geq 2,j_2=j_1+1)
\end{align}
\end{lem}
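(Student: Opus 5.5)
The plan is to reduce every commutation claim to the last line of Proposition~\ref{prop:T_rels}. By that line, $T_{i,j}$ and $T_{k,l}$ commute unless they form one of the listed ``adjacent'' pairs. Explicitly, the only non-commuting pairs are an odd-indexed $T_{2i+1,j}$ against an even-indexed $T_{k,l}$ with $k\in\{2i,2i+2\}$ (indices of the first coordinate read cyclically modulo $2m$, so that $T_{2m-1,j}$ is adjacent to $T_{2m-2,\cdot}$ and $T_{0,\cdot}$) and $l\in\{j,j+1\}$. In particular, two operators $T_{i,j},T_{k,l}$ commute whenever $|j-l|\ge 2$, or when $i,k$ have the same parity, or when the odd one sits in row $j$ and the even one in a row other than $j,j+1$. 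It is also worth recording which rows each product uses. By definition, $\AA_j=\prod_{i}T_{2m-2-i,j}$ involves only operators in row $j$, both parities. The factors of $\BB_j$ are $T_{0,j+1}$ and the $T_{2i,j+1}$ (even, row $j+1$) together with the $T_{2i-1,j}$ (odd, row $j$).

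First I will treat $\AA_{j_1}$. If $|j_1-j_2|\ge2$, each factor $T_{i',j_1}$ of $\AA_{j_1}$ commutes with $T_{i,j_2}$, hence so does the product. If $j_2=j_1-1$ and $i$ is even, the operator $T_{i,j_1-1}$ could only fail to commute with an odd factor in row $j_1-2$ or $j_1-1$; $\AA_{j_1}$ has none there. Its odd factors lie in row $j_1$, and $T_{i,j_1-1}$ is not among $T_{\cdot,j_1},T_{\cdot,j_1+1}$, the even partners of those odd factors. Its even factors commute with $T_{i,j_1-1}$ by parity. The case $j_1=j_2+1$ with $i$ odd is handled symmetrically: an odd $T_{i,j_2}$ only fails to commute with even operators in rows $j_2,j_2+1=j_1$. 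Here I must check exactly which case the statement intends, since as literally written this overlaps the previous row condition. I would read the indices so that the odd operator's partner rows exclude the rows of the even factors, and verify from the list in Proposition~\ref{prop:T_rels}. If this does not hold as written, I will adjust the reading to the one consistent with that list.

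For $\BB_{j_1}$ with $|j_1-j_2|\ge2$, each factor of $\BB_{j_1}$ lies in row $j_1$ or $j_1+1$. An obstruction would need an odd factor in row $j_1$ against an even $T_{i,j_2}$ with $j_2\in\{j_1,j_1+1\}$, or an odd $T_{i,j_2}$ against an even factor in row $j_1+1$ with $j_1+1\in\{j_2,j_2+1\}$. These reduce to $|j_1-j_2|\le1$, so they are excluded. The case $j_2=j_1+1$ requires a parity check. An even $T_{i,j_1+1}$ meets odd factors in row $j_1$, and these pairs are non-commuting according to the list. So here I would verify against the precise indexing of the relations, including the $2m-1$ case, and the handle-slide origin of the commutation with $T_{2m-1,j}$. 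I expect this index bookkeeping against the list in Proposition~\ref{prop:T_rels}, with its cyclic wrap-around, to be the main obstacle. I will do it by tabulating, for each factor, its adjacent operators.

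The mutual commutators then follow formally: if $A$ commutes with every factor of a product $B$, it commutes with $B$. So $[\AA_{j_1},\AA_{j_2}]=[\BB_{j_1},\BB_{j_2}]=0$ for $|j_1-j_2|\ge2$, and $[\AA_{j_1},\BB_{j_2}]=0$ for $|j_1-j_2|\ge2$, follow from the first two items applied factor by factor. For $j_2=j_1+1$, the factors of $\BB_{j_1+1}$ lie in rows $j_1+1$ and $j_1+2$. The row-$(j_1+2)$ factors commute with $\AA_{j_1}$ by the distance-two case. The even row-$(j_1+1)$ factors are not among these (the even factors of $\BB_{j_1+1}$ sit in row $j_1+2$), and the odd row-$(j_1+1)$ factors fall under the odd case of the first item.
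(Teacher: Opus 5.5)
\textbf{Verdict: same route as the paper, but two cases are left open.}

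Your strategy is the paper's own: there the lemma is read off, factor by factor, from the commutation relations of Proposition~\ref{prop:T_rels}. Everything you actually carry out is correct. That includes the adjacency table, the distance-$\geq 2$ cases, the case $j_2=j_1-1$ with $i$ even, and the step from ``commutes with every factor'' to ``commutes with the product''.

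The gap is that you leave open exactly the two cases where the factorwise argument breaks, and defer them to further tabulation with the wrap-around. That cannot close them. $T_{2m-1,\cdot}$ is not a factor of any $\AA_j$ or $\BB_j$, and its adjacencies are already in the list, so your obstruction analysis is final. It shows the factorwise argument breaks exactly in these cases:
\begin{itemize}
\item for $\AA_{j_1}$: $i$ even with $j_2\in\{j_1,j_1+1\}$, or $i$ odd with $j_2\in\{j_1-1,j_1\}$;
\item for $\BB_{j_1}$: $j_2\in\{j_1,j_1+1\}$.
\end{itemize}
The printed condition ``$j_1=j_2+1$ and $i$ odd'' is just $j_2=j_1-1$ again, and there $T_{2k+1,j_1-1}$ braids with the factors $T_{2k,j_1},T_{2k+2,j_1}$ of $\AA_{j_1}$. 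The printed condition ``$j_2=j_1+1$'' for $\BB$ sits inside the second obstruction set. Both are misprints, not cases awaiting more careful bookkeeping.

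You should state the corrected lemma explicitly:
\begin{itemize}
\item $[\AA_{j_1},T_{i,j_2}]=0$ for $|j_1-j_2|\geq 2$, for $j_2=j_1-1$ with $i$ even, and for $j_2=j_1+1$ with $i$ odd;
\item $[\BB_{j_1},T_{i,j_2}]=0$ for $|j_1-j_2|\geq 2$ and for $j_2=j_1-1$.
\end{itemize}
These are exactly what the paper uses. For example, in deriving \eqref{eq:TA_rels_1} from $FT_{2m-1,j}F^{-1}=\eta T_{2m-2,j}$, it moves $T_{2m-1,j}$ past $\AA_{j-1}$ and $T_{0,j}$ past $\AA_{j+1}$.

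At $j_2=j_1+1$, what is available for $\BB_j$ is the twisted relation $T_{j+1}\BB_j=\eta^{-m}\BB_jT'_j$ of Proposition~\ref{prop:TAB_rels}. Genuine commutation there would collapse this to $T_{j+1}=\eta^{-m}T'_j$, which nothing in the paper supports.

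Your final paragraph, treating the odd row-$(j_1+1)$ factors of $\BB_{j_1+1}$, already relies on the corrected odd case $j_2=j_1+1$, not the literal one you examined earlier. Once the corrected conditions are stated, your factor-by-factor argument is complete in every case.
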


\begin{lem}\label{lem:utAB_rels}
We have the following identities:
\begin{align}
uT^{-1}_j\AA_ju^{-1}&=\BB_jT^{-1}_{j+1}, \\ 
u^{-1}\AA_jT^{-1}_ju&=T^{-1}_{j+1}\BB_j
\end{align}
     
\end{lem}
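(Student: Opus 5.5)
Since both sides are products of the elementary operators $T_{i,j}$ conjugated by $u$, I will reduce the identities to the conjugation rules of Lemma \ref{lem:uT_rels} and the relations of Proposition \ref{prop:T_rels}, with no new graphical calculus. The second identity should follow from the first by applying $\Theta_2$. By Lemma \ref{lem:ThetaT_rel} and the relation $\Theta_2u=u^{-1}\Theta_2$, conjugation by $\Theta_2$ inverts $u$ and sends each $T_{i,j}$ to the inverse of a reflected $T_{i',j}$. Hence it turns $uT_j^{-1}\AA_ju^{-1}=\BB_jT_{j+1}^{-1}$ into an identity of the form $u^{-1}(\cdots)u=(\cdots)$. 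Taking inverses and relabelling the reflected indices should give exactly $u^{-1}\AA_jT_j^{-1}u=T^{-1}_{j+1}\BB_j$, once we check that the reflection $i\mapsto 2m-2-i$ (even) and $i\mapsto 2m-3-i$ (odd) carries the ordered word $\AA_j$ to the reversed word. So the main work is the first identity.

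For the first identity, I will expand $T_j^{-1}\AA_j$. Since the even $T_{2i,j}$ commute with one another, I can cancel $T_j^{-1}$ against the even letters of $\AA_j=T_{2m-2,j}T_{2m-3,j}\cdots T_{0,j}$. To do this I move each $T_{2i,j}^{-1}$ rightward through the odd letters using the braid relation in the form $T_{2i,j}^{-1}T_{2i+1,j}T_{2i,j}=T_{2i+1,j}T_{2i,j}T_{2i+1,j}^{-1}$, up to the scalar $\eta$ from \eqref{eq:braidrelation}. This rewrites $T_j^{-1}\AA_j$ as a word in the odd letters $T_{2i+1,j}$ conjugated by even letters. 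I then conjugate by $u$ letter by letter. Even letters go to even letters via $uT_{2i,j}u^{-1}=T_{2m-2-2i,j}$. Odd letters go, by \eqref{eq:uT_rels_2}, to $T_{2m-3-2i,j}$ conjugated by $T_{2m-2-2i,j}T_{2m-4-2i,j+1}$, which mixes rows $j$ and $j+1$. This mixing is exactly what produces $\BB_j=T_{0,j+1}\prod T_{2i-1,j}T_{2i,j+1}$.

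Next I simplify the resulting word. The conjugating factors $T_{2k,j}^{\pm1}$ and $T_{2k,j+1}^{\pm1}$ coming from neighbouring odd letters telescope, because the even letters in a fixed row commute and commute with non-adjacent odd letters (Lemma \ref{lem:ABTcomm_rels} and Proposition \ref{prop:T_rels}). After telescoping, a leftover block of row-$(j+1)$ even letters should assemble into $T_{j+1}^{-1}$ on the right. The surviving letters alternate $T_{2i-1,j}T_{2i,j+1}$, giving $\BB_j$. I will then check that the powers of $\eta$ cancel: each braid move used forward in the first step is used backward once in this step.

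I expect the main obstacle to be the index bookkeeping at the wraparound. The letters $T_{2m-1,j}$ and $T_{0,j}$ are adjacent through the second line of Proposition \ref{prop:T_rels}, and \eqref{eq:uT_rels_2} with $i=m-1$ involves $T_{-2,j+1}$, which must be read cyclically as $T_{2m-2,j+1}$. If the direct telescoping gets tangled there, I will instead verify the first identity inductively on $m$ by peeling off one odd letter at a time, or equivalently check $u(T_j^{-1}\AA_j)=(\BB_jT_{j+1}^{-1})u$ by pushing $u$ through one letter at a time.
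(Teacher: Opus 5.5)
Your plan for the first identity is the paper's argument. You split $T_j^{-1}\AA_j$ into blocks $T^{-1}_{2i,j}T_{2i+1,j}T_{2i,j}$ and conjugate each block by $u$ via Lemma \ref{lem:uT_rels}, which gives $T_{2m-4-2i,j+1}T_{2m-3-2i,j}T^{-1}_{2m-4-2i,j+1}$. You then reassemble into $\BB_jT^{-1}_{j+1}$ by commutation.

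Your first step should be simplified, though: no braid relation is involved. The letter $T^{-1}_{2k,j}$ commutes with every letter $T_{2m-2,j},\dots,T_{2k+2,j}$ of $\AA_j$ that precedes $T_{2k+1,j}$. Hence $T_j^{-1}\AA_j=\prod_{i=m-2}^{0}T^{-1}_{2i,j}T_{2i+1,j}T_{2i,j}$ holds exactly, and there is no $\eta$ to track. Moreover, the move $T^{-1}_{2i,j}T_{2i+1,j}T_{2i,j}\to T_{2i+1,j}T_{2i,j}T^{-1}_{2i+1,j}$ you quote would take you out of the odd-conjugated-by-even form that \eqref{eq:uT_rels_2} requires. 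The wraparound you anticipate also never arises: $T_{2m-1,j}$ is not a letter of $\AA_j$ or $\BB_j$, so only the cases $0\le i\le m-2$ of \eqref{eq:uT_rels_2} are used.

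For the second identity the paper gives no separate argument. Your $\Theta_2$ route is a genuine and efficient alternative to redoing the computation with $u^{-1}$, which would first require inverting \eqref{eq:uT_rels_2} to get formulas for $u^{-1}T_{2i+1,j}u$. It works as you predict. By Lemma \ref{lem:ThetaT_rel}, conjugation by $\Theta_2$ acts on every letter of $\AA_j$, $\BB_j$, $T_j$, $T_{j+1}$ by $T_{k,l}\mapsto T^{-1}_{2m-2-k,l}$ for $0\le k\le 2m-2$. It therefore sends $\AA_j\mapsto\AA_j^{-1}$, $\BB_j\mapsto\BB_j^{-1}$, $T_j\mapsto T_j^{-1}$ and $T_{j+1}\mapsto T_{j+1}^{-1}$, and $\Theta_2u=u^{-1}\Theta_2$ gives $u\mapsto u^{-1}$. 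The first identity thus becomes $u^{-1}T_j\AA_j^{-1}u=\BB_j^{-1}T_{j+1}$, whose inverse is exactly $u^{-1}\AA_jT_j^{-1}u=T^{-1}_{j+1}\BB_j$. Possible antilinearity of $\Theta_2$ is harmless here, since conjugating an operator identity by an antilinear bijection still preserves products and inverses.
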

\begin{proof}
From Lemma \ref{lem:uT_rels}, we have: 
\[
\begin{aligned}
&T^{-1}_{2m-2-2i,j}uT_{2i+1,j}u^{-1}T_{2m-2-2i,j}=T_{2m-4-2i,j+1}T_{2m-3-2i,j}T^{-1}_{2m-4-2i,j+1},\\
\implies &uT^{-1}_{2i,j}T_{2i+1,j}T_{2i,j}u^{-1}=T_{2m-4-2i,j+1}T_{2m-3-2i,j}T^{-1}_{2m-4-2i,j+1}.
\end{aligned}
\]
Therefore, together with Lemma \ref{lem:ABTcomm_rels}, we have:
\[
\begin{aligned}
uT^{-1}_j\AA_ju^{-1}&=uT^{-1}_{2m-2,j}T_{2m-2,j}\prod_{i=m-2}^{0}T^{-1}_{2i,j}T_{2i+1,j}T_{2i,j}u^{-1}\\
&=\prod_{i=m-2}^{0}T_{2m-4-2i,j+1}T_{2m-3-2i,j}T^{-1}_{2m-4-2i,j+1}\\
&=(\prod_{i=0}^{m-2}T_{2i,j+1}T_{2i+1,j}T^{-1}_{2i,j+1})T_{2m-2,j+1}T^{-1}_{2m-2,j+1}\\
&=\BB_jT^{-1}_{j+1}     
\end{aligned}
\]
\end{proof}

\begin{prop}\label{prop:ABC_rels}
Let $C_{j,j+1}:=\Id\otimes(\boxtimes^{m-1}_{i=0}c^{-1}_{Y_{i,j},Y_{i,j+1}})\otimes \Id$ denote the braiding operator in $\CC^{\boxtimes m}$, the following equalities hold:
\begin{align}
\BB_j\AA_j&=T_{j+1}C_{j,j+1}=C_{j,j+1}T_j\\
\AA_j\BB_j&=T_{j}C_{j,j+1}=C_{j,j+1}T_{j+1}
\end{align}   
\end{prop}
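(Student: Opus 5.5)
The plan has two parts. First I will prove one of the four equalities, say $\BB_j\AA_j=T_{j+1}C_{j,j+1}$, by direct graphical calculus. Then I will derive the other three from it using the algebraic relations already available: Lemma \ref{lem:uT_rels}, Lemma \ref{lem:utAB_rels}, and the $\Theta_2$-conjugation rules of Lemma \ref{lem:ThetaT_rel}.

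\textbf{Step 1: the direct computation.} I expand $\BB_j\AA_j$ into the ordered product
\[
T_{0,j+1}\prod_{i=1}^{m-1}T_{2i-1,j}T_{2i,j+1}\prod_{i=0}^{2m-2}T_{2m-2-i,j}
\]
and apply it to a basis vector of $ONB(Conf(\CC)_{m,n})$, drawn as in Figure \ref{fig:cfbasis}.

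Each even operator $T_{2i,\cdot}$ acts locally as a twist on the strand $Y_{i,\cdot}$. Each odd operator $T_{2i+1,j}$ inserts a Kirby-coloured (red) loop encircling the strands between columns $j$ and $j+1$.

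I will use the cutting property \eqref{eq:cutting_identity} to merge consecutive red loops, exactly as in Figure \ref{fig:Proofbraidrel1}. I then use the handle slide property to slide the strands $Y_{i,j}$ across the loops one level at a time. The $2m-1$ factors of $\AA_j$ followed by the $2m-1$ factors of $\BB_j$ should telescope level by level. What remains is, on each level $i$, a full twist of $Y_{i,j}$ around $Y_{i,j+1}$ composed with twists on the $j+1$ strands.

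Using the twist property $\theta_{X\otimes Y}=c_{Y,X}c_{X,Y}\theta_X\otimes\theta_Y$, I rewrite each such level as $c^{-1}_{Y_{i,j},Y_{i,j+1}}$ followed by the twist that defines $T_{2i,j+1}$. This gives $T_{j+1}C_{j,j+1}$.

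The scalar factors have to be tracked. In the braid relation \eqref{eq:braidrelation} factors of $\eta$ appear, so here I must check that the powers of $\eta$ coming from each loop removal cancel; otherwise the statement should be read projectively. Since the $a_i$ and $b_{i,j}$ lie in spaces $\Hom(\mathbbm{1},\cdots)$, moving twists of $Y_{i,j}$ across them only changes the braiding by naturality.

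\textbf{Step 2: the second equality $T_{j+1}C_{j,j+1}=C_{j,j+1}T_j$.} This is naturality of the braiding: conjugating the twist on strand $j$ by the crossing $c^{-1}_{Y_{i,j},Y_{i,j+1}}$ moves it to strand $j+1$.

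\textbf{Step 3: the identity $\AA_j\BB_j=T_jC_{j,j+1}$.} I take the identity of Step 1 and conjugate it by $u$. Lemma \ref{lem:utAB_rels} gives $u\AA_ju^{-1}=\BB_jT^{-1}_{j+1}uT_ju^{-1}$, and Lemma \ref{lem:uT_rels} gives $uT_ju^{-1}=T_j$. I also need how $u$ conjugates $\BB_j$, which comes from the second identity of Lemma \ref{lem:utAB_rels}. Since $u$ commutes with $C_{j,j+1}$ (both are built from braidings level by level), chaining these relations turns $\BB_j\AA_j=C_{j,j+1}T_j$ into $\AA_j\BB_j=C_{j,j+1}T_{j+1}$.

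If this algebraic route stalls, the fallback is to redo the Step 1 computation with the factors taken in the reverse order.

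\textbf{Main obstacle.} The hard part is Step 1. It requires keeping track of the handle slides across all $m$ levels simultaneously, including the wrap-around loop $T_{2m-1,j}$ that connects level $m-1$ back to level $0$. For that loop I expect to need the same argument used for its commuting relations in Proposition \ref{prop:T_rels}: a handle slide over the vertical red circles.
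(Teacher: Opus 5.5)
Your Step 1 follows the paper's approach. The paper proves $\BB_j\AA_j=T_{j+1}C_{j,j+1}$ by direct graphical calculus: cutting property, the procedure of \cite[Prop.~4.17]{LR24} on the first layer, handle slides, the twist property, and a final handle slide plus cutting. For $\AA_j\BB_j$ it only says ``similar argument'', meaning a second computation of the same kind.

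Your Step 3 is a genuinely different route for the second identity: conjugation by $u$. It uses only the last identity of Lemma~\ref{lem:uT_rels}, Lemma~\ref{lem:utAB_rels} (both proved without this proposition, so there is no circularity), and the commutation $uC_{j,j+1}u^{-1}=C_{j,j+1}$. What this buys you is that the only new verification is a naturality check with no red loops. However, that commutation is not among the established relations, and you must check it against the definition of $u$; it is of the same type as the paper's $[\Delta_n^{\boxtimes m},u]=0$. It is not a side remark: given the first identity and Lemma~\ref{lem:utAB_rels}, it is \emph{equivalent} to the second identity, so it carries all the remaining content.

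As written, your chaining step fails. From $uT_j^{-1}\AA_ju^{-1}=\BB_jT_{j+1}^{-1}$ you get $u\AA_ju^{-1}=T_j\BB_jT_{j+1}^{-1}$, with $T_j=uT_ju^{-1}$ on the \emph{left}, not the right. $T_j$ need not commute with $\BB_j$, because the factors $T_{2i-1,j}$ of $\BB_j$ braid with $T_{2i-2,j}$ and $T_{2i,j}$. With the correct form, and with $u\BB_ju^{-1}=T_{j+1}\AA_jT_j^{-1}$ from the second identity of the lemma, the conjugate telescopes:
\[
T_{j+1}\AA_j\BB_jT_{j+1}^{-1}=u\BB_j\AA_ju^{-1}=u\,C_{j,j+1}T_j\,u^{-1}=C_{j,j+1}T_j .
\]
Hence $\AA_j\BB_j=T_{j+1}^{-1}C_{j,j+1}T_jT_{j+1}=C_{j,j+1}T_{j+1}$, which equals $T_jC_{j,j+1}$ by the column-exchange property of Step 2.

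In Step 1, the endpoint you predict cannot be right. A full twist $c_{Y_{i,j+1},Y_{i,j}}c_{Y_{i,j},Y_{i,j+1}}$ is an endomorphism of $Y_{i,j}\otimes Y_{i,j+1}$, whereas $T_{j+1}C_{j,j+1}$ exchanges the two columns. The twist property only trades double braidings for twists, so it can never produce the single crossing $c^{-1}_{Y_{i,j},Y_{i,j+1}}$ on each level. That crossing has to come from the handle slides and cutting that realize the braiding of double strings (the procedure of \cite[Prop.~4.17]{LR24} that the paper invokes; cf.\ Figure~\ref{fig:connect_two_braidings}). The twist property can only absorb leftover twists into $T_{j+1}$.

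Three smaller points. $T_{2m-1,j}$ is not a factor of $\AA_j$ or $\BB_j$, whose indices run over $0,\dots,2m-2$, so the wrap-around loop you flag as the main obstacle never appears. Finally, the identity is exact and is used exactly later (e.g.\ Corollary~\ref{Cor:ABTrho_1_rels}), so a projective reading is not an acceptable fallback.
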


\begin{proof}
We prove the first identity, noting that the proof of the second follows by a similar argument. The proof relies on the graphical calculus detailed in Figures \ref{fig:ABC_rel_1}, \ref{fig:ABC_rel_2}, \ref{fig:ABC_rel_3}. Note we omit the scalar factor $\prod_{i}\sqrt{X_{i,j}X_{i,{j+1}}}$ for simplicity.

In Figure \ref{fig:ABC_rel_1}, we apply the cutting property followed by the procedure established in \cite[Prop.~4.17]{LR24} to arrive at the desired form in the first layer. Next, we turn to the remaining two layers (Figure \ref{fig:ABC_rel_2}): we first isolate the dotted red loops in the upper-left and lower-right corners via handle slides, then apply the twist property to reach the final configuration. Finally, as shown in Figure \ref{fig:ABC_rel_3}, we perform a handle slide on one of the blue strands, followed by a cutting property application with respect to two solid red loops and one dotted red loop, to obtain the final result.

\begin{figure}
    \centering
    \includegraphics[width=0.65\linewidth]{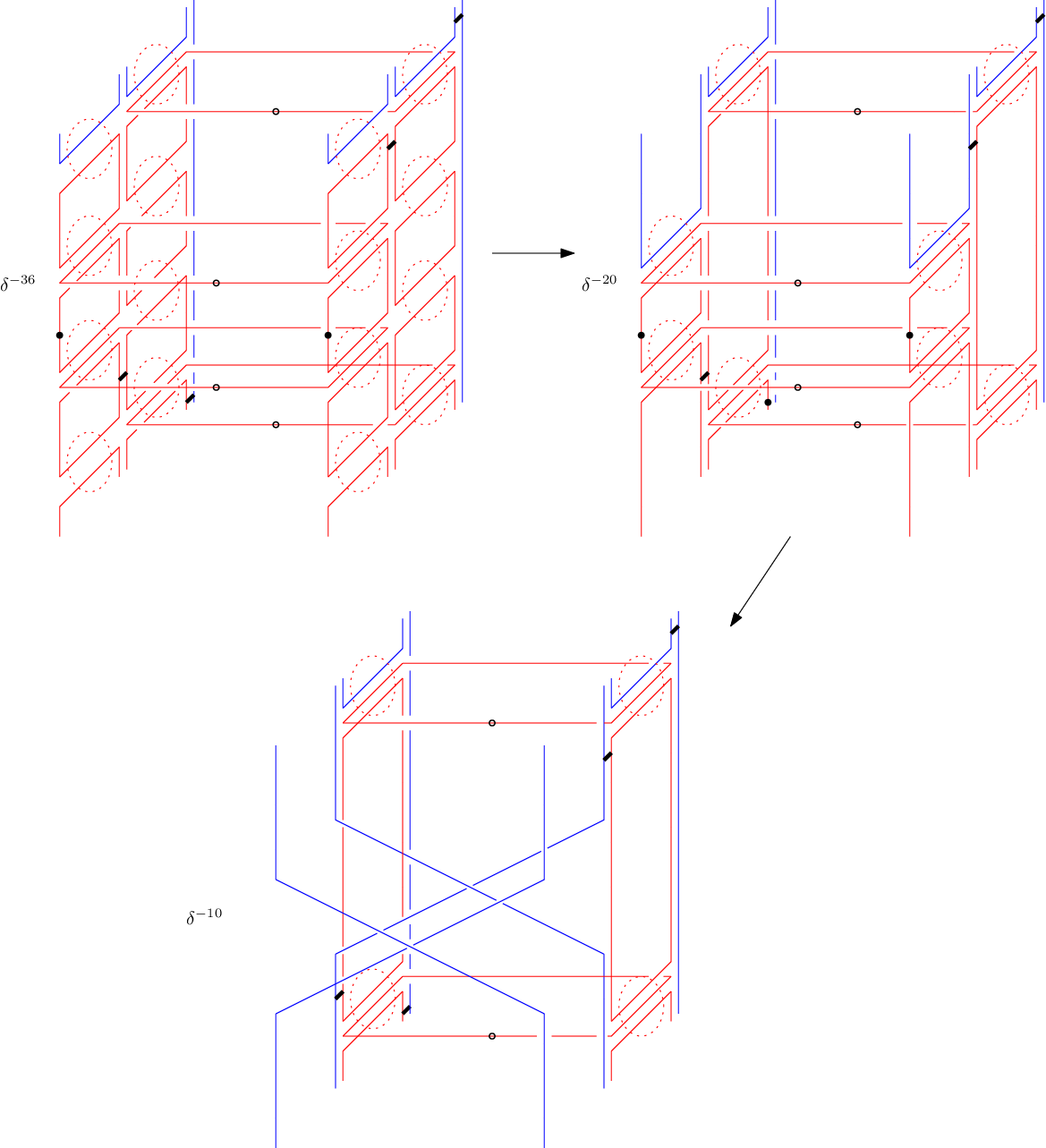}
    \caption{Proof of Proposition \ref{prop:ABC_rels}}
    \label{fig:ABC_rel_1}
\end{figure}

\begin{figure}
    \centering
    \includegraphics[width=0.65\linewidth]{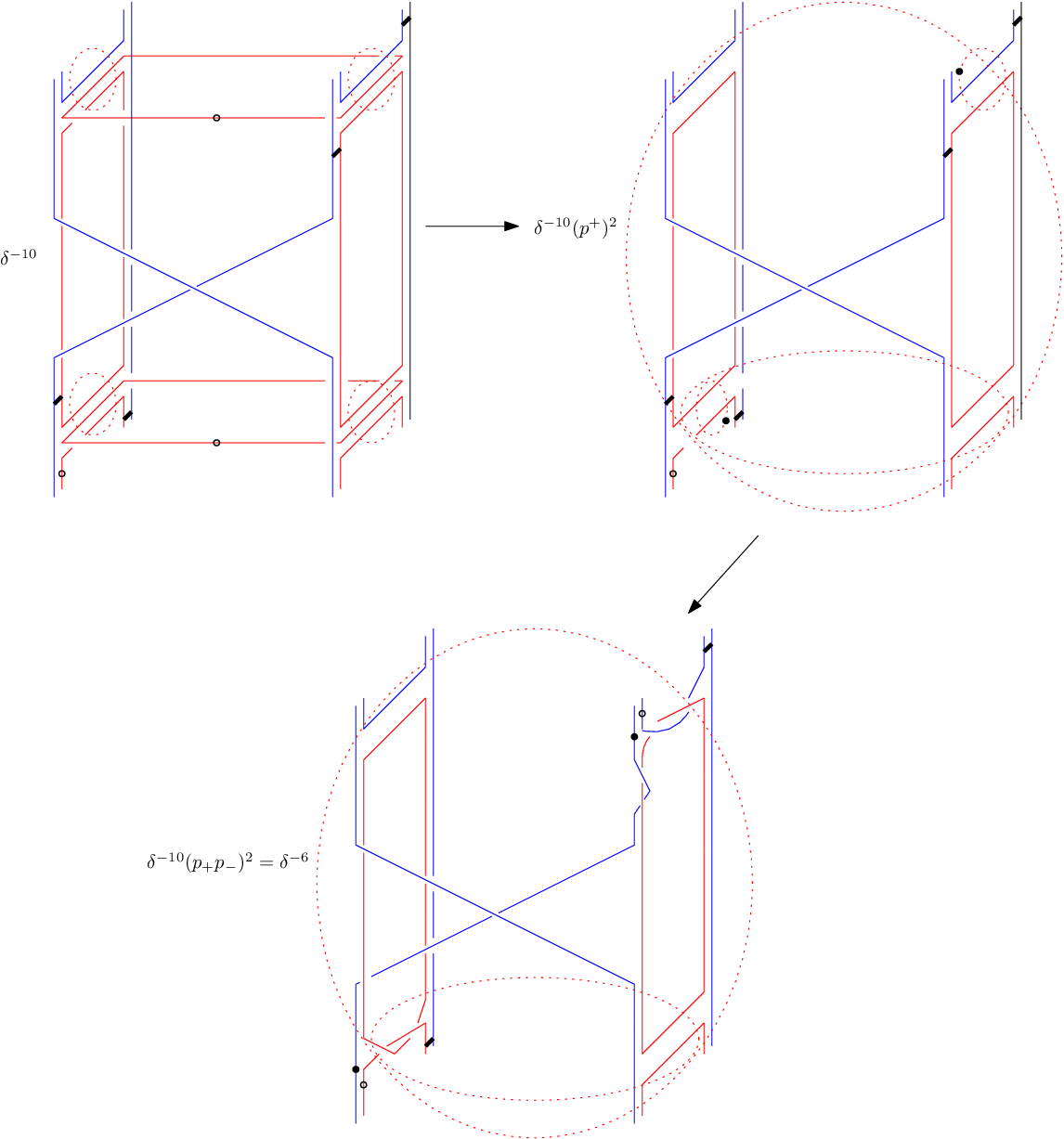}
    \caption{Proof of Proposition \ref{prop:ABC_rels}}
    \label{fig:ABC_rel_2}
\end{figure}

\begin{figure}
    \centering
    \includegraphics[width=0.65\linewidth]{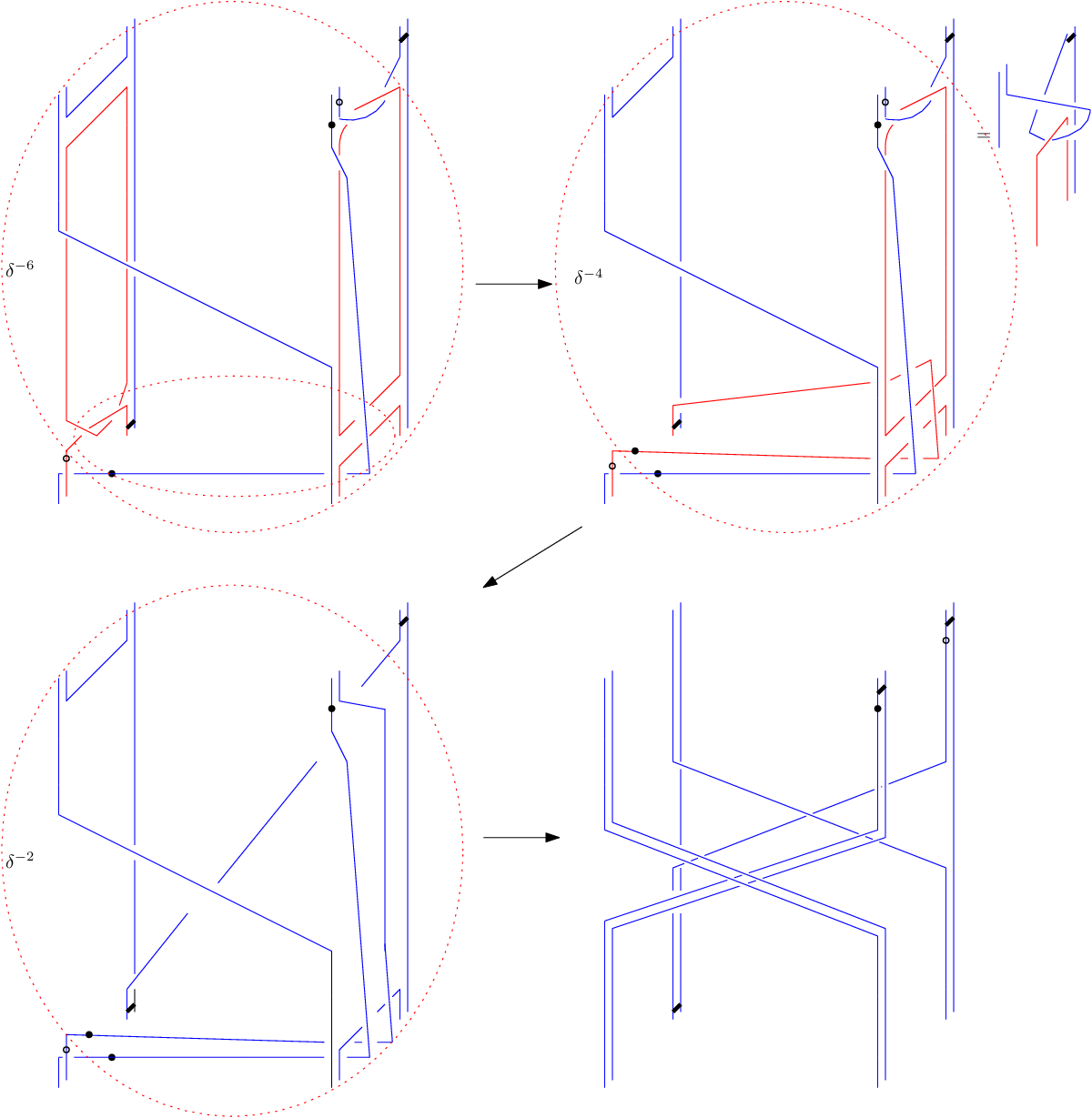}
    \caption{Proof of Proposition \ref{prop:ABC_rels}}
    \label{fig:ABC_rel_3}
\end{figure}
\end{proof}

As a direct consequence of the previous Proposition, we have the following identities:
\begin{align}
\BB_j\AA_jT^{\pm1}_{2i,j}=T^{\pm1}_{2i,j+1}\BB_j\AA_j,\ &\BB_j\AA_jT^{\pm1}_{2i,j+1}=T^{\pm1}_{2i,j}\BB_j\AA_j\label{eq:BAT_rels_1}\\ 
\BB_j\AA_jT^{-1}_{j}&=T^{-1}_j\AA_j\BB_j\label{eq:BAT_rels_2}
\end{align}
\begin{Cor}\label{Cor:ABTrho_1_rels}
We have the following equalities:
\begin{align}
(\prod^{n-2}_{j=0}\BB_{n-1-j}\AA_{n-1-j}T^{-1}_{j})\BB_{0}\AA_{0}&=\rho_1=\BB^{-1}_{n-1}\AA^{-1}_{n-1}\prod_{j=0}^{n-2}\BB^{-1}_{n-2-j}\AA^{-1}_{n-2-j}T_{n-2-j}, \\   
(\prod^{n-2}_{j=0}T^{-1}_J\AA_{j}\BB_{j})\AA_{n-1}\BB_{n-1}&=\rho^{-1}_1=\AA^{-1}_{0}\BB^{-1}_{0}\prod^{n-1}_{j=1}T_{j}\AA^{-1}_{j}\BB^{-1}_{j}.
\end{align}
\end{Cor}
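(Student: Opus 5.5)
The plan is to reduce everything to Proposition \ref{prop:ABC_rels}, which writes each product $\BB_j\AA_j$ (resp.\ $\AA_j\BB_j$) as a diagonal twist operator times the elementary crossing $C_{j,j+1}$ of the $j$-th and $(j+1)$-th tensor columns. Then $\rho_1$, the clockwise rotation by $2\pi/n$ about the $Y$-direction, will appear as a product of adjacent transpositions $C_{j,j+1}$ that cyclically shifts the $n$ columns, up to twist factors.

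\textbf{Step 1: the first expression for $\rho_1$.} Write $\BB_{n-1-j}\AA_{n-1-j}=C_{n-1-j,n-j}T_{n-1-j}$ using the first identity of Proposition \ref{prop:ABC_rels}. (For $j=0$ the index $n$ is read mod $n$, which is where the cyclic structure of $\rho_1$ enters.) The $T^{-1}_j$ factors have to be moved past the $C$'s. For this I use \eqref{eq:BAT_rels_1}: conjugation by $\BB_k\AA_k$ swaps $T_{2i,k}$ and $T_{2i,k+1}$ and fixes the $T_{\cdot,l}$ with $l\neq k,k+1$, the latter by Lemma \ref{lem:ABTcomm_rels}. Pushing every $T$ and $T^{-1}$ to one end, the twist exponents attached to each column should telescope. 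The surviving product $C_{n-1,0}C_{n-2,n-1}\cdots C_{1,2}C_{0,1}$ composed with $\BB_0\AA_0$ is the cyclic permutation braid on the $n$ columns. Geometrically, this is the diagram realizing $\rho_1$: one column is carried over (or under) all the others, and the leftover full twist on that strand is exactly cancelled by the $T_j^{-1}$ factors. To confirm this I will identify the cyclic braid with the rotation tangle graphically, using the twist property $\theta_{X\otimes Y}=c_{Y,X}c_{X,Y}\theta_X\otimes\theta_Y$.

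\textbf{Step 2: the inverse formulas.} The second expression for $\rho_1$ is obtained by inverting the first identity and reindexing, using \eqref{eq:BAT_rels_2}: $\BB_j\AA_jT_j^{-1}=T_j^{-1}\AA_j\BB_j$. This lets me convert between $\BB\AA T^{-1}$ and $T^{-1}\AA\BB$ blocks. Concretely, I will show $\rho_1^{-1}=(\prod_{j}T^{-1}_j\AA_j\BB_j)\AA_{n-1}\BB_{n-1}$ by the same telescoping argument with the second identity of Proposition \ref{prop:ABC_rels}, $\AA_j\BB_j=T_jC_{j,j+1}$. The remaining equalities then follow by taking inverses of the two established formulas and using that all operators involved are unitary.

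\textbf{Main obstacle.} The hardest part is the bookkeeping of the $T$ factors in Step 1. Each $C$ permutes the column indices, so each $T_j^{-1}$ must be tracked as it is conjugated to its final position, and one must check that the net twist is exactly what the rotation requires, with no residual factor such as $T_0$ or a scalar $\eta^k$. I would first verify the cases $n=2$ and $n=3$ by hand to fix conventions, such as whether $C_{n-1,0}$ wraps around and the direction of rotation, and then run an induction on $n$ that peels off one $\BB\AA T^{-1}$ block at a time.
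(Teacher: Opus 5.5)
Your overall plan matches the paper's: realize $\rho_1$ as the braid that carries one column past all the others, corrected by a single twist, and then trade each crossing for $\BB_j\AA_j$ or $\AA_j\BB_j$ via Proposition~\ref{prop:ABC_rels}. Step~1, however, is built on the wrong word. You took the printed subscripts literally, and they are off by one.

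Proposition~\ref{prop:ABC_rels} is only available for $0\le j\le n-2$. The reason is that $C_{j,j+1}$ braids adjacent tensor factors of $\bigotimes_{j=0}^{n-1}Y_{i,j}$, and $(n-1,0)$ is not an adjacent pair. So there is no operator $C_{n-1,0}$, and no formula for $\BB_{n-1}\AA_{n-1}$ to use.

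Moreover, your word contains $C_{n-1,0}$ together with all of $C_{n-2,n-1},\dots,C_{0,1}$, so it has at least $n$ crossings. The cyclic shift of the $n$ columns is $C_{n-2,n-1}\cdots C_{0,1}$, with exactly $n-1$, and no bookkeeping of $T$'s turns your word into $\rho_1$. What the paper's argument actually establishes involves only $\AA_k,\BB_k$ with $0\le k\le n-2$, and each $T^{-1}$ carries the index of its own block:
\[
\rho_1=\BB_{n-2}\AA_{n-2}T^{-1}_{n-2}\cdots\BB_1\AA_1T^{-1}_1\,\BB_0\AA_0,\qquad
\rho_1^{-1}=T^{-1}_0\AA_0\BB_0\cdots T^{-1}_{n-3}\AA_{n-3}\BB_{n-3}\,\AA_{n-2}\BB_{n-2}.
\]
The other two expressions are the inverses of these two words.

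The missing ingredient is the geometric identity the paper's proof rests on, which you defer ("I will identify the cyclic braid with the rotation tangle graphically") but applied to the wrong braid:
\[
\rho_1=C_{n-2,n-1}\cdots C_{1,2}C_{0,1}\,T_0=T^{-1}_{n-1}\,C^{-1}_{n-2,n-1}\cdots C^{-1}_{1,2}C^{-1}_{0,1}.
\]
This says: carry column $0$ over, respectively under, the other columns, with one twist on the moved column. The paper also records the equivalent forms with $T_{n-1}$ on the left or $T_0^{-1}$ on the right.

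With this identity in hand there is nothing to telescope, and \eqref{eq:BAT_rels_1} is not needed.
\begin{itemize}
\item Substituting $C_{j,j+1}=\BB_j\AA_jT^{-1}_j$ for $j\ge 1$ and $C_{0,1}T_0=\BB_0\AA_0$ into the first presentation gives the first formula.
\item Substituting $C_{j,j+1}=T^{-1}_j\AA_j\BB_j$ for $j\le n-3$ and $C_{n-2,n-1}T_{n-1}=\AA_{n-2}\BB_{n-2}$ into $\rho_1^{-1}=C_{0,1}\cdots C_{n-2,n-1}T_{n-1}$ gives the second.
\end{itemize}

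The $\AA\BB$ formula for $\rho_1^{-1}$ (equivalently the $\BB^{-1}\AA^{-1}$ formula for $\rho_1$) genuinely needs the under-crossing presentation. It cannot come from inverting the first formula and applying \eqref{eq:BAT_rels_2}, as your Step~2 suggests. Inversion only yields the $\AA^{-1}\BB^{-1}$ word for $\rho_1^{-1}$, and \eqref{eq:BAT_rels_2} merely says that both sides equal $C_{j,j+1}$. Your closing remark is correct: the two remaining expressions are the inverses of the two above.
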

\begin{proof}
 It follows from the equality:
 \[
 \begin{aligned}
  \rho_1=&(\prod^{0}_{j=n-2}C_{j,j+1})T_0=(\prod^{0}_{j=n-2}C^{-1}_{j,j+1})T^{-1}_0,\\
  =&T_{n-1}(\prod^{0}_{j=n-2}C_{j,j+1})=T^{-1}_{n-1}(\prod^{0}_{j=n-2}C^{-1}_{j,j+1}).
 \end{aligned}
 \]
\end{proof}
 In the next theorem, We demonstrate that the Fourier transform, as defined in \cite{LX19} via the pairing $LL$, can be decomposed into a product of the unitary operators introduced in this section.

\begin{Thm}\label{thm:Fourier_transform}
The Fourier transform is given by the following product of unitary operators:
\[
F=u^{-1}\prod^{n-2}_{j=0} T_{n-2-j}\AA^{-1}_{n-2-j},
\]
we have, for $x,x'\in ONB(Conf(\CC)_{m,n})$,
\[
<Fx, x'>=LL(x,\Theta_2(x')),
\]
or equivalently,
\[
Fx=\sum_{x'\in ONB(Conf(\CC)_{m,n})}LL(x,\Theta_2(x'))x'.
\]
\end{Thm}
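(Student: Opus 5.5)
The strategy is to compute both sides on the orthonormal basis $ONB(Conf(\CC)_{m,n})$ and compare them diagrammatically, not to work with $LL$ abstractly. Recall from \cite{LX19} that $LL(x,\Theta_2(x'))$ is the evaluation of a closed diagram. In it, the $m$ horizontal layers of $x$ (the $\mathbf{a}_i$) are glued to the reflected layers of $x'$, and the $\mathbf{b}_j$ directions are linked through Kirby-coloured loops. So it suffices to show that $\langle u^{-1}\prod_{j} T_{n-2-j}\AA^{-1}_{n-2-j}x, x'\rangle$ evaluates to the same closed diagram. Since every factor is unitary, $F$ is unitary, and the equivalent form $Fx=\sum_{x'} LL(x,\Theta_2(x'))x'$ follows at once from expanding in the orthonormal basis.

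\textbf{Step 1: a single column.} First treat the block $T_{j}\AA^{-1}_{j}$, where $\AA_j^{-1}=\prod_i T^{-1}_{i,j}$ runs through the chain $T_{0,j},T_{1,j},\dots,T_{2m-2,j}$ in reverse. Using the definitions in Figure~\ref{fig:ElementaryBraidings}, I will show by induction on the number of factors that this product inserts a single Kirby-coloured loop encircling the $j$-th column of strands $X_{\cdot,j}$, together with twists. The cutting property \eqref{eq:cutting_identity} and handle slides absorb the intermediate red loops. Composing with $T_j=\prod_i T_{2i,j}$ then cancels the residual twists on the $Y_{i,j}$ strands. The resulting picture should match one "$L$-move" of the pairing $LL$ in column $j$.

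\textbf{Step 2: assemble the columns and apply $u^{-1}$.} Taking the ordered product over $j=n-2,\dots,0$, Lemma~\ref{lem:ABTcomm_rels} lets me commute non-adjacent blocks past one another, so each column's loop is created independently. The $(n-1)$-st column is not acted on directly. Its contribution is forced by the cutting property (equivalently, by the relation $u^2=\prod_{j\ge1}T_j$ and Corollary~\ref{Cor:ABTrho_1_rels}, which expresses $\rho_1$ through the $\BB\AA$ products). Finally, $u^{-1}$ performs the half-twist reflection. By Figure~\ref{fig:u_def2} and the relation $\Theta_2 u=u^{-1}\Theta_2$, this reflection turns the top layer into the $\Theta_2$-reflected configuration, producing exactly the $\Theta_2(x')$ pattern in the pairing.

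\textbf{Step 3: the coefficient.} The remaining issue is the scalar: the powers of $\eta$ produced by each $T_{i,j}$ (compare \eqref{eq:braidrelation}), the factors $\sqrt{d_{Y_{i,j}}}$ in the orthonormal basis normalization, and the twists $\theta_{Y_{i,j}}$. I expect this step to be the main obstacle: the column-by-column simplification, and especially getting the twists and the global factor $\delta^{\pm}$ to cancel exactly against the normalization of $LL$ in \cite{LX19}. To control it, I would first verify the statement for $n=2$, where it reduces to $F=u^{-1}T_0\AA_0^{-1}$, checked by a direct diagram computation analogous to \cite[Prop.~4.17]{LR24}. I would then induct on $n$, using \eqref{eq:BAT_rels_1} and \eqref{eq:BAT_rels_2} to move the new column's block past the old ones without introducing extra scalars.
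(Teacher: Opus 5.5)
Your overall plan is the paper's: evaluate $\langle Fx,x'\rangle$ as a closed diagram and recognize the $LL$ diagram. But the way you organize the computation does not work, and the computation itself is only asserted.

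\textbf{The column decoupling in Steps 1--2 is false.} By Proposition \ref{prop:T_rels}, $T_{2i+1,j}$ satisfies a braid relation with $T_{2i,j+1}$ and $T_{2i+2,j+1}$, the twists on the column-$(j+1)$ labels $Y_{i,j+1},Y_{i+1,j+1}$. So it cannot commute with them, and its red loop necessarily involves the strands of column $j+1$. Consequences:
\begin{itemize}
\item The block $T_j\AA_j^{-1}$ is not supported on column $j$, so it cannot reduce to ``a loop around the $X_{\cdot,j}$ plus twists''.
\item For the same reason, column $n-1$ is acted on, through $\AA_{n-2}$.
\item Consecutive blocks of $F$ share strands and do not commute; the paper later shows they satisfy a braid relation (Corollary \ref{Cor:ABbraid_rels}). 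Since $\prod_j T_{n-2-j}\AA^{-1}_{n-2-j}$ contains only consecutive blocks, Lemma \ref{lem:ABTcomm_rels} gives you no decoupling.
\item Appealing to $u^2=\prod T_j$ and Corollary \ref{Cor:ABTrho_1_rels} for the last column is not an argument. These are operator identities and determine no matrix coefficient of $F$.
\item $\Theta_2$ is the antilinear reflection through which the inner product with $x'$ is realized diagrammatically. The relation $\Theta_2u=u^{-1}\Theta_2$ cannot turn $u^{-1}$ into it. Here $u^{-1}$ only contributes the layer reversal and twists, which must be resolved at the end.
\end{itemize}

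\textbf{The induction on $n$ in Step 3 has no inductive step.} The relations \eqref{eq:BAT_rels_1} and \eqref{eq:BAT_rels_2} hold inside a fixed $Conf(\CC)_{m,n}$. Nothing in your plan relates $F$ or $LL$ on $Conf(\CC)_{m,n+1}$ to those on $Conf(\CC)_{m,n}$. The normalization $\delta^{(m-1)(1-n)}$ changes with $n$, and checking on the image of an inclusion would not determine $F$ on the larger space.

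\textbf{What is missing is a global simplification, which is what the paper does.}
\begin{enumerate}
\item It draws $u^{-1}\prod_j T_{n-2-j}\AA^{-1}_{n-2-j}$ as a single diagram, with all its dotted red loops and the factor $\mu^{-\#\text{dotted red loops}}\delta^{(m-1)(1-n)}\prod\sqrt{d_{Y_{i,j}}}\prod\sqrt{d_{X_{i,j}}}$.
\item It simplifies the shaded regions, handle-slides the purple strands over the dotted red loops, and resolves all red loops at once by the cutting property. This removes the $\mu^{-\#}$ factors, so the scalar you were worried about comes out automatically as the $LL$ normalization.
\item It then bends the diagram and resolves the twists to obtain $LL(x,\Theta_2(x'))$.
\end{enumerate}
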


\begin{proof}
 We begin by considering the graphical interpretation of the product on the right-hand side, as illustrated in Figure \ref{fig:Fourier_pairing_proof1}, with a factor of:
 \[\mu^{-\#\text{dotted red loops}}\delta^{(m-1)(1-n)}\prod_{1\leq i\leq m-2,j}\sqrt{d_{Y_{i,j}}}\prod_{ i,j}\sqrt{d_{X_{i,j}}}.
 \] 
 By performing graphic calculus in the shaded region,
 \begin{figure}
     \centering
     \includegraphics[width=0.8\linewidth]{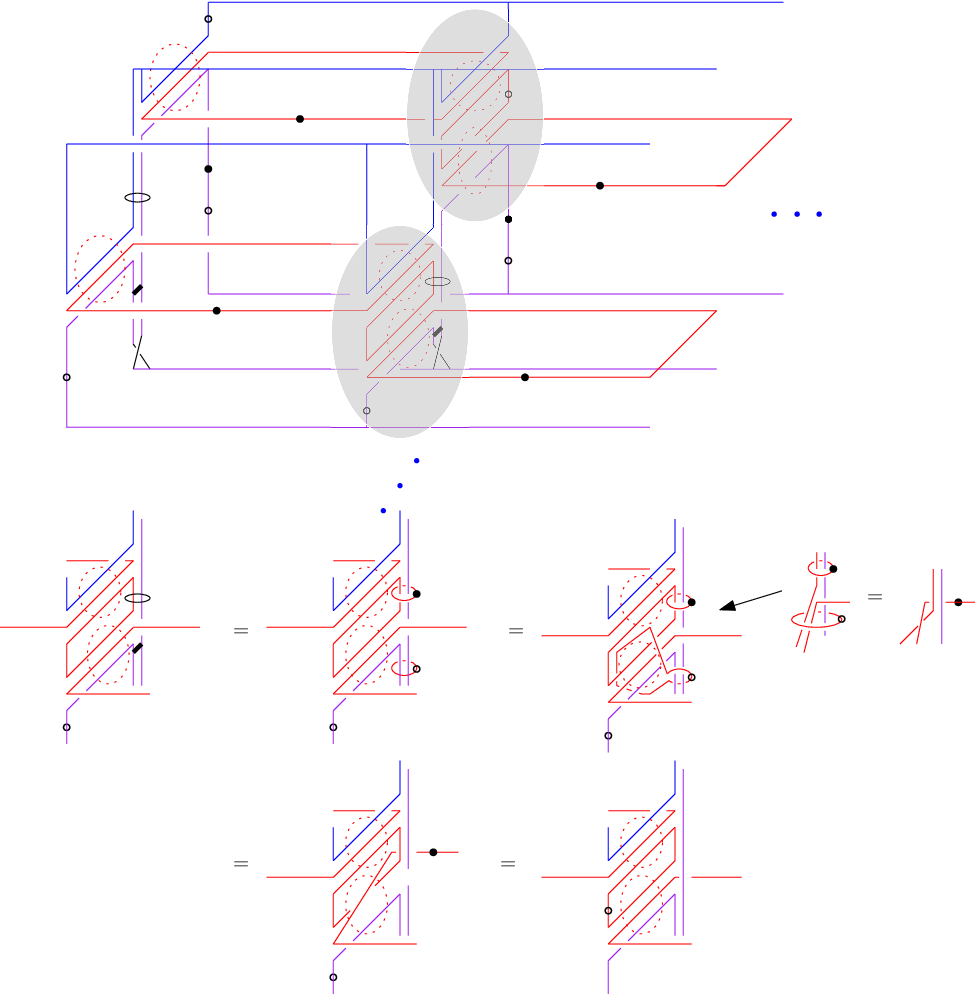}
     \caption{Graphic description of the Fourier transform} \label{fig:Fourier_pairing_proof1}
 \end{figure}
we obtain a more unified and streamlined diagram, shown in the first graph of Figure \ref{fig:Fourier_pairing_proof2}. 
\begin{figure}
     \centering
     \includegraphics[width=0.7\linewidth]{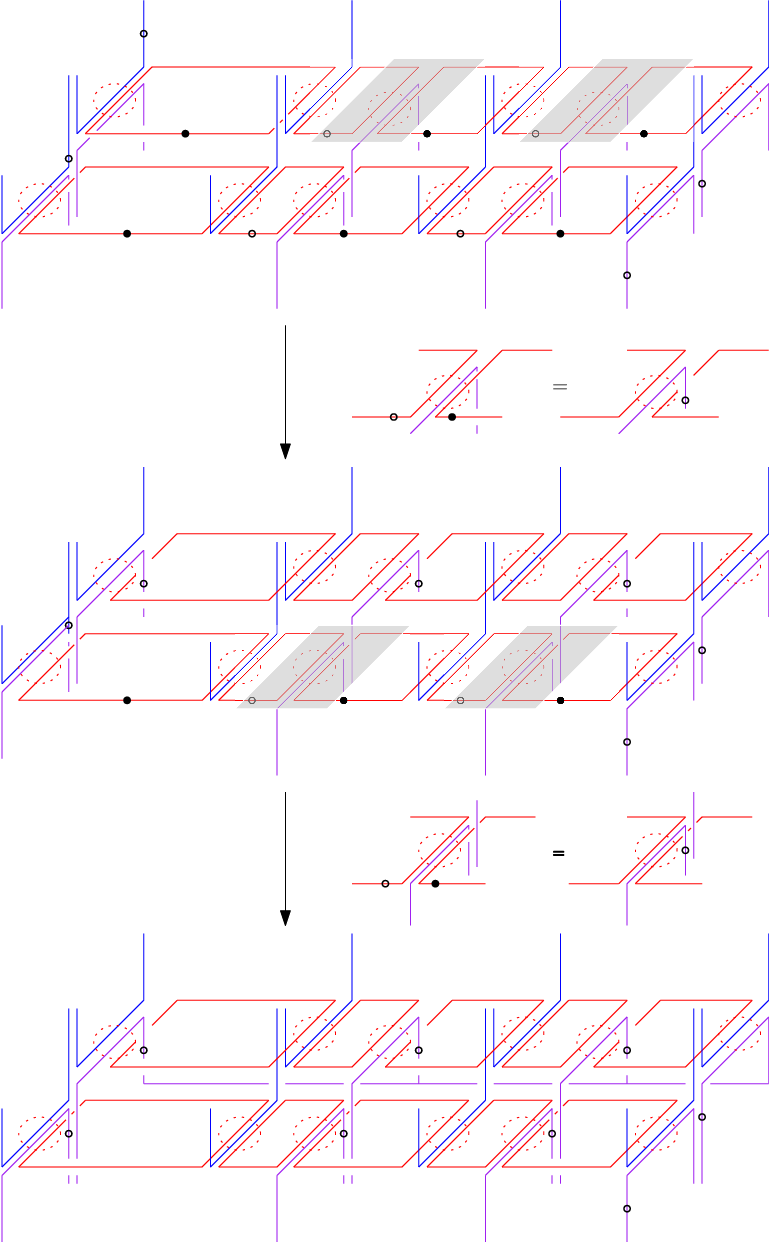}
     \caption{Graphic Calculus}     \label{fig:Fourier_pairing_proof2}
 \end{figure}
 
Then we continue the graphical calculus within the shaded region until we reach the final configuration. Applying a handle slide of the purple strands over the dotted red loops, followed by the resolution of all red loops, yields the configuration shown in Figure  \ref{fig:Fourier_pairing_proof3}, accompanied by a factor of $\delta^{(m-1)(1-n)}\prod_{1\leq i\leq m-2,j}\sqrt{d_{Y_{i,j}}}\prod_{ i,j}\sqrt{d_{X_{i,j}}}$. Finally, by bending the graph in the indicated direction and resolving all twists, we show that the resulting diagram evaluates to $LL(a,\Theta_2(a'))$.
\begin{figure}
    \centering
    \includegraphics[width=0.8\linewidth]{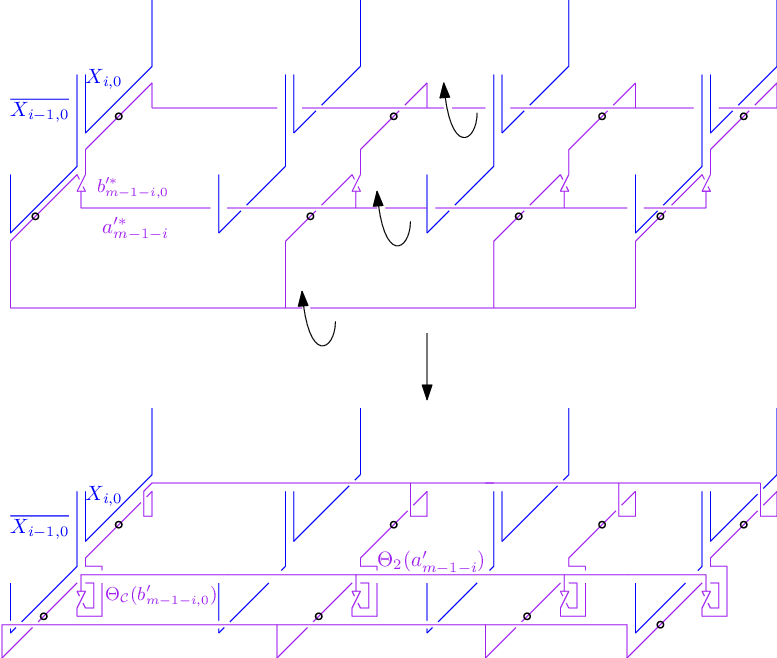}
    \caption{Graphic Calculus}
    \label{fig:Fourier_pairing_proof3}
\end{figure}
\end{proof}

From Theorem \ref{thm:Fourier_transform}, it follows immediately that the Fourier transform is a unitary operator. Consequently, we have
\begin{Cor}[\cite{LX19}]
 The Jones-Wassermann subfactors are selfdual.   
\end{Cor}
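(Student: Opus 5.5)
The corollary asserts that the multi-interval Jones--Wassermann subfactors are self-dual. I would deduce it from Theorem \ref{thm:Fourier_transform} by the same route as \cite{LX19}, with the unitarity of the Fourier transform now coming for free.

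In the framework of \cite{LX19}, the subfactor $\AA_m\subset\BB_m$ is modelled by the planar algebra whose $2$-box spaces are identified with $Conf(\CC)_{m,n}$ (after taking the appropriate direct sum over objects). Self-duality reduces to showing that the string Fourier transform $F$, the one-click rotation from the $2$-box space of the subfactor to that of its dual, is a unitary isomorphism compatible with the multiplication and convolution structures. Concretely, \cite{LX19} showed that $F$ is the operator with matrix coefficients $LL(x,\Theta_2(x'))$ in the orthonormal basis $ONB(Conf(\CC)_{m,n})$. The single nontrivial input they needed was that this matrix is invertible and in fact unitary; everything else followed by planar-algebraic formalities.

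The plan is therefore as follows.

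\textbf{Step 1: unitarity of $F$.} By Theorem \ref{thm:Fourier_transform}, $F=u^{-1}\prod_{j} T_{n-2-j}\AA^{-1}_{n-2-j}$. Each factor is unitary:
\begin{itemize}
\item $u$ is unitary by definition;
\item each $T_{i,j}$ is unitary, as noted after its definition;
\item $T_j$ and $\AA_j$ are products of the $T_{i,j}$, hence unitary.
\end{itemize}
So $F$ is unitary. In particular the matrix $\bigl(LL(x,\Theta_2(x'))\bigr)_{x,x'}$ is a unitary matrix, which gives invertibility of the $LL$ pairing.

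\textbf{Step 2: feed this into the LX19 criterion.} Since $\Theta_2$ is an antiunitary involution preserving the orthonormal basis up to relabelling, the pairing $LL$ is nondegenerate. The Fourier transform then intertwines the multiplication on the $2$-boxes of the subfactor with the convolution on those of the dual. This is exactly the hypothesis under which \cite{LX19} conclude that the dual planar algebra is isomorphic to the original one, i.e.\ that the subfactor is self-dual.

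\textbf{Main obstacle.} The only delicate point is bookkeeping. One must check that the normalization of $ONB(Conf(\CC)_{m,n})$, including the factor $\prod\sqrt{d_{Y_{i,j}}}$, matches the normalization of the $2$-box inner product used in \cite{LX19}. Only then does unitarity with respect to our inner product coincide with the unitarity of the planar algebra Fourier transform. I would verify this by comparing the definition of the inner product on $Conf(\CC)_{m,n}$ with the trace on the $2$-box space in \cite{LX19}; they agree by construction, since our configuration space is a refinement of theirs.
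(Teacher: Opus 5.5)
Your proposal is correct and matches the paper's argument. The paper likewise deduces the corollary directly from Theorem \ref{thm:Fourier_transform}: since $F=u^{-1}\prod_{j=0}^{n-2}T_{n-2-j}\AA^{-1}_{n-2-j}$ is a product of unitaries, the Fourier matrix $\bigl(LL(x,\Theta_2(x'))\bigr)_{x,x'}$ is unitary, and self-duality then follows within the \cite{LX19} framework. One minor correction: here $Conf(\CC)_{m,n}$ is the $n$-box space $\mathcal{P}_n$ of the unshaded planar algebra rather than a $2$-box space, and the unitarity argument applies uniformly in $n$.
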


\begin{prop} \label{prop:FT_rels}
The following equalities hold:
\begin{align}
FT_{2i+1,j}=\eta T_{2m-2i-4,j}F\  (0\leq i\leq m-2),\ 
&FT_{2m-1,j}=\eta T_{2m-2,j}F\ (i=m-1),\\ 
FT_{2i,j+1}=\eta^{-1}T_{2m-2i-3,j}F\  (0\leq i\leq m-2),\ 
&FT_{2m-2,j+1}=\eta^{-1}T_{2m-1,j}F\ (i=m-1),\\
FT_{j+1}=\eta^{-m}T'_jF,\ &FT'_j=\eta^m T_{j}F.
\end{align}
\end{prop}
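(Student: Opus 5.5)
The plan is to use the product formula $F=u^{-1}\prod_{j=0}^{n-2}T_{n-2-j}\AA^{-1}_{n-2-j}$ from Theorem \ref{thm:Fourier_transform} and push each $T_{k,l}$ through the factors one at a time. Every step should be a relation already proved: the braid and commutation relations of Proposition \ref{prop:T_rels}, the conjugation rules for $u$ in Lemma \ref{lem:uT_rels}, and the commutation facts in Lemma \ref{lem:ABTcomm_rels}. This avoids going back to the diagrams. Once the single-generator identities hold for all $i$, the last line follows by taking products. Since the $T_{2i+1,j}$ commute, $T'_j=\prod_i T_{2i+1,j}$ and $T_{j+1}=\prod_i T_{2i,j+1}$. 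As $i$ runs over $0,\dots,m-1$, the indices $2m-2i-4$ (together with $2m-2$ for $i=m-1$) run over all even indices. Hence $FT'_jF^{-1}=\eta^m T_j$. Similarly the indices $2m-2i-3$ (together with $2m-1$) run over all odd indices, which gives $FT_{j+1}F^{-1}=\eta^{-m}T'_j$.

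\textbf{Step 1: pushing $T_{k,l}$ through $\AA^{-1}_j$.} The core identity I expect is a "Coxeter element" relation. The product $\AA_j=T_{2m-2,j}T_{2m-3,j}\cdots T_{0,j}$ runs along the chain of generators in row $j$. By Proposition \ref{prop:T_rels}, consecutive generators in this chain satisfy the braid relation up to a factor $\eta^{\pm1}$, so the standard braid-group computation gives
\[
\AA_j T_{k,j}\AA_j^{-1}=\eta^{\epsilon}\,T_{k+1,j}, \qquad 0\le k<2m-2,
\]
where the sign $\epsilon$ comes from the scalar in \eqref{eq:braidrelation}. It also gives analogous mixed relations between row $j$ and row $j+1$: the odd generator $T_{2i+1,j}$ braids with $T_{2i,j+1}$ and $T_{2i+2,j+1}$. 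The generators $T_{k,l}$ with $|l-j|\ge2$ commute with $\AA_j$, by Lemma \ref{lem:ABTcomm_rels}.

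\textbf{Step 2: passing through the full product.} Inductively, moving $T_{2i+1,j}$ leftward through $T_{n-2-j'}\AA^{-1}_{n-2-j'}$ for $j'$ from small to large will shift its index within the column and its row label. It is cleaner to use Lemma \ref{lem:utAB_rels} and Proposition \ref{prop:ABC_rels} for this. They say that $T_{j'}\AA_{j'}^{-1}$ agrees with $\BB_{j'}T_{j'+1}^{-1}$ up to conjugation by $u$, and that $\BB\AA$ acts as the row swap $C_{j,j+1}$ twisted by $T$. Using \eqref{eq:BAT_rels_1}, \eqref{eq:BAT_rels_2} and Corollary \ref{Cor:ABTrho_1_rels}, I would rewrite $F$ as $u^{-1}$ times a product of $\AA\BB$'s and $T$'s closely related to $\rho_1$. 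Conjugation by such a product transports $T_{k,l}$ to a generator in an adjacent row, with an $\eta^{\pm1}$ appearing at each genuine braid move. The final $u^{-1}$ is then applied with Lemma \ref{lem:uT_rels}: $uT_{2i,j}u^{-1}=T_{2m-2-2i,j}$ reflects the even indices. For the odd generators I would use \eqref{eq:uT_rels_2}, whose conjugation by $T_{2m-2-2i,j}T_{2m-4-2i,j+1}$ cancels against the leftover $T$-factors. This cancellation is what produces the index $2m-2i-4$ or $2m-2i-3$.

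\textbf{Boundary cases and the main obstacle.} The boundary generators $T_{2m-1,j}$ and $T_{2m-2,j+1}$ need separate treatment, because $T_{2m-1,j}$ closes the cycle: it braids with $T_{0,\cdot}$ and $T_{2m-2,\cdot}$. There I would use the handle-slide form of the relations in Proposition \ref{prop:T_rels}, or else apply Lemma \ref{lem:ThetaT_rel} together with $\Theta_2u=u^{-1}\Theta_2$ to reduce to a case already handled. I expect the main obstacle to be the bookkeeping of the scalars $\eta^{\pm1}$. Each use of \eqref{eq:braidrelation} carries a factor, and one must check that exactly one net factor $\eta$ (for odd $T$) or $\eta^{-1}$ (for even $T$ in row $j+1$) survives. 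If this bookkeeping becomes unwieldy, my fallback is to verify one representative case directly by the graphical calculus, as was done for \eqref{eq:braidrelation}, and then obtain the remaining cases by the symmetries $\rho_2$, $\Theta_2$ and $u$.
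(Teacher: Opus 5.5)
\textbf{The gap: the boundary cases.} The identities $FT_{2m-1,j}=\eta T_{2m-2,j}F$ and $FT_{2m-2,j+1}=\eta^{-1}T_{2m-1,j}F$ cannot be obtained by your pushing argument.

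When you push $T_{2m-1,j}$ through $F=u^{-1}\prod_j T_{n-2-j}\AA^{-1}_{n-2-j}$, it commutes past every factor except $T_j\AA_j^{-1}$ and the final $u^{-1}$. The first identity is therefore \emph{equivalent} to $\AA_j^{-1}T_{2m-1,j}\AA_j=\eta T_{0,j}$, i.e.\ $T_{2m-1,j}=\eta\AA_jT_{0,j}\AA_j^{-1}$. Likewise, the second is equivalent to $T_{2m-1,j}=\eta\BB_j^{-1}T_{0,j+1}\BB_j$. These are exactly \eqref{eq:TA_rels_1} and \eqref{eq:TB_rels}, which the paper derives \emph{from} this proposition, not the other way round.

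These relations close the cyclic chain $T_{0,j},\dots,T_{2m-1,j}$, and they are not consequences of Proposition \ref{prop:T_rels}. Already in the Coxeter quotient, $\AA_jT_{0,j}\AA_j^{-1}$ lies in the finite parabolic subgroup generated by $T_{0,j},\dots,T_{2m-2,j}$, which does not contain $T_{2m-1,j}$. None of the other tools you name supplies them:
\begin{itemize}
\item $\Theta_2$ fixes the index $2m-1$ (Lemma \ref{lem:ThetaT_rel}), and it satisfies $\Theta_2F=(u\prod_jT^{-1}_{n-2-j}\AA_{n-2-j})\Theta_2$. So it neither moves the boundary case to an interior one nor stays with $F$.
\item The relations between $\rho_2$ and the $T$'s (Corollary \ref{Cor:rho_2TABu_rels}) are proved later using this very proposition, so invoking them is circular.
\item The handle slides in Proposition \ref{prop:T_rels} only yield commutation relations.
\end{itemize}
Your graphical fallback closes the gap only if the case you check by hand is itself a boundary one, for instance \eqref{eq:TA_rels_1} directly.

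\textbf{What the paper does instead.} It uses the pairing description $Fx=\sum_{x'}LL(x,\Theta_2(x'))x'$ from Theorem \ref{thm:Fourier_transform}. It then shows diagrammatically that each twist can be transported across the pairing, e.g.\ $LL(T_{2i+1,j}x,\Theta_2x')=\eta\,LL(x,T_{2i+2,j}\Theta_2x')$ and $LL(T_{2m-1,j}x,\Theta_2x')=\eta\,LL(x,T_{0,j}\Theta_2x')$, with analogous $\eta^{-1}$ identities for $T_{2i,j+1}$ and $T_{2m-2,j+1}$. Lemma \ref{lem:ThetaT_rel} converts $T_{k,l}\Theta_2$ into $\Theta_2T^{-1}_{k',l}$, and re-indexing the orthonormal basis by the unitary $T_{k',l}$ gives all single-generator identities uniformly, boundary cases included. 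The product identities then follow as you say.

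\textbf{The interior cases.} Here your algebraic route is a legitimate alternative, but it needs two corrections.
\begin{itemize}
\item Since $\AA_j=T_{2m-2,j}T_{2m-3,j}\cdots T_{0,j}$, the chain computation gives $\AA_j^{-1}T_{k,j}\AA_j=\lambda_kT_{k+1,j}$ for $0\le k\le 2m-3$, where $T_{k,j}T_{k+1,j}T_{k,j}=\lambda_kT_{k+1,j}T_{k,j}T_{k+1,j}$. Your formula has the direction reversed and would land on $T_{2m-2-2i,j}$ instead of $T_{2m-2i-4,j}$.
\item $T_{2i,j+1}$ is not shifted along the chain of $\AA_j$, because it braids with two chain members, $T_{2i\pm1,j}$. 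Instead, use Lemma \ref{lem:utAB_rels} to write $F=\BB_{n-2}^{-1}T_{n-1}\BB_{n-3}^{-1}T_{n-2}\cdots\BB_0^{-1}T_1u^{-1}$. Then $u^{-1}T_{2i,j+1}u=T_{2m-2-2i,j+1}$ lies on the chain $T_{0,j+1},T_{1,j},T_{2,j+1},\dots,T_{2m-2,j+1}$, whose ordered product is $\BB_j$, and is shifted to $\eta^{-1}T_{2m-3-2i,j}$.
\end{itemize}
Both computations need the scalar of every braid relation. The proof of Proposition \ref{prop:T_rels} exhibits this only in one representative case, \eqref{eq:braidrelation}.
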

\begin{proof}
Using the graphic interpretation of the Fourier pairing. we have:
\[
\begin{aligned}
 LL(T_{2i+1,j}(x),\Theta_2(x'))&=\eta LL(x,T_{2i+2,j}\Theta_2(x')),\ LL(T_{2m-1,j}(x),\Theta_2(x'))=\eta LL(x,T_{0,j}\Theta_2(x')),\\
  LL(T_{2i,j+1}(x),\Theta_2(x'))&=\eta^{-1}LL(x,T_{2i+1,j}\Theta_2(x')),\ LL(T_{2m-2,j+1}(x),\Theta_2(x'))=\eta^{-1}LL(x,T_{2m-1,j}\Theta_2(x')).
 \end{aligned}
 \]
Now by Lemma \ref{lem:ThetaT_rel}, we have: 
\[
\begin{aligned}
FT_{2i+1,j}(x)=&\sum_{x'\in ONB}LL(T_{2i+1,j}(x),\Theta_2(x'))x',\\
=&\eta\sum_{x'\in ONB}LL(x,T_{2i+2,j}\Theta_2(x'))x',\\
=&\eta\sum_{x'\in ONB}LL(x,\Theta_2T^{-1}_{2m-2i-4,j}(x'))x',\qquad (\text{Lem.~\ref{lem:ThetaT_rel}})\\
=&\eta\sum_{x'\in ONB}LL(x,\Theta_2T^{-1}_{2m-2i-4,j}(x'))T_{2m-2i-4,j}T^{-1}_{2m-2i-4,j}x',\\
=&\eta T_{2m-2i-4,j}F(x).   
\end{aligned}
\]
The other identities can be derived similarly.
\end{proof}   

Since we have $FT_{2m-1,j}F^{-1}=\eta T_{2m-2,j}$ which implies: \[T_j\AA^{-1}_jT_{2m-1,j}\AA_jT_j^{-1}=\eta(\prod ^{n-2}_{k=j+1}\AA_kT_k^{-1}) uT_{2m-2,j}u^{-1}(\prod ^{j+1}_{k=n-2}T_k\AA^{-1}_k)=\eta T_{0,j},\] therefore we have:
\begin{equation}\label{eq:TA_rels_1}
\begin{aligned}
T_{2m-1,j}&=\eta \AA_jT_{0,j}\AA^{-1}_j,\\
&=\eta\prod^{0}_{i=2m-2}T_{i,j}T_{0,j}\prod^{2m-2}_{i=0}T^{-1}_{i,j},\\
&=\eta(\prod^{2}_{i=2m-2}T_{i,j})T_{1,j}T_{0,j}T^{-1}_{1,j}\prod^{2m-2}_{i=2}T^{-1}_{i,j},\\
&=\eta\eta^{-1}(\prod^{2}_{i=2m-2}T_{i,j})T^{-1}_{0,j}T_{1,j}T_{0,j}\prod^{2m-2}_{i=2}T^{-1}_{i,j},\qquad (\text{Prop.~\ref{prop:T_rels}})\\
&=T^{-1}_{0,j}(\prod^{2}_{i=2m-2}T_{i,j})T_{1,j}\prod^{2m-2}_{i=2}T^{-1}_{i,j}T_{0,j},\\
&=\cdots,\\
&=\eta \AA^{-1}_jT_{2m-2,j}\AA_j.     
\end{aligned}
\end{equation}
Similarly, we have $T_{2i+1,j}=\eta \AA_jT_{2i+2,j}\AA^{-1}_j=\eta \AA^{-1}_jT_{2m-2i-4,j}\AA_j$. Therefore we have:
\begin{equation}
\eta^{m} \AA_jT_j\AA^{-1}_j=\eta^{m} \AA^{-1}_jT_j\AA_j=T'_j.    
\end{equation}

\begin{lem}
The following identities hold:
\begin{align}
T_{2m-1,j}&=\eta \BB_jT_{2m-2,j+1}\BB^{-1}_j=\eta \BB^{-1}_jT_{0,j+1}\BB_j,\label{eq:TB_rels}\\
T_{2i+1,j}&=\eta\BB_jT_{2m-2i-4,j+1}\BB^{-1}_j=\eta \BB^{-1}_jT_{2i+2,j+1}\BB_j.
\end{align}
\end{lem}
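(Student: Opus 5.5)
We transfer the $\AA$-identities to the $\BB$-identities by conjugating with $u$, using Lemma \ref{lem:utAB_rels}. The relevant input is the identity $uT^{-1}_j\AA_ju^{-1}=\BB_jT^{-1}_{j+1}$, and its companion $u^{-1}\AA_jT^{-1}_ju=T^{-1}_{j+1}\BB_j$.

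\textbf{First form of \eqref{eq:TB_rels}.} Start from the already established relation
\[
T_{2m-1,j}=\eta\,\AA_jT_{0,j}\AA^{-1}_j.
\]
Since $T_{0,j}$ commutes with $T_j$, we may rewrite this as
\[
T_{2m-1,j}=\eta\,(T^{-1}_j\AA_j)\,T_{0,j}\,(T^{-1}_j\AA_j)^{-1},
\]
after checking that $T_j$ commutes with $T_{2m-1,j}$; by Proposition \ref{prop:T_rels} it does, since $T_{2m-1,j}$ only braids with $T_{2m-2,\cdot}$ and $T_{0,\cdot}$. Now conjugate by $u$. Lemma \ref{lem:uT_rels} gives $uT_{0,j}u^{-1}=T_{2m-2,j}$. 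For $uT_{2m-1,j}u^{-1}$, note that the third identity of Lemma \ref{lem:uT_rels} is stated only for $i\le m-2$. For $i=m-1$ I would use the $\Theta_2$-symmetry: Lemma \ref{lem:ThetaT_rel} together with $\Theta_2u=u^{-1}\Theta_2$. Alternatively, I would derive the analogue of \eqref{eq:uT_rels_2} for $T_{2m-1,j}$ by the same graphical computation as in Figure \ref{fig:uT_rel}, which should give $uT_{2m-1,j}u^{-1}$ as a conjugate of $T_{2m-1,j}$ by $T_{0,j}T_{2m-2,j+1}$ (a product of commuting elements). Plugging in $u(T^{-1}_j\AA_j)u^{-1}=\BB_jT^{-1}_{j+1}$ and using $[T_{j+1},T_{2m-2,j}]=0$, the extra $T$-factors cancel. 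The result is $T_{2m-1,j}=\eta\,\BB_jT_{2m-2,j+1}\BB^{-1}_j$.

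\textbf{Second form.} Run the same argument from the other expression $T_{2m-1,j}=\eta\,\AA^{-1}_jT_{2m-2,j}\AA_j$, now conjugating by $u^{-1}$ and using $u^{-1}\AA_jT^{-1}_ju=T^{-1}_{j+1}\BB_j$. This yields $\eta\,\BB^{-1}_jT_{0,j+1}\BB_j$.

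\textbf{Fallback: direct computation.} Alternatively, one can avoid $u$ entirely and mimic the telescoping computation \eqref{eq:TA_rels_1}. Expand
\[
\BB_jT_{2m-2,j+1}\BB^{-1}_j=T_{0,j+1}\prod_i\bigl(T_{2i-1,j}T_{2i,j+1}\bigr)\,T_{2m-2,j+1}\,(\cdots)^{-1}.
\]
Then repeatedly apply the braid relation $T_{2i+1,j}T_{2i,j+1}T_{2i+1,j}=\eta\,T_{2i,j+1}T_{2i+1,j}T_{2i,j+1}$ from Proposition \ref{prop:T_rels}, in the form $aba^{-1}=\eta^{\pm1}b^{-1}ab$. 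At each step the innermost conjugated generator moves one position and picks up a factor $\eta^{\pm1}$. The commutation relations make all non-adjacent factors pass through. The exponents should telescope to a single $\eta$, giving $T_{2m-1,j}$.

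\textbf{General odd index and main obstacle.} The general identity for $T_{2i+1,j}$ follows the same way, starting from $T_{2i+1,j}=\eta\,\AA_jT_{2i+2,j}\AA^{-1}_j=\eta\,\AA^{-1}_jT_{2m-2i-4,j}\AA_j$ and conjugating by $u^{\pm1}$. The main obstacle is bookkeeping of the $\eta$-powers and of which braid relation carries $\eta$ versus $\eta^{-1}$. The index-reversal $2i\mapsto 2m-2-2i$ under $u$ must also match the stated indices. I would first check the case $m=3$ explicitly to fix the signs before writing the general induction.
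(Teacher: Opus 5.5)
\textbf{There is a genuine gap: neither your main route nor your fallback reaches the identity.} The missing ingredient is a relation that moves even generators from layer $j$ to layer $j+1$.

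\textbf{Main route.} The commutation $[T_j,T_{2m-1,j}]=0$ that you invoke is false. $T_j=\prod_i T_{2i,j}$ contains $T_{0,j}$ and $T_{2m-2,j}$. By Proposition \ref{prop:T_rels}, these are exactly the generators with which $T_{2m-1,j}$ satisfies a braid relation rather than a commutation; your own parenthetical says so. You can sidestep this by computing $u\AA_ju^{-1}=T_j\BB_jT_{j+1}^{-1}$ from $uT_ju^{-1}=T_j$ and Lemma \ref{lem:utAB_rels}. The correct result of conjugating \eqref{eq:TA_rels_1} by $u^{\pm1}$ is then
\[
\begin{aligned}
uT_{2m-1,j}u^{-1}&=\eta\,T_j\BB_j\,T_{2m-2,j}\,\BB_j^{-1}T_j^{-1},\\
u^{-1}T_{2m-1,j}u&=\eta\,T_j^{-1}\BB_j^{-1}\,T_{0,j}\,\BB_jT_j .
\end{aligned}
\]
The even generator stays in layer $j$, because $uT_{2i,j}u^{-1}=T_{2m-2-2i,j}$ only reverses the index within a layer. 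The target, however, involves $T_{2m-2,j+1}$ and $T_{0,j+1}$. Moreover, the left side is $uT_{2m-1,j}u^{-1}$, for which you only conjecture a formula. Nothing in your sketch bridges these mismatches, and the claimed cancellation of the extra $T$-factors does not produce the target.

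\textbf{Fallback.} This fails for a structural reason. $\BB_jT_{2m-2,j+1}\BB_j^{-1}$ is a word in the chain $T_{0,j+1},T_{1,j},T_{2,j+1},\dots,T_{2m-3,j},T_{2m-2,j+1}$, and the relations of Proposition \ref{prop:T_rels} only rearrange such words. They can show that $\BB_jT_{2m-2,j+1}\BB_j^{-1}$ and $\BB_j^{-1}T_{0,j+1}\BB_j$ agree up to a scalar; that is all the telescoping in \eqref{eq:TA_rels_1} does for $\AA_j$. They can never produce the closing generator $T_{2m-1,j}$: in the Artin-type group presented by these relations, $T_{2m-1,j}$ does not lie in the subgroup generated by that chain and the scalars. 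In \eqref{eq:TA_rels_1} the link to $T_{2m-1,j}$ came from outside the braid relations, namely from $FT_{2m-1,j}F^{-1}=\eta T_{2m-2,j}$.

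\textbf{The missing relation.} This is Proposition \ref{prop:ABC_rels}:
\[
\BB_j\AA_j=T_{j+1}C_{j,j+1}=C_{j,j+1}T_j,\qquad \AA_j\BB_j=T_jC_{j,j+1}=C_{j,j+1}T_{j+1},
\]
which gives \eqref{eq:BAT_rels_1}. Conjugation by $\BB_j\AA_j$ sends $T_{2i,j}\mapsto T_{2i,j+1}$, and conjugation by $\AA_j\BB_j$ sends $T_{2i,j+1}\mapsto T_{2i,j}$. With this, the lemma is immediate from \eqref{eq:TA_rels_1}:
\[
\begin{aligned}
\BB_jT_{2m-1,j}\BB_j^{-1}&=\eta\,(\BB_j\AA_j)T_{0,j}(\BB_j\AA_j)^{-1}=\eta\,T_{0,j+1},\\
T_{2m-1,j}&=\eta\,\AA_j^{-1}(\AA_j\BB_j)T_{2m-2,j+1}(\AA_j\BB_j)^{-1}\AA_j=\eta\,\BB_jT_{2m-2,j+1}\BB_j^{-1}.
\end{aligned}
\]
The general odd case follows in the same way from $T_{2i+1,j}=\eta\AA_jT_{2i+2,j}\AA_j^{-1}=\eta\AA_j^{-1}T_{2m-2i-4,j}\AA_j$. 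This is essentially the paper's proof. No conjugation by $u$ and no $\eta$-bookkeeping beyond what is already in \eqref{eq:TA_rels_1} is needed.
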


\begin{proof}
From identity (\ref{eq:TA_rels_1}), it suffices to prove $\eta \AA_jT_{0,j}\AA^{-1}_j=\eta \BB^{-1}_jT_{0,j+1}\BB_j$. This is equivalent to $\BB_j\AA_jT_{0,j}=T_{0,j+1}\BB_j\AA_j$, which is identity (\ref{eq:BAT_rels_1}). The remaining cases follow by a similar argument.   
\end{proof}

Moreover, since $FT_{2i,j+1}F^{-1}=\eta^{-1}T_{2m-2i-3,j}$, and by Lemma \ref{lem:uT_rels}, $[uT_{2m-2i-3,j}u^{-1},T_{l,k}]=0$ for $k\geq j+2$, we obtain the following: 
\[
\begin{aligned}
T_{j+1}\AA^{-1}_{j+1}T_{j}\AA^{-1}_{j}T_{2i,j+1}\AA_{j}T_{j}^{-1}\AA_{j+1}T_{j+1}^{-1}=&\eta^{-1}(\prod ^{n-2}_{k=j+2}\AA_kT_k^{-1}) uT_{2m-2i-3,j}u^{-1}(\prod ^{j+2}_{k=n-2}T_k\AA^{-1}_k),\\
=&\eta^{-1} uT_{2m-2i-3,j}u^{-1}.    
\end{aligned}
\]
From the above computations, we have the following proposition.
\begin{prop}\label{prop:TAB_rels}
The following identities hold:
\[
\begin{aligned}
T_j\AA_j=\eta^{-m}\AA_jT'_j&,\ \  \AA_jT_j=\eta^{-m}T'_j\AA_j,
\\
T_{j+1}\BB_j=\eta^{-m}\BB_jT'_j&,\ \  \BB_jT_{j+1}=\eta^{-m}T'_j\BB_j,
\\
T_{j+1}\AA^{-1}_{j+1}T_{j}\AA^{-1}_{j}T_{j+1}=
&\eta^{-m} uT'_{j}u^{-1}T_{j+1}\AA^{-1}_{j+1}T_{j}\AA^{-1}_{j}.      
\end{aligned}
\]
\end{prop}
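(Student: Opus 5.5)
The plan is to derive all three lines algebraically from the identities established right before the statement. No new graphical calculus should be needed.

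\textbf{First line.} We have just shown $T_{2i+1,j}=\eta\AA_jT_{2i+2,j}\AA_j^{-1}=\eta\AA_j^{-1}T_{2m-2i-4,j}\AA_j$, and \eqref{eq:TA_rels_1} gives the case $T_{2m-1,j}$. Multiplying these over the $m$ odd indices $2i+1$, $0\le i\le m-1$, is legitimate because the $T_{2i+1,j}$ commute by Proposition \ref{prop:T_rels}. As $i$ ranges, the indices $2i+2 \pmod{2m}$ and $2m-2i-4 \pmod{2m}$ each run over all even indices, so we obtain $T'_j=\eta^m\AA_jT_j\AA_j^{-1}=\eta^m\AA_j^{-1}T_j\AA_j$, as already recorded. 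Rearranging $T'_j=\eta^m\AA_jT_j\AA_j^{-1}$ gives $\AA_jT_j=\eta^{-m}T'_j\AA_j$. Rearranging $T'_j=\eta^m\AA_j^{-1}T_j\AA_j$ gives $T_j\AA_j=\eta^{-m}\AA_jT'_j$.

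\textbf{Second line.} I will use the preceding lemma, \eqref{eq:TB_rels} and its companion. In the same way, take the product over all odd indices of $T_{2i+1,j}=\eta\BB_jT_{2m-2i-4,j+1}\BB_j^{-1}$ and of $T_{2m-1,j}=\eta\BB_jT_{2m-2,j+1}\BB_j^{-1}$. This yields $T'_j=\eta^m\BB_jT_{j+1}\BB_j^{-1}$, i.e.\ $\BB_jT_{j+1}=\eta^{-m}T'_j\BB_j$. The other form, $T'_j=\eta^m\BB_j^{-1}T_{j+1}\BB_j$, gives $T_{j+1}\BB_j=\eta^{-m}\BB_jT'_j$. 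The only point to check is that the index maps $i\mapsto 2m-2i-4$ and $i\mapsto 2i+2$, with the $i=m-1$ cases supplied separately, are bijections onto the even residues.

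\textbf{Third line.} Start from the displayed identity just before the proposition:
\[
T_{j+1}\AA^{-1}_{j+1}T_{j}\AA^{-1}_{j}\,T_{2i,j+1}\,\AA_{j}T_{j}^{-1}\AA_{j+1}T_{j+1}^{-1}=\eta^{-1}uT_{2m-2i-3,j}u^{-1}.
\]
This holds for $0\le i\le m-2$. For $i=m-1$, the analogue with $T_{2m-1,j}$ on the right follows in the same way from $FT_{2m-2,j+1}=\eta^{-1}T_{2m-1,j}F$ in Proposition \ref{prop:FT_rels}. Multiply these $m$ identities over $i=0,\dots,m-1$. The left-hand side conjugates $T_{j+1}$ by $W:=T_{j+1}\AA^{-1}_{j+1}T_{j}\AA^{-1}_{j}$. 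On the right, the indices $2m-2i-3$ for $i\le m-2$ together with $2m-1$ exhaust the odd indices, so the right-hand side is $\eta^{-m}uT'_ju^{-1}$. Commutativity of the factors on the right is preserved under conjugation by $u$. We therefore get $WT_{j+1}W^{-1}=\eta^{-m}uT'_ju^{-1}$, which is the claimed identity after multiplying on the right by $W$.

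The main obstacle is bookkeeping. One must make sure that the derivation preceding the statement (the reduction of $F$ using $[uT_{2m-2i-3,j}u^{-1},T_{l,k}]=0$ for $k\ge j+2$) applies uniformly, including the boundary case $i=m-1$. One must also check that the factors $\AA_kT_k^{-1}$ for $k\ge j+2$ really cancel for the odd index $2m-1$ as well. I would verify this first, using Lemma \ref{lem:ABTcomm_rels} and Lemma \ref{lem:uT_rels}.
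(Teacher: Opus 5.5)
Your proposal is correct and follows the paper's route: the paper presents this proposition as a direct consequence of the preceding computations. Those are exactly the steps you spell out: multiplying the single-generator conjugation relations for $\AA_j$ and $\BB_j$ over all odd indices (with the boundary index $2m-1$ from \eqref{eq:TA_rels_1} and \eqref{eq:TB_rels}), and multiplying the $F$-derived identity $WT_{2i,j+1}W^{-1}=\eta^{-1}uT_{2m-2i-3,j}u^{-1}$, together with its $i=m-1$ analogue, over $i$, where $W=T_{j+1}\AA^{-1}_{j+1}T_j\AA^{-1}_j$. Your argument only needs the conjugated even generators to exhaust all even indices, so it is unaffected by an index slip in the paper: by the braid relations of Proposition \ref{prop:T_rels}, the printed relations $T_{2i+1,j}=\eta\AA_j^{-1}T_{2m-2i-4,j}\AA_j$ and $T_{2i+1,j}=\eta\BB_jT_{2m-2i-4,j+1}\BB_j^{-1}$ should have $T_{2i,j}$ and $T_{2i,j+1}$ in place of the $2m-2i-4$ entries.
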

\begin{prop}   
\label{prop:FAB_rels}
We have the following identities (for $0\leq j\leq n-2$):
\begin{align}
 F\BB_j&=\AA_j F, \\
 F\AA_{j+1}&=\BB_jF.
\end{align}
\end{prop}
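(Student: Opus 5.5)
The plan is to derive both identities algebraically from Proposition~\ref{prop:FT_rels}, which records how $F$ conjugates each elementary braiding $T_{i,j}$. The operators $\AA_j$ and $\BB_j$ are ordered products of these braidings, and the scalars $\eta^{\pm1}$ have to cancel exactly.

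\textbf{First identity, $F\BB_j=\AA_jF$.}
\begin{enumerate}
\item Write $\BB_j=T_{0,j+1}\prod_{i=1}^{m-1}T_{2i-1,j}T_{2i,j+1}$.
\item Conjugate each factor by $F$. By Proposition~\ref{prop:FT_rels}, $FT_{2i,j+1}F^{-1}=\eta^{-1}T_{2m-2i-3,j}$ for $0\le i\le m-2$, and $FT_{2m-2,j+1}F^{-1}=\eta^{-1}T_{2m-1,j}$.
\item Likewise $FT_{2i+1,j}F^{-1}=\eta T_{2m-2i-4,j}$ for $0\le i\le m-2$, and $FT_{2m-1,j}F^{-1}=\eta T_{2m-2,j}$.
\item Count scalars. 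The even factors $T_{0,j+1},T_{2,j+1},\dots,T_{2m-2,j+1}$ give $m$ copies of $\eta^{-1}$. The odd factors $T_{1,j},\dots,T_{2m-3,j}$ give $m-1$ copies of $\eta$.
\item The result is $F\BB_jF^{-1}=\eta^{-1}\cdot(\text{a word in }T_{\cdot,j})$, so the scalars do not cancel on the nose.
\item Read off the word. The first factor $T_{0,j+1}$ becomes $T_{2m-3,j}$; then $T_{1,j}$ becomes $T_{2m-4,j}$ and $T_{2,j+1}$ becomes $T_{2m-5,j}$. In general the indices run $2m-3,2m-4,\dots,1,0$, ending with $T_{2m-2,j+1}\mapsto T_{2m-1,j}$ and $T_{2m-3,j}\mapsto T_{0,j}$, in some interleaved order.
\item Compare with $\AA_j=T_{2m-2,j}T_{2m-3,j}\cdots T_{0,j}$. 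The image contains $T_{2m-1,j}$ instead of $T_{2m-2,j}$.
\end{enumerate}

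So I expect to need an additional relation to convert the image into $\AA_j$. The relevant ones are $T_{2m-1,j}=\eta\,\AA_jT_{0,j}\AA_j^{-1}=\eta\,\AA_j^{-1}T_{2m-2,j}\AA_j$ from \eqref{eq:TA_rels_1}, and the cyclic rotation $T_{2i+1,j}=\eta\,\AA_jT_{2i+2,j}\AA_j^{-1}$. These should absorb the stray $T_{2m-1,j}$ and the leftover $\eta^{-1}$, rewriting the word as $\AA_j$ exactly.

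Concretely, I would:
\begin{enumerate}
\item Use the commutation relations of Proposition~\ref{prop:T_rels} to move the $T_{2m-1,j}$ factor to one end of the word. It commutes with every $T_{k,j}$ except $T_{0,j}$ and $T_{2m-2,j}$.
\item Substitute $T_{2m-1,j}=\eta\,\AA_j^{-1}T_{2m-2,j}\AA_j$, or the analogous form, so that the scalar $\eta$ cancels the $\eta^{-1}$.
\item Simplify the resulting $\AA_j^{\pm1}$ conjugation using the $\eta$-twisted braid relation \eqref{eq:braidrelation}.
\end{enumerate}

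An alternative I would try first, because it is cleaner, avoids the word manipulation. Use $\BB_j\AA_j=C_{j,j+1}T_j$ from Proposition~\ref{prop:ABC_rels}, so that $\BB_j=C_{j,j+1}T_j\AA_j^{-1}$. Then compute $FC_{j,j+1}F^{-1}$ via Corollary~\ref{Cor:ABTrho_1_rels}, or directly from the Fourier pairing $LL$, and $FT_jF^{-1}=\eta^{m}T'_{j-1}$ from Proposition~\ref{prop:FT_rels}.

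\textbf{Second identity, $F\AA_{j+1}=\BB_jF$.} The same conjugation applies to $\AA_{j+1}$, which involves only $T_{\cdot,j+1}$. Its even factors map to odd $T_{\cdot,j}$ with $\eta^{-1}$, and its odd factors $T_{2i+1,j+1}$ map to $\eta\,T_{2m-2i-4,j+1}$. This yields exactly the alternating pattern of $\BB_j$, with $m$ copies of $\eta^{-1}$ and $m-1$ copies of $\eta$. Again one net factor of $\eta^{-1}$ must be absorbed, which I expect to do with the relations \eqref{eq:TB_rels} between $T_{2m-1,j}$ and $\BB_j$.

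\textbf{Main obstacle.} The hardest step is the bookkeeping that rewrites the conjugated word as $\AA_j$ (respectively $\BB_j$) while cancelling the residual scalar $\eta^{-1}$. If this bookkeeping does not close, I would fall back on the diagrammatic identity $LL(\BB_jx,\Theta_2x')=LL(x,\AA_j^{*}\Theta_2x')$, proved by isotopy in the pairing picture as in the proof of Proposition~\ref{prop:FT_rels}, where the scalars are visible directly.
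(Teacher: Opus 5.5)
Your main plan is correct and is the paper's proof: conjugating factor by factor with Proposition~\ref{prop:FT_rels} gives $F\BB_jF^{-1}=\eta^{-1}T_{2m-2,j}^{-1}\AA_jT_{2m-1,j}$, with $T_{2m-1,j}$ already at the right end, so the single substitution $T_{2m-1,j}=\eta\,\AA_j^{-1}T_{2m-2,j}\AA_j$ from \eqref{eq:TA_rels_1} yields $\AA_j$, and your commutation and braid-relation steps are not needed. Likewise $F\AA_{j+1}F^{-1}=\eta^{-1}T_{2m-1,j}\BB_jT_{2m-2,j+1}^{-1}$, which is not literally the pattern of $\BB_j$ as you claim, and \eqref{eq:TB_rels} finishes it immediately.
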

\begin{proof}
From Proposition \ref{prop:FT_rels} and identities \eqref{eq:TA_rels_1}, \eqref{eq:TB_rels}, we have:
 \[
 \begin{aligned}
  F\BB_{j}F^{-1}&=F(T_{0,j+1}\prod^{m-1}_{i=1}T_{2i-1,j}T_{2i,j+1})F^{-1},\\
  &=\eta^{-1}T_{2m-3,j}(\prod^{m-2}_{i=1}\eta T_{2m-2i-2,j}\eta^{-1}T_{2m-2i-3,j})\eta T_0\eta^{-1} T_{2m-1},\\
  &=\eta^{-1}T^{-1}_{2m-2}\AA_jT_{2m-1},\\
  &=\AA_j.\qquad \eqref{eq:TA_rels_1}\\
 F\AA_{j+1}F^{-1}&=F\prod_{i=0}^{2m-2}T_{2m-2-i,j+1}  F^{-1},\\
  &=\eta^{-1}T_{2m-1,j}\prod^{m-2}_{i=0}\eta T_{2i,j+1}\eta^{-1}T_{2i+1,j},\\
  &=\eta^{-1}T_{2m-1,j}\BB_jT^{-1}_{2m-2,j+1},\\
  &=\BB_jT_{2m-2,j+1}\BB^{-1}_j\BB_jT^{-1}_{2m-2,j+1},\qquad\eqref{eq:TB_rels}\\
  &=\BB_j.
 \end{aligned}
 \]
\end{proof}
Moreover, by conjugating $F$ to the identity (\ref{eq:BAT_rels_2}). we have 
\begin{equation}\label{eq:AB_rels}
\BB^{-1}_{j}\AA^{-1}_{j+1} T'_{j+1}=T'_{j+1}\AA^{-1}_{j+1}\BB^{-1}_{j}.  
\end{equation}
\begin{Thm}\label{Thm:F'F_rels}
Define the following unitary operator:
\[
 F'=u\prod^{n-2}_{j=0} T^{-1}_{n-2-j}\AA_{n-2-j}.
\]
We have the following identities:
\begin{align}
F^2=&F'^2=\rho_1,\\
F^{-1}F'=&\rho_2,\\
F\rho_2=&\rho^{-1}_2F.
\end{align}
\end{Thm}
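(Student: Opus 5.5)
Our approach is purely algebraic, using the relations already established. For $F^2=\rho_1$ we rewrite $F^2$ so that Corollary \ref{Cor:ABTrho_1_rels} applies. Write $F=u^{-1}P$ with $P=\prod_{j=0}^{n-2}T_{n-2-j}\AA^{-1}_{n-2-j}$, so that $F^2=u^{-1}Pu^{-1}P$.

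The first step is to move the middle $u^{-1}$ leftward through $P$ factor by factor. By Lemma \ref{lem:uT_rels}, $u$ commutes with each $T_j$. By Lemma \ref{lem:utAB_rels}, $u^{-1}\AA_jT_j^{-1}u=T^{-1}_{j+1}\BB_j$, and inverting gives $u^{-1}T_j\AA_j^{-1}u=\BB_j^{-1}T_{j+1}$. Hence $u^{-1}Pu=\prod_{j}\BB^{-1}_{n-2-j}T_{n-1-j}$, and therefore
\[
F^2=u^{-2}\Bigl(\prod_{j=0}^{n-2}\BB^{-1}_{n-2-j}T_{n-1-j}\Bigr)\Bigl(\prod_{j=0}^{n-2}T_{n-2-j}\AA^{-1}_{n-2-j}\Bigr),\qquad u^{-2}=\prod_{j\ge1}T_j^{-1}.
\]

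The second step is to reorganize this word using three facts: the commutations of Lemma \ref{lem:ABTcomm_rels}, the relations $T_{j+1}\BB_j=\eta^{-m}\BB_jT'_j$ and their $\AA$-analogues from Proposition \ref{prop:TAB_rels}, and the relation $\BB_j\AA_jT_j^{-1}=T_j^{-1}\AA_j\BB_j$ from \eqref{eq:BAT_rels_2}. The aim is to bring the word to the form $\BB^{-1}_{n-1}\AA^{-1}_{n-1}\prod\BB^{-1}_{n-2-j}\AA^{-1}_{n-2-j}T_{n-2-j}$, which equals $\rho_1$ by Corollary \ref{Cor:ABTrho_1_rels}. Along the way, $\AA_{n-1}$ appears only after using Proposition \ref{prop:ABC_rels} in the form $\BB_j\AA_j=T_{j+1}C_{j,j+1}$ to trade products $\BB\AA$ for braidings $C_{j,j+1}$. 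A cleaner route is therefore to convert everything to $C$'s: $F^2$ should become $\bigl(\prod C_{j,j+1}\bigr)T_0$ up to $T$-factors that cancel against $u^{-2}$. The $\eta$-scalars must cancel, which is checked by counting the $T\leftrightarrow T'$ exchanges. I expect this bookkeeping to be the main obstacle, and I would check it first for $n=2$.

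For $F'$ the argument is the same, with the inverse relations. Indeed, $F'=u\prod T^{-1}\AA$, and Lemma \ref{lem:utAB_rels} in its first form, $uT^{-1}_j\AA_ju^{-1}=\BB_jT^{-1}_{j+1}$, gives $F'^2$ as a product of $\BB_j\AA_j$'s and $T$'s. Corollary \ref{Cor:ABTrho_1_rels}, the $\rho_1$ expression $(\prod\BB\AA T^{-1})\BB_0\AA_0$, then yields $F'^2=\rho_1$.

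For $F^{-1}F'=\rho_2$ a purely algebraic derivation is unlikely, since $\rho_2$ has not been expressed in terms of $T$'s. I would instead compare matrix coefficients via Theorem \ref{thm:Fourier_transform}. That theorem gives $\langle Fx,x'\rangle=LL(x,\Theta_2x')$. The analogous graphical computation for $F'$ should give $\langle F'x,x'\rangle=LL(x,\Theta_2\rho_2x')$; that is, $F'$ is the mirror image of $F$ under $\Theta_2$. To see this, use $\Theta_2u=u^{-1}\Theta_2$ and Lemma \ref{lem:ThetaT_rel}, which give $\Theta_2F\Theta_2^{-1}$ as $u$ times a product of $T^{-1}\AA$-type words, identifiable with $F'$ after reindexing. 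Combined with the rotation covariance of the pairing $LL$ from \cite{LX19}, this yields $F'=F\rho_2$. Finally, $F\rho_2=\rho_2^{-1}F$ follows by substitution: $F\rho_2F=F'F$, and $F'F=F'F'\rho_2^{-1}=\rho_1\rho_2^{-1}$ after checking that $\rho_1$ and $\rho_2$ commute. Also $F^2=\rho_1$, so $F\rho_2F=F^2\rho_2^{-1}$, which gives the claim.
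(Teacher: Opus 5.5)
Your derivation of the third identity is correct and takes a different route from the paper. The first two parts, however, have genuine gaps.

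\textbf{$F^2=\rho_1$.} First, a slip. From $u^{-1}Pu=Q:=\prod_j\BB^{-1}_{n-2-j}T_{n-1-j}$ you get $F^2=u^{-1}Pu^{-1}P=Q\,u^{-2}P$, not $u^{-2}QP$. Moving $u^{-2}$ to the front would need it to commute with the $\BB_j$, and nothing in the paper provides that (e.g.\ $T_{j+1}\BB_j=\eta^{-m}\BB_jT'_j$). With the correct placement, the $T$'s in $Q$ slide right past the distant $\BB$'s and cancel $u^{-2}$. This gives $F^2=\bigl(\prod_j\BB^{-1}_{n-2-j}\bigr)\bigl(\prod_jT_{n-2-j}\AA^{-1}_{n-2-j}\bigr)$, as in the paper.

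The real problem is the next step, which you leave as ``bookkeeping''. To reach the form in Corollary \ref{Cor:ABTrho_1_rels}, each $\AA^{-1}_k$ must end up next to $\BB^{-1}_k$, and that forces $\AA^{-1}_{k+1}$ across $\BB^{-1}_{k}$. None of the relations you invoke does this:
\begin{itemize}
\item Lemma \ref{lem:ABTcomm_rels} gives only far commutations and $[\AA_j,\BB_{j+1}]=0$.
\item \eqref{eq:BAT_rels_2} and Propositions \ref{prop:ABC_rels} and \ref{prop:TAB_rels} concern same-index pairs.
\item In particular, $\BB_j\AA_j=T_{j+1}C_{j,j+1}$ applies only once $\BB_j$ and $\AA_j$ are already adjacent, so passing to $C$'s does not help.
\end{itemize}
The missing input is \eqref{eq:AB_rels}, $\BB^{-1}_{j}\AA^{-1}_{j+1}T'_{j+1}=T'_{j+1}\AA^{-1}_{j+1}\BB^{-1}_{j}$. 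The paper obtains it by conjugating \eqref{eq:BAT_rels_2} by $F$, using Propositions \ref{prop:FT_rels} and \ref{prop:FAB_rels}. With it, regroup the word into blocks $\BB^{-1}_{k-1}T_k\AA^{-1}_k$, rewrite $T_k\AA^{-1}_k=\eta^{-m}\AA^{-1}_kT'_k$, apply \eqref{eq:AB_rels}, and rewrite back. The $\eta$'s cancel in pairs, so scalars are not the obstacle you anticipate. Your sketch for $F'^2$ has the same gap: $F'^2$ comes out as $\bigl(\prod_j\BB_{n-2-j}\bigr)\bigl(\prod_jT^{-1}_{n-2-j}\AA_{n-2-j}\bigr)$, not as adjacent $\BB_j\AA_j$ pairs.

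\textbf{$F^{-1}F'=\rho_2$.} Your observation $F'=\Theta_2F\Theta_2^{-1}$ is right. Lemma \ref{lem:ThetaT_rel} sends $T_{k,j}\mapsto T^{-1}_{2m-2-k,j}$, so $\Theta_2T_j\Theta_2^{-1}=T_j^{-1}$, $\Theta_2\AA_j\Theta_2^{-1}=\AA_j^{-1}$, and also $\Theta_2u^{-1}\Theta_2^{-1}=u$. But turning this into $F'=F\rho_2$ means proving $\Theta_2F\Theta_2^{-1}=F\rho_2$. Via Theorem \ref{thm:Fourier_transform}, that is a statement about how $LL$ transforms when both arguments are reflected by $\Theta_2$. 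The invariance $LL(\rho_2x,\rho_2x')=LL(x,x')$ only shifts $\rho_2$ between the two slots and says nothing about reflections, so it cannot supply this. The paper instead evaluates the $F'$-pairing graphically, obtaining $\langle F'x,x'\rangle=LL(\rho_2x,\Theta_2x')$, which is equivalent to your target $LL(x,\Theta_2\rho_2x')$. You still need that computation, or a separate graphical proof of the reflection property of $LL$.

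\textbf{$F\rho_2=\rho_2^{-1}F$.} Your argument is valid:
\[
F\rho_2F=F'F=F'^2\rho_2^{-1}=\rho_1\rho_2^{-1}=F^2\rho_2^{-1},
\]
hence $\rho_2F=F\rho_2^{-1}$. No commutation of $\rho_1$ with $\rho_2$ is needed. The paper proves this identity directly from $LL(\rho_2x,\rho_2x')=LL(x,x')$ and $\rho_2\Theta_2=\Theta_2\rho_2^{-1}$, independently of the other two. Your route is shorter, but it depends on both $F'^2=\rho_1$ and $F'=F\rho_2$, so it inherits the gaps above.
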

\begin{proof}
From Lemma \ref{lem:utAB_rels}, Corollary \ref{Cor:ABTrho_1_rels} and identity \eqref{eq:AB_rels}, we have:
\[
\begin{aligned}
    F^2&=u^{-1}\prod^{n-2}_{j=0} T_{n-2-j}\AA^{-1}_{n-2-j}u^{-1}\prod^{n-2}_{j=0} T_{n-2-j}\AA^{-1}_{n-2-j},\\
    &=\prod^{n-2}_{j=0}\BB^{-1}_{n-2-j}\prod^{n-2}_{j=0} T_{n-2-j}\AA^{-1}_{n-2-j},\qquad (\text{Lem.~\ref{lem:utAB_rels}})\\
    &=\BB^{-1}_{n-2}(\prod^{n-1}_{j=0}\BB^{-1}_{n-3-j} T_{n-2-j}\AA^{-1}_{n-2-j})\AA^{-1}_0,\\
    &=\BB^{-1}_{n-2}(\prod^{n-1}_{j=0}\BB^{-1}_{n-3-j}\AA^{-1}_{n-2-j} T'_{n-2-j})\AA^{-1}_0,\\
    &=\BB^{-1}_{n-2}(\prod^{n-1}_{j=0}\AA^{-1}_{n-2-j}T_{n-2-j}\BB^{-1}_{n-3-j})\AA^{-1}_0,\qquad \eqref{eq:AB_rels}\\
    &=\rho_1. \qquad (\text{Cor.~\ref{Cor:ABTrho_1_rels}})
\end{aligned}
\]
The proof is similar for $F'^2=\rho_1$.

The proof of the second identity follows from a graphical calculus analogous to that used for $F$, we first establish the geometric interpretation of the pairing determined by $F'$, as illustrated in Figures \ref{fig:F'pairing1} and \ref{fig:F'pairing2}. It follows that, for any $x'\in Conf(\CC)_{m,n}$,
\[
<F'(x),x'>=LL(\rho_2(x),x')=<F\rho_2(x),x'>,
\]
which implies $F'=F\rho_2$.

For the last equality, since  $LL(\rho_2(x),\rho_2(x'))=LL(x,x')$ and $\rho_2\theta_2=\theta_2\rho^{-1}_2$, we have
\[
\begin{aligned}
F\rho_2(x)=&\sum_{x'\in ONB}LL(\rho_2(x),\Theta_2(x'))x',\\
=&\sum_{x'\in ONB}LL(x,\rho^{-1}_2\Theta_2(x'))x',\\
=&\sum_{x'\in ONB}LL(x,\Theta_2\rho_2(x'))x',\\
=&\sum_{x'\in ONB}LL(x,\Theta_2\rho_2(x'))\rho^{-1}_2\rho_2x',\\
=&\rho^{-1}_2F(x).   
\end{aligned}
\]
\begin{figure}
    \centering
    \includegraphics[width=0.8\linewidth]{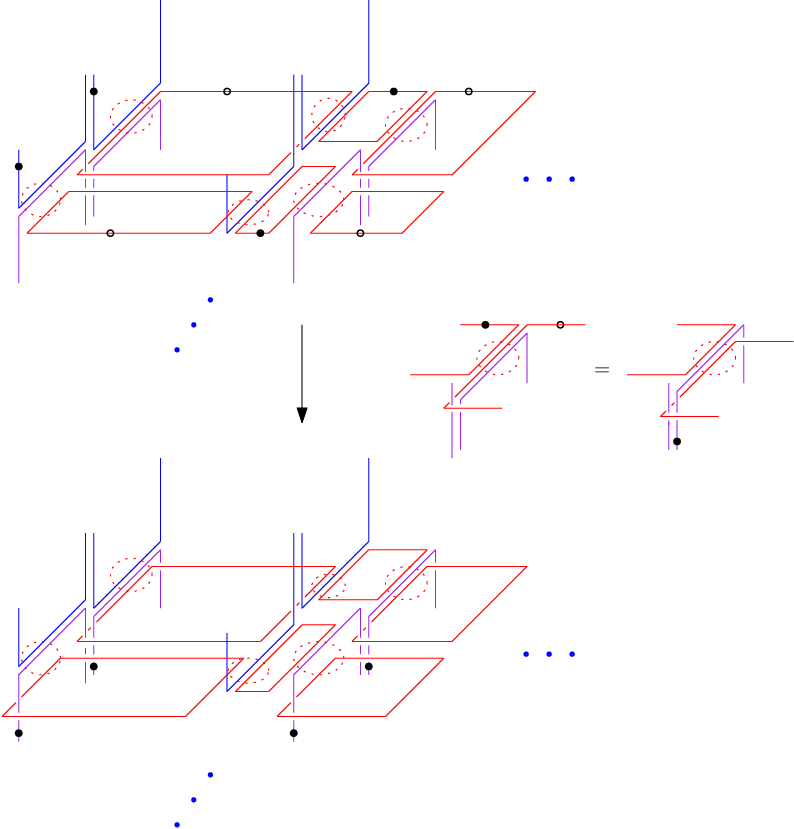}
    \caption{Graphic Calculus}
    \label{fig:F'pairing1}
\end{figure}

\begin{figure}
    \centering
    \includegraphics[width=0.8\linewidth]{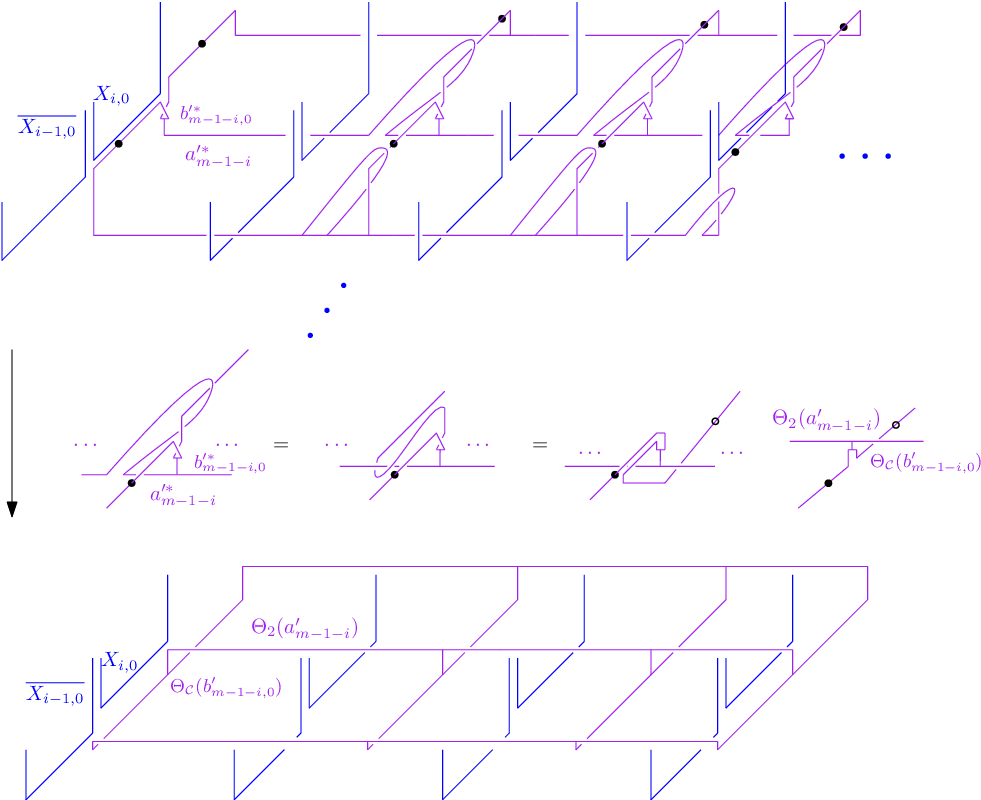}
    \caption{Graphic Calculus}
    \label{fig:F'pairing2}
\end{figure}
\end{proof}

Now, similar to Proposition \ref{prop:TAB_rels}, we have 
\begin{prop}\label{prop:F'TA_rels}
The following identities hold,
\[
\begin{aligned}
F'^{-1}T_{j}F'=\eta^{-m}T'_j,&\  F'^{-1}T'_{j}F'=\eta^{m}T_{j+1},\\
\AA^{-1}_j\AA^{-1}_{j+1}uT'_ju^{-1}&=\eta^{m}T_{j+1}\AA^{-1}_j\AA^{-1}_{j+1},\\
F'^{-1}\BB_jF'=\AA_{j+1},&\ F'^{-1}\AA_jF'=\BB_j. 
\end{aligned}
\]
\end{prop}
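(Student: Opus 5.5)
The plan is to transport everything already proved for $F$ to $F'$ through the relation $F' = F\rho_2$ from Theorem \ref{Thm:F'F_rels}. The only extra input needed is how $\rho_2$ acts on $T_j$, $T'_j$, $\AA_j$, $\BB_j$.

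\textbf{Step 1: $\rho_2$-equivariance of the operators.} The operator $\rho_2$ rotates the $X$-direction by $2\pi/m$. Reading the definitions of the $T_{i,j}$, $\rho_2$ should shift the index $i$ by $2$ modulo $2m$ while preserving parity. Hence $\rho_2 T_j \rho_2^{-1} = T_j$ and $\rho_2 T'_j\rho_2^{-1}=T'_j$, since both are products over all even (respectively all odd) $i$. I would confirm this from the pictures, or derive it from $\rho_2 = F^{-1}F'$ together with $F\rho_2=\rho_2^{-1}F$. For $\AA_j$ and $\BB_j$, a cyclic shift of the ordered product is not literally the same word. I would use the relations $T_{2i+1,j}=\eta\AA_jT_{2i+2,j}\AA_j^{-1}$ and \eqref{eq:TA_rels_1} to show the shifted word equals $\AA_j$ (and likewise for $\BB_j$ via \eqref{eq:TB_rels}). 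This is the standard fact that the Coxeter-type product $\AA_j$ conjugates each generator to the next one around the cycle.

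\textbf{Step 2: the first row.} Once Step 1 is in place, we compute
\[
F'^{-1}T_jF' = \rho_2^{-1}F^{-1}T_jF\rho_2 .
\]
Proposition \ref{prop:FT_rels} gives $F^{-1}T_jF=\eta^{-m}T'_j$, and $\rho_2$-invariance of $T'_j$ then yields the first identity. The second, $F'^{-1}T'_jF'=\eta^mT_{j+1}$, follows in the same way from $FT_{j+1}=\eta^{-m}T'_jF$.

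\textbf{Step 3: the third row.} This uses $F'^{-1}\BB_jF'=\rho_2^{-1}F^{-1}\BB_jF\rho_2$ together with Proposition \ref{prop:FAB_rels}, which gives $F^{-1}\BB_jF=\AA_{j+1}$. The $\rho_2$-invariance of $\AA_{j+1}$ from Step 1 then finishes it, and the other identity in this row is analogous. Alternatively, one can avoid $\rho_2$ altogether by writing $F'=u\prod T^{-1}_{n-2-j}\AA_{n-2-j}$ and mimicking the proof of Proposition \ref{prop:FAB_rels}. In that route one would conjugate $T_{i,j}$ by $F'$ via Lemma \ref{lem:uT_rels} and Lemma \ref{lem:utAB_rels}, obtaining analogues of Proposition \ref{prop:FT_rels} with $\eta$ replaced by $\eta^{-1}$ and the indices reflected.

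\textbf{Step 4: the middle identity.} This is the analogue of the last line of Proposition \ref{prop:TAB_rels}, and I would derive it the same way. Write $F'^{-1}T'_jF'=\eta^mT_{j+1}$ and expand $F'$. The factors $\AA_kT_k^{-1}$ with $k\ge j+2$ commute past $uT'_ju^{-1}$ by Lemma \ref{lem:uT_rels} and Lemma \ref{lem:ABTcomm_rels}. The factors with $k\le j-1$ commute with $T_{j+1}$. After cancelling, what remains is
\[
T_{j+1}^{-1}\AA_{j+1}\,T_j^{-1}\AA_j\;u^{-1}\,(uT'_ju^{-1})\,u\,\cdots .
\]
The extra $T_j$, $T_{j+1}$ factors are then absorbed using $T_j\AA_j=\eta^{-m}\AA_jT'_j$ from Proposition \ref{prop:TAB_rels} to reach $\AA_j^{-1}\AA_{j+1}^{-1}uT'_ju^{-1}=\eta^mT_{j+1}\AA_j^{-1}\AA_{j+1}^{-1}$.

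I expect Step 1 to be the main obstacle, namely the $\rho_2$-invariance of $\AA_j$ and $\BB_j$ as ordered products, along with the bookkeeping of $\eta$-powers in Step 4. If Step 1 resists, I would fall back on the direct route sketched at the end of Step 3, which only uses lemmas already proved.
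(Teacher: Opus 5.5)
\textbf{There is a genuine gap: Step 1 is never proved, and it carries the entire content of the proposition.}

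Your Steps 2 and 3 are valid reductions. But they show that Step 1 is not an auxiliary lemma. Since $F'=F\rho_2$ (Theorem \ref{Thm:F'F_rels}), Propositions \ref{prop:FT_rels} and \ref{prop:FAB_rels} turn the first and last rows of the statement into exactly $[\rho_2,T'_j]=[\rho_2,T_{j+1}]=[\rho_2,\AA_{j+1}]=[\rho_2,\BB_j]=0$. So Step 1 is equivalent to the proposition, and you do not prove it.

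\emph{Your first justification is circular.} In the paper these commutations are Corollary \ref{Cor:rho_2TABu_rels}, which is deduced \emph{from} this proposition. Deriving them from $\rho_2=F^{-1}F'$ and $F\rho_2=\rho_2^{-1}F$ does not work: $\rho_2T_j\rho_2^{-1}=F^{-1}(F'T_jF'^{-1})F$ needs precisely the $F'$-conjugation you are trying to establish, and $F\rho_2=\rho_2^{-1}F$ says nothing about $T_j$.

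\emph{Reading it off the pictures is not free either.} The basis of $Conf(\CC)_{m,n}$ is cut open between the last and first layers: $Y_{0,j}=X_{0,j}$, $Y_{m-1,j}=\overline{X_{m-2,j}}$, and $b_{0,j},b_{m-1,j}$ are identities. The generator $T_{2m-1,j}$ straddling the cut is treated separately, since its relations need handle slides over red circles. So $\rho_2$ is not a relabelling $i\mapsto i+2$. Showing $\rho_2T_{k,j}\rho_2^{-1}=T_{k+2,j}$ across the cut, with no stray power of $\eta$ (a real risk given the $\eta$-twisted braid relations), is a graphical computation you have not supplied.

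\emph{The fallback at the end of Step 3 is only a sentence, and its predicted shape is wrong.} The statement itself gives $F'^{-1}T_jF'=\eta^{-m}T'_j$, the same scalar as for $F^{-1}T_jF$. Inverting the $T_{i,j}$-level phases would give $\eta^{m}$ instead. What actually changes is the index reflection, from $k\mapsto 2m-3-k$ to $k\mapsto 2m-1-k$.

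The paper does not use $\rho_2$-invariance here. It proves the first row from the pairing picture of $F'$ (Figures \ref{fig:F'pairing1}, \ref{fig:F'pairing2}) as in Proposition \ref{prop:FT_rels}, and the $\AA,\BB$ row as in Proposition \ref{prop:FAB_rels}.

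\textbf{An algebraic repair.} Conjugate by $\Theta_2$ instead of $\rho_2$.
\begin{itemize}
\item Lemma \ref{lem:ThetaT_rel} sends $T_{k,j}\mapsto T^{-1}_{2m-2-k,j}$ (indices mod $2m$). This reverses each word, so $\Theta_2$ inverts $T_j$, $T'_j$, $\AA_j$ and $\BB_j$.
\item Together with $\Theta_2u=u^{-1}\Theta_2$, this gives $\Theta_2F\Theta_2^{-1}=F'$.
\item $\Theta_2$ is antilinear; this is forced by $\Theta_2u=u^{-1}\Theta_2$, since $u^2$ acts by twist scalars. Also $\bar\eta=\eta^{-1}$.
\item Conjugating Propositions \ref{prop:FT_rels} and \ref{prop:FAB_rels} by $\Theta_2$ therefore gives the first and last rows directly.
\item Conjugating the last identity of Proposition \ref{prop:TAB_rels} gives $W\,u^{-1}T'_ju\,W^{-1}=\eta^mT_{j+1}$ with $W=\AA_j^{-1}T_j\AA_{j+1}^{-1}T_{j+1}$. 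This is also what your expansion in Step 4 produces, and it is the paper's intermediate identity.
\end{itemize}

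\textbf{Step 4 closes with the wrong tool.} Using $T_j\AA_j=\eta^{-m}\AA_jT'_j$ would introduce $T'_j$ and a stray $\eta^{-m}$. What you need instead is:
\begin{itemize}
\item $[T_j,\AA_{j+1}]=0$ (Lemma \ref{lem:ABTcomm_rels});
\item $T_jT_{j+1}\,u^{-1}T'_ju\,(T_jT_{j+1})^{-1}=uT'_ju^{-1}$, which follows from $u^2=\prod_kT_k$ (Lemma \ref{lem:uT_rels}).
\end{itemize}
Together these turn $W$-conjugation of $u^{-1}T'_ju$ into $\AA_j^{-1}\AA_{j+1}^{-1}$-conjugation of $uT'_ju^{-1}$, which is the middle identity.
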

\begin{proof}
The proofs for the first two identities follow a similar argument to that provided in Proposition \ref{prop:FT_rels}. The third identity is derived analogously to the final identity in Proposition \ref{prop:TAB_rels}. Specifically, we first establish that:
\[
\AA^{-1}_jT_j\AA^{-1}_{j+1}T_{j+1}u^{-1}T'_ju(\AA^{-1}_jT_j\AA^{-1}_{j+1}T_{j+1})^{-1}=\eta^{m}T_{j+1},
\]
then from Lemma \ref{lem:uT_rels}, we have $T_jT_{j+1}u^{-1}T'_ju(T_jT_{j+1})^{-1}=uT'_ju^{-1}$. Combining these two identities yields the desired identity.
Finally, the proof for the last two identities follows from the arguments presented in Proposition \ref{prop:FAB_rels}.
\end{proof}

\begin{Cor}\label{Cor:rho_2TABu_rels}
The following relations hold:
\[
\begin{aligned}
[\rho_2,T_j]&=[\rho_2,T'_j]=0,\\
[\rho_2,\AA_j]&=[\rho_2,\BB_j]=0,\\
\rho_2 u&=u\rho^{-1}_2.
\end{aligned}
\]
\end{Cor}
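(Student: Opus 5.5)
The plan is to express $\rho_2$ through $F$ and $F'$ using Theorem~\ref{Thm:F'F_rels}, namely $\rho_2=F^{-1}F'$. Each commutation relation then reduces to conjugation identities already proved for $F$ and $F'$.

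\textbf{The relations $[\rho_2,T_j]=[\rho_2,T'_j]=0$.} By Proposition~\ref{prop:FT_rels} we have $FT_{j+1}F^{-1}=\eta^{-m}T'_j$ and $FT'_jF^{-1}=\eta^{m}T_j$. By Proposition~\ref{prop:F'TA_rels} we have $F'^{-1}T_jF'=\eta^{-m}T'_j$ and $F'^{-1}T'_jF'=\eta^mT_{j+1}$. Then
\[
\rho_2T'_j\rho_2^{-1}=F^{-1}F'T'_jF'^{-1}F .
\]
The second $F'$ identity rewritten gives $F'T_{j+1}F'^{-1}=\eta^{-m}T'_j$, and applying $F^{-1}(\cdot)F$ to the first $F$ identity gives $F^{-1}T'_jF=\eta^{m}T_{j+1}$. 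We then chain these carefully; the index shift $j\mapsto j+1$ produced by $F$ is compensated by the opposite shift produced by $F'$. The powers of $\eta$ should cancel ($\eta^{m}\eta^{-m}$), which gives $\rho_2T'_j\rho_2^{-1}=T'_j$. The argument for $T_j$ is the same with the roles of the identities swapped.

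The boundary indices $j=n-1$ and $j=0$ are not covered by the index ranges $0\le j\le n-2$. For these I would instead use the rotation $\rho_1=F^2$ to cyclically shift indices. Alternatively, I would argue geometrically: $\rho_2$ rotates in the $X$-direction and $T_j$ is a product over all $i$ of $T_{2i,j}$, hence invariant under the cyclic shift $i\mapsto i+1$.

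\textbf{The relations $[\rho_2,\AA_j]=[\rho_2,\BB_j]=0$.} Proposition~\ref{prop:FAB_rels} gives $F\BB_jF^{-1}=\AA_j$ and $F\AA_{j+1}F^{-1}=\BB_j$. Proposition~\ref{prop:F'TA_rels} gives $F'^{-1}\BB_jF'=\AA_{j+1}$ and $F'^{-1}\AA_jF'=\BB_j$. Hence
\[
\rho_2\AA_j\rho_2^{-1}=F^{-1}F'\AA_jF'^{-1}F=F^{-1}\BB_{j-1}F=\AA_j ,
\]
using $F'\AA_jF'^{-1}=\BB_{j-1}$ (from $F'^{-1}\BB_{j-1}F'=\AA_j$) and $F^{-1}\BB_{j-1}F=\AA_j$. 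Here no scalars appear at all. The computation for $\BB_j$ is analogous.

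\textbf{The relation $\rho_2u=u\rho_2^{-1}$.} I would write $u=F'\big(\prod_j T^{-1}_{n-2-j}\AA_{n-2-j}\big)^{-1}$ and $u^{-1}=F\big(\prod_j T_{n-2-j}\AA^{-1}_{n-2-j}\big)^{-1}$. Call these products $P'$ and $P$. By the previous two parts, $\rho_2$ commutes with $P$ and $P'$. So it suffices to show $\rho_2F'\rho_2=F'$. This follows from $F'=F\rho_2$ and $F\rho_2=\rho_2^{-1}F$ (Theorem~\ref{Thm:F'F_rels}): indeed $\rho_2F'\rho_2=\rho_2F\rho_2\rho_2=F\rho_2=F'$.

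\textbf{Expected obstacle.} I expect the main difficulty to be the index bookkeeping at the boundary values of $j$ in the first two parts, where the conjugation formulas are stated only for $0\le j\le n-2$. If the algebra there is awkward, I would fall back on the direct graphical observation that $\rho_2$ cyclically permutes the $i$-indices, and that $T_j$, $T'_j$, $\AA_j$, $\BB_j$ are invariant under this permutation up to the commutation relations of Proposition~\ref{prop:T_rels}.
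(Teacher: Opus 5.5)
Your approach is the paper's: express $\rho_2$ through $F$ and $F'$, transport the conjugation formulas of Propositions~\ref{prop:FT_rels}, \ref{prop:FAB_rels} and \ref{prop:F'TA_rels}, and then obtain $\rho_2u=u\rho_2^{-1}$ from $F\rho_2=\rho_2^{-1}F$. You write $u=F'P'^{-1}$ where the paper writes $u=PF^{-1}$, which makes no difference.

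The boundary trouble you anticipate only arises because you use $\rho_2=F^{-1}F'$ throughout. Theorem~\ref{Thm:F'F_rels} equally gives $\rho_2=FF'^{-1}$, since $F'=F\rho_2=\rho_2^{-1}F$. The paper uses this form for $T_j$ and $\AA_j$; for example, $\rho_2\AA_j=F\BB_jF'^{-1}=\AA_jFF'^{-1}$, with no index shift. That form covers $T_0$ and $\AA_0$, and you do need both, since they occur in $P'$. Without it, your proposal leaves $\AA_0$ unproved.

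Your index-shifting fallback can be made to work, most simply by conjugating with $F$ itself. First establish $[\rho_2,\BB_0]=0$ and $[\rho_2,T'_0]=0$, which involve no shift. Then use $\AA_0=F\BB_0F^{-1}$, $T_0=\eta^{-m}FT'_0F^{-1}$ and $\rho_2F=F\rho_2^{-1}$. The geometric fallback, by contrast, does not go through as stated. The shifted word contains $T_{2m-1,j}$, which $\AA_j$ does not. Every relation in Proposition~\ref{prop:T_rels} preserves the set of generators occurring in a word, so you would also need \eqref{eq:TA_rels_1}.

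There is also a bookkeeping slip in the $T'_j$ paragraph. The two identities you chain there prove $[\rho_2,T_{j+1}]=0$, which is useful because it covers $T_{n-1}$. The relation $[\rho_2,T'_j]=0$ instead comes from the other pair, $F'T'_jF'^{-1}=\eta^mT_j$ and $F^{-1}T_jF=\eta^{-m}T'_j$.
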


\begin{proof}
By Theorem \ref{Thm:F'F_rels}, we have $F'=F\rho_2=\rho_2^{-1}F$, therefore $FF'^{-1}=F^{-1}F'=\rho$. Now, from Proposition \ref{prop:FT_rels}, \ref{prop:FAB_rels} and \ref{prop:F'TA_rels}, we have:
\[
\begin{aligned}
 \rho_2T_j&=FF'^{-1}T_j=\eta^{-m}FT'_jF'^{-1}=T_jFF'^{-1}=T_j\rho_2,\\
 \rho_2T'_j &=F^{-1}F'T'_j=\eta^{m}F^{-1}T_jF'=T'_jF^{-1}F'=T_j'\rho_2,\\     
 \rho_2\AA_j&=FF'^{-1}\AA_j=F\BB_jF'^{-1}=\AA_jFF'^{-1}=\AA_j\rho_2,\\
\rho_2\BB_j&=F^{-1}F'\BB_j=F^{-1}\AA_jF'=\BB_jF^{-1}F'=\BB_j\rho_2.
\end{aligned} 
\]
Therefore, from above calculations and Theorem \ref{Thm:F'F_rels}, we have:
\[
\begin{aligned}
\rho_2u&=\rho_2(\prod^{n-2}_{j=0} T_{n-2-j}\AA^{-1}_{n-2-j})F^{-1}\\
&=(\prod^{n-2}_{j=0} T_{n-2-j}\AA^{-1}_{n-2-j})\rho_2F^{-1}\\
&=(\prod^{n-2}_{j=0} T_{n-2-j}\AA^{-1}_{n-2-j})F^{-1}\rho^{-1}_2\\
&=u\rho_2^{-1}
\end{aligned}
\]
\end{proof}

\begin{Cor}\label{Cor:ABbraid_rels}
 We have the following identities:
 \[
 \begin{aligned} T_j\AA^{-1}_jT_{j+1}\AA^{-1}_{j+1}T_j\AA^{-1}_{j}&=T_{j+1}\AA^{-1}_{j+1}T_{j}\AA^{-1}_{j}T_{j+1}\AA^{-1}_{j+1} \\
\BB^{-1}_jT_{j+1}\BB^{-1}_{j+1}T_{j+2}\BB^{-1}_jT_{j+1}&=\BB^{-1}_{j+1}T_{j+2}\BB^{-1}_jT_{j+1}\BB^{-1}_{j+1}T_{j+2}  
 \end{aligned}
 \]
\end{Cor}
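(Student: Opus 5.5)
The plan is to rewrite the third identity of Proposition \ref{prop:TAB_rels} and the third identity of Proposition \ref{prop:F'TA_rels} so that they become braid relations. The second identity of the corollary should then follow from the first by conjugating with $F$ via Proposition \ref{prop:FAB_rels}.

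For the first identity, set $X_j:=T_j\AA^{-1}_j$. The last line of Proposition \ref{prop:TAB_rels} reads
\[
X_{j+1}X_jT_{j+1}=\eta^{-m}\,uT'_ju^{-1}\,X_{j+1}X_j .
\]
Multiplying on the right by $\AA^{-1}_{j+1}$, the claim $X_jX_{j+1}X_j=X_{j+1}X_jX_{j+1}$ becomes equivalent to
\[
X_jX_{j+1}X_j=\eta^{-m}\,uT'_ju^{-1}\,X_{j+1}X_j\AA^{-1}_{j+1},
\]
that is, to
\[
\eta^{m}\,u T'^{-1}_j u^{-1} X_jX_{j+1}X_j=X_{j+1}X_j\AA^{-1}_{j+1}.
\]
To evaluate $uT'_ju^{-1}$, I will use Lemma \ref{lem:uT_rels}: it equals $\prod_i T_{2m-2-2i,j}T_{2m-4-2i,j+1}T_{2m-3-2i,j}T^{-1}_{2m-4-2i,j+1}T^{-1}_{2m-2-2i,j}$. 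Combining this with $T'_j=\eta^m\AA_jT_j\AA_j^{-1}$ and the relation $uT_j^{-1}\AA_ju^{-1}=\BB_jT^{-1}_{j+1}$ from Lemma \ref{lem:utAB_rels}, I expect to obtain $uT'_ju^{-1}=\eta^m\,(\text{conjugate of }T_{j})$ by something like $X_jX_{j+1}$.

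Concretely, the target is
\[
uT'_ju^{-1}=\eta^{m}\,(X_{j+1}X_j)\,T_j\,(X_{j+1}X_j)^{-1}\cdot(\text{correction}).
\]
This is what happens in the third identity of Proposition \ref{prop:F'TA_rels}, which says $\AA^{-1}_j\AA^{-1}_{j+1}uT'_ju^{-1}=\eta^mT_{j+1}\AA^{-1}_j\AA^{-1}_{j+1}$. Substituting this expression for $uT'_ju^{-1}$ into the displayed relation, and using the commutations of Lemma \ref{lem:ABTcomm_rels} together with $T_j\AA_j=\eta^{-m}\AA_jT'_j$, should reduce the braid relation to an identity already known. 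The scalars $\eta^{\pm m}$ must cancel; this is automatic, since each side contains the same number of $T$'s and $\AA$'s once rewritten.

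The second identity should follow by applying $F(\cdot)F^{-1}$ to the first identity with $j$ shifted. Proposition \ref{prop:FAB_rels} gives $F\AA_{j+1}F^{-1}=\BB_j$, and Proposition \ref{prop:FT_rels} gives $FT_{j+1}F^{-1}=\eta^{-m}T'_j$. Rewriting $T'_j$ via $T_{j+1}\BB_j=\eta^{-m}\BB_jT'_j$ (Proposition \ref{prop:TAB_rels}) turns $\eta^{-m}T'_j\BB_j^{-1}$ into $\BB_j^{-1}T_{j+1}$. Hence $F\,T_{j+1}\AA^{-1}_{j+1}\,F^{-1}=\BB^{-1}_jT_{j+1}$, and likewise $F\,T_{j+2}\AA^{-1}_{j+2}\,F^{-1}=\BB^{-1}_{j+1}T_{j+2}$. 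The first braid relation, applied at index $j+1$, therefore transports exactly to the second.

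The main obstacle is the bookkeeping in the first identity: matching $uT'_ju^{-1}$ against $X_{j+1}X_j$-conjugates while tracking the ordering of the noncommuting factors $\AA_j$ and $\AA_{j+1}$. I will try first to derive directly from the two "third identities" that $X_jX_{j+1}$ conjugates $T_{j+1}$ to $T_j$ (up to $\eta$-powers that cancel). This is a standard equivalent form of the braid relation, since $X_j=T_j\AA_j^{-1}$ and $T_{j+1}$ and $X_{j+1}$ differ by $\AA_{j+1}$.
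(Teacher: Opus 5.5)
Your treatment of the second identity is fine: $F\,T_{j+1}\AA^{-1}_{j+1}F^{-1}=\BB^{-1}_jT_{j+1}$ is correct, and conjugating the first identity at index $j+1$ by $F$ gives exactly the second. The gap is in the first identity, and your route there is circular.

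The two ``third identities'' you combine only record how $X_{j+1}X_j$ and $\AA_{j+1}\AA_j$ conjugate $T_{j+1}$. Eliminating $uT'_ju^{-1}$ between them gives only $[T_{j+1},\AA^{-1}_j\AA^{-1}_{j+1}X_{j+1}X_j]=0$. If you substitute this into your reformulated target $\eta^m uT'^{-1}_ju^{-1}X_jX_{j+1}X_j=X_{j+1}X_j\AA^{-1}_{j+1}$, you get back exactly $X_jX_{j+1}X_j=X_{j+1}X_jX_{j+1}$. The braid relation also needs to know how $X_{j+1}X_j$ conjugates the $\AA_{j+1}$-factor of $X_{j+1}$. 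Neither of these identities supplies that, and neither do the far commutations or $T_j\AA_j=\eta^{-m}\AA_jT'_j$.

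Your fallback target does not help either. ``$X_jX_{j+1}$ conjugates $T_{j+1}$ to $T_j$'' is not an equivalent form of the braid relation. The equivalent form of $aba=bab$ is $(ba)\,b\,(ba)^{-1}=a$ with $b=X_{j+1}$, not $T_{j+1}$. As mapping classes the two sides are twists about different pairs of branch points: $X_j$ and $X_{j+1}$ exchange $2j+1,2j+3$ and $2j+3,2j+5$ respectively, so they send the pair $\{2j+3,2j+4\}$ of $T_{j+1}$ to a pair still containing $2j+4$. So this is not a relation you can expect to derive.

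The missing idea is already in your second paragraph: use the conjugation action of $F$ on all of $X_{j+1}$. Since $F=u^{-1}X_{n-2}X_{n-3}\cdots X_0$, the identity $FX_{j+1}F^{-1}=\BB^{-1}_jT_{j+1}$ combines with the inverted second line of Lemma \ref{lem:utAB_rels}, $u\BB^{-1}_jT_{j+1}u^{-1}=T_j\AA^{-1}_j$, to give
\[
(X_{n-2}\cdots X_0)\,X_{j+1}\,(X_{n-2}\cdots X_0)^{-1}=X_j .
\]
Now $X_{j+1}$ commutes with $X_k$ for $k<j$, and $X_j$ commutes with $X_k$ for $k>j+1$. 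So this collapses to $(X_{j+1}X_j)X_{j+1}(X_{j+1}X_j)^{-1}=X_j$, which is the braid relation. This is the paper's argument; your $F$-conjugation step then yields the second identity as you describe.
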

\begin{proof}
From Proposition \ref{prop:FAB_rels}, we have: 
\[
\begin{aligned}
&FT_{j+1}\AA^{-1}_{j+1}F^{-1}=\eta^{-m}T'_j\BB^{-1}_j\\
\implies &T_{j+1}\AA^{-1}_{j+1}T_{j}\AA^{-1}_{j}T_{j+1}\AA^{-1}_{j+1}(T_{j+1}\AA^{-1}_{j+1}T_{j}\AA^{-1}_{j})^{-1}=\ u\BB^{-1}_jT_{j+1}u^{-1}= T_j\AA^{-1}_{j}
\end{aligned}
\]
\end{proof}

\section{Braiding structures and their compatibility}
In this section, we demonstrate that the operators and relations established previously endow the multi-interval Jones–Wassermann subfactor planar algebra with specific braiding structures. We further show that this construction yields a projective unitary representation of the balanced superelliptic mapping class group $\SMod(\Sigma_{(n-1)(m-1)})$ (Theorem \ref{thm:Smod_rep}). 

To begin, we recall from \cite{LX19} that the multi-interval Jones–Wassermann subfactor planar algebra is characterized as an unshaded subfactor planar algebra, with the $n$-box spaces defined by the configuration spaces $\mathcal{P}_{n}=Conf(\CC)_{m,n}$.

\begin{defn}[\cite{GW17lifting}]
Let \(\sigma\in S_{2\ell}\). We say that \(\sigma\) is
\emph{parity-preserving} if
\[
    \sigma(i)\equiv i \pmod 2
    \quad \text{for all } i,
\]
and \emph{parity-reversing} if
\[
    \sigma(i)\equiv i+1 \pmod 2
    \quad \text{for all } i.
\]
A permutation is called \emph{parity-compatible} if it is either
parity-preserving or parity-reversing. The corresponding subgroup is
denoted by
\[
    W_{2\ell}\leq S_{2\ell}.
\]
\end{defn}

\begin{defn}
Let \(p:B_{2\ell}\to S_{2\ell}\) be the natural projection. A braid
\(\beta\in B_{2\ell}\) is called \emph{parity-compatible} if
\(\rho(\beta)\in W_{2\ell}\).
The corresponding subgroup of \(B_{2\ell}\) is denoted by
\[
    B_{2\ell}^{\mathrm{par}}:=\rho^{-1}(W_{2\ell})\leq B_{2\ell}.
\]    
\end{defn}

\begin{defn}
We define the generating tangles as illustrated in Figure \ref{fig:generators}.  Note that all of these generating braiding tangles are parity-compatible.   
\end{defn}

\begin{figure}
    \centering
\includegraphics[width=0.8\linewidth]{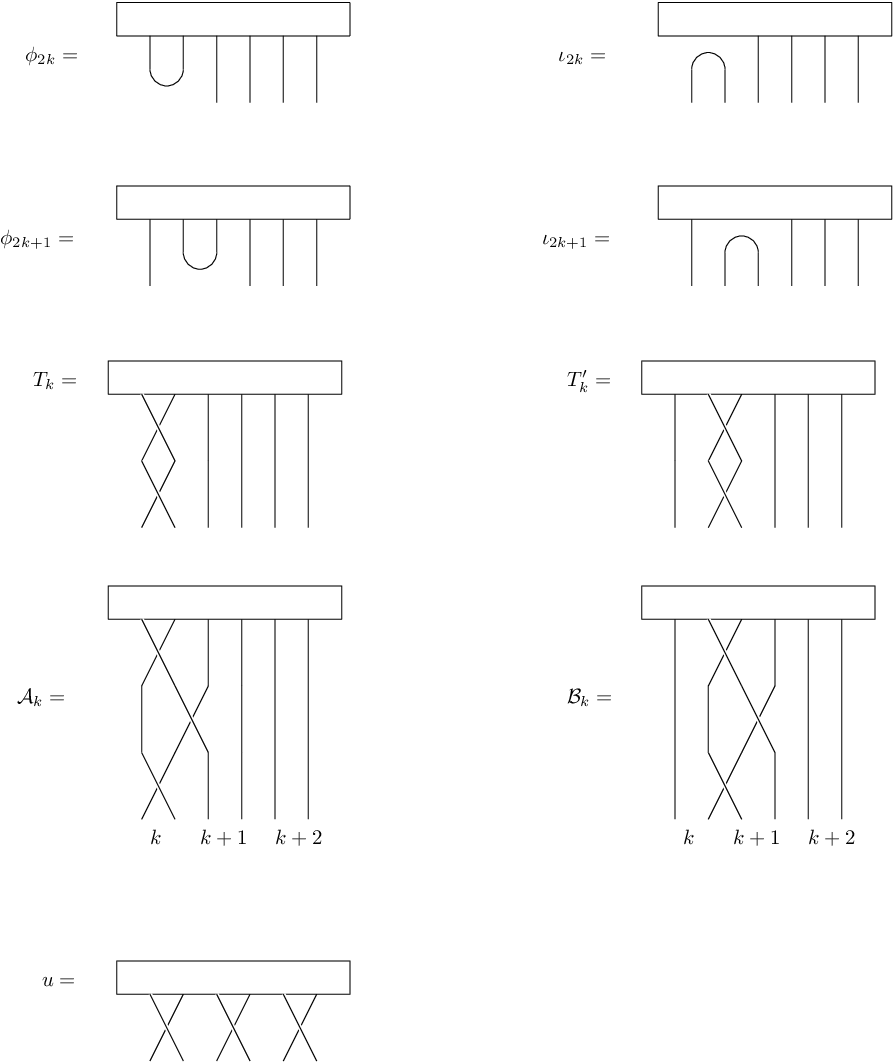}
    \caption{Generating tangles}
    \label{fig:generators}
\end{figure}

In the remainder of this section, we show that these generating braiding
tangles define a well-defined action of the parity-compatible braid
subgroup \(B_{2\ell}^{\mathrm{par}}\) on the planar algebra. We refer to
this as the parity-compatible braiding structure. Equivalently, isotopic
parity-compatible braids are assigned the same linear map. Moreover, this
action does more than satisfy the braid-group relations: it induces a projective
representation of the balanced superelliptic mapping class group.

\begin{rmk}
 The Zigzag relations for the contraction $\phi_k$ and inclusion $\iota_k$ follow directly from the definition via straightforward graphical calculus. In this framework, the relations are expressed as follows: 
 \begin{align*}
&\phi_{2k-1}\circ(\Id\otimes \iota_{2k})=(\Id\otimes \phi_{2k})\circ \iota_{2k-1}=\Id\\
&\phi_{2k-1}\circ(\iota_{2k-2}\otimes \Id )=(\phi_{2k-2}\otimes \Id)\circ \iota_{2k-1}=\Id
\end{align*}
\end{rmk}

\begin{defn}
To simplify our computations and avoid the complexities of tracking the scalar $\eta$ throughout our calculations, we adopt the following abuse of notation and normalization:

\begin{align*}
 &T_k:=\eta^{\frac{m}{2}} T_k,\ \ T_k':=\eta^{-\frac{m}{2}} T'_k,\ \ u:=\eta^{\frac{mn}{4}}u,\\
 &\AA_{k}:=\eta^{\frac{m}{4}}\AA_{k},\ \ \BB_{k}:=\eta^{\frac{m}{4}}\BB_{k}.
\end{align*}    
\end{defn}

Then we will have

\[
\AA^{\pm1}_iT_i=T_i'\AA_i^{\pm1},\quad \BB^{\pm1}_iT_{i+1}=T_i'\BB_i^{\pm1},
\]
and the only relations where $\eta$ appears will be the following (since $F_{new}=\eta^{-\frac{m}{4}}F$, $F'_{new}=\eta^{\frac{m}{4}}F'$):
\begin{align*}
 &F^2=\eta^{-\frac{m}{2}}\rho_1,\ \ (F')^2=\eta^{\frac{m}{2}}\rho_1, \\
 &\rho_2=\eta^{\frac{m}{2}}F^{-1}F'=\eta^{-\frac{m}{2}}FF'^{-1}.
\end{align*}

\subsection{Relations for the generating braiding tangles}

\begin{lem}\label{lem:Abraid_rels}
We have the following identities:
\begin{align*}
\AA^{-1}_jT_{j}\AA^{-1}_{j+1}\AA^{-1}_{j}=&\AA^{-1}_{j+1}\AA^{-1}_{j}\AA^{-1}_{j+1}T_{j+1},\\
\AA^{-1}_i\BB^{-1}_i\AA^{-1}_{i+1}\AA^{-1}_i=&\AA^{-1}_{i+1}\AA^{-1}_{i}\BB^{-1}_i\AA^{-1}_{i+1}.
\end{align*}
\end{lem}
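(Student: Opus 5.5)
We derive both identities algebraically from the relations already established, using the normalized operators (so $\AA^{\pm1}_iT_i=T'_i\AA^{\pm1}_i$ and $\BB^{\pm1}_iT_{i+1}=T'_i\BB^{\pm1}_i$ hold without scalars). For the first identity we start from the braid relation of Corollary \ref{Cor:ABbraid_rels},
\[
T_j\AA^{-1}_jT_{j+1}\AA^{-1}_{j+1}T_j\AA^{-1}_{j}=T_{j+1}\AA^{-1}_{j+1}T_{j}\AA^{-1}_{j}T_{j+1}\AA^{-1}_{j+1}.
\]
After normalization the scalars on the two sides agree (each side has three $T$'s and three $\AA^{-1}$'s), so this relation still holds. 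The goal is to move every $T$ factor to the outside, where it can be cancelled.

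The commutations needed come from Lemma \ref{lem:ABTcomm_rels}. $\AA_{j+1}$ commutes with $T_{i,j}$ for $i$ odd, and $\AA_j$ commutes with $T_{i,j+1}$ for $i$ even. Hence $\AA_{j+1}$ commutes with $T'_j$, and $\AA_j$ commutes with $T_{j+1}$. We also use the twisting rule $\AA_j^{\pm1}T_j=T'_j\AA_j^{\pm1}$.

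We rewrite the left side in three steps:
\begin{itemize}
\item Move the first $T_j$ through $\AA_j^{-1}$, turning it into $T'_j$.
\item Move $T'_j$ through $T_{j+1}$ and $\AA^{-1}_{j+1}$.
\item Move the inner $T_j$ similarly.
\end{itemize}
We do the same on the right side, moving $T_{j+1}$ past $\AA^{-1}_j$. Cancelling the common $T$-factors should reduce the relation to the first claimed identity. If the leftover $T$'s do not match directly, we can instead use Proposition \ref{prop:TAB_rels}, which relates $T_{j+1}$ to $T'_j$ conjugated by $u$, to trade one $T$ for another.

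The expected main obstacle is the commutation of $T'_j$ with $T_{j+1}$. Proposition \ref{prop:T_rels} does not give this in general, since $T_{2i+1,j}$ braids with $T_{2i,j+1}$ and $T_{2i+2,j+1}$. Where a true commutation fails, we will instead use the identity $\eta^mT_j=\AA_j^{-1}T'_j\AA_j$, i.e. $T'_j=\AA_jT_j\AA_j^{-1}$, to express everything in terms of $T_j$, $T_{j+1}$ and the $\AA$'s before cancelling. As a fallback, the first identity can be verified by conjugating with $F$: Propositions \ref{prop:FT_rels} and \ref{prop:FAB_rels} give $F\AA_{j+1}F^{-1}=\BB_j$ and $FT_{j+1}F^{-1}\propto T'_j$. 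Then the identity becomes an equivalent statement involving $\BB_j$, which can be checked against the second line of Corollary \ref{Cor:ABbraid_rels}.

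For the second identity, the plan is to apply $F$-conjugation to the first identity at level $j$ and combine it with Proposition \ref{prop:F'TA_rels}, which gives $F'^{-1}\AA_jF'=\BB_j$ and $F'^{-1}\BB_jF'=\AA_{j+1}$. Conjugating the first identity by $F'$ moves indices so that one copy of $\AA$ becomes $\BB_i$. The leftover $T$ factors are then absorbed using $\BB_iT_{i+1}=T'_i\BB_i$ together with $\AA_iT_i=T'_i\AA_i$. Alternatively, we can write $\AA^{-1}_i\BB^{-1}_i=(\BB_i\AA_i)^{-1}=C^{-1}_{i,i+1}T^{-1}_{i+1}$ by Proposition \ref{prop:ABC_rels}, up to normalization. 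The second identity then becomes a statement about how $C_{i,i+1}$ conjugates $\AA_{i+1}$ and $\AA_i$, plus the first identity. The key step is to check that $C_{i,i+1}\AA_{i+1}C^{-1}_{i,i+1}$ can be expressed via $\AA_i$, using that $C$ swaps strand indices $i,i+1$ as in \eqref{eq:BAT_rels_1}.

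Bookkeeping of indices and signs of exponents is the routine part. Matching the $T$'s cleanly is the step most likely to need adjustment.
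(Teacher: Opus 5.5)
\textbf{There is a genuine gap in both halves.}

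\textbf{First identity.} Your top-level plan (start from Corollary \ref{Cor:ABbraid_rels} and strip off the $T$'s) is the one the paper uses. However, the commutation relations you feed into it are the wrong ones, so the moves you list do not go through. By Proposition \ref{prop:T_rels}, $T_{2k+1,j}$ braids with $T_{2k,j+1}$ and $T_{2k+2,j+1}$ rather than commuting with them. Now $\AA_{j+1}$ and $T_{j+1}$ contain the even twists at level $j+1$, while $\AA_j$ and $T'_j$ contain the odd twists at level $j$. So nothing gives $[\AA_{j+1},T'_j]=0$, $[\AA_j,T_{j+1}]=0$ or $[T'_j,T_{j+1}]=0$. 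The two index conditions in the first line of Lemma \ref{lem:ABTcomm_rels} are both $j_2=j_1-1$, evidently a misprint. Read off from Proposition \ref{prop:T_rels}, the true relations are $[\AA_{j+1},T_j]=0$ and $[\AA_j,T'_{j+1}]=0$, together with $[T_j,T_{j+1}]=[T_j,T'_{j+1}]=[T'_j,T'_{j+1}]=0$.

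Consequences for your plan:
\begin{itemize}
\item Once you turn the leading $T_j$ into $T'_j$, it is stuck.
\item Rewriting $T'_j=\AA_jT_j\AA^{-1}_j$ just undoes that move.
\item The $F$-conjugation fallback only transports the same bookkeeping to the $\BB$'s.
\end{itemize}

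The missing idea is to move the $T$'s the other way.
\begin{itemize}
\item Keep the leading $T_j$ on the left. On the right-hand side, pull it leftward past $T_{j+1}\AA^{-1}_{j+1}$.
\item Push each $T_{j+1}$ rightward through $\AA^{-1}_{j+1}$. There it becomes $T'_{j+1}$, which commutes with $T_j$ and $\AA_j$.
\end{itemize}
Using the normalized rules $T_k\AA^{\pm1}_k=\AA^{\pm1}_kT'_k$ and $\AA^{\pm1}_kT_k=T'_k\AA^{\pm1}_k$, the two sides become
\[
T_j\AA^{-1}_j\AA^{-1}_{j+1}\AA^{-1}_jT'_jT'_{j+1}=T_j\AA^{-1}_{j+1}\AA^{-1}_j\AA^{-1}_{j+1}T_{j+1}T'_{j+1}.
\]
Now cancel $T_j$ and $T'_{j+1}$, rewrite $\AA^{-1}_jT'_j=T_j\AA^{-1}_j$, and commute $T_j$ past $\AA^{-1}_{j+1}$.

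\textbf{Second identity.} Your route fails for a structural reason. Conjugation by $F$ or $F'$ (Propositions \ref{prop:FAB_rels}, \ref{prop:F'TA_rels}) turns every $\AA$ into a $\BB$, and every $T$ into a multiple of a $T'$, all at once. So the first identity is carried to its pure-$\BB$ analogue, never to a relation mixing $\AA_i$, $\BB_i$, $\AA_{i+1}$. Your alternative via $C_{i,i+1}$ hinges on computing $C_{i,i+1}\AA_{i+1}C^{-1}_{i,i+1}$, and no relation available at this point does this. Equation \eqref{eq:BAT_rels_1} only says that $C_{i,i+1}$ exchanges the even twists at levels $i$ and $i+1$. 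But $\AA_{i+1}$ also contains the odd twists $T_{2k+1,i+1}$ linking levels $i+1$ and $i+2$.

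The paper instead conjugates the word $\BB^{-1}_i\AA^{-1}_{i+1}$ itself. Proposition \ref{prop:FAB_rels} gives $F\BB^{-1}_i\AA^{-1}_{i+1}F^{-1}=\AA^{-1}_i\BB^{-1}_i$. Expand $F=u^{-1}\prod_kT_k\AA^{-1}_k$:
\begin{itemize}
\item the factors with $k\le i-1$ commute with $\BB_i$ and $\AA_{i+1}$;
\item the factors with $k\ge i+2$ commute with $\AA_i$ and $\BB_i$;
\item $u$ commutes with $\BB_i\AA_i$, because Lemma \ref{lem:utAB_rels} gives $u\BB_i\AA_iu^{-1}=T_{i+1}\AA_i\BB_iT^{-1}_{i+1}$, which is $\BB_i\AA_i$ by Proposition \ref{prop:ABC_rels}.
\end{itemize}
What remains is conjugation by $T_{i+1}\AA^{-1}_{i+1}T_i\AA^{-1}_i=T_{i+1}T_i\AA^{-1}_{i+1}\AA^{-1}_i$. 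Since $T_{i+1}T_i$ commutes with $\BB_i\AA_i$ by \eqref{eq:BAT_rels_1}, this yields
\[
\AA^{-1}_{i+1}\AA^{-1}_i\,\BB^{-1}_i\AA^{-1}_{i+1}\,\AA_i\AA_{i+1}=\AA^{-1}_i\BB^{-1}_i,
\]
which is the second identity. The first identity is not needed for it.
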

\begin{proof}
Use the identity in Corollary \ref{Cor:ABbraid_rels}, we have: 
\begin{align*}
&T_j\AA^{-1}_jT_{j+1}\AA^{-1}_{j+1}T_j\AA^{-1}_{j}=T_{j+1}\AA^{-1}_{j+1}T_{j}\AA^{-1}_{j}T_{j+1}\AA^{-1}_{j+1},\\
\Leftrightarrow \quad&T_j\AA^{-1}_j\AA^{-1}_{j+1}T'_{j+1}T_j\AA^{-1}_{j}=T_{j}\AA^{-1}_{j+1}\AA^{-1}_{j}T'_{j+1}T_{j+1}\AA^{-1}_{j+1},\\
\Leftrightarrow
\quad&T_j\AA^{-1}_j\AA^{-1}_{j+1}\AA^{-1}_{j}T'_{j+1}T'_j=T_{j}\AA^{-1}_{j+1}\AA^{-1}_{j}\AA^{-1}_{j+1}T_{j+1}T'_{j+1},\\
\Leftrightarrow
\quad&\AA^{-1}_j\AA^{-1}_{j+1}\AA^{-1}_{j}T'_j=\eta^{m}\AA^{-1}_{j+1}\AA^{-1}_{j}\AA^{-1}_{j+1}T_{j+1},\\
\Leftrightarrow
\quad&\AA^{-1}_jT_{j}\AA^{-1}_{j+1}\AA^{-1}_{j}=\AA^{-1}_{j+1}\AA^{-1}_{j}\AA^{-1}_{j+1}T_{j+1}.
\end{align*}   
\end{proof}

For the second identity, according to Proposition \ref{prop:FAB_rels}, we have: 
\[
\begin{aligned}
&F\BB^{-1}_i\AA^{-1}_{i+1}F^{-1}=\AA^{-1}_i\BB^{-1}_i,\\
\implies 
\quad&T_{i+1}\AA^{-1}_{i+1}T_{i}\AA^{-1}_{i}\BB^{-1}_i\AA^{-1}_{i+1}(T_{i+1}\AA^{-1}_{i+1}T_{i}\AA^{-1}_{i})^{-1}=\AA^{-1}_i\BB^{-1}_i,\\
\implies
\quad&\AA^{-1}_{i+1}\AA^{-1}_{i}\BB^{-1}_i\AA^{-1}_{i+1}(\AA^{-1}_{i+1}\AA^{-1}_{i})^{-1}=(T_{i+1}T_{i})^{-1}\AA^{-1}_i\BB^{-1}_iT_{i+1}T_{i}=\AA^{-1}_i\BB^{-1}_i,\\
\implies
\quad&\AA^{-1}_i\BB^{-1}_i\AA^{-1}_{i+1}\AA^{-1}_i=\AA^{-1}_{i+1}\AA^{-1}_{i}\BB^{-1}_i\AA^{-1}_{i+1}. 
\end{aligned}
\]

\begin{defn}\label{def:t}
We define the following unitary operators for $(0\leq i<j\leq n-1)$, and $t_{i,i}=\Id$.
\begin{align*}
t_{2i+1,2j+1}:=
&\prod^{j-1}_{k=i}\AA^{-1}_k\prod^{i}_{k=j-1}\AA^{-1}_kT'_k,
&t'_{2i+1,2j+1}=&\prod^{i}_{k=j-1}T_k\AA^{-1}_k\prod^{j-1}_{k=i}\AA^{-1}_k,\\  
t_{2i+1,2j}:=
&(\prod^{j-2}_{k=i}\AA^{-1}_k)T^{-1}_{j-1}\prod^{i}_{k=j-2}\AA^{-1}_kT'_k
,&t'_{2i+1,2j}=&(\prod^{i}_{k=j-2}T'_k\BB^{-1}_k)T^{-1}_{i}\prod^{j-2}_{k=i}\BB^{-1}_k,\\
t_{2i+2,2j+1}:=
&(\prod^{j-2}_{k=i}\BB^{-1}_k)(T'_{j})^{-1}\prod^{i}_{k=j-2}\BB^{-1}_kT_{k+1},
&t'_{2i+2,2j+1}:=&(\prod^{i+1}_{k=j-1}T_k\AA^{-1}_k)(T'_{i})^{-1}\prod^{j-1}_{k=i+1}\AA^{-1}_k,\\  
t_{2i+2,2j}:=
&\prod^{j-2}_{k=i}\BB^{-1}_k\prod^{i}_{k=j-2}\BB^{-1}_kT_{k+1},
&t'_{2i+2,2j}:=&\prod^{i}_{k=j-2}T'_k\BB^{-1}_k\prod^{j-2}_{k=i}\BB^{-1}_k.
\end{align*}

\end{defn}

\begin{figure}
    \centering
    \includegraphics[width=1.1\linewidth]{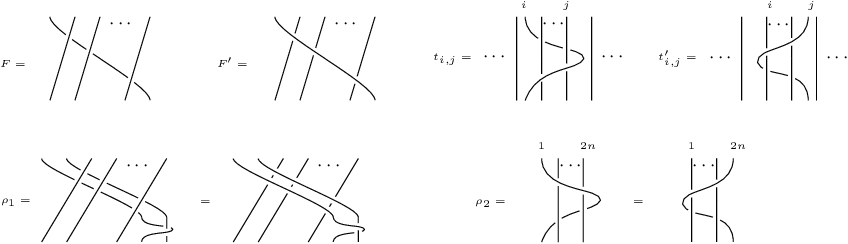}
    \caption{Some special tangles}
    \label{fig:special_tangles}
\end{figure}

It is straightforward that $Ft_{i+1,j+1}F^{-1}=t_{i,j}$ and $Ft'_{i+1,j+1}F^{-1}=t'_{i,j}$.
\begin{prop}\label{prop:t_comm_rels}
We have identities:
\begin{align}
[t_{i,j},t_{k,l}]&=0,\  for\ j<k,\  l<i\ or\  k<i<j\leq l,\\
[t'_{i,j},t'_{k,l}]&=0,\  for\ j<k,\  l<i\ or\  k\leq i<j<l,\\
[t_{i,j},t'_{k,l}]&=0,\  for\ j<k,\  l<i,\  k\leq i<j<l\ or\  i<k<l\leq j,\\
t_{i,j}t'_{i+1,j}&=t'_{i,j}t_{i,j-1}\label{eq:tt'_rels}.   
\end{align}
\end{prop}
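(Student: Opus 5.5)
The plan is to reduce the commutation relations to the locality relations already available, and then to treat \eqref{eq:tt'_rels} as the single genuinely new identity.

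\textbf{Step 1: supports of the operators.} Each $t_{i,j}$ and $t'_{i,j}$ is a word in $\AA_k,\BB_k,T_k,T'_k$ with $k$ confined to the window determined by $i$ and $j$; roughly $\lceil i/2\rceil-1\le k\le \lfloor j/2\rfloor$. By Proposition \ref{prop:T_rels} and Lemma \ref{lem:ABTcomm_rels}, operators whose index windows are at distance at least $2$ commute. This gives the disjoint cases $j<k$ and $l<i$ immediately, except at the boundary, where the windows touch at one index.

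\textbf{Step 2: the touching and nested cases.} I will handle these in two ways.
\begin{itemize}
\item For the touching disjoint cases, use the finer statements in Lemma \ref{lem:ABTcomm_rels}: for instance, $\AA_{j}$ commutes with the even $T_{i,j-1}$, and $\BB_j$ commutes with $T_{i,j+1}$. I will also use Proposition \ref{prop:TAB_rels}, in its normalized form $\AA^{\pm1}_kT_k=T'_k\AA^{\pm1}_k$ and $\BB^{\pm1}_kT_{k+1}=T'_k\BB^{\pm1}_k$, to move $T$'s across the word.
\item For the nested cases, such as $k<i<j\le l$, I interpret $t_{k,l}$ as a full twist, i.e. a Dehn twist around the curve enclosing points $k$ through $l$. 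Algebraically I will show that $t_{k,l}$ commutes with each generator $\AA_s,\BB_s,T_s$ whose window is strictly inside. To do this, I first rewrite $t_{k,l}$ as a product of a "half-twist" word $\prod\AA^{-1}$ and its reverse. Conjugating a single inner generator through this word uses the braid relations of Lemma \ref{lem:Abraid_rels} and Corollary \ref{Cor:ABbraid_rels}, exactly as $\Delta$ conjugates $\sigma_s$ to $\sigma_{n-s}$ in the braid group. Doing this for the two halves returns the generator to itself.
\end{itemize}

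\textbf{Step 3: reduction by $F$.} Both the nested argument and \eqref{eq:tt'_rels} should be reduced to low indices using the stated identity $Ft_{i+1,j+1}F^{-1}=t_{i,j}$ (and its primed version). This shift lets me assume $i\in\{1,2\}$ in the odd/even bookkeeping. Then \eqref{eq:tt'_rels} is checked by expanding both sides from Definition \ref{def:t} in the four parity cases of $(i,j)$. After cancelling common factors with the commutations from Step 1, each case reduces to one of the following:
\begin{itemize}
\item $\AA_kT_k=T'_k\AA_k$ and $\BB_kT_{k+1}=T'_k\BB_k$;
\item the mixed braid relation $\AA^{-1}_i\BB^{-1}_i\AA^{-1}_{i+1}\AA^{-1}_i=\AA^{-1}_{i+1}\AA^{-1}_{i}\BB^{-1}_i\AA^{-1}_{i+1}$ of Lemma \ref{lem:Abraid_rels};
\item the relation \eqref{eq:AB_rels}.
\end{itemize}

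\textbf{Main obstacle.} I expect the hardest part to be the mixed relations $[t_{i,j},t'_{k,l}]=0$ in the nested case $i<k<l\le j$, together with \eqref{eq:tt'_rels}. These mix $\AA$-words with $\BB$-words, and the only bridges between them are Proposition \ref{prop:ABC_rels}, \eqref{eq:BAT_rels_1}, \eqref{eq:BAT_rels_2} and \eqref{eq:AB_rels}. My first attempt will be to conjugate by $F$, using Proposition \ref{prop:FAB_rels}, to convert $\BB_j$ into $\AA_j$. This turns each primed word into an unprimed one at shifted indices, so the mixed case reduces to the unprimed nested case already proven. If the $F$-shift overshoots the boundary index $n-1$, I will instead use $F'$ and Proposition \ref{prop:F'TA_rels}, which shift in the opposite direction.
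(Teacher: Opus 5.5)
Your architecture matches the paper's: locality for the disjoint cases, commutation of $t_{i,j}$ with every generator supported strictly inside its window for the nested cases, and $F$-conjugation to handle parities. However, two of the steps as you describe them would not go through.

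\textbf{The mixed case via $F$.} Reducing $[t_{i,j},t'_{k,l}]=0$ to the unprimed case by conjugating with $F$ fails. $F$ does send $\BB_j\mapsto\AA_j$ and $\AA_{j+1}\mapsto\BB_j$, but it shifts all labels by one at the same time, and the paper records $Ft'_{i+1,j+1}F^{-1}=t'_{i,j}$. So primed stays primed. For example, the $T',\BB$-word $t'_{2i+1,2j}$ goes to $t'_{2i,2j-1}$, which is an $\AA,T$-word but still of $t'$ type. Your fallback does not help either: $F'=F\rho_2$, and $\rho_2$ commutes with all $\AA,\BB,T,T'$ (Corollary \ref{Cor:rho_2TABu_rels}), so conjugation by $F'$ coincides with conjugation by $F$ on these operators.

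Relatedly, $t_{k,l}$ is not the Dehn twist around $k,\dots,l$; that is $r_{k,l}$. From the definition of $r$ one gets $t_{k,l}=r_{k,l}r_{k+1,l}^{-1}$ (point $k$ encircling $k+1,\dots,l$) and $t'_{k,l}=r_{k,l-1}^{-1}r_{k,l}$ (point $l$ encircling $k,\dots,l-1$). This is why $t_{k,l}$ commutes only with operators supported in $[k+1,l]$, and why the hypotheses in the statement are asymmetric.

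The fix is the paper's route. Once $t_{i,j}$ is known to commute with every inner $\AA_s$, $\BB_s$, $T_s$ and also $T'_s$, every nested case, pure or mixed, is immediate. Then \eqref{eq:tt'_rels} is first rewritten as $t'_{i+1,j}t_{i,j}=t'_{i,j}t_{i,j-1}$ using that mixed commutation. After this, the odd case is a telescoping use of the first identity of Lemma \ref{lem:Abraid_rels}, and the even case follows from Proposition \ref{prop:ABC_rels}.

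\textbf{The generator-level commutations.} These do not follow from the braid relations of Lemma \ref{lem:Abraid_rels} and Corollary \ref{Cor:ABbraid_rels} alone. Those relations suffice for $\AA_l$, and that is the paper's first computation. (Note that $\prod_k\AA_k^{-1}$ behaves like $\sigma_1\cdots\sigma_r$, shifting indices by one, not like a half-twist.)

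For $T_l$ they are not enough. Pushing $T_l$ through $T_l\AA_l^{-1}T_{l-1}\AA_{l-1}^{-1}$ lands on $uT'_{l-1}u^{-1}$, which is none of your generators. To bring it back, the paper uses two identities:
\begin{itemize}
\item the last identity of Proposition \ref{prop:TAB_rels}, $T_{l}\AA^{-1}_{l}T_{l-1}\AA^{-1}_{l-1}T_{l}=\eta^{-m}uT'_{l-1}u^{-1}T_{l}\AA^{-1}_{l}T_{l-1}\AA^{-1}_{l-1}$;
\item the identity $\AA^{-1}_{l-1}\AA^{-1}_{l}uT'_{l-1}u^{-1}=\eta^{m}T_{l}\AA^{-1}_{l-1}\AA^{-1}_{l}$ from Proposition \ref{prop:F'TA_rels}.
\end{itemize}
Both come from the $F$, $F'$, $u$ relations, not from braid relations.

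Several other cases also need inputs beyond braid relations:
\begin{itemize}
\item commuting past $\BB_l$ needs \eqref{eq:BAT_rels_2} and \eqref{eq:AB_rels};
\item the edge cases $[t_{2i+1,2j+2},\BB_{j-1}]=[t_{2i+1,2j+2},T_j]=0$ need Lemma \ref{lem:utAB_rels};
\item $[t'_{2i+1,2j+1},T'_l]=0$ needs Proposition \ref{prop:FAB_rels} together with Lemma \ref{lem:utAB_rels}.
\end{itemize}
Without these inputs the nested step, and with it the proposition, is not established by your argument.
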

\begin{proof}
We begin by proving the commutation relations for $t_{i,j}$. It suffices to prove the following identities, from which the relations for $t_{2i+2,2j+1}$ and $t_{2i+2,2j}$ follow by conjugation with $F$.  
\[
\begin{aligned}
[t_{2i+1,2j+1},\AA^{-1}_k]&=[t_{2i+1,2j+1},T_{k}]=0,\ for\ i+1\leq k\leq j-1,\\
[t_{2i+1,2j+1},\BB^{-1}_k]&=[t_{2i+1,2j+1},T'_l]=0,\ for\ i\leq k\leq j-2\ and\ i\leq l\leq j-1,\\
[t_{2i+1,2j+2},\AA^{-1}_k]&=[t_{2i+1,2j+2},T_{l}]=0,\ for\ i+1\leq k\leq j-1\ and\ i+1\leq l\leq j,\\
[t_{2i+1,2j+2},\BB^{-1}_k]&=[t_{2i+1,2j+2},T'_k]=0,\ for\ i\leq k\leq j-1.
\end{aligned}
\]

Now we have:
\[
\begin{aligned}
t_{2i+1,2j+1}\AA^{-1}_l&=(\prod^{j-1}_{k=i}\AA^{-1}_k\prod^{i}_{k=j-1}T_k\AA^{-1}_k)\AA^{-1}_l,\\
&=(\prod^{j-1}_{k=i}\AA^{-1}_k)T_{j-1}\AA^{-1}_{j-1}\cdots T_{l}\AA^{-1}_{l}T_{l-1}\AA^{-1}_{l-1}\AA^{-1}_l\prod^{i}_{k=l-2}T_k\AA^{-1}_k, \\
&=(\prod^{j-1}_{k=i}\AA^{-1}_k)T_{j-1}\AA^{-1}_{j-1}\cdots \underline{T_{l}\AA^{-1}_{l}T_{l-1}\AA^{-1}_{l-1}T_l\AA^{-1}_l}T'^{-1}_l\prod^{i}_{k=l-2}T_k\AA^{-1}_k,\qquad (\text{Lem.~\ref{lem:Abraid_rels}})\\
&=\AA^{-1}_i\cdots\underline{\AA^{-1}_{l-1}\AA^{-1}_{l}T_{l-1}\AA^{-1}_{l-1}T^{-1}_l}(\prod^{j-1}_{k=l+1}\AA^{-1}_k)\prod^{i}_{k=j-1}T_k\AA^{-1}_k,\\
&=\AA^{-1}_i\cdots \AA^{-1}_{l}\AA^{-1}_{l-1}\AA^{-1}_{l}(\prod^{j-1}_{k=l+1}\AA^{-1}_k)\prod^{i}_{k=j-1}T_k\AA^{-1}_k,\\
&=\AA^{-1}_lt_{2i+1,2j+1}.
\end{aligned}
\]
Next, from Proposition \ref{prop:TAB_rels} and \ref{prop:F'TA_rels}, we have:
\[
\begin{aligned}
t_{2i+1,2j+1}T_l&=(\prod^{j-1}_{k=i}\AA^{-1}_k\prod^{i}_{k=j-1}T_k\AA^{-1}_k)T_l,\\
&=(\prod^{j-1}_{k=i}\AA^{-1}_k)T_{j-1}\AA^{-1}_{j-1}\cdots \underline{T_{l}\AA^{-1}_{l}T_{l-1}\AA^{-1}_{l-1}T_l}\prod^{i}_{k=l-2}T_k\AA^{-1}_k, \qquad (\text{Prop.~\ref{prop:TAB_rels}})\\ 
&=\eta^{-m}(\prod^{j-1}_{k=i}\AA^{-1}_k)T_{j-1}\AA^{-1}_{j-1}\cdots uT'_{l-1}u^{-1}T_{l}\AA^{-1}_{l}T_{l-1}\AA^{-1}_{l-1}\prod^{i}_{k=l-2}T_k\AA^{-1}_k, \\
&=\eta^{-m}\AA^{-1}_i\cdots\underline{\AA^{-1}_{l-1}\AA^{-1}_{l}uT'_{l-1}u^{-1}}(\prod^{j-1}_{k=l+1}\AA^{-1}_k)\prod^{i}_{k=j-1}T_k\AA^{-1}_k, \qquad(\text{Prop.~\ref{prop:F'TA_rels}})\\
&=T_lt_{2i+1,2j+1}.
\end{aligned}
\]

From Proposition \ref{prop:ABC_rels} and equation (\ref{eq:AB_rels}), we have:
\[
\begin{aligned}
t_{2i+1,2j+1}\BB^{-1}_l&=(\prod^{j-1}_{k=i}\AA^{-1}_k\prod^{i}_{k=j-1}T_k\AA^{-1}_k)\BB^{-1}_l,\\
&=(\prod^{j-1}_{k=i}\AA^{-1}_k)T_{j-1}\AA^{-1}_{j-1}\cdots T_{l+1}\AA^{-1}_{l+1}\underline{T_{l}\AA^{-1}_{l}\BB^{-1}_l}\prod^{i}_{k=l-1}T_k\AA^{-1}_k, \\ 
&=(\prod^{j-1}_{k=i}\AA^{-1}_k)T_{j-1}\AA^{-1}_{j-1}\cdots T_{l+1}\underline{\AA^{-1}_{l+1}T_{l+1}\BB^{-1}_l}\AA^{-1}_{l}\prod^{i}_{k=l-1}T_k\AA^{-1}_k,\qquad (\text{Eq.~\eqref{eq:BAT_rels_2}}) \\ 
&=\eta^{-m}(\prod^{j-1}_{k=i}\AA^{-1}_k)T_{j-1}\AA^{-1}_{j-1}\cdots \BB^{-1}_lT'_{l}T_{l+1}\AA^{-1}_{l+1}\AA^{-1}_{l}\prod^{i}_{k=l-1}T_k\AA^{-1}_k, \qquad (\text{Eq.~\eqref{eq:AB_rels}}) \\
&=\eta^{-m}\AA^{-1}_i\cdots\AA^{-1}_{l}\underline{\AA^{-1}_{l+1}\BB^{-1}_lT'_l}T^{-1}_l(\prod^{j-1}_{k=l+2}\AA^{-1}_k)\prod^{i}_{k=j-1}T_k\AA^{-1}_k,\\
&=\eta^{-m}\AA^{-1}_i\cdots\underline{\AA^{-1}_{l}T'_l\BB^{-1}_l}\AA^{-1}_{l+1}T^{-1}_l(\prod^{j-1}_{k=l+2}\AA^{-1}_k)\prod^{i}_{k=j-1}T_k\AA^{-1}_k,\\
&=\AA^{-1}_i\cdots\BB^{-1}_l\AA^{-1}_{l}T_l\AA^{-1}_{l+1}T^{-1}_l(\prod^{j-1}_{k=l+2}\AA^{-1}_k)\prod^{i}_{k=j-1}T_k\AA^{-1}_k,\\
&=\BB^{-1}_lt_{2i+1,2j+1}.
\end{aligned}
\]

The commutation relation $[t_{2i+1,2j+1},T'_l]=0$ is straightforward and follows directly from Proposition \ref{prop:TAB_rels}.
Regarding the case of $t_{2i+1,2j+2}$, the non-trivial relations reduce to $[t_{2i+1,2j+2},\BB^{-1}_{j-1}]=[t_{2i+1,2j+2},T_j]=0$. These can be derived from the following two identities:
\[\AA^{-1}_{j-1}T^{-1}_jT_{j-1}\AA^{-1}_{j-1}\BB^{-1}_{j-1}=\AA^{-1}_{j-1}\BB^{-1}_{j-1}\AA^{-1}_{j-1}=\BB^{-1}_{j-1}\AA^{-1}_{j-1}T^{-1}_jT_{j-1}\AA^{-1}_{j-1}.\qquad (\text{Eq.~\eqref{eq:BAT_rels_2}})
\]
\[
\begin{aligned}
 \AA^{-1}_{j-1}T^{-1}_jT_{j-1}\AA^{-1}_{j-1}T_{j}&=uT^{-1}_{j-1}\BB^{-1}_{j-1}T_jT_j^{-1}\BB^{-1}_{j-1}T^2_ju^{-1},\qquad (\text{Lem.~\ref{lem:utAB_rels}})\\
 &=\eta^{-m}uT^{-1}_{j-1}\BB^{-1}_{j-1}T'_{j-1}\BB^{-1}_{j-1}T^2_ju^{-1},\\
 &=uT_{j}T^{-1}_{j-1}\BB^{-1}_{j-1}\BB^{-1}_{j-1}T^2_ju^{-1},\\
 &=T_{j}\AA^{-1}_{j-1}T^{-1}_jT_{j-1}\AA^{-1}_{j-1}.   
\end{aligned}
\]

The commutation relations for the operators $t'_{i,j}$ follow a similar derivation to those of $t_{i,j}$. The only distinct case is the relation $[t'_{2i+1,2j+1},T'_l]=0$ for $i\leq l\leq j-2$, which can be deduced from the following identity:
\[
\begin{aligned}
 T_{l+1}\AA^{-1}_{l+1}T_{l}\AA^{-1}_{l}\AA^{-1}_l\AA^{-1}_{l+1}T'_l &=F\BB^{-1}_{l+1}\BB^{-1}_{l}T_{l+2}T_{l+1}\BB^{-1}_l\BB^{-1}_{l+1}T_{l+1}F^{-1},\qquad (\text{Prop.~\ref{prop:FAB_rels}})\\
 &=FuT^{-1}_{l+2}\AA^{-1}_{l+1}T_{l+1}T^{-1}_{l+1}\AA^{-1}_lT_lT_{l+2}T_{l+1}T^{-1}_{l+1}\AA^{-1}_lT_lT^{-1}_{l+2}\AA^{-1}_{l+1}T^2_{l+1}u^{-1}F^{-1},\  (\text{Lem.~\ref{lem:utAB_rels}})\\
 &=FuT^{-1}_{l+2}\AA^{-1}_{l+1}\AA^{-1}_lT_l\AA^{-1}_lT_l\AA^{-1}_{l+1}T^2_{l+1}u^{-1}F^{-1},\\
 &=FuT_{l+1}T^{-1}_{l+2}\AA^{-1}_{l+1}\AA^{-1}_lT_l\AA^{-1}_lT_l\AA^{-1}_{l+1}T_{l+1}u^{-1}F^{-1},\\
 &=T'_lT_{l+1}\AA^{-1}_{l+1}T_{l}\AA^{-1}_{l}\AA^{-1}_l\AA^{-1}_{l+1}.
\end{aligned}
\]

Now, from the commutation relations obtained above, we have:
\begin{align*}
&t_{2i+1,2j+1}t'_{2i+2,2j+1}=t'_{2i+1,2j+1}t_{2i+1,2j},\\
\Leftrightarrow\quad& t'_{2i+2,2j+1}t_{2i+1,2j+1}=t'_{2i+1,2j+1}t_{2i+1,2j},\\
\Leftrightarrow\quad& (\prod^{i+1}_{k=j-1}T_k\AA^{-1}_k)(T'_{i})^{-1}\prod^{j-1}_{k=i+1}\AA^{-1}_k\prod^{j-1}_{k=i}\AA^{-1}_k\prod^{i}_{k=j-1}\AA^{-1}_kT'_k=\prod^{i}_{k=j-1}T_k\AA^{-1}_k\prod^{j-1}_{k=i}\AA^{-1}_k(\prod^{j-2}_{k=i}\AA^{-1}_k)T^{-1}_{j-1}\prod^{i}_{k=j-2}\AA^{-1}_kT'_k,\\
\Leftrightarrow\quad& (T'_{i})^{-1}\prod^{j-1}_{k=i+1}\AA^{-1}_k\prod^{j-1}_{k=i}\AA^{-1}_kT_{j-1}\AA^{-1}_{j-1}=T_i\AA^{-1}_i\prod^{j-1}_{k=i}\AA^{-1}_k(\prod^{j-2}_{k=i}\AA^{-1}_k)T^{-1}_{j-1}.
\end{align*}
Now we have: 
\begin{align*}
(T'_{i})^{-1}\prod^{j-1}_{k=i+1}\AA^{-1}_k\prod^{j-1}_{k=i}\AA^{-1}_kT_{j-1}\AA^{-1}_{j-1}&=(T'_{i})^{-1}(\prod^{j-2}_{k=i+1}\AA^{-1}_k\prod^{j-3}_{k=i}\AA^{-1}_k)\underline{\AA^{-1}_{j-1}\AA^{-1}_{j-2}\overline{(\AA}_{j-1}T_{j-1})^2}T^{-1}_{j-1},\\
&= (T'_{i})^{-1}(\prod^{j-3}_{k=i+1}\AA^{-1}_k\prod^{j-4}_{k=i}\AA^{-1}_k)\underline{\AA^{-1}_{j-2}\AA^{-1}_{j-3}\overline{(\AA}_{j-2}T_{j-2})^2}\AA^{-1}_{j-1}\AA^{-1}_{j-2}T^{-1}_{j-1},\  (\text{Lem.~\ref{lem:Abraid_rels}})\\
&= \cdots,\\
&= (T'_{i})^{-1}(\AA^{-1}_iT_i)^2\prod^{j-1}_{k=i+1}\AA^{-1}_k(\prod^{j-2}_{k=i}\AA^{-1}_k)T^{-1}_{j-1},\\
&=T_i\AA^{-1}_i\prod^{j-1}_{k=i}\AA^{-1}_k(\prod^{j-2}_{k=i}\AA^{-1}_k)T^{-1}_{j-1}.
\end{align*}

Moreover, we have:
\[
\begin{aligned}
 &t_{2i+1,2j}t'_{2i+2,2j}=t'_{2i+1,2j}t_{2i+1,2j-1},\\
\Leftrightarrow\quad& t'_{2i+2,2j}t_{2i+1,2j}=t'_{2i+1,2j}t_{2i+1,2j-1},\\
\Leftrightarrow\quad& (\prod^{j-2}_{k=i}\BB^{-1}_k)\prod^{j-2}_{k=i}\AA^{-1}_k T^{-1}_{j-1}=T_i^{-1}\prod^{j-2}_{k=i}\BB^{-1}_k\prod^{j-2}_{k=i}\AA^{-1}_k,\\
\end{aligned}
\]   
which follows from Proposition \ref{prop:ABC_rels}.
\end{proof}

\begin{defn}
 We define the following unitary operators:
\[
 r_{i,j}:=\prod^{j}_{k=i}t_{k,j}=\prod^{j}_{k=i}t'_{i,k}.
\]
\end{defn}

\begin{proof}
In view of Proposition \ref{prop:t_comm_rels}, the product is independent of the order of its factors. First, note that $r_{i,i+1}=t_{i,i+1}=t'_{i,i+1}$. Now  
assume the relations hold for $j-i < k$, when $j-i=k$, we have: 
\[
\begin{aligned}
\prod^{j+1}_{k=i}t_{k,j+1}&=t_{i,j+1}\prod^{j+1}_{k=i+1}t_{k,j+1},\\
&=t_{i,j+1}\prod^{j+1}_{k=i+1}t'_{i+1,k}, \qquad (\text{By induction})\\
&=t'_{i,j+1}t_{i,j}\prod^{j}_{k=i+1}t'_{i+1,k},\qquad (\text{Eq.~\eqref{eq:tt'_rels}})\\
&=\cdots,\\
&=\prod^{j+1}_{k=i}t'_{i,k}.
\end{aligned}
\]
\end{proof}

\begin{prop}\label{prop:r_comm_rels}
 The following relations hold in $\End(Conf(\CC)_{m,n})$:
\[
\begin{aligned}
&r_{1,2n}=\Id,\\
&[r_{i,j},r_{k,l}]=0,\  for\ j<k,\  l<i\ or\  k\leq i<j\leq l,\\
&[r_{i,j},\prod^{j}_{k=i} t_{k,l}]=[r_{i,j},\prod^{j}_{k=i} t'_{s,k}]=0,\  for\ j\leq l, s\leq i.
\end{aligned}
\]
\end{prop}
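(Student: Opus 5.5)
The plan has two parts: the identity $r_{1,2n}=\Id$, which needs a genuine computation, and the commutation relations, which I expect to follow formally from Proposition \ref{prop:t_comm_rels}.

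\textbf{The commutation relations.} Both $r_{i,j}$ and the products $\prod_{k=i}^{j}t_{k,l}$, $\prod_{k=i}^{j}t'_{s,k}$ are products of the $t$'s and $t'$'s, so I will reduce every claimed relation to pairwise commutations.

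For disjoint index intervals ($j<k$ or $l<i$), I write $r_{i,j}=\prod_k t_{k,j}$ and $r_{k,l}=\prod_p t_{p,l}$. Every pair of factors then satisfies the first commutation relation of Proposition \ref{prop:t_comm_rels}, so the products commute.

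For nested intervals $k\le i<j\le l$, I use the two expressions for $r$ from the preceding definition: $r_{i,j}=\prod_p t'_{i,p}$ and $r_{k,l}=\prod_q t_{q,l}$. The needed pairwise commutation $[t_{q,l},t'_{i,p}]=0$ does not come directly from Proposition \ref{prop:t_comm_rels} in every index configuration. In particular, the boundary cases $q=i$ or $p=l$ are not covered. I will handle these by induction on the interval length using \eqref{eq:tt'_rels}, which lets a factor $t_{i,j}t'_{i+1,j}$ be rewritten as $t'_{i,j}t_{i,j-1}$. Equivalently, I will show that $r_{i,j}$ commutes with each generator $\AA_k,\BB_k,T_k,T'_k$ whose indices lie strictly inside the interval. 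This is the same style of argument as the proof of Proposition \ref{prop:t_comm_rels} (Lemma \ref{lem:Abraid_rels}, Propositions \ref{prop:TAB_rels}, \ref{prop:F'TA_rels}), and it gives the nested case directly.

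The last relation, $[r_{i,j},\prod_{k=i}^{j}t_{k,l}]=0$ for $j\le l$, is treated the same way. I will observe that $\prod_{k=i}^{j}t_{k,l}=r_{i,l}\,r_{j+1,l}^{-1}$. Both $r_{i,l}$ and $r_{j+1,l}$ commute with $r_{i,j}$: the first by the nested case, the second by the disjoint case. The same argument applies to the $t'$ version.

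\textbf{The identity $r_{1,2n}=\Id$ (main obstacle).} Geometrically, $r_{1,2n}$ is the full twist of all $2n$ strands, which in the unshaded planar algebra should act as a rotation by $2\pi$, hence trivially. Algebraically, I will:
\begin{enumerate}
\item expand $r_{1,2n}=\prod_{k=1}^{2n}t_{k,2n}$ using Definition \ref{def:t}, grouping odd and even indices;
\item rewrite the result in terms of $\AA_j,\BB_j,T_j$;
\item use Proposition \ref{prop:ABC_rels} to replace each $\BB_j\AA_j$ by $T_{j+1}C_{j,j+1}$;
\item recognize the resulting product of the $C_{j,j+1}$ and $T$'s as a power of $\rho_1$ via Corollary \ref{Cor:ABTrho_1_rels}, presumably $\rho_1^{n}$ up to the normalizing $\eta$-factors.
\end{enumerate}
Since $\rho_1$ is rotation by $2\pi/n$, $\rho_1^n=\Id$.

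An alternative route uses $F^2=\eta^{-m/2}\rho_1$ (Theorem \ref{Thm:F'F_rels}) together with $Ft_{i+1,j+1}F^{-1}=t_{i,j}$. This lets me express $r_{1,2n}$ as a product of conjugates of $u$ and $F$, since $F$ is itself built from $u$ and the $T_k\AA_k^{-1}$.

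The main risk is bookkeeping. I must check that the $\eta$-normalizations in the redefined $T_k,\AA_k,\BB_k,u$ cancel exactly, and that the product telescopes correctly across the boundary index $2m-1$ (the cyclic indices). I will verify this first for $n=2$ as a sanity check before doing the general telescoping.
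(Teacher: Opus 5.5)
Your disjoint case is correct. So is your reduction of the last relation to the other two via $\prod_{k=i}^{j}t_{k,l}=r_{i,l}\,r_{j+1,l}^{-1}$; this is legitimate because any two $t_{\cdot,l}$ commute, by the clause $k<i<j\le l$ of Proposition \ref{prop:t_comm_rels}, and the same holds for the $t'_{s,\cdot}$.

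\textbf{The nested case $k\le i<j\le l$ has a gap.} Write $r_{k,l}=\prod_q t_{q,l}$ and $r_{i,j}=\prod_p t'_{i,p}$. The pairs not covered by Proposition \ref{prop:t_comm_rels} are all those with $i\le q\le p$, not only $q=i$ or $p=l$. For $i<q<p<l$ the two index intervals cross, and the corresponding braid-group elements do not commute. So no induction with \eqref{eq:tt'_rels} can rescue a factor-by-factor argument.

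Your fallback (an $r$ commutes with all $\AA_s,\BB_s,T_s,T'_s$ inside its interval) is a different statement, and it would be a new computation. The proof of Proposition \ref{prop:t_comm_rels} gives it only for a single $t_{a,b}$ and generators inside $[a,b]$, and that does not pass to the product $r_{k,l}$.

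The missing idea is a regrouping: $r_{k,l}=\bigl(\prod_{q=k}^{i-1}t_{q,l}\bigr)\,r_{i,l}$.
\begin{itemize}
\item Each $t_{q,l}$ with $q<i$ commutes with every $t'_{i,p}$, $i<p\le l$, by the last clause of the mixed relation. Hence it commutes with $r_{i,j}$.
\item $r_{i,l}=\prod_{p=i}^{l}t'_{i,p}$ and $r_{i,j}=\prod_{p=i}^{j}t'_{i,p}$ commute, since $t'_{i,p}$ and $t'_{i,p'}$ commute by the clause $k\le i<j<l$.
\end{itemize}
That is all the paper's one-line appeal to Proposition \ref{prop:t_comm_rels} amounts to.

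\textbf{The identity $r_{1,2n}=\Id$.} Your target matches the paper's: reduce everything to the $T_l$ and the $C_{l,l+1}$. But steps 2--3 carry all the work and your plan does not say how to do them. In the expansion $\BB_j$ and $\AA_j$ are not adjacent, so Proposition \ref{prop:ABC_rels} cannot just be substituted.

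The paper's mechanism is the route you list as an alternative:
\begin{itemize}
\item Group $r_{1,2n}$ into pairs $t_{2k-1,2n}t_{2k,2n}=F^{-2(k-1)}\,t_{1,2n-2k+2}t_{2,2n-2k+2}\,F^{2(k-1)}$.
\item Starting from $t_{1,2}t_{2,2}=T_0^{-1}$, prove by induction that $t_{1,2k+2}t_{2,2k+2}=C_{0,1}^{-1}F^{-2}(t_{1,2k}t_{2,2k})F^{2}C_{0,1}^{-1}$.
\item The induction uses $t_{2,2k+2}=T_1\BB_0^{-1}t_{4,2k+2}\BB_0^{-1}$, $t_{1,2k+2}=\AA_0^{-1}t_{3,2k+2}\AA_0^{-1}T'_0$, and $T_1\BB_0^{-1}\AA_0^{-1}=C_{0,1}^{-1}=\AA_0^{-1}T'_0\BB_0^{-1}$.
\end{itemize}

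Your closing step differs from the paper's and is valid. The paper notes that the $C$'s assemble into a full twist on each $a_i$, which the $T_l^{-1}$ cancel by the twist property. You instead read off a power of $\rho_1$, and that power is $\rho_1^{-n}$. Indeed, Corollary \ref{Cor:ABTrho_1_rels} gives $\rho_1=\gamma T_0$ with $\gamma=C_{n-2,n-1}\cdots C_{0,1}$, so $\rho_1^{n}=\gamma^{n}\prod_l T_l$, and $\gamma^n$ is the full twist. You thus trade the twist computation for the fact that $\rho_1$ is the two-click rotation, which has order $n$.

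\textbf{Scalars: do not expect them to cancel exactly.} For $n=2$ the definitions give $r_{1,4}=\AA_0^{-1}T_1^{-1}\AA_0^{-1}T'_0\BB_0^{-2}$. The renormalized relations turn this into $(\BB_0\AA_0)^{-2}=(T_1T_0C_{0,1}^{2})^{-1}$. But $\rho_1=C_{0,1}T_0$ in Corollary \ref{Cor:ABTrho_1_rels} uses the un-renormalized $T_0$. Comparing the two gives $r_{1,4}=\eta^{-m}\rho_1^{-2}=\eta^{-m}\Id$, and in general the stated relations give $r_{1,2n}=\eta^{-mn/2}\Id$. The identity therefore holds only projectively. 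That is all Theorem \ref{thm:Smod_rep} needs, and the paper's proof does not track this scalar either.
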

\begin{proof}
We first prove the following identity:
\[
t_{1,2k}t_{2,2k}=T_0^{-1}\prod^{k-2}_{l=0}C_{l,l+1},
\]
the identity holds when $k=1$, suppose it holds for $k$, then we have:
\[
\begin{aligned}
t_{1,2k+2}t_{2,2k+2}=&t_{1,2k+2}T_1\BB^{-1}_0t_{4,2k+2}\BB^{-1}_0,\\
=&T_1\BB^{-1}_0t_{1,2k+2}t_{4,2k+2}\BB^{-1}_0,\\
=&T_1\BB^{-1}_0\AA^{-1}_0t_{3,2k+2}t_{4,2k+2}\AA^{-1}_0T'_0\BB^{-1}_0,\\
=&C^{-1}_{0,1}F^{-2}t_{1,2k}t_{2,2k}F^2C^{-1}_{0,1},\\
=&C^{-1}_{0,1}T_1^{-1}\prod^{k-1}_{l=1}C^{-1}_{l,l+1}(\prod^{1}_{l=k-1}C^{-1}_{l,l+1})C^{-1}_{0,1},\\
=&T_0^{-1}\prod^{k-1}_{l=0}C^{-1}_{l,l+1}\prod^{0}_{l=k-1}C^{-1}_{l,l+1}.
\end{aligned}
\]
 Therefore, we have:
\[
\begin{aligned}
r_{1,2n}&=\prod^{2n}_{k=1}t_{2k-1,2n}t_{2k,2n},\\ 
&=\prod^{n-1}_{l=0}T_l^{-1}\prod^{n-2}_{s=0}(\prod^{n-2}_{l=s}C^{-1}_{l,l+1}\prod^{s}_{l=n-2}C^{-1}_{l,l+1}).
\end{aligned}
\]
Recall that the $C_{i,i+1}$ are braids in $\CC^{\boxtimes m}$ satisfying the standard braid relations. Consequently, the operator represented by the right-hand side corresponds to a full right twist applied to each $a_i\in \Hom_{\CC}(\mathbbm{1}, \bigotimes^{n-1}_{j=0} Y_{i,j})$ for $0\leq i\leq m-1$, which evaluates to the identity.
The commutation relation follows from the first identity and Proposition \ref{prop:t_comm_rels}.
\end{proof}

\begin{lem}\label{lem:rho_2tr_rels}
The following relations hold in $\End(Conf(\CC)_{m,n})$:
\begin{align*}
 &\eta^{\frac{m}{2}}t^{-1}_{1,2n}=\eta^{-\frac{m}{2}}t'_{1,2n}=\rho_2, &&\eta^{-\frac{m}{2}}r^{-1}_{1,2n-1}=\eta^{\frac{m}{2}}r_{2,2n}=\rho_2, \\
 &t^{-1}_{2j+1,2n}=\eta^{-\frac{m}{2}}t'_{1,2j+1}\rho_2,
 &&t^{-1}_{2j,2n}=\eta^{-\frac{m}{2}}t'_{1,2j}\rho^{-1}_2,\\
 &r_{1,2k}=\eta^{-m(n-k)}r_{2k+1,2n},
 &&r_{1,2k+1}=\eta^{-m(n-k-\frac{1}{2})}r_{2k+2,2n}\rho^{-1}_2.
\end{align*}
\end{lem}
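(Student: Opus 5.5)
The first identity is the anchor; the rest will be derived from it algebraically.

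\textbf{Step 1: $t_{1,2n}$ and $t'_{1,2n}$.} Write out $t_{1,2n}=t_{2\cdot 0+1,2(n-1)+2}$ from Definition \ref{def:t}. Pair the factors into blocks $\AA^{-1}_k T'_k$ and $T_k\AA^{-1}_k$. Then rewrite using:
\begin{itemize}
\item $\AA^{\pm1}_kT_k=T'_k\AA^{\pm1}_k$ (normalized form);
\item Lemma \ref{lem:utAB_rels}, to trade $T_{k}^{-1}\AA_k$ for $u^{-1}\BB_kT^{-1}_{k+1}u$ up to conjugation.
\end{itemize}
The target is an expression involving $u$, $F=u^{-1}\prod T_{n-2-j}\AA^{-1}_{n-2-j}$ and $F'=u\prod T^{-1}_{n-2-j}\AA_{n-2-j}$. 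Concretely, I expect $t'_{1,2n}$ to be essentially $F'F^{-1}$ or $F^{-1}F'$, up to the normalizing scalars. Theorem \ref{Thm:F'F_rels}, in normalized form $\rho_2=\eta^{\frac m2}F^{-1}F'=\eta^{-\frac m2}FF'^{-1}$, then gives $\eta^{-\frac m2}t'_{1,2n}=\rho_2$.

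For $t_{1,2n}$ I would argue in one of two ways:
\begin{itemize}
\item directly, via the analogous computation;
\item via the relation \eqref{eq:tt'_rels} together with $r_{1,2n}=\Id$ (Proposition \ref{prop:r_comm_rels}), which should relate $t_{1,2n}$ to the inverse of $t'_{1,2n}$ after accounting for $F$-conjugation.
\end{itemize}
Tracking the $\eta$ powers from the normalization ($T\mapsto\eta^{m/2}T$, $\AA\mapsto\eta^{m/4}\AA$, $u\mapsto\eta^{mn/4}u$) is the bookkeeping part; Corollary \ref{Cor:rho_2TABu_rels} ensures $\rho_2$ commutes with everything appearing.

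\textbf{Step 2: the shifted identities for $t_{2j+1,2n}$ and $t_{2j,2n}$.} Use $Ft_{i+1,j+1}F^{-1}=t_{i,j}$ repeatedly together with $F\rho_2=\rho_2^{-1}F$ from Theorem \ref{Thm:F'F_rels}. Because conjugation by $F$ inverts $\rho_2$, a parity shift explains the $\rho_2$ versus $\rho_2^{-1}$ alternation.

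Alternatively, I would use \eqref{eq:tt'_rels} inductively in $j$: from $t_{i,j}t'_{i+1,j}=t'_{i,j}t_{i,j-1}$ one peels $t_{1,2n}$ into products. A cleaner route is to show
\[
t'_{1,k}\,t_{k,2n}=t'_{1,k-1}\,t_{k-1,2n}\cdot(\text{known}),
\]
with base case $k=1$ given by Step 1. The combinatorial identity is the one used in proving $\prod t_{k,j}=\prod t'_{i,k}$.

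\textbf{Step 3: the $r$ identities.} Start from $r_{1,2n}=\Id$ and expand $r_{1,2n}=\prod_k t'_{1,k}=\prod_k t_{k,2n}$. Splitting the range at $2k$ gives $r_{1,2k}\cdot(\text{rest})$. Using the commutation relations in Proposition \ref{prop:r_comm_rels}, the rest is expressed through $r_{2k+1,2n}$ and the $t_{i,2n}$ with $i\le 2k$. Step 2 converts those $t_{i,2n}$ into $t'_{1,i}$ times powers of $\rho_2$ and $\eta$. The alternating $\rho_2^{\pm1}$ cancel in pairs for even length, giving $r_{1,2k}=\eta^{-m(n-k)}r_{2k+1,2n}$; an odd length leaves one $\rho_2^{-1}$. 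The special cases $r_{1,2n-1}$ and $r_{2,2n}$ then follow from $r_{1,2n}=\Id$ with $t'_{1,2n}=\eta^{m/2}\rho_2$, respectively $t_{1,2n}=\eta^{m/2}\rho_2^{-1}$.

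\textbf{Main obstacle.} I expect Step 1 to be the hardest part. It requires recognizing $t_{1,2n}$ as $F^{-1}F'$ (or its variant) through a telescoping rewrite with Lemma \ref{lem:utAB_rels} and \eqref{eq:AB_rels}, while getting the $\eta$ exponents exactly right. If the algebraic route stalls, I would fall back on a graphical calculus check that the tangle $t'_{1,2n}$ is a $2\pi/m$ rotation about the $X$-direction, as for $F'$ in Figures \ref{fig:F'pairing1}--\ref{fig:F'pairing2}.
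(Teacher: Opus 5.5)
\paragraph{Where the proposal falls short.} Your Steps 1 and 3 follow the paper's route. The first identity comes from expressing $t_{1,2n}$ and $t'_{1,2n}$ through $F,F'$ together with $\rho_2=\eta^{\frac m2}F^{-1}F'=\eta^{-\frac m2}FF'^{-1}$. The $r$-identities come from $r_{1,2n}=\Id$ after converting factors with the second-row identities.

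The gap is Step 2, where neither of your mechanisms can work.
\begin{itemize}
\item Conjugation by $F$ sends $t_{i,j}$ to $t_{i-1,j-1}$, so it moves \emph{both} endpoints. Applied to $t_{1,2n}$ or $t'_{1,2n}$, it immediately leaves the range $1\le i<j\le 2n$ on which these operators are defined. It can never reach $t_{2j+1,2n}$ or $t'_{1,2j}$, each of which keeps one endpoint of the base case fixed.
\item The induction through \eqref{eq:tt'_rels} is not specified, and it cannot work as sketched. The target says $t'_{1,k}t_{k,2n}$ equals $\eta^{\frac m2}\rho_2^{-1}$ for $k$ odd and $\eta^{\frac m2}\rho_2$ for $k$ even. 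So your ``(known)'' factor would have to be $\rho_2^{\pm2}$. But \eqref{eq:tt'_rels} only relates $t_{i,j}$, $t'_{i+1,j}$, $t'_{i,j}$ and $t_{i,j-1}$, and gives nothing of that kind.
\end{itemize}

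\paragraph{The missing idea.} It is a factorization read off directly from Definition \ref{def:t}.

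For odd indices, put $A=\AA_0^{-1}\cdots\AA_{j-1}^{-1}$ and $B=\AA_{j-1}^{-1}T'_{j-1}\cdots\AA_0^{-1}T'_0$. Then $t_{1,2n}=A\,t_{2j+1,2n}\,B$. Since $\AA_k^{-1}T'_k=T_k\AA_k^{-1}$ in the normalized form, $BA=t'_{1,2j+1}$. Therefore, using Step 1 and Corollary \ref{Cor:rho_2TABu_rels},
\[
t'_{1,2j+1}\,t_{2j+1,2n}=B\,(A\,t_{2j+1,2n}\,B)\,B^{-1}=B\,t_{1,2n}\,B^{-1}=\eta^{\frac m2}\rho_2^{-1}.
\]

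For even indices, put $C=T'_{n-2}\BB_{n-2}^{-1}\cdots T'_{j-1}\BB_{j-1}^{-1}$ and $D=\BB_{j-1}^{-1}\cdots\BB_{n-2}^{-1}$. Then $t'_{1,2n}=C\,t'_{1,2j}\,D$. Using $\BB_k^{-1}T_{k+1}=T'_k\BB_k^{-1}$, one gets $DC=t_{2j,2n}$. Hence $t_{2j,2n}\,t'_{1,2j}=D\,t'_{1,2n}\,D^{-1}=\eta^{\frac m2}\rho_2$.

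So the $\rho_2^{\pm1}$ alternation reflects whether you peel $t_{1,2n}$ or $t'_{1,2n}$. It does not come from $F\rho_2=\rho_2^{-1}F$.

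\paragraph{Step 3 and the scalars.} With Step 2 in place, Step 3 goes through as in the paper, provided you split the $t'$-product: $r_{1,2k}=r_{1,2n}\prod_{s=2k+1}^{2n}t'^{-1}_{1,s}$. If you instead convert the $t_{s,2n}$ with $s\le 2k$, you get $\eta^{mk}$ in place of $\eta^{-m(n-k)}$. The discrepancy $\eta^{mn}$ indicates that the normalized scalars in this section are only reliable up to phase.
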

\begin{proof}
The first identity follows from the definition of $F,F'$ and $\rho_2=F^{-1}F'=FF'^{-1}$. For the second identity, we have
\[
\begin{aligned}
 &\rho_2=\eta^{\frac{m}{2}}t^{-1}_{1,2n}=\eta^{\frac{m}{2}}(\prod^{j-1}_{k=0}\AA_kT^{-1}_k)t^{-1}_{2j+1,2n}\prod^{0}_{k=j-1}\AA_k,\\ 
 \implies&\ t^{-1}_{2j+1,2n}=\eta^{-\frac{m}{2}}(\prod^{0}_{k=j-1}\AA^{-1}_kT'_k)\rho_2\prod^{j-1}_{k=0}\AA^{-1}_k=\eta^{-\frac{m}{2}}\rho_2t'_{1,2j+1},\\
 &\rho_2=\eta^{-\frac{m}{2}}t'_{1,2n}=\eta^{-\frac{m}{2}}(\prod^{j-1}_{k=n-2}T'_k\BB^{-1}_k)t'_{1,2j}\prod^{n-2}_{k=j-1}\BB^{-1}_k,\\
 \implies&\ t'_{1,2j}=\eta^{\frac{m}{2}}(\prod^{n-2}_{k=j-1}T^{-1}_{k+1}\BB_k)\rho_2\prod^{j-1}_{k=n-2}\BB_k=\eta^{\frac{m}{2}}\rho_2t^{-1}_{2j,2n}.
\end{aligned}
\]

The third relation follows from Proposition \ref{prop:r_comm_rels}, 
\[
r_{1,2n-1}=r_{1,2n}t'^{-1}_{1,2n}=\Id\rho_2^{-1}=\rho_2^{-1}.
\]

The final two relations follow similarly from Proposition \ref{prop:r_comm_rels}, By applying the second identity, we obtain:
\[
\begin{aligned}
r_{1,2k}&=r_{1,2n}\prod^{2n}_{s=2k+1}t'^{-1}_{1,s},\\
&=\prod^{n-1}_{s=k}t'^{-1}_{1,2s+1}t'^{-1}_{1,2s+2},\\
&=\prod^{n-1}_{s=k}\eta^{-m}\rho_2t_{2s+1,2n}\rho^{-1}_2t_{2s+2,2n},\\
&=\eta^{-m(n-k)}\prod^{2n}_{s=2k+1}t_{s,2n}=\eta^{-m(n-k)}r_{2k+1,2n}.
\end{aligned}
\]

Similarly, we have: 
\[
\begin{aligned}
 r_{1,2k+1}&=\prod^{2n}_{s=2k+2}t'^{-1}_{1,s},\\
 &=\eta^{-m(n-k-1)}t'^{-1}_{1,2k+2}r_{2k+3,2n},\\
 &=\eta^{-m(n-k-\frac{1}{2})}\rho^{-1}_2r_{2k+2,2n}.
\end{aligned}
\]
\end{proof}

Now from the definition of $\rho_2$, we have:
\begin{equation}\label{r_powerthree_to_1}
\Id=\rho^m_2=\eta^{-\frac{3m}{2}}r^{-3}_{1,2n-1}\implies r^3_{1,2n-1}=\eta^{-\frac{3m}{2}}\Id.   
\end{equation}

Recall from Proposition \ref{prop:ABC_rels} that the standard braidings of $\CC^{\boxtimes m}$ can be represented as operators within $\End(Conf(\CC)_{m,n})$. Furthermore, using the planar tangle interpretation illustrated in Figure \ref{fig:connect_two_braidings}, we establish a direct correspondence between the braiding of double strings and the braiding of $\CC^{\boxtimes m}$ (cf. \cite[Prop.~4.17]{LR24}).

\begin{figure}
    \centering
    \includegraphics[width=0.6\linewidth]{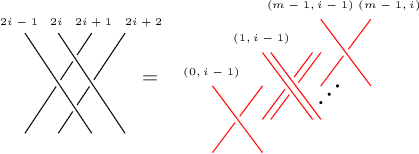}
    \caption{Braiding of the double strings equals the braiding of $\CC^{\boxtimes n}$}
    \label{fig:connect_two_braidings}
\end{figure}

\begin{defn}
Let $\Delta_n$ be the unitary action of half-full-twist in the braid group of $n$-strands on the Hom space of $\CC$, and it follows readily from the graphical calculus that $[\Delta_n^{\boxtimes m},u]=0$. We define the following unitary operator in $\End(Conf(\CC)_{m,n})$:
\[
\Delta:=\Delta_n^{\boxtimes m}u=u\Delta_n^{\boxtimes m}.
\]
\end{defn}

Given that $\Delta^2_n$ represents the full twist on $n$ strands, and utilizing the twist property, we can express this operator as the product of individual twists acting on each strand. This leads to the following identity: 
\begin{equation}\label{delta_square_to_1}
 \Delta^2=\Id.   
\end{equation}

\begin{lem}\label{lem:Delta_ABT_rels}
 We have the following relations:
 \[
 \begin{aligned}
  T_i\Delta=\Delta T_{n-1-i},&\quad  T'_i\Delta=\Delta T'_{n-2-i}, \\
  \BB_i\Delta=\Delta\AA_{n-2-i},&\quad \AA_i\Delta=\Delta\BB_{n-2-i}.\\
 \end{aligned}
 \]
\end{lem}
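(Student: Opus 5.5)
We reduce the four relations to two relations, which we prove by graphical calculus. The relevant facts are $\Delta = \Delta_n^{\boxtimes m}u$ with $[\Delta_n^{\boxtimes m},u]=0$ and $\Delta^2=\Id$.

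\textbf{Step 1: the elementary operators.} The first step is to understand how conjugation by $\Delta$ acts on the elementary operators $T_{i,j}$. By Lemma \ref{lem:uT_rels}, conjugation by $u$ already reflects the index $i$ for even $i$: $uT_{2i,j}u^{-1}=T_{2m-2-2i,j}$. It keeps the $Y$-index $j$ fixed. The half twist $\Delta_n^{\boxtimes m}$ acts on each $a_i\in\Hom(\mathbbm{1},\bigotimes_j Y_{i,j})$ simultaneously. Graphically it reverses the order of the $n$ strands in the $Y$-direction, i.e.\ it sends $j\mapsto n-1-j$. I would verify by a planar isotopy that
\[
\Delta_n^{\boxtimes m}T_{2i,j}(\Delta_n^{\boxtimes m})^{-1}=T_{2m-2-2i,n-1-j}.
\]
The point is that $T_{2i,j}$ is a twist localized on the $j$-th column, and conjugating by a half twist transports it to the $(n-1-j)$-th column. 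Combining the two reflections gives $\Delta T_{2i,j}\Delta^{-1}=T_{2i,n-1-j}$. Taking the product over $i$ yields $T_i\Delta=\Delta T_{n-1-i}$.

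\textbf{Step 2: the $\AA$--$\BB$ relation.} Next I would prove $\AA_i\Delta=\Delta\BB_{n-2-i}$. The relation $\BB_i\Delta=\Delta\AA_{n-2-i}$ then follows by conjugating by $\Delta$ and using $\Delta^2=\Id$, since $\Delta^{-1}=\Delta$. The cleanest route avoids handling the odd $T_{2i+1,j}$ directly. Instead it uses Lemma \ref{lem:utAB_rels}, $uT_j^{-1}\AA_ju^{-1}=\BB_jT^{-1}_{j+1}$, which already expresses how $u$ converts $\AA$ into $\BB$. The intended chain is
\[
\Delta T_j^{-1}\AA_j\Delta^{-1}=\Delta_n^{\boxtimes m}\bigl(\BB_jT_{j+1}^{-1}\bigr)(\Delta_n^{\boxtimes m})^{-1}.
\]
It then suffices to show that the half twist $\Delta_n^{\boxtimes m}$ alone conjugates $\BB_j$ to $\BB_{n-2-j}$ and $T_{j+1}$ to $T_{n-2-j}$. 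This should come from the same column-reversal isotopy: $\BB_j$ involves the odd operators on column $j$ and the even ones on column $j+1$, and the reversal exchanges the pair $(j,j+1)$ with $(n-2-j,n-1-j)$. One must check that the odd $T_{2i+1,j}$, which straddle columns $j$ and $j+1$, are sent to the straddling operators on $(n-2-j,n-1-j)$ and not to their mirrors. If they go to mirrors, correct the chain using the identities \eqref{eq:TA_rels_1} and \eqref{eq:TB_rels}. Combined with Step 1, this gives $\AA_j\Delta=\Delta\BB_{n-2-j}$.

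\textbf{Step 3: the $T'$ relation.} Finally, $T'_i\Delta=\Delta T'_{n-2-i}$ follows from the normalized relation $\AA_iT_i=T'_i\AA_i$ and Steps 1--2:
\[
\Delta T'_{n-2-i}\Delta^{-1}=\Delta\,\BB_{n-2-i}T_{n-1-i}\BB_{n-2-i}^{-1}\Delta^{-1}=\AA_iT_i\AA_i^{-1}=T'_i.
\]
Here the first equality uses the relation $\BB_kT_{k+1}=T'_k\BB_k$, with $k=n-2-i$, rewritten as $T'_k=\BB_kT_{k+1}\BB_k^{-1}$.

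\textbf{Main obstacle.} The main obstacle is Step 2, namely checking that the half twist really carries the odd operators $T_{2i+1,j}$ to the correct odd operators, including the boundary operators $T_{2m-1,j}$ with their red-loop handle slides. A further check is that no residual scalar, such as a power of $\eta$ or twist factors from $\Delta_n^2$, appears; the normalizations of $T,\AA,\BB$ were chosen so that none should arise.
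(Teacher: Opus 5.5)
\textbf{Step 2 has a genuine gap.} As stated it fails, and the fallback you name cannot repair it.

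Write $k=n-2-j$. Combining your chain with Step 1 (which is correct for the products $T_j$) gives
\[
\Delta\AA_j\Delta^{-1}=T_{k+1}\bigl(\Delta_n^{\boxtimes m}\BB_j(\Delta_n^{\boxtimes m})^{-1}\bigr)T_k^{-1}.
\]
Suppose the half twist carried $\BB_j$ to $\BB_k$. The normalized relation $T_{k+1}\BB_k=\BB_kT'_k$ would then give $\Delta\AA_j\Delta^{-1}=\BB_kT'_kT_k^{-1}$. This equals $\BB_k$ only if $T'_k=T_k$. Since $T'_k=\AA_kT_k\AA_k^{-1}$, that would mean $\AA_k$ commutes with $T_k$, which is false in general. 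So the non-mirrored version you propose to check is not merely unverified; it contradicts the lemma.

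The identities \eqref{eq:TA_rels_1} and \eqref{eq:TB_rels} cannot fix this. They express odd twists as $\AA$- or $\BB$-conjugates of even twists. They say nothing about how reversing the two columns interacts with $\BB_k$.

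\textbf{The missing idea.} $\Delta_n^{\boxtimes m}$ reverses the order of the two columns $j,j+1$ that $\BB_j$ straddles. It therefore produces the mirror
\[
\Delta_n^{\boxtimes m}\BB_j(\Delta_n^{\boxtimes m})^{-1}=C_{k,k+1}\BB_kC_{k,k+1}^{-1}.
\]
This is the topological input the paper uses. There it is written in the other conjugation direction as $\Delta^{-1}\BB_i\Delta=u^{-1}C^{-1}\BB_{n-2-i}Cu$, with $C$ the crossing of columns $n-2-i,n-1-i$.

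The algebraic tool that absorbs the crossing is Proposition \ref{prop:ABC_rels}: $T_{k+1}C_{k,k+1}=\BB_k\AA_k$ and $T_kC_{k,k+1}=\AA_k\BB_k$. With it your chain closes at once:
\[
T_{k+1}C_{k,k+1}\BB_kC_{k,k+1}^{-1}T_k^{-1}=\BB_k\AA_k\BB_k(\AA_k\BB_k)^{-1}=\BB_k .
\]

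\textbf{Comparison with the paper.} After this correction your argument is a valid variant of the paper's proof. The paper applies the mirror formula to each of $T'_i,\AA_i,\BB_i$. It then checks that $C_{k,k+1}u$ commutes with $T'_k$ and interchanges $\AA_k$ and $\BB_k$, using Lemma \ref{lem:utAB_rels} and Propositions \ref{prop:ABC_rels}, \ref{prop:TAB_rels}. You need the mirror only for the $\AA$ case. You then derive the $\BB$ case from $\Delta^2=\Id$ and the $T'$ case from $\AA_iT_i=T'_i\AA_i$; both derivations are correct.

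\textbf{A small slip in Step 1.} The half twist acts on each $a_i$ separately and does not reflect the $X$-index; only $u$ does. In fact $\Delta T_{2i,j}\Delta^{-1}=T_{2m-2-2i,n-1-j}$. This does not affect the product over $i$.
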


\begin{proof}
From the topological definition of $\Delta$, we have: 
\begin{align*}
    T_i'\Delta&=\Delta u^{-1}C^{-1}_{i,i+1}T'_{n-2-i}C_{i,i+1}u,\\
    \AA_i\Delta&=\Delta u^{-1}C^{-1}_{i,i+1}\AA_{n-2-i}C_{i,i+1}u,\\
    \BB_i\Delta&=\Delta u^{-1}C^{-1}_{i,i+1}\BB_{n-2-i}C_{i,i+1}u.
\end{align*}
The identities can now be deduced from the following identities:
 \begin{align*}
T'_iC_{i,i+1}u&=T'_i\AA_i\BB_iT^{-1}_{i+1}u,\\
&=T'_i\AA_iuT_i^{-1}\AA_i, \qquad (\text{Lem.~\ref{lem:utAB_rels}})\\
&=\AA_iuT_i^{-1}\AA_iT'_i, \qquad (\text{Prop.~\ref{prop:TAB_rels}})\\
&=C_{i,i+1}uT'_i. \qquad (\text{Lem.~\ref{lem:utAB_rels}})
 \end{align*}
 \[
 \begin{aligned}
\AA_iC_{i,i+1}u&=\AA_i\BB_i\AA_iT^{-1}_{i}u,\\
&=C_{i,i+1}T_{i+1}\AA_iT_i^{-1}u,\\
&=C_{i,i+1}u\BB_i. \qquad (\text{Lem.~\ref{lem:utAB_rels}})
 \end{aligned}
 \]
 \begin{align*}
\BB_iC_{i,i+1}u&=\BB_iT^{-1}_{i}\AA_i\BB_iu,\\
&=C_{i,i+1}T_{i}T_i'^{-1}\BB_iu,\\
&=C_{i,i+1}u\AA_i. \qquad (\text{Lem.~\ref{lem:utAB_rels}})    
 \end{align*}
\end{proof}
From the Lemma \ref{lem:Delta_ABT_rels} and the definition of $t_{i,j},t'_{i,j}$, we have the following: 
\[
\begin{aligned}
 t_{2i+1,2j+1}\Delta=\Delta t_{2n-2j,2n-2i}, &\quad t_{2i+1,2j}\Delta=\Delta t_{2n-2j+1,2n-2i},
  \\
 t_{2i+2,2j+1}\Delta=\Delta t_{2n-2j,2n-2i-1},&\quad t_{2i+2,2j}\Delta=\Delta t_{2n-2j+1,2n-2i-1}.
\end{aligned}
\]
\begin{prop}\label{prop:r_delta_rels}

We have the following relations:
\[
\begin{aligned}
r_{i,j}\Delta&=\Delta r_{2n+1-j,2n+1-i},\\ 
\rho_2\Delta&=\Delta\rho_2^{-1},\\
r_{1,2n-2i}\Delta&=\eta^{m(n-k)}\Delta r_{1,2i},\\
r_{1,2n-2i-1}\Delta&=\eta^{m(n-k-1)}\Delta r_{1,2i+1}r^{-1}_{1,2n-1}.
\end{aligned}
\]
\end{prop}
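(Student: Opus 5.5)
The first relation follows termwise from the $\Delta$-conjugation identities for $t_{i,j}$ displayed just before the statement. Those four cases combine into one rule, $t_{i,j}\Delta=\Delta\, t_{2n+1-j,\,2n+1-i}$, by checking each parity pattern. For instance, $i=2a+1$, $j=2b+1$ gives $2n+1-j=2n-2b$ and $2n+1-i=2n-2a$, which matches $t_{2a+1,2b+1}\Delta=\Delta t_{2n-2b,2n-2a}$. With this rule,
\[
r_{i,j}\Delta=\Big(\prod_{k=i}^{j}t_{k,j}\Big)\Delta=\Delta\prod_{k=i}^{j}t_{2n+1-j,\,2n+1-k}=\Delta\prod_{k'=2n+1-j}^{2n+1-i}t'_{2n+1-j,k'}\quad(\text{up to the }t\leftrightarrow t'\text{ swap}).
\]
This requires that $\Delta$ exchanges the two descriptions $\prod_k t_{k,j}$ and $\prod_k t'_{i,k}$ of $r_{i,j}$. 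Since the rule as stated only converts $t$ into $t$, I would also derive the $t'$ version $t'_{i,j}\Delta=\Delta t'_{2n+1-j,2n+1-i}$ from Lemma \ref{lem:Delta_ABT_rels}, by the same termwise substitution $T_i\mapsto T_{n-1-i}$, $T'_i\mapsto T'_{n-2-i}$, $\AA\leftrightarrow\BB$ applied to the definitions.

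Then $r_{i,j}\Delta=\Delta\prod_{k}t_{2n+1-j,2n+1-k}$, where the first index is fixed, so this product is $\prod_{k'}t_{\text{?}}$. It is more cleanly matched with the $t'$ form: $\prod_{k}t'_{i,k}\Delta=\Delta\prod_k t'_{2n+1-k,2n+1-i}$. Here the second index $2n+1-i$ is fixed and the first ranges from $2n+1-j$ to $2n+1-i$. After converting to $t$, this is exactly $r_{2n+1-j,2n+1-i}$ by the definition $r_{i,j}=\prod t_{k,j}$. The product is order-independent by Proposition \ref{prop:t_comm_rels}, so no reordering issues arise.

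For $\rho_2\Delta=\Delta\rho_2^{-1}$, I would use Lemma \ref{lem:rho_2tr_rels}, which gives $\rho_2=\eta^{\frac m2}r_{2,2n}$ and $\rho_2^{-1}=\eta^{\frac m2}r_{1,2n-1}$. By the first relation, $r_{2,2n}\Delta=\Delta r_{1,2n-1}$, hence $\rho_2\Delta=\eta^{\frac m2}\Delta r_{1,2n-1}=\Delta\rho_2^{-1}$. As a cross-check, one can use $\Delta=\Delta_n^{\boxtimes m}u$ together with $\rho_2u=u\rho_2^{-1}$ (Corollary \ref{Cor:rho_2TABu_rels}) and the fact that $\rho_2$ commutes with $\Delta_n^{\boxtimes m}$, since the latter acts diagonally on the $m$ factors and $\rho_2$ cyclically permutes them.

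For the last two relations (with $k$ read as $i$, which appears to be a typo), the first relation gives $r_{1,2n-2i}\Delta=\Delta r_{2i+1,2n}$. Lemma \ref{lem:rho_2tr_rels} gives $r_{1,2i}=\eta^{-m(n-i)}r_{2i+1,2n}$, so $r_{2i+1,2n}=\eta^{m(n-i)}r_{1,2i}$, which is the third relation. Similarly, $r_{1,2n-2i-1}\Delta=\Delta r_{2i+2,2n}$. From $r_{1,2i+1}=\eta^{-m(n-i-\frac12)}r_{2i+2,2n}\rho_2^{-1}$ and $\rho_2^{-1}=\eta^{-\frac m2}r_{1,2n-1}^{-1}$ I get $r_{2i+2,2n}=\eta^{m(n-i-1)}r_{1,2i+1}r_{1,2n-1}^{-1}$. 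This uses the commutation $[r_{2i+2,2n},\rho_2]=0$, which follows from Proposition \ref{prop:r_comm_rels}, since $\rho_2$ is proportional to $r_{2,2n}$ and the index intervals are nested.

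The main obstacle is the first step: verifying that the index reversal is consistent across the four parity cases, including the boundary factors $T_{j-1}^{-1}$, $(T'_j)^{-1}$ in $t_{2i+1,2j}$ and $t_{2i+2,2j+1}$, and obtaining the $t'$ analogue. I expect it to reduce to the same bookkeeping using $\Delta^2=\Id$.
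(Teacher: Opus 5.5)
\paragraph{The gap: the first relation.} Your treatment of the second, third and fourth relations works once the first is available. The first relation, on which the others rest, is not established.

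With the rule $t_{a,b}\Delta=\Delta t_{2n+1-b,2n+1-a}$ you only reach $\Delta\prod_{k'}t_{2n+1-j,k'}$. With your proposed companion $t'_{a,b}\Delta=\Delta t'_{2n+1-b,2n+1-a}$ you only reach $\Delta\prod_{k'}t'_{k',2n+1-i}$. Neither is one of the two descriptions $\prod_k t_{k,b}=\prod_k t'_{a,k}$ of $r_{a,b}$. The step ``after converting to $t$'' is unjustified, since $t_{a,b}\neq t'_{a,b}$.

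In fact both rules are wrong. The pair $(1,2n)$ is fixed by $(a,b)\mapsto(2n+1-b,2n+1-a)$. So either rule makes $\Delta$ commute with $t_{1,2n}$ (resp.\ $t'_{1,2n}$), hence with $\rho_2$ by Lemma \ref{lem:rho_2tr_rels}. Together with $\rho_2\Delta=\Delta\rho_2^{-1}$, which you then prove, this forces $\rho_2^2=\Id$, which is false in general for $m>2$.

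Concretely, $t_{2i+1,2i+3}=\AA_i^{-2}T'_i$, and Lemma \ref{lem:Delta_ABT_rels} gives $t_{2i+1,2i+3}\Delta=\Delta\,\BB_k^{-2}T'_k$ with $k=n-2-i$. The relations $\BB_k^{\pm1}T_{k+1}=T'_k\BB_k^{\pm1}$ make $\BB_k^2$ commute with $T'_k$. So this is $t'_{2k+2,2k+4}=T'_k\BB_k^{-2}$, whereas $t_{2k+2,2k+4}=\BB_k^{-2}T_{k+1}$.

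The identities displayed before the proposition must therefore be read with $t'$ on the right. This is exactly what the paper uses in its proof of the second relation: $\eta^{\frac m2}t^{-1}_{1,2n}\Delta=\Delta\,\eta^{\frac m2}t'^{-1}_{1,2n}$.

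\paragraph{The missing idea.} Conjugation by $\Delta$ interchanges the two families:
\[
t_{a,b}\Delta=\Delta\,t'_{2n+1-b,\,2n+1-a},\qquad t'_{a,b}\Delta=\Delta\,t_{2n+1-b,\,2n+1-a}.
\]
The second follows from the first and $\Delta^2=\Id$. Geometrically, $t_{k,j}$ winds the $k$-th point around the block to its right, $t'_{i,k}$ winds it around the block to its left, and $\Delta$ reverses the order of the points.

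To check this from Lemma \ref{lem:Delta_ABT_rels}: the termwise substitution turns the word for $t_{a,b}$ into the word for $t'_{2n+1-b,2n+1-a}$ read backwards. You must then verify that this reversal does not change the operator; in the smallest case it is the commutation of $\BB_k^2$ with $T'_k$ above.

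The first relation is then immediate, using the $t'$-description of $r$ and Proposition \ref{prop:t_comm_rels} for order-independence:
\[
r_{i,j}\Delta=\Big(\prod_{k=i}^{j}t_{k,j}\Big)\Delta=\Delta\prod_{k=i}^{j}t'_{2n+1-j,\,2n+1-k}=\Delta\,r_{2n+1-j,\,2n+1-i}.
\]

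\paragraph{The remaining relations.} After this repair, deriving $\rho_2\Delta=\Delta\rho_2^{-1}$ from $r_{2,2n}\Delta=\Delta r_{1,2n-1}$ is a valid alternative to the paper's direct use of $t_{1,2n}$. Your cross-check via $\Delta=\Delta_n^{\boxtimes m}u$ and $\rho_2u=u\rho_2^{-1}$ would even give it independently of the first relation, once $[\rho_2,\Delta_n^{\boxtimes m}]=0$ is verified graphically.

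The last two relations then go as in the paper. Correct one slip: $\rho_2^{-1}=\eta^{-\frac m2}r^{-1}_{1,2n-1}$ should read $\rho_2=\eta^{-\frac m2}r^{-1}_{1,2n-1}$. No commutation with $\rho_2$ is needed there: just multiply $r_{1,2i+1}=\eta^{-m(n-i-\frac12)}r_{2i+2,2n}\rho_2^{-1}$ on the right by $\rho_2$.
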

\begin{proof}
The first identity is straightforward, for the second identity, from Lemma \ref{lem:rho_2tr_rels}, we have: 
\[
\rho_2\Delta=\eta^{\frac{m}{2}}t^{-1}_{1,2n}\Delta=\Delta \eta^{\frac{m}{2}}t'^{-1}_{1,2n}=\Delta\rho_2^{-1}.
\]

Now, the last two identities also follow from Lemma \ref{lem:rho_2tr_rels}. 
\begin{align*}
&r_{1,2n-2i}\Delta=\Delta r_{2i+1,2n}=\eta^{m(n-k)}\Delta r_{1,2i},\\
&r_{1,2n-2i-1}\Delta=\eta^{m(n-k-\frac{1}{2})}\Delta r_{1,2i+1}\rho_2=\eta^{m(n-k-1)}\Delta r_{1,2i+1}r^{-1}_{1,2n-1}.    
\end{align*}
  
\end{proof}

\begin{lem}\label{Lem:Atrels_boundary}
We have the following identities: $(i<2j+1)$
\[
\begin{aligned}
t'^{-1}_{i,2j+1}\AA_j&=\AA^{-1}_jT_jt'^{-1}_{i,2j+3},\\
t'^{-1}_{i,2j+2}T_j^{-1}\AA_j&=\AA_jT_j^{-1}t'^{-1}_{i,2j+2},\\
T_jt'^{-1}_{i,2j+1}t'^{-1}_{i,2j+2}T_j^{-1}\AA_j&=T_j\AA^{-1}_jt'^{-1}_{i,2j+3}t'^{-1}_{i,2j+2}.
\end{aligned}
\]    
\end{lem}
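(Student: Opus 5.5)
The plan is to reduce all three identities to relations among $\AA_k$, $T_k$, $T'_k$ near the index $j$, using the explicit products defining $t'_{i,2j+1}$, $t'_{i,2j+2}$, $t'_{i,2j+3}$ in Definition~\ref{def:t}. The key structural fact is that each $t'_{i,\ell}$ with larger second index differs from the smaller one by a factor localized at the $j$-th and $(j+1)$-th ``handles''. I would first treat the case $i=2s+1$ odd; the even case then follows by conjugating with $F$, using $Ft'_{i+1,j+1}F^{-1}=t'_{i,j}$ together with Proposition~\ref{prop:FAB_rels} and Proposition~\ref{prop:FT_rels}. Equivalently, one can rewrite $\AA$'s as $\BB$'s where needed, via the normalized relations $\AA^{\pm1}_iT_i=T'_i\AA^{\pm1}_i$ and $\BB^{\pm1}_iT_{i+1}=T'_i\BB^{\pm1}_i$.

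For the first identity I take $i=2s+1$ and expand
\[
t'_{2s+1,2j+3}=\Bigl(\prod^{s}_{k=j}T_k\AA^{-1}_k\Bigr)\prod^{j}_{k=s}\AA^{-1}_k
\quad\text{and}\quad
t'_{2s+1,2j+1}=\Bigl(\prod^{s}_{k=j-1}T_k\AA^{-1}_k\Bigr)\prod^{j-1}_{k=s}\AA^{-1}_k .
\]
Then $t'_{2s+1,2j+3}=T_j\AA^{-1}_j\,(\cdots)\,\AA^{-1}_j$, where the middle factor is $t'_{2s+1,2j+1}$ with an $\AA^{-1}_j$ threaded through it. So the claim $t'^{-1}_{i,2j+1}\AA_j=\AA^{-1}_jT_jt'^{-1}_{i,2j+3}$ becomes the statement that $\AA_j$ passes through $\prod^{j-1}_{k=s}\AA^{-1}_k$ and $\prod^{s}_{k=j-1}T_k\AA^{-1}_k$ in the expected way. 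I would prove this by repeated use of Lemma~\ref{lem:Abraid_rels}, specifically $\AA^{-1}_{j-1}\AA^{-1}_j\AA^{-1}_{j-1}$-type braid moves together with $\AA^{-1}_{j-1}T_{j-1}\AA^{-1}_j\AA^{-1}_{j-1}=\AA^{-1}_j\AA^{-1}_{j-1}\AA^{-1}_jT_j$. The factors with $k\le j-2$ commute with $\AA_j$ by Lemma~\ref{lem:ABTcomm_rels}. This is essentially the underlined manipulation already carried out in the proof of Proposition~\ref{prop:t_comm_rels}.

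For the second identity, $t'_{i,2j+2}$ involves only $\BB_k$ and $T'_k$ with $k\le j-1$, and one $T^{-1}_s$. I would show that $T_j^{-1}\AA_j$ commutes with each factor. For $k\le j-2$ this is direct from Lemma~\ref{lem:ABTcomm_rels}. The boundary factors $T'_{j-1}\BB^{-1}_{j-1}$ require Proposition~\ref{prop:ABC_rels} and \eqref{eq:BAT_rels_2}, in the form $\BB_{j-1}\AA_{j-1}T^{-1}_{j-1}=T^{-1}_{j-1}\AA_{j-1}\BB_{j-1}$, together with \eqref{eq:AB_rels} shifted to relate $\BB^{-1}_{j-1}$, $\AA^{-1}_j$ and $T'_j$. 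This step I expect to be the main obstacle: one must carefully track which of $T_j$, $T'_{j-1}$ and $T'_j$ appear, and whether the needed relation holds exactly or only up to a power of $\eta$, which the normalization was designed to kill. If the direct commutation fails, I would instead use the commutation $[t_{2i+1,2j+2},\BB^{-1}_{j-1}]=[t_{2i+1,2j+2},T_j]=0$ from Proposition~\ref{prop:t_comm_rels} and the relation \eqref{eq:tt'_rels} to transport the statement from $t$ to $t'$.

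The third identity should be formal from the first two. Starting from the left side $T_jt'^{-1}_{i,2j+1}t'^{-1}_{i,2j+2}T_j^{-1}\AA_j$, apply the second identity to move $T_j^{-1}\AA_j$ left past $t'^{-1}_{i,2j+2}$. Then apply the first identity to $t'^{-1}_{i,2j+1}\AA_j$, which gives $T_j\AA^{-1}_jT_jt'^{-1}_{i,2j+3}T_j^{-1}t'^{-1}_{i,2j+2}$. Finally, commute $T_j^{\pm1}$ past $t'^{-1}_{i,2j+3}$ using the commutation relations of Proposition~\ref{prop:t_comm_rels}, where $T_j$ sits strictly inside the range of $t'_{i,2j+3}$, to cancel the extra $T_j$ against $T_j^{-1}$. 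If the indices are off by one here, I would adjust by using instead that $T_j$ commutes with $t'_{i,2j+2}$.
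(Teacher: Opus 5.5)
\textbf{Gap in the second identity.} Your derivation of the third identity from the first two, plus $[T_j,t'_{i,2j+3}]=0$, is fine. The problem is the second identity, which is the only one the paper actually proves. Your plan there aims at a false statement. You propose to show that $T_j^{-1}\AA_j$ commutes with every factor of $t'_{i,2j+2}$. But the identity is not a commutation: the left side carries $T_j^{-1}\AA_j$ and the right side carries $\AA_jT_j^{-1}=T'^{-1}_j\AA_j$. These are different operators, since $\AA_jT_j\AA_j^{-1}=T'_j\neq T_j$ in general. If $T_j^{-1}\AA_j$ did commute with $t'_{i,2j+2}$, the (true) second identity would force $T_j^{-1}\AA_j=\AA_jT_j^{-1}$, i.e.\ $T'_j=T_j$.

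Factorwise, the argument breaks exactly where the content is. The outer factors $T'_{j-1}$ and $\BB^{-1}_{j-1}$ are built from generators $T_{2l+1,j-1}$ and $T_{2l,j}$. By Proposition~\ref{prop:T_rels}, these satisfy braid relations, not commutation relations, with constituents of $\AA_j$. Only the middle part of $t'_{i,2j+2}$ commutes with $\AA_j$ and $T_j$.

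\textbf{What is missing.} Relation \eqref{eq:AB_rels} (shifted to $j-1$) gives a \emph{twisted} intertwining. Using the normalized relations $\AA_j^{\pm1}T_j=T'_j\AA_j^{\pm1}$, it can be rewritten as
\[
T_j^{-1}\AA_j\,\BB^{-1}_{j-1}=\BB^{-1}_{j-1}\,\AA_jT_j^{-1},
\]
so passing a $\BB^{-1}_{j-1}$ exchanges the two orders. Now write
\[
t'_{i,2j+2}=T'_{j-1}\BB^{-1}_{j-1}M\BB^{-1}_{j-1}=\BB^{-1}_{j-1}T_jM\BB^{-1}_{j-1},
\]
using $\BB^{-1}_{j-1}T_j=T'_{j-1}\BB^{-1}_{j-1}$. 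Here $M$ involves only $T'_k,\BB_k$ with $k\le j-2$, and possibly $T_s$ with $s\le j-1$, so it commutes with $\AA_j$ and $T_j$. Two applications of the displayed relation give $t'_{i,2j+2}\AA_jT_j^{-1}=\BB^{-1}_{j-1}\AA_jM\BB^{-1}_{j-1}=T_j^{-1}\AA_j\,t'_{i,2j+2}$, which is equivalent to the claim. This is precisely the paper's computation, with its two uses of \eqref{eq:AB_rels}. You named the right relation, but in a framing where it cannot close the argument.

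\textbf{Two smaller points.}
\begin{enumerate}
\item For the first identity no braid moves are needed. For either parity of $i$, one has exactly $t'_{i,2j+3}=T_j\AA_j^{-1}\,t'_{i,2j+1}\,\AA_j^{-1}$; nothing is threaded through. Substituting this and using $\AA_j^{-1}T_j\AA_j=T'_j$ and $\AA_jT_j^{-1}=T'^{-1}_j\AA_j$ reduces the claim to the far commutation $[T'_j,t'_{i,2j+1}]=0$. The operator $\AA_j$ never has to pass through $t'_{i,2j+1}$, and it could not, since it does not commute with $\AA_{j-1}$.
\item Conjugation by $F$ cannot reduce even $i$ to odd $i$. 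It shifts both indices of $t'$ and sends $\AA_j\mapsto\BB_{j-1}$ and $T_j\mapsto T'_{j-1}$ (up to scalars), so it produces the dual $\BB$-identities rather than the even-$i$ cases of these. The even case must be done directly, but the arguments above apply verbatim, since the boundary structure of $t'_{i,\cdot}$ is the same.
\end{enumerate}
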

\begin{proof}
The first and third identities follow from straightforward calculations. We focus on the second equality, verifying the case for $i=1$; the remaining cases follow by similar arguments.
\[
\begin{aligned}
t'^{-1}_{1,2j+2}T_j^{-1}\AA_j&=\prod^{j-1}_{k=0}T'^{-1}_k\prod^{0}_{k=j-1}\BB_k(\prod^{j-1}_{k=0}\BB_k)T^{-1}_j\AA_k,\\
&=\prod^{j-1}_{k=0}T'^{-1}_k\prod^{0}_{k=j-1}\BB_k(\prod^{j-2}_{k=0}\BB_k)\underline{\BB_{j-1}T^{-1}_j\AA_{j}},\\
&=\prod^{j-1}_{k=0}T'^{-1}_k\underline{\BB_{j-1}\AA_{j}}T^{-1}_j\prod^{0}_{k=j-2}\BB_k(\prod^{j-1}_{k=0}\BB_k),\qquad (\text{Eq.~\eqref{eq:AB_rels}})\\
&=(\prod^{j-2}_{k=0}T'^{-1}_k)T'^{-1}_j\AA_j\BB_{j-1}T^{-1}_j\prod^{0}_{k=j-2}\BB_k(\prod^{j-1}_{k=0}\BB_k),\qquad (\text{Eq.~\eqref{eq:AB_rels}})\\
&=\AA_jT^{-1}_j(\prod^{j-2}_{k=0}T'^{-1}_k)T'^{-1}_{j-1}\BB_{j-1}\prod^{0}_{k=j-2}\BB_k(\prod^{j-1}_{k=0}\BB_k),\\
&=\AA_jT_j^{-1}t'^{-1}_{1,2j+2}.
\end{aligned}
\]
\end{proof}

\begin{prop}\label{prop:r_pentagon}
 The following relations hold:
 \begin{align}
 r^{-1}_{i,l-1}r_{i,k-1}r_{j,l-1}r_{k,m-1}r^{-1}_{j,m-1}=r^{-1}_{j,m-1}r_{k,m-1}r_{j,l-1}r_{i,k-1}r^{-1}_{i,l-1}.
 \end{align}
\end{prop}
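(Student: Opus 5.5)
This identity should be the lift of the lantern/pentagon-type relation among the "boundary twists" $r_{i,j}$ for indices $i<j<k<l<m$. Here $r_{i,j}$ is the operator attached to the simple closed curve enclosing the consecutive punctures $i,\dots,j$, i.e. the full twist on that block of strands. My plan is to reduce the identity to commutation relations plus one genuinely new "three consecutive blocks" relation, proved in the basic case and transported by conjugation.

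\textbf{Step 1 (regrouping).} Using $r_{i,j}=\prod_{s=i}^{j}t_{s,j}=\prod_{s=i}^{j}t'_{i,s}$ together with the commutation relations of Proposition \ref{prop:t_comm_rels}, I will factor
\[
r_{i,l-1}=r_{i,k-1}\,X\,r_{k,l-1},\qquad X=\prod_{s=i}^{k-1}\prod_{p=k}^{l-1}t_{s,p},
\]
and factor $r_{j,m-1}$ and $r_{j,l-1}$ analogously. By Proposition \ref{prop:r_comm_rels}, $r_{i,k-1}$ commutes with $r_{k,m-1}$, and both commute with the blocks nested inside them. After cancelling the nested pieces, both sides of the proposition become words in three "crossing" operators: the braiding of block $[i,j-1]$ past $[j,k-1]$, of $[j,k-1]$ past $[k,l-1]$, and of $[k,l-1]$ past $[l,m-1]$. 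The identity is then equivalent to a braid-type relation among these crossing operators. This is the algebraic shadow of the braid-group identity expressing that the lantern/pentagon relation holds among Dehn twists along nested and overlapping curves.

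\textbf{Step 2 (reduction to a base case).} I will conjugate by powers of $F$, using $Ft_{i+1,j+1}F^{-1}=t_{i,j}$ and $Ft'_{i+1,j+1}F^{-1}=t'_{i,j}$, to shift the indices so that $i=1$. Where parity matters, I will use $\Delta$ (Proposition \ref{prop:r_delta_rels}) and $\rho_2$ (Lemma \ref{lem:rho_2tr_rels}) to reflect. It then suffices to treat $i=1$ with arbitrary $j<k<l<m$.

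\textbf{Step 3 (the base case).} I will write the crossing operators explicitly as products of $\AA_k^{-1}$, $\BB_k^{-1}$, $T_k$, $T'_k$ via Definition \ref{def:t}. The required braid relation should then follow by the same sliding arguments used in Proposition \ref{prop:t_comm_rels}. The inputs are the local braid relations of Lemma \ref{lem:Abraid_rels}, Corollary \ref{Cor:ABbraid_rels}, and the boundary-moving identities of Lemma \ref{Lem:Atrels_boundary}. The last lemma is exactly what lets $t'^{-1}_{i,2j+1}$ and $t'^{-1}_{i,2j+2}$ pass across $\AA_j$, which is why it immediately precedes this proposition.

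\textbf{Main obstacle.} The hardest part is Step 3 when block endpoints have mixed parity, since $t_{2i+1,2j}$ and $t_{2i+2,2j+1}$ involve the asymmetric factors $T^{-1}_{j-1}$ and $(T'_j)^{-1}$. Scalars $\eta^{\pm m}$ may also appear there. I would first verify the all-odd-endpoint case, then obtain the remaining cases from the third identity of Lemma \ref{Lem:Atrels_boundary} together with \eqref{eq:tt'_rels}, checking that the $\eta$-powers cancel because each $r$ appears equally often on both sides.
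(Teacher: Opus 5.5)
Your proposal has a genuine gap, and the reformulation in Step~1 already points in the wrong direction. Put $A=[i,j-1]$, $B=[j,k-1]$, $C=[k,l-1]$, $D=[l,m-1]$. By the lantern relation, $r^{-1}_{i,l-1}r_{i,k-1}r_{j,l-1}$ is, up to twists about the blocks themselves, an inverse twist about a non-convex curve enclosing $A\cup C$ but not $B$. The proposition then says that this twist commutes with a twist about a curve enclosing $B\cup D$. Concretely, take one strand per block and read the $r$'s as full twists in Artin's generators, with $\Delta^2_{123}=A_{12}A_{13}A_{23}$. Both sides then reduce, and the identity becomes $[A_{23}^{-1}A_{13}A_{23},A_{24}]=1$.

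This is not a braid relation among the three adjacent-block crossings; already $A_{13}\notin\langle A_{12},A_{23},A_{34}\rangle$. Block half-braidings, which a genuine braid relation would need, are parity-compatible only when both blocks have even length, so in general no operator realizes them here. Your explicit factor $X$ is also wrong. For $i=k-1$, $l=k+2$ one has $r_{k-1,k+1}r^{-1}_{k,k+1}=t_{k-1,k+1}$, not $t_{k-1,k}t_{k-1,k+1}$.

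More importantly, Step~3, where all the content lies, is only asserted. The sliding arguments of Proposition~\ref{prop:t_comm_rels} give commutations only for disjoint or nested index ranges. Here $[i,k-1]$ overlaps $[j,l-1]$, and $[j,l-1]$ overlaps $[k,m-1]$. The lemmas you cite are ingredients, but the exchange rules for overlapping ranges still have to be derived, and that is the bulk of the proof. The paper does this in two steps you are missing.

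\emph{First, the spherical conversion.} The paper conjugates by $F$ so that $m-1=2n$. It then uses the spherical identities of Lemma~\ref{lem:rho_2tr_rels}, which come from $\rho_2=\eta^{m/2}t^{-1}_{1,2n}=\eta^{-m/2}t'_{1,2n}$: up to scalars, $t^{-1}_{2s+1,2n}=t'_{1,2s+1}\rho_2$ and $t^{-1}_{2s,2n}=t'_{1,2s}\rho_2^{-1}$. Since $\rho_2$ commutes with all $T_j,T'_j,\AA_j,\BB_j$ (Corollary~\ref{Cor:rho_2TABu_rels}), this rewrites $r_{k,2n}r^{-1}_{j,2n}$ as $\prod_{p=j}^{k-1}t'_{1,p}$ up to a scalar and a power of $\rho_2$. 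After splitting $r_{j,l-1}=r_{j,k-1}\prod_{p=k}^{l-1}t'_{j,p}$, everything is in $t'$-operators. The claim becomes an exchange identity between $\prod_{p=k}^{l-1}t'^{-1}_{i,p}t'_{j,p}$ and $\prod_{p=j}^{k-1}t'_{1,p}$.

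\emph{Second, the exchange identity.} It is proved by induction on $l$ using three tools:
\begin{itemize}
\item the expressions of $t'^{-1}_{i,p}t'_{j,p}$ as $\AA$- or $\BB$-conjugates of $t'^{-1}_{i,2j+1}$ or $t'^{-1}_{i,2j+2}$, possibly times a single $T$ or $T'$;
\item the new exchange relations of Lemma~\ref{lem:ttcomm_rel}, built from Proposition~\ref{prop:ABC_rels} and Lemma~\ref{lem:Abraid_rels};
\item the boundary identities of Lemma~\ref{Lem:Atrels_boundary}.
\end{itemize}

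Your normalization $i=1$ mirrors the paper's $m-1=2n$. It could presumably be made to work with the mirrored conversion of $t'_{1,p}$ into $t_{p,2n}$, but you never invoke the spherical relation. The parity and $\eta$ bookkeeping you single out as the main obstacle is routine by comparison.
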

\begin{proof}
The identity is equivalent to:
\[
\begin{aligned}
(\prod^{k}_{p=l-1}t'^{-1}_{i,p})r_{j,l-1}\prod^{k-1}_{p=j}t^{-1}_{p,m-1}&=(\prod^{k-1}_{p=j}t^{-1}_{p,m-1})r_{j,l-1}\prod^{k}_{p=l-1}t'^{-1}_{i,p},\\   
(\prod^{k}_{p=l-1}t'^{-1}_{i,p}\prod^{k}_{p=l-1}t'_{j,p})\prod^{k-1}_{p=j}t_{p,k-1}\prod^{k-1}_{p=j}t^{-1}_{p,m-1}&=\prod^{k-1}_{p=j}t^{-1}_{p,m-1}\prod^{k-1}_{p=j}t_{p,k-1}\prod^{k}_{p=l-1}t'_{j,p}\prod^{k}_{p=l-1}t'^{-1}_{i,p}.\\
\end{aligned}
\]
We can first assume $m=2n+1$, other cases will follow from conjugating by $F$. Using the commutation relation in Proposition \ref{prop:t_comm_rels}, we have: 
\[
\begin{aligned}
\prod^{k}_{p=l-1}t'^{-1}_{1,p}\prod^{k}_{p=l-1}t'_{j,p}&=\prod^{l-1}_{p=k}t'^{-1}_{1,p}t'_{j,p},\\
\prod^{k}_{p=l-1}t'_{j,p}\prod^{k}_{p=l-1}t'^{-1}_{1,p}&=\prod^{k}_{p=l-1}t'_{j,p}t'^{-1}_{1,p}.   
\end{aligned} 
\] 
From Lemma \ref{lem:rho_2tr_rels}, we have: 
\[
\prod^{k-1}_{p=j}t_{p,k-1}\prod^{k-1}_{p=j}t^{-1}_{p,2n}=r_{j,k-1}(\prod^{k-1}_{p=j}t'_{1,p})\rho_2^{\epsilon}.
\]
Hence, it suffices to prove the following:
\[
\prod^{l-1}_{p=k}t'^{-1}_{1,p}t'_{j,p}\prod^{k-1}_{p=j}t'_{1,p}=\prod^{k-1}_{p=j}t'_{1,p}\prod^{k}_{p=l-1}t'_{j,p}t'^{-1}_{1,p}.
\]
Now, by definition, we have:
\begin{align*}
t'^{-1}_{i,2s+1}t'_{2j+1,2s+1}&=(\prod^{j}_{k=s-1}\AA_k)t'^{-1}_{i,2j+1}\prod^{s-1}_{k=j}\AA^{-1}_k\\ 
t'^{-1}_{i,2s+1}t'_{2j,2s+1}&=(\prod^{j}_{k=s-1}\AA_k)t'^{-1}_{i,2j+1}T'_{j-1}\prod^{s-1}_{k=j}\AA^{-1}_k\\ t'^{-1}_{i,2s}t'_{2j+1,2s}&=(\prod^{j}_{k=s-2}\BB_k)t'^{-1}_{i,2j+2}T^{-1}_j\prod^{s-2}_{k=j}\BB^{-1}_k\\ 
t'^{-1}_{i,2s}t'_{2j+2,2s}&=(\prod^{j}_{k=s-2}\BB_k)t'^{-1}_{i,2j+2}\prod^{s-2}_{k=j}\BB^{-1}_k 
\end{align*}

We have the following Lemma:
\begin{lem}\label{lem:ttcomm_rel}
The following relations hold, the graphical interpretation of these relations is provided in Figure \ref{fig:ttcomm_rel_graph}.
\[
\begin{aligned}
t'^{-1}_{i,2s}t'_{j,2s}t'^{-1}_{i,2s+1}t'_{j,2s+1}&=T'^{-1}_{s-1}t'^{-1}_{i,2s+1}t'_{j,2s+1}T'_{s-1}t'^{-1}_{i,2s}t'_{j,2s},\\
t'^{-1}_{i,2s-1}t'_{j,2s-1}T'^{-1}_{s-1}t'^{-1}_{i,2s+1}t'_{j,2s+1}T'_{s-1}&=\AA^{-1}_{s-1}t'^{-1}_{i,2s-1}t'_{j,2s-1}\AA_{s-1}t'^{-1}_{i,2s-1}t'_{j,2s-1}.
\end{aligned}
\]
\end{lem}
\begin{figure}
    \centering
    \includegraphics[width=1\linewidth]{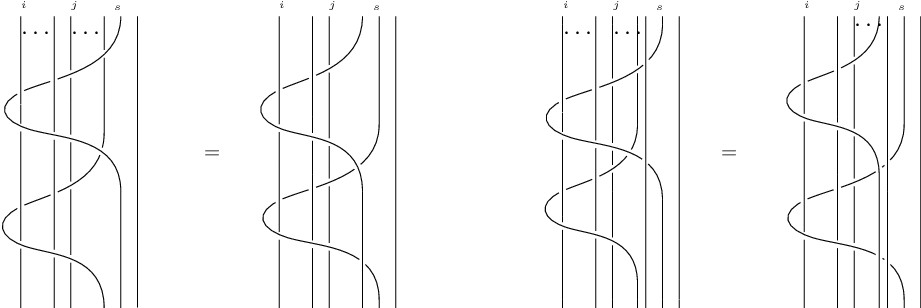}
    \caption{Graphic interpretation of Lemma \ref{lem:ttcomm_rel} }
    \label{fig:ttcomm_rel_graph}
\end{figure}
\begin{proof}
For the first identity, from Proposition \ref{prop:ABC_rels}, we have 
 \[
\BB^{-1}_{k-2}\BB^{-1}_{k-1}T_{k}\AA_{k-1}=T^{-1}_{k-1}\AA_{k-1}T_{k}\BB^{-1}_{k-2}T_{k-1}\BB^{-1}_{k-1},\quad \BB_{k-1}\BB_{k-2}T^{-1}_{k-1}\AA_{k-1}T_k=\AA_{k-1}\BB_{k-1}\BB_{k-2}.
 \]
and from Lemma \ref{Lem:Atrels_boundary}, we have
\[
t'^{-1}_{i,2s}t'_{j,2s}\AA_{s-1}=t'^{-1}_{i,2s}T^{-1}_{s-1}\AA_{s-1}t'_{j,2s}T_{s-1}= T'^{-1}_{s-1}\AA_{s-1}t'^{-1}_{i,2s}t'_{j,2s}T_{s-1}.
\]
Hence, we have the following (the case for $t'^{-1}_{i,2s}t'_{2j,2s}$ will be similar):  
\[
\begin{aligned}
t'^{-1}_{i,2s}t'_{2j+1,2s}\prod_{k=s-1}^{j}\AA_{k}&=T'^{-1}_{s-1}\AA_{s-1}t'^{-1}_{i,2s}t'_{2j+1,2s}T_{s-1}\prod_{k=s-2}^{j}\AA_{k},\\
&=T'^{-1}_{s-1}\AA_{s-1}\AA_{s-2}\BB_{s-2}t'^{-1}_{i,2s-2}t'_{2j+1,2s-2}T_{s-2}\BB^{-1}_{s-2}\prod_{k=s-3}^{j}\AA_{k},\\
&=\cdots,\\
&=T'^{-1}_{s-1}(\prod_{k=s-1}^{j+1}\AA_{k})\BB_{s-2}\cdots\BB_jt'^{-1}_{i,2j+2}T^{-1}_j\BB^{-1}_jT_{j+1}\cdots\BB^{-1}_{s-2}\AA_{j},\\
&=T'^{-1}_{s-1}(\prod_{k=s-1}^{j}\AA_{k})\BB_{s-2}\cdots\BB_jt'^{-1}_{i,2j+2}\BB^{-1}_j\cdots\BB^{-1}_{s-2}.
\end{aligned}
\]
which also implies: 
\[
(\prod_{k=j}^{s-1}\AA^{-1}_k)T'_{s-1}t'^{-1}_{i,2s}t'_{2j+1,2s}=\BB_{s-2}\cdots\BB_jt'^{-1}_{i,2j+2}\BB^{-1}_j\cdots\BB^{-1}_{s-2}\prod_{k=j}^{s-1}\AA^{-1}_k.
\]
Moreover, from Proposition \ref{prop:t_comm_rels}, we have: 
\[
[\BB_{s-2}\cdots\BB_jt'^{-1}_{i,2j+2}\BB^{-1}_j\cdots\BB^{-1}_{s-2},t'^{-1}_{i,2j+1}]=0.
\]
Therefore we have:
\[
\begin{aligned}
t'^{-1}_{i,2s}t'_{2j+1,2s}t'^{-1}_{i,2s+1}t'_{2j+1,2s+1}&=T'^{-1}_{s-1}(\prod_{k=s-1}^{j}\AA_{k})\BB_{s-2}\cdots\BB_jt'^{-1}_{i,2j+2}\BB^{-1}_j\cdots\BB^{-1}_{s-2}t'^{-1}_{i,2j+1}\prod_{k=j}^{s-1}\AA^{-1}_k\\
&=T'^{-1}_{s-1}(\prod_{k=s-1}^{j}\AA_{k})t'^{-1}_{i,2j+1}(\prod_{k=j}^{s-1}\AA^{-1}_k) T'_{s-1} t'^{-1}_{i,2s}t'_{2j+1,2s}\\
&=T'^{-1}_{s-1}t'^{-1}_{i,2s+1}t'_{2j+1,2s+1}T'_{s-1}t'^{-1}_{i,2s}t'_{2j+1,2s}    
\end{aligned}
\]
For the second one, by repeatedly applying Lemma \ref{lem:Abraid_rels}, we have 
\begin{align*}
 t'^{-1}_{i,2s-1}t'_{j,2s-1}T'^{-1}_{s-1}t'^{-1}_{i,2s+1}t'_{j,2s+1}T'_{s-1}=&\AA^{-1}_{s-1}\AA_{s-1}\AA_{s-2}t'^{-1}_{i,2s-3}t'_{j,2s-3}\underline{\AA^{-1}_{s-2}\AA_{s-1}T^{-1}_{s-1}\AA_{s-2}}\\
 &t'^{-1}_{i,2s-3}t'_{j,2s-3}\AA^{-1}_{s-2} T_{s-1}\AA^{-1}_{s-1},\\
 =&\AA^{-1}_{s-1}\AA_{s-1}\AA_{s-2}t'^{-1}_{i,2s-3}t'_{j,2s-3}\underline{\AA_{s-1}T^{-1}_{s-1}T^{-1}_{s-2}\AA_{s-2}T_{s-1}\AA^{-1}_{s-1}}\\
 &t'^{-1}_{i,2s-3}t'_{j,2s-3}\AA^{-1}_{s-2} T_{s-1}\AA^{-1}_{s-1},\\
 =&\AA^{-1}_{s-1}\underline{\AA_{s-1}\AA_{s-2}\AA_{s-1}T^{-1}_{s-1}}t'^{-1}_{i,2s-3}t'_{j,2s-3}T^{-1}_{s-2}\AA_{s-2}\\
 &t'^{-1}_{i,2s-3}t'_{j,2s-3}\underline{T_{s-1}\AA^{-1}_{s-1}\AA^{-1}_{s-2} T_{s-1}\AA^{-1}_{s-1}},\\
 =&\AA^{-1}_{s-1}\underline{\AA_{s-2}\AA_{s-1}T^{-1}_{s-2}\AA_{s-2}}t'^{-1}_{i,2s-3}t'_{j,2s-3}T^{-1}_{s-2}\AA_{s-2}\\
 &t'^{-1}_{i,2s-3}t'_{j,2s-3}\underline{\AA^{-1}_{s-2}T_{s-2}\AA^{-1}_{s-1} T'_{s-1}\AA^{-1}_{s-2}},\\
 =&\AA^{-1}_{s-1}\AA_{s-2}\AA_{s-1}T^{-1}_{s-2}\AA_{s-2}\AA_{s-3}t'^{-1}_{i,2s-5}t'_{j,2s-5}\underline{\AA^{-1}_{s-3}T^{-1}_{s-2}\AA_{s-2}\AA_{s-3}}\\
 &t'^{-1}_{i,2s-5}t'_{j,2s-5}\AA^{-1}_{s-3}\AA^{-1}_{s-2}T_{s-2}\AA^{-1}_{s-1} T'_{s-1}\AA^{-1}_{s-2},\\
 =&\AA^{-1}_{s-1}\prod^{s-3}_{k=s-2}\AA_{k}\prod^{s-2}_{k=s-1}\AA_{k}T^{-1}_{s-3}\AA_{s-3}t'^{-1}_{i,2s-5}t'_{j,2s-5}T^{-1}_{s-3}\AA_{s-3}\\
 &t'^{-1}_{i,2s-5}t'_{j,2s-5}\AA^{-1}_{s-3}T_{s-3}(\prod_{k=s-2}^{s-1}\AA^{-1}_{k})T'_{s-1}\prod^{s-2}_{k=s-3}\AA^{-1}_{k}.
\end{align*}
Applying the relation established in Lemma \ref{lem:Abraid_rels}, the left-hand side reduces to the following expression:
\[
\begin{aligned}
 &\AA^{-1}_{s-1}\prod^{j}_{k=s-2}\AA_{k}\prod^{j+1}_{k=s-1}\AA_{k}T^{-1}_{j}\AA_{j}t'^{-1}_{i,2j+1}T^{-1}_{j}\AA_{j}
 t'^{-1}_{i,2j+1}\AA^{-1}_{j}T_{j}(\prod_{k=j+1}^{s-1}\AA^{-1}_{k})T'_{s-1}\prod^{s-2}_{k=j}\AA^{-1}_{k},\\
 =&\AA^{-1}_{s-1}\prod^{j}_{k=s-2}\AA_{k}\prod^{j+1}_{k=s-1}\AA_{k}t'^{-1}_{i,2j+3}
 t'^{-1}_{i,2j+1}\AA^{-1}_{j}(\prod_{k=j+1}^{s-1}\AA^{-1}_{k})T'_{s-1}\prod^{s-2}_{k=j}\AA^{-1}_{k},\\
 =&\AA^{-1}_{s-1}\prod^{j}_{k=s-2}\AA_{k}t'^{-1}_{i,2j+1}\prod^{j+1}_{k=s-1}\AA_{k}t'^{-1}_{i,2j+3}
 \AA^{-1}_{j}(\prod_{k=j+1}^{s-1}\AA^{-1}_{k})T'_{s-1}\prod^{s-2}_{k=j}\AA^{-1}_{k},\\
 =&\AA^{-1}_{s-1}\prod^{j}_{k=s-2}\AA_{k}t'^{-1}_{i,2j+1}\prod^{j+1}_{k=s-1}\AA_{k}T_j^{-1}\AA_{j}
 (\prod_{k=j+1}^{s-1}\AA^{-1}_{k})T'_{s-1}t'^{-1}_{i,2j+1}\prod^{s-2}_{k=j}\AA^{-1}_{k}.
\end{aligned}
\]
Again, we use relation in Lemma \ref{lem:Abraid_rels} to rewrite the middle part,
\[
\begin{aligned}
\prod^{j+1}_{k=s-1}\AA_{k}T_j^{-1}\AA_{j}
 (\prod_{k=j+1}^{s-1}\AA^{-1}_{k})T'_{s-1}&=\AA^{-1}_{j}\prod^{j+2}_{k=s-1}\AA_{k}T_{j+1}^{-1}\AA_{j+1}\AA_{j}
 (\prod_{k=j+2}^{s-1}\AA^{-1}_{k})T'_{s-1},\\
 &=\cdots,\\
 &=(\prod_{k=j}^{s-2}\AA^{-1}_{k})T^{-1}_{s-1}\AA_{s-1}\prod^{j}_{k=s-2}\AA_{k}T'_{s-1},\\
 &=(\prod_{k=j}^{s-2}\AA^{-1}_{k})\AA_{s-1}\prod^{j}_{k=s-2}\AA_{k}.
\end{aligned}
\]
Plug it in, we have:

\begin{align*}
 t'^{-1}_{i,2s-1}t'_{2j+1,2s-1}T'^{-1}_{s-1}t'^{-1}_{i,2s+1}t'_{2j+1,2s+1}T'_{s-1}&=\AA^{-1}_{s-1}\prod^{j}_{k=s-2}\AA_{k}t'^{-1}_{i,2j+1}(\prod_{k=j}^{s-2}\AA^{-1}_{k})\AA_{s-1}\prod^{j}_{k=s-2}\AA_{k}t'^{-1}_{i,2j+1}\prod^{s-2}_{k=j}\AA^{-1}_{k},\\
 &=\AA^{-1}_{s-1}t'^{-1}_{i,2s-1}t'_{2j+1,2s-1}\AA_{s-1}t'^{-1}_{i,2s-1}t'_{2j+1,2s-1}.
\end{align*}

\end{proof}

Now from Lemma \ref{lem:ttcomm_rel}, the term $t'^{-1}_{1,l}t'_{j,l}$ can be moved to the front of the product $\prod^{l}_{p=k}t'^{-1}_{1,p}t'_{j,p}$, as shown in the following equalities:
\[
\begin{aligned}
\prod^{2l+1}_{p=2k+1}t'^{-1}_{i,p}t'_{j,p}=&(\prod ^{k}_{s=l-1}\AA^{-1}_s)t'^{-1}_{i,2k+1}t'_{j,2k+1}\prod ^{l-1}_{s=k}\AA_s\prod^{2l}_{p=2k+1}t'^{-1}_{i,p}t'_{j,p},\\  
\prod^{2l+1}_{p=2k}t'^{-1}_{i,p}t'_{j,p}=&(\prod ^{k}_{s=l-1}\AA^{-1}_s)T'^{-1}_{k-1}t'^{-1}_{i,2k+1}t'_{j,2k+1}T'_{k-1}\prod ^{l-1}_{s=k}\AA_s\prod^{2l}_{p=2k}t'^{-1}_{i,p}t'_{j,p},\\
\prod^{2l}_{p=2k+1}t'^{-1}_{i,p}t'_{j,p}=&(\prod ^{k}_{s=l-2}\BB^{-1}_s)T^{-1}_kt'^{-1}_{i,2k+2}t'_{j,2k+2}T_k\prod ^{l-2}_{s=k}\BB_s\prod^{2l-1}_{p=2k+1}t'^{-1}_{i,p}t'_{j,p},\\  
\prod^{2l}_{p=2k}t'^{-1}_{i,p}t'_{j,p}=&(\prod ^{k-1}_{s=l-2}\BB^{-1}_s)t'^{-1}_{i,2k}t'_{j,2k}\prod^{l-2}_{s=k-1}\BB_s\prod^{2l-1}_{p=2k}t'^{-1}_{i,p}t'_{j,p}.
\end{aligned}
\]
Note that the last two can be obtained from conjugating the first two by $F$.

Next, we have:
\begin{align*}
t'^{-1}_{i,2k+1}t'_{2j+1,2k+1}\prod^{2k}_{p=2j+1}t'_{1,p}=&(\prod^{j}_{p=k-1}\AA_p)t'^{-1}_{i,2j+1}\prod^{k-1}_{p=j}\AA^{-1}_p \prod^{2k}_{p=2j+1}t'_{1,p},\\
=&(\prod^{j}_{p=k-1}\AA_p)t'^{-1}_{i,2j+1}(\prod^{k-2}_{p=j}\AA^{-1}_p)T'^{-1}_{k-1}t'_{1,2k+1}t'_{1,2k}(\prod^{2k-2}_{p=2j+1}t'_{1,p})\AA_{k-1}, \qquad \text{( Lem.~\ref{Lem:Atrels_boundary})}\\
=&\cdots,\\
=&(\prod^{j}_{p=k-1}\AA_p)t'^{-1}_{i,2j+1}\prod^{k-1}_{p=j}T'^{-1}_p \prod^{2k+1}_{p=2j+2}t'_{1,p}\prod^{k-1}_{p=j}\AA_p, \\
=&(\prod^{j}_{p=k-1}\AA_p)\prod^{k-1}_{p=j}T'^{-1}_p \prod^{2k+1}_{p=2j+2}t'_{1,p}t'^{-1}_{i,2j+1}\prod^{k-1}_{p=j}\AA_p, \qquad \text{( Prop.~\ref{prop:t_comm_rels})}\\
=&\prod^{2k}_{p=2j+1}t'_{1,p}(\prod^{j}_{p=k-1}\AA^{-1}_p)t'^{-1}_{i,2j+1}\prod^{k-1}_{p=j}\AA_p,\qquad \text{( Lem.~\ref{Lem:Atrels_boundary})}\\
=&\prod^{2k}_{p=2j+1}t'_{1,p}t'_{2j+1,2k+1}t'^{-1}_{i,2k+1}.
\end{align*}
Similarly, we have:
\begin{align*}
t'^{-1}_{i,2k+1}t'_{2j,2k+1}\prod^{2k}_{p=2j}t'_{1,p}=&(\prod^{j}_{p=k-1}\AA_p)t'^{-1}_{i,2j+1}T'^{-1}_{j-1}\prod^{k-1}_{p=j}\AA^{-1}_p \prod^{2k}_{p=2j}t'_{1,p},\\
=&(\prod^{j}_{p=k-1}\AA_p)t'^{-1}_{i,2j+1}T'^{-1}_{j-1}t'_{1,2j}\prod^{k-1}_{p=j}T'^{-1}_p \prod^{2k+1}_{p=2j+2}t'_{1,p}\prod^{k-1}_{p=j}\AA_p, \qquad \text{( Lem.~\ref{Lem:Atrels_boundary})}\\
=&(\prod^{j}_{p=k-1}\AA_p)\prod^{k-1}_{p=j}T'^{-1}_p (\prod^{2k+1}_{p=2j+2}t'_{1,p})t'^{-1}_{i,2j+1}T'^{-1}_{j-1}t'_{1,2j}\prod^{k-1}_{p=j}\AA_p, \qquad \text{( Prop.~\ref{prop:t_comm_rels})} \\
=&\prod^{2k}_{p=2j+1}t'_{1,p}(\prod^{j}_{p=k-1}\AA^{-1}_p)t'^{-1}_{i,2j+1}T'^{-1}_{j-1}t'_{1,2j}\prod^{k-1}_{p=j}\AA_p, \qquad \text{( Lem.~\ref{Lem:Atrels_boundary})}\\
=&\prod^{2k}_{p=2j+1}t'_{1,p}(\prod^{j}_{p=k-1}\AA^{-1}_p)T^{-1}_{j-1}\AA_{j-1}t'^{-1}_{i,2j-1}\AA_{j-1}T'^{-1}_{j-1}t'_{1,2j}\prod^{k-1}_{p=j}\AA_p,\\
=&\prod^{2k}_{p=2j+1}t'_{1,p}(\prod^{j}_{p=k-1}\AA^{-1}_p)t'_{1,2j}T'^{-1}_{j-1}\AA_{j-1}\ t'^{-1}_{i,2j-1}\AA_{j-1}T^{-1}_{j-1}\prod^{k-1}_{p=j}\AA_p, \qquad \text{( Lem.~\ref{Lem:Atrels_boundary})}\\
=&\prod^{2k}_{p=2j}t'_{1,p}(\prod^{j}_{p=k-1}\AA^{-1}_p)T'^{-1}_{j-1}t'^{-1}_{i,2j+1}\prod^{k-1}_{p=j}\AA_p, \\
=&\prod^{2k}_{p=2j}t'_{1,p}t'_{2j,2k+1}t'^{-1}_{i,2k+1}.   
\end{align*}
We also have:
\[
\begin{aligned}
t'^{-1}_{i,2k+1}t'_{j,2k+1}t'_{1,2k-1}=t'_{1,2k-1}T'_{k-1}\AA^{-1}_{k-1}t'^{-1}_{i,2k-1}t'_{j,2k-1}\AA_{k-1}T'^{-1}_{k-1}.
\end{aligned}
\]
Therefore, we have:
\[
t'^{-1}_{i,2k+1}t'_{j,2k+1}\prod^{2k-1}_{p=j}t'_{1,p}=(\prod^{2k-1}_{p=j}t'_{1,p})T'_{k-1}t'_{j,2k+1}t'^{-1}_{i,2k+1}T'^{-1}_{k-1}.
\]
Now we can do induction, suppose: 
\[
\prod^{l-1}_{p=k}t'^{-1}_{1,p}t'_{j,p}\prod^{k-1}_{p=j}t'_{1,p}=\prod^{k-1}_{p=j}t'_{1,p}\prod^{k}_{p=l-1}t'_{j,p}t'^{-1}_{i,p}.
\]
We need to show:
\[
\prod^{l}_{p=k}t'^{-1}_{1,p}t'_{j,p}\prod^{k-1}_{p=j}t'_{1,p}=\prod^{k-1}_{p=j}t'_{1,p}\prod^{k}_{p=l}t'_{j,p}t'^{-1}_{1,p}.    
\]
We have (The computation for the product $\prod^{2l+1}_{p=2k+1}t'^{-1}_{i,p}t'_{j,p}\prod^{2k}_{p=j}t'_{1,p}$ follows an analogous, simpler, argument; thus, we omit the details here):
\[
\begin{aligned}
\prod^{2l+1}_{p=2k}t'^{-1}_{i,p}t'_{j,p}\prod^{2k-1}_{p=j}t'_{1,p}=&(\prod ^{k}_{s=l-1}\AA^{-1}_s)T'^{-1}_{k-1}t'^{-1}_{i,2k+1}t'_{j,2k+1}T'_{k-1}\prod ^{l-1}_{s=k}\AA_s\prod^{2l}_{p=2k}t'^{-1}_{i,p}t'_{j,p}\prod^{2k-1}_{p=j}t'_{1,p},\qquad(\text{Lem.~\ref{lem:ttcomm_rel}})\\
=&(\prod ^{k}_{s=l-1}\AA^{-1}_s)T'^{-1}_{k-1}t'^{-1}_{i,2k+1}t'_{j,2k+1}T'_{k-1}\prod ^{l-1}_{s=k}\AA_s\prod^{2k-1}_{p=j}t'_{1,p} \prod^{2k}_{p=2l}t'_{j,p}t'^{-1}_{i,p},\qquad \text{(By induction)}\\
=&(\prod ^{k}_{s=l-1}\AA^{-1}_s)T'^{-1}_{k-1}\underline{t'^{-1}_{i,2k+1}t'_{j,2k+1}(\prod^{2k-1}_{p=j}t'_{1,p})}T'_{k-1}\prod ^{l-1}_{s=k}\AA_s \prod^{2k}_{p=2l}t'_{j,p}t'^{-1}_{i,p},\qquad \text{(Prop. \ref{prop:t_comm_rels})}\\
=&\prod^{2k-1}_{p=j}t'_{1,p}(\prod ^{k}_{s=l-1}\AA^{-1}_s)t'_{j,2k+1}t'^{-1}_{i,2k+1}\prod ^{l-1}_{s=k}\AA_s \prod^{2k}_{p=2l}t'_{j,p}t'^{-1}_{i,p}, \qquad \text{(Lem. \ref{Lem:Atrels_boundary})}\\
=&\prod^{2k-1}_{p=j}t'_{1,p}\prod^{2k}_{p=2l+1}t'_{j,p}t'^{-1}_{i,p}.
\end{aligned}
\]
The remaining case,  $\prod^{2l}_{p=k}t'^{-1}_{i,p}t'_{j,p}\prod^{k-1}_{p=j}t'_{1,p}$, follows directly by conjugating the previous identities by $F$.
\end{proof}

\subsection{Projective representations of $\SMod(\Sigma_{(n-1)(m-1)})$}

We fix our notation for the relevant surface mapping class groups. Let $\Sigma_{g,n}$ denote a closed oriented surface of genus $g$ with $n$ punctures. We consider the balanced superelliptic covering $\Sigma_{(n-1)(k-1)} \to \Sigma_0$ of degree $k > 2$ with $2n$ branch points. Let $\LMod(\Sigma_{0,2n})$ be the associated liftable mapping class group, and let $\SMod(\Sigma_{(n-1)(k-1)})$ denote the balanced superelliptic mapping class group. The presentation for $\SMod(\Sigma_{(n-1)(k-1)})$ is given in \cite[Thm.~6.12]{HO25}; see also \cite{GW17lifting, GW17blifting}.With these definitions in place, we now proceed to the proof of the following theorem:

\begin{Thm}\label{thm:Smod_rep}
 The configuration space $Conf(\CC)_{m,n}$ together with operators $\AA_j, \BB_j,  r_{i,j},\Delta,$ gives a unitary projective representation of $\SMod(\Sigma_{(n-1)(m-1)})$.    
\end{Thm}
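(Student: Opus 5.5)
The strategy is to work from an explicit finite presentation of $\SMod(\Sigma_{(n-1)(m-1)})$, namely the one in \cite[Thm.~6.12]{HO25}. Via the Birman--Hilden type exact sequence
\[
1\to \Zn{m}\to \SMod(\Sigma_{(n-1)(m-1)})\to \LMod(\Sigma_{0,2n})\to 1,
\]
this presentation consists of generators and relations of the liftable mapping class group, which is essentially the parity-compatible subgroup $B_{2n}^{\mathrm{par}}$ modulo the sphere relations. It is supplemented by the deck transformation generator and by the relations describing the central extension.

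The first step is a dictionary. I would send the lifts of the generating parity-compatible tangles of Figure \ref{fig:generators} to $\AA_j$ and $\BB_j$. The ``pure'' generators, namely the lifts of full twists of arcs joining the $i$-th to the $j$-th branch points, go to $r_{i,j}$. The half-twist/reflection-type generator goes to $\Delta$. The generator of the deck group $\Zn{m}$ goes to $\rho_2$, which Lemma \ref{lem:rho_2tr_rels} already expresses as $\eta^{\frac{m}{2}}r_{2,2n}$, so it lies in the group generated by the listed operators. Whenever the presentation uses the products $T_j,T'_j$ or the elementary twists, I would rewrite them through $t_{i,j},t'_{i,j}$ (Definition \ref{def:t}) and the identities $\AA^{\pm1}_iT_i=T'_i\AA^{\pm1}_i$. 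Unitarity is automatic, since every operator involved is a product of the unitary $T_{i,j}$, $u$ and $\Delta_n^{\boxtimes m}$.

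The second step is to check the defining relations one family at a time, each up to a scalar power of $\eta$:
\begin{itemize}
\item The far-commutation relations come from Lemma \ref{lem:ABTcomm_rels}, Proposition \ref{prop:t_comm_rels} and Proposition \ref{prop:r_comm_rels}.
\item The braid-type relations among the $\AA$'s and $\BB$'s come from Lemma \ref{lem:Abraid_rels} and Corollary \ref{Cor:ABbraid_rels}.
\item The sphere-type relations $r_{1,2n}=\Id$ and $r_{1,2n-1}^3$ being scalar come from Proposition \ref{prop:r_comm_rels} and \eqref{r_powerthree_to_1}.
\item The relations involving the involution ($\Delta^2=\Id$ by \eqref{delta_square_to_1}, together with conjugation of $\AA,\BB,T,r,\rho_2$) come from Lemma \ref{lem:Delta_ABT_rels} and Proposition \ref{prop:r_delta_rels}.
\item The ``lantern/pentagon'' type relations among the $r_{i,j}$ come from Proposition \ref{prop:r_pentagon}.
\item The relations expressing that the deck transformation is central, or acts by inversion under orientation-reversing generators, come from Corollary \ref{Cor:rho_2TABu_rels} and $\rho_2^m=\Id$.
\end{itemize}
Since every relation holds up to an explicit power of $\eta$ (and $\eta$ is a root of unity times a phase), the map factors through the projectivization of the unitary group, which yields a projective unitary representation.

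The main obstacle will be the bookkeeping in matching the presentation of \cite{HO25} with our operators. Their generators are lifts of specific braids, and some relations are stated in terms of words involving both parity-preserving and parity-reversing elements. Each such word has to be translated into the $\AA,\BB,T,T'$ alphabet, so that the relations become exactly one of the identities proved above, possibly after conjugating by $F$ or $\Delta$. To handle this, I would first treat the pure sub-presentation (the $r_{i,j}$ and their commutation/pentagon relations, which mirror the standard presentation of the pure braid group of the sphere). I would then add the generators $\AA_j,\BB_j$ acting by conjugation on the $r_{i,j}$, using the identities $Ft_{i+1,j+1}F^{-1}=t_{i,j}$ and Lemma \ref{Lem:Atrels_boundary} to compute these conjugation actions. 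Finally I would adjoin $\Delta$ and $\rho_2$. Any relation not directly covered would be derived from Proposition \ref{prop:ABC_rels} by expressing it as a statement about the braidings $C_{j,j+1}$ in $\CC^{\boxtimes m}$, which satisfy the braid relations exactly.
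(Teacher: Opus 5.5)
Your plan is essentially the paper's proof. The paper defines $\rho$ on the generators of the presentation in \cite[Thm.~6.12]{HO25} by $\tilde h_{2i+1}\mapsto\AA_i$, $\tilde h_{2i+2}\mapsto\BB_i$, $\tilde t_{i,j}\mapsto r_{i,j}$, $r\mapsto\Delta$, and checks the defining relations against the same list of propositions and lemmas. Your extra handling of the deck generator through $\rho_2=\eta^{\frac{m}{2}}r_{2,2n}$ and Corollary \ref{Cor:rho_2TABu_rels} is consistent with this.

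Two small corrections:
\begin{itemize}
\item The generator that inverts $\rho_2$ is the parity-reversing lift $\Delta$ (Proposition \ref{prop:r_delta_rels}). It is not an ``orientation-reversing'' generator.
\item The sphere relation you need is that $r_{1,2n-1}^{m}$ is scalar. This follows from $\rho_2^m=\Id$ and Lemma \ref{lem:rho_2tr_rels}. The exponent $3$ that you copied from \eqref{r_powerthree_to_1} is only correct for $m=3$.
\end{itemize}
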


\begin{proof}
We define the map
\[
\rho:\SMod(\Sigma_{(n-1)(m-1)})\to \PGL(Conf(\CC)_{m,n}),
\]
by $\rho(\tilde{h}_{2i+1})=\AA_i$, $\rho(\tilde{h}_{2i+2})=\BB_i$, $\rho(\tilde{t}_{i,j})=r_{i,j}$, $\rho(r)=\Delta$.

This is well defined follows from relations in Proposition \ref{prop:ABC_rels},\ref{prop:t_comm_rels}, \ref{prop:r_comm_rels}, \ref{prop:TAB_rels},  \ref{prop:r_pentagon}, \ref{prop:r_delta_rels}, Lemma \ref{lem:Abraid_rels}, and relations \eqref{delta_square_to_1}, \eqref{r_powerthree_to_1}. 
\end{proof}

Let $B_n$ be the braid group on $n$-strands, and $PB_n$ be the pure braid group. Let $\Mod(\Sigma_{0,2n})$ be the spherical braid group, and $\PMod(\Sigma_{0,2n})$ be the spherical pure braid group, $q$ be the following map:
\begin{equation}
    B_{2n}\xrightarrow[]{q}\operatorname{Mod}(\Sigma_{0,2n}).
\end{equation}
Recall that \(W_{2n}\) and \(B_{2n}^{\mathrm{par}}\) denote the subgroups of parity-compatible permutations and braids, respectively; we have the following commutative diagram:
\[
\begin{tikzcd}[row sep=large, column sep=large]
\ker(q)
  \arrow[r, "\cong"] 
  \arrow[d, hook] 
&
\ker(q)
  \arrow[d, hook] 
&
\phantom{A_{13}}
\\
PB_{2n} 
  \arrow[r, hook ] 
  \arrow[d, two heads, "q"'] 
&
B_{2n}^{\mathrm{par}}
  \arrow[r, two heads ] 
  \arrow[d, two heads, "q"] 
&
W_{2n}
  \arrow[d, "\cong"]
\\
\operatorname{PMod(\Sigma_{0,2n})} 
  \arrow[r, hook] 
&
\LMod(\Sigma_{0,2n}) 
  \arrow[r, two heads] 
&
W_{2n}
\end{tikzcd}
\]

Therefore, using the same technique in \cite{GW17lifting, HO25}, we can get a presentation for the group $G\subset B_{2n}$. Using relations that do not involve $\eta$, i.e., Proposition \ref{prop:ABC_rels}, \ref{prop:TAB_rels}, \ref{prop:t_comm_rels}, \ref{prop:r_comm_rels}, \ref{prop:r_pentagon}, the first identity in Proposition \ref{prop:r_delta_rels}, Lemma \ref{lem:Delta_ABT_rels} and relations \eqref{delta_square_to_1}, we obtain the linear representation for the group $B_{2n}^{\mathrm{par}}$:

\begin{Cor}
The map $\rho$ induces a unitary representation of the $G\subset B_n$, therefore, given two isotopic braids $\beta_1,\beta_2\in B_{2n}$ which are parity compatible. We have: 
\[\rho(\beta_1)=\rho(\beta_2).\]
Or graphically, we have:
\begin{equation}\label{eq:braid_topological_equvalence}
\includegraphics[width=150pt]{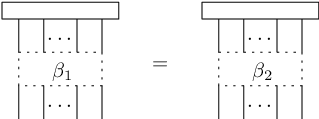} 
\end{equation}
\end{Cor}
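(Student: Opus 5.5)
The plan is to build a presentation of the parity-compatible braid group $B_{2n}^{\mathrm{par}}$ from the commutative diagram above. Then we check that every defining relation holds on the nose, not just up to scalars, for the normalized operators $T_j, T'_j, \AA_j, \BB_j, r_{i,j}, \Delta$. The diagram exhibits $B_{2n}^{\mathrm{par}}$ as an extension
\[
1\to \ker(q)\to B_{2n}^{\mathrm{par}}\to \LMod(\Sigma_{0,2n})\to 1 .
\]
Here $\ker(q)$ is the normal closure in $B_{2n}$ of the spherical relation $\sigma_1\cdots\sigma_{2n-1}\sigma_{2n-1}\cdots\sigma_1$ together with $(\sigma_1\cdots\sigma_{2n-1})^{2n}$. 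Following the technique of \cite{GW17lifting, HO25}, we lift the generators of $\LMod(\Sigma_{0,2n})$ to braids:
\begin{itemize}
\item $\tilde h_{2i+1}, \tilde h_{2i+2}$ to the parity-compatible products of elementary crossings drawn in Figure \ref{fig:generators};
\item $\tilde t_{i,j}$ to the pure braids $r_{i,j}$;
\item $r$ to the half twist $\Delta$.
\end{itemize}
A presentation of the middle group is then obtained in the standard way. Its generators are these lifts together with generators of $\ker(q)$. Its relations are the lifts of the \cite[Thm.~6.12]{HO25} relations, each corrected by an explicit element of $\ker(q)$, plus conjugation relations expressing $\ker(q)$ as normal.

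Next, we match each relation with one proved above, using only identities in which $\eta$ does not appear:
\begin{itemize}
\item the braid-type relations among the $\AA$'s come from Lemma \ref{lem:Abraid_rels};
\item the relations $\AA_j\BB_j=T_jC_{j,j+1}$ and their partners come from Propositions \ref{prop:ABC_rels} and \ref{prop:TAB_rels};
\item the commutation relations of the Dehn twist lifts come from Propositions \ref{prop:t_comm_rels} and \ref{prop:r_comm_rels};
\item the lantern-type relation comes from Proposition \ref{prop:r_pentagon};
\item the relations for $r$ come from Lemma \ref{lem:Delta_ABT_rels}, the first identity of Proposition \ref{prop:r_delta_rels}, and \eqref{delta_square_to_1}.
\end{itemize}
The normalization $T_k\mapsto\eta^{m/2}T_k$ etc. was chosen precisely so that these relations hold without $\eta$.

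For the kernel generators, we use $r_{1,2n}=\Id$ from Proposition \ref{prop:r_comm_rels}. This shows that the lift of the spherical relation, which is the product of all $t$'s around the sphere, acts trivially. Since $r_{1,2n}$ equals a full twist in each factor of $\CC^{\boxtimes m}$, the full twist $(\sigma_1\cdots\sigma_{2n-1})^{2n}$ also acts as the identity.

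The main obstacle is the set of relations from $\SMod$ that only hold projectively, namely $r_{1,2n-1}^3$, $\rho_2^m$, and $F^2=\eta^{-m/2}\rho_1$. We must check that their lifts in $B_{2n}^{\mathrm{par}}$ do not actually impose them. On the braid side, these are not relations of $B_{2n}^{\mathrm{par}}$; they arise only in the quotient by $\ker(q)$ and the deck group. So we must verify that the braid-level versions, such as the relation between $r_{1,2n-1}$ and $t'_{1,2n}$, are exactly the $\eta$-free identities, i.e.\ Lemma \ref{lem:rho_2tr_rels} with the $\eta$ factors absorbed by the normalization. We expect this bookkeeping of exponents of $\eta$ to be the delicate point.

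Once the presentation is verified, $\rho$ descends to a genuine homomorphism on $B_{2n}^{\mathrm{par}}$. Unitarity is inherited from the unitarity of the generators. The final graphical statement \eqref{eq:braid_topological_equvalence} follows because two isotopic parity-compatible braids define the same element of $B_{2n}^{\mathrm{par}}$.
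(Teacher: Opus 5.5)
Your overall route is the paper's: present $B_{2n}^{\mathrm{par}}$ as an extension of $\LMod(\Sigma_{0,2n})$ by $\ker(q)$ following \cite{GW17lifting, HO25}, then match the relations against the $\eta$-free identities. The gap is in how you handle $\ker(q)$.

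You identify the spherical relator with $r_{1,2n}$, ``the product of all $t$'s''. But $r_{1,2n}=\prod_k t_{k,2n}$ is the full twist $(\sigma_1\cdots\sigma_{2n-1})^{2n}$. Since $r_{1,2n}=t_{1,2n}r_{2,2n}$, the relator $\sigma_1\cdots\sigma_{2n-1}\sigma_{2n-1}\cdots\sigma_1$ (strand $1$ encircling all others) is the single factor $t_{1,2n}$.

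By Lemma \ref{lem:rho_2tr_rels}, $t_{1,2n}$ and $t'_{1,2n}$ act as scalar multiples of $\rho_2^{-1}$ and $\rho_2$. Likewise $r_{1,2n-1}$ lies in $\ker(q)$, since it is the twist about a curve that is peripheral on the sphere, and it acts as a scalar multiple of $\rho_2^{-1}$. The rotation $\rho_2$ of the $m$ intervals is not a scalar in general, so $\ker(q)$ does \emph{not} act trivially. If it did, $\rho$ would factor through $\LMod(\Sigma_{0,2n})$, where the twist about a curve around $2n-1$ points is trivial, contradicting that lemma.

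Conceptually, elements of $\ker(q)$ correspond to deck transformations of the superelliptic cover, i.e.\ the kernel of $\SMod\to\LMod$. The representation realizes these by powers of $\rho_2$, which is exactly why the paper obtains a representation of $\SMod$ rather than of $\LMod$. Only your statement about the full twist is correct.

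Consequently, the relations of the extension presentation that involve kernel elements cannot be discharged by setting the correction terms equal to $\Id$. These are the conjugation relations encoding normality of $\ker(q)$, and the lifted \cite[Thm.~6.12]{HO25} relations corrected by elements of $\ker(q)$. You must insert the actual images, which are scalar multiples of powers of $\rho_2$. You then verify them using $[\rho_2,T_j]=[\rho_2,T'_j]=[\rho_2,\AA_j]=[\rho_2,\BB_j]=0$ and $\rho_2 u=u\rho_2^{-1}$ from Corollary \ref{Cor:rho_2TABu_rels}, together with $\rho_2\Delta=\Delta\rho_2^{-1}$ from Proposition \ref{prop:r_delta_rels}, while tracking the scalars.

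Moreover, the $\eta$-factors in Lemma \ref{lem:rho_2tr_rels} are not ``absorbed by the normalization'': that lemma is already stated for the normalized operators. Checking that these scalars are compatible with every relation of $B_{2n}^{\mathrm{par}}$ among kernel elements is therefore a genuine step, not bookkeeping that disappears. Your final paragraph gestures at this, but it is inconsistent with your earlier claim that the kernel acts trivially.
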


The next relation follows directly from Proposition \ref{prop:ABC_rels}, or Figure \ref{fig:connect_two_braidings}.

\begin{equation}\label{eq:double_string_box_through}
\includegraphics[width=400pt]{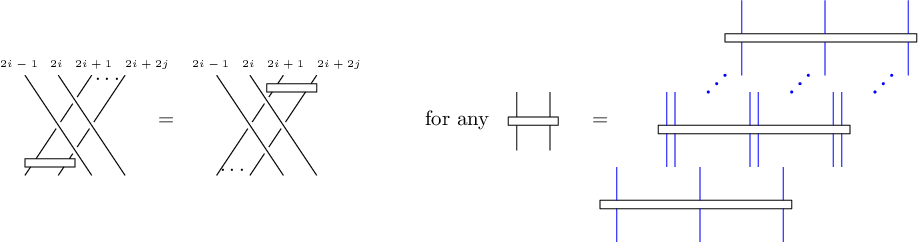}    
\end{equation}

\section{Applications}
In this section, we first examine the interplay between the maps $\phi_k,\iota_k$ and the braiding structures established in previous sections. Utilizing these relations, we derive the structure constants for the 2-box convolution product of the planar algebra, in terms of a generalized Verlinde formula. 

\subsection{Relations involving $\phi_k,\iota_k$ and the braiding structures}
\begin{lem}
The following identity holds
\begin{align*}
\phi_{2k+4}T_{k+1}\mathcal{A}^{-1}_{k+1}\iota_{2k+1}=\mathcal{B}_k&=\phi_{2k+1}\mathcal{A}^{-1}_{k+1}T_{k+1}\iota_{2k+4},\\
\phi_{2k+3}T'_{k}\mathcal{B}^{-1}_{k}\iota_{2k+1}=\mathcal{A}_k&=\phi_{2k}\mathcal{B}^{-1}_{k}T'_{k}\iota_{2k+3},\\
\end{align*}
or graphically
\begin{equation}\label{eq:phi_iota_AB_rels}
\includegraphics[width=220pt]{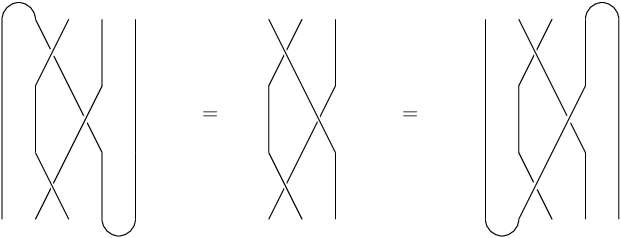}    
\end{equation}
\end{lem}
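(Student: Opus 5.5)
The plan is to prove the first pair of identities by a direct graphical computation. The second pair will then follow by conjugation with the Fourier transform $F$ together with Proposition \ref{prop:FAB_rels} and Proposition \ref{prop:FT_rels}.

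\textbf{Reduction via $F$.} In the normalization of Section 4, Proposition \ref{prop:FT_rels} gives $FT_{k+1}F^{-1}=T'_k$, and Proposition \ref{prop:FAB_rels} gives $F\AA_{k+1}F^{-1}=\BB_k$ and $F\BB_kF^{-1}=\AA_k$. Rotating the input and output disks by one step, the planar-algebra maps $\phi,\iota$ intertwine with $F$ up to an index shift by one. Conjugating the first identity by $F$ therefore turns $\phi_{2k+4}T_{k+1}\AA^{-1}_{k+1}\iota_{2k+1}$ into $\phi_{2k+3}T'_k\BB^{-1}_k\iota_{2k}$, up to the bookkeeping of indices, and its right-hand side $\BB_k$ into $\AA_k$. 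So only the first line needs to be proved, and I will check the index shift against the Zigzag relations.

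\textbf{Main computation.} I will draw $T_{k+1}\AA^{-1}_{k+1}$ as the product of the elementary operators $T_{i,k+1}$ from Figure \ref{fig:ElementaryBraidings}: $T_{k+1}$ consists of the even ones, and $\AA^{-1}_{k+1}$ is the product of all $T^{-1}_{i,k+1}$ in increasing order. The product is then sandwiched between the inclusion and contraction of Figure \ref{fig:Contrac_inclu}. Capping and cupping the strands adjacent to the $(k+1)$-st column removes that column of $Y$-labels. Applying the cutting property \eqref{eq:cutting_identity} to the red loops attached to that column should collapse the even factors against their inverses. What remains should be exactly the alternating product $T_{0,k+1}\prod T_{2i-1,k}T_{2i,k+1}$, which is the definition of $\BB_k$.

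\textbf{Alternative algebraic route and the symmetric identity.} Alternatively, I can use Proposition \ref{prop:ABC_rels}, $\AA_{k+1}\BB_{k+1}=C_{k+1,k+2}T_{k+2}$, together with \eqref{eq:double_string_box_through} and Figure \ref{fig:connect_two_braidings}. In terms of the double strings, the operator $T_{k+1}\AA^{-1}_{k+1}$ is a parity-compatible braid. Using the braid isotopy invariance \eqref{eq:braid_topological_equvalence}, the cap and cup of $\phi$ and $\iota$ can be slid through this braid, leaving the braid that the topological picture assigns to $\BB_k$. The second equality in the first line, with $\phi_{2k+1}$ and $\iota_{2k+4}$, follows in the same way by closing on the other side. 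It can also be obtained by taking adjoints, since $\phi^\dagger$ is proportional to $\iota$ and all operators involved are unitary.

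\textbf{Main obstacle.} The hardest step is the diagrammatic collapse. Under the cap, the red Kirby-colored loops at the boundary column must be resolved with the handle slide and cutting identities without producing stray twist factors. The powers of $\eta$ and $\sqrt{d}$ must then be checked to cancel exactly under the normalizations $T_k\mapsto\eta^{m/2}T_k$ and $\AA_k\mapsto\eta^{m/4}\AA_k$. My first approach is to follow the proof of Proposition \ref{prop:ABC_rels} (Figures \ref{fig:ABC_rel_1}--\ref{fig:ABC_rel_3}): handle-slide the blue strands over the dotted loops, then cut. Any leftover scalar will be fixed by comparing norms, since both sides are unitary.
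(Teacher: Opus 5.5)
Your main line is the paper's proof. The paper verifies $\phi_{2k+4}T_{k+1}\AA^{-1}_{k+1}\iota_{2k+1}=\BB_k$ by a direct diagrammatic computation (Figure \ref{fig:ABcapcup_rels}) and gets the second line by conjugating with $F$, which is your reduction via $FT_{k+1}F^{-1}=T'_k$, $F\AA_{k+1}F^{-1}=\BB_k$, $F\BB_kF^{-1}=\AA_k$ and the compatibility of $\phi,\iota$ with $F$ (cf.\ Lemma \ref{lem:phi_F_and_far_comm}). Three of your auxiliary claims, however, would fail and should be dropped.

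\textbf{Adjoints.} Taking adjoints does not give the right-hand equality of the first line. We have $(T_{k+1}\AA^{-1}_{k+1})^\dagger=\AA_{k+1}T^{-1}_{k+1}$ and $\BB_k^\dagger=\BB_k^{-1}$. So applying $\dagger$ to the left-hand equality yields, up to the constant relating $\phi_k^\dagger$ and $\iota_k$, the identity $\phi_{2k+1}\AA_{k+1}T^{-1}_{k+1}\iota_{2k+4}=\BB_k^{-1}$. That is the mirror-image statement, not $\phi_{2k+1}\AA^{-1}_{k+1}T_{k+1}\iota_{2k+4}=\BB_k$. The closure $X\mapsto\phi_{2k+1}X\iota_{2k+4}$ is linear, not multiplicative, so its value on $X^{-1}$ does not determine its value on $X$. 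You have two options:
\begin{itemize}
\item compute the other-side closure directly; or
\item compose the adjoint with a mirror symmetry such as conjugation by $\Theta_2$. By Lemma \ref{lem:ThetaT_rel} this inverts $T_j$, $\AA_j$ and $\BB_j$. For the first line the Section 4 rescalings cancel between the two sides, so you may use the unnormalized operators. You would then also have to check that $\Theta_2$ is compatible with $\phi_k$ and $\iota_k$.
\end{itemize}

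\textbf{Scalars.} Leftover scalars cannot be fixed by comparing norms. Since $\phi_k$ and $\iota_k$ are not unitary, unitarity of the left-hand side is not known before the computation is finished. Moreover, the scalars most at risk here are powers of $\eta$, which have modulus one and are invisible to norms. They must be tracked in the diagrams themselves.

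\textbf{Alternative algebraic route.} This route is circular. The relation \eqref{eq:braid_topological_equvalence} only identifies isotopic parity-compatible braids, and \eqref{eq:double_string_box_through} concerns double strings passing through a box. Neither lets you slide a single cup or cap through a crossing, and that sliding is exactly what this lemma asserts.
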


\begin{proof}
 The proof of the left-hand side is given in Figure \ref{fig:ABcapcup_rels}. Conjugating the first identity by $F$, we get the second identity
 \begin{figure}
     \centering     \includegraphics[width=0.8\linewidth]{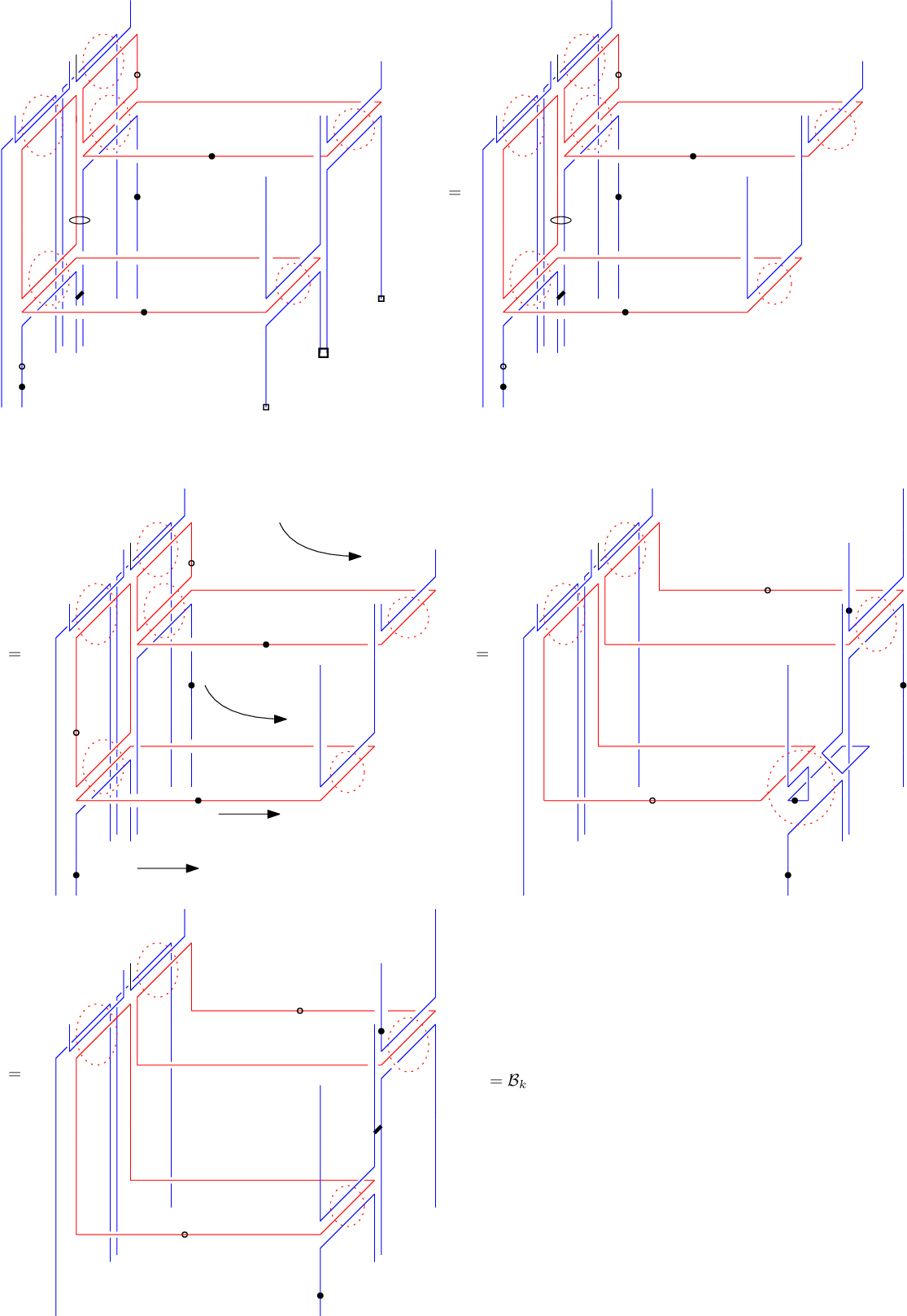}
     \caption{Proof for $(T_{k+1}\otimes\phi_{2k+4})\overline{\mathcal{A}}_{k+1}\iota_{2k+1}=\mathcal{B}_k$}
     \label{fig:ABcapcup_rels}
\end{figure}
\end{proof}

\begin{rmk}
 The relation \eqref{eq:phi_iota_AB_rels} is equivalent to the following, by applying the Zigzag relation of $\phi_k,\iota_k$:
 \[
 \includegraphics[width=300pt]{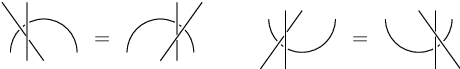}
 \]
\end{rmk}
\begin{lem}\label{lem:phi_F_and_far_comm}
The following relations holds (same for $\iota_k$).
\begin{equation}
 \includegraphics[width=400pt]{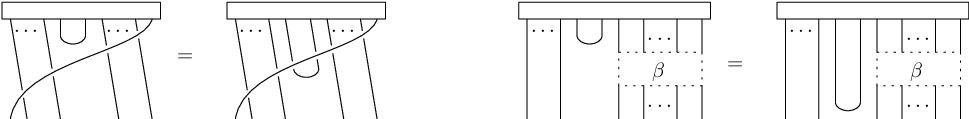}   
\end{equation}
\end{lem}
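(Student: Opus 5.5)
The statement is given only as a picture, so I read it as two kinds of relations for the contraction $\phi_k$ and the inclusion $\iota_k$. The first kind says that they intertwine with the Fourier transform $F$ (and $F'$) by shifting the index. The second kind says that $\phi_k$ commutes with the generating braids $T_{i,j}$, $\AA_j$, $\BB_j$ whose strands are far from the strings that $\phi_k$ caps off. I will prove the $\phi_k$ relations; the $\iota_k$ relations then follow by taking adjoints. Indeed, $\iota_k$ is a multiple of $\phi_k^\dagger$ in the unitary structure on $Conf(\CC)_{m,n}$, and the operators $F$, $T_{i,j}$, $\AA_j$, $\BB_j$ are unitary, so adjoining swaps them with their inverses.

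\textbf{Far commutativity.} Take a generator $X\in\{T_{i,j},\AA_j,\BB_j\}$ whose support is disjoint from the strings capped by $\phi_k$. I will compare $\phi_k X$ and $X'\phi_k$ directly in the graphical picture of Figure \ref{fig:Contrac_inclu}, where $X'$ is the same generator reindexed after the contraction. The contraction only acts on the $\mathbf{b}_j$ configurations of the capped columns: it evaluates $b_{i,j}$ against $b_{i,j+1}$ and applies a Kirby-colour cutting. The operator $X$ acts by red loops and twists on other columns. By the cutting property \eqref{eq:cutting_identity} and the handle slide property, the red loops of $X$ can be isotoped past the cap. The normalising scalars $\sqrt{d_{X_{i,j}}}$ of the orthonormal basis match on both sides because the indices involved are disjoint.

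\textbf{Fourier relation.} I will use Theorem \ref{thm:Fourier_transform}, namely $\langle Fx,x'\rangle=LL(x,\Theta_2(x'))$. I will verify the equality in the form $LL(\phi_k x,\Theta_2 y)=LL(x,\Theta_2\iota_{k\pm1}y)$, up to the normalising power of $\delta$. The $LL$ pairing is the evaluation of a closed diagram, so capping two adjacent boundary points on one side is the same as capping the neighbouring pair on the dual side. This is the usual planar-algebra statement that the Fourier transform is a one-click rotation. Expanding $F\phi_k x$ in $ONB(Conf(\CC)_{m,n})$, exactly as in the proof of Proposition \ref{prop:FT_rels}, then gives the identity.

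\textbf{Main obstacle.} I expect the hardest step to be the scalar bookkeeping. Contracting a column removes one red dotted loop and changes the $\delta^{(m-1)(1-n)}$ normalisation of $F$ on $\mathcal{P}_n$ versus $\mathcal{P}_{n-1}$. I will check it on the basis vectors of Definition \ref{def:cfbasis}, where both sides reduce to explicit products of $d_{Y_{i,j}}^{1/2}$. If a stray factor such as $\eta^{\pm m/4}$ appears, I will absorb it using the renormalised $F_{\mathrm{new}}$. As a fallback, I will derive the far-commutation relations for $\AA_j$ and $\BB_j$ from the $T_{i,j}$ case together with Lemma \ref{lem:uT_rels}.
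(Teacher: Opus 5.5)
\textbf{The far-commutativity step has a genuine gap.} Your handling of the Fourier relation via the $LL$ pairing is acceptable in outline. It is essentially \cite[Prop.~6.10]{LX19}, which the paper simply cites; appealing to ``the usual planar-algebra statement'' amounts to that citation. Your treatment of $\iota_k$ by adjoints is also reasonable.

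The problem is your premise that a generator far from the capped boundary points has support in $Conf(\CC)_{m,n}$ disjoint from the strings touched by $\phi_k$. If that held, its red loops could be pushed past the cap and the generator reindexed. It does not hold:
\begin{itemize}
\item Both $\AA_j$ and $\BB_j$ contain the factors $T_{2i+1,j}$. These straddle columns $j$ and $j+1$: they braid with $T_{2i,j}$ and with $T_{2i,j+1}$, and they involve the labels $X_{i,j}$ and $X_{i,j+1}$.
\item The contraction you describe, pairing $b_{i,j}$ with $b_{i,j+1}$, acts on two adjacent columns at once.
\end{itemize}
So a generator that does not touch the capped points can still act on a column that $\phi_k$ merges or removes. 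There is no disjointness to exploit. Moreover, the reindexed generator $X'$ cannot be obtained by relabelling the individual $T_{i,j}$, because some of them have no counterpart once a column disappears. Your fallback does not repair this: Lemma~\ref{lem:uT_rels} concerns conjugation by $u$, not commutation with $\phi_k$.

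\textbf{The missing idea is to establish the $\phi$--$F$ relation first and use it to transport the easy case.} The paper checks far commutativity directly only for $\phi_{2k}$, where it is immediate from the definition. It obtains the other parity by conjugating with $F$:
\begin{itemize}
\item the $\phi$--$F$ relation (the first identity of the lemma) moves the contraction by one click;
\item Proposition~\ref{prop:FAB_rels} gives $F\BB_jF^{-1}=\AA_j$ and $F\AA_{j+1}F^{-1}=\BB_j$, so the braid generators also move by one click.
\end{itemize}
Hence the odd-index far-commutation identities are exactly the $F$-conjugates of the even-index ones. If you reorder your argument this way (Fourier relation, then a direct check for $\phi_{2k}$, then conjugation by $F$), you never need a direct graphical computation for the contractions that involve two columns.
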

\begin{proof}
 The first identity is \cite[Prop.~6.10]{LX19}, also compare to \cite[Prop.~4.8]{LR24}. The identity for $\phi_{2k}$ is straightforward from the definition. the case for     $\phi_{2k}$ then follows from conjugating by $F$.
\end{proof}

\begin{lem}\label{lem:u_F_phi_iota_rels}
 We have the following relation in $Cf_{m,1}(\CC)$.
 \begin{align}
 \iota_0= \eta^{\frac{m}{4}}u^{-1}\iota_0=\eta^{-\frac{m}{4}}\phi_0F^{-1}\iota_0\iota_1,     
 \end{align}

or graphically
\[
\includegraphics[width=200pt]{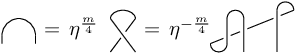}
\]
\end{lem}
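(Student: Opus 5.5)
We need to prove two equalities in $Cf_{m,1}(\CC)=Conf(\CC)_{m,1}$: first $\iota_0=\eta^{\frac m4}u^{-1}\iota_0$, i.e.\ on the image of $\iota_0$ the normalized $u$ acts as the identity; second $\iota_0=\eta^{-\frac m4}\phi_0F^{-1}\iota_0\iota_1$. Throughout we use the normalization of the previous section, where $u$ carries the factor $\eta^{\frac{mn}{4}}$ and $F_{new}=\eta^{-\frac m4}F$. Since both sides are linear maps out of a configuration space, it suffices to check each identity on the orthonormal basis vectors of Definition \ref{def:cfbasis} for $n=1$.

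\textbf{First identity.} For the image of $\iota_0$ in $Conf(\CC)_{m,1}$ there is a single column ($j=0$). The vector $a_i\in\Hom(\mathbbm 1,Y_{i,0})$ is nonzero only when $Y_{i,0}=\mathbbm 1$, so the vertical strands are essentially trivial and the red strands form the Kirby-colour data. I would apply the graphical definition of $u$ (Figure \ref{fig:u_def2}) to such a vector. With no nontrivial $Y$-strands, the half-twist part of $u$ disappears. What is left is a red Kirby-coloured strand carrying a single twist in each of the $m$ layers, and by the cutting property each such twisted red loop contributes a factor $p^{\pm}/\delta=\eta^{\pm1}$. The plan is to count these factors so that the total scalar is $\eta^{-\frac m4}$ relative to the unnormalized $u$ (with $n=1$). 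This is consistent with $u^2=\prod\theta^{-1}_{Y}$ being trivial on this subspace, which forces $u$ to act by a scalar there. It then remains to fix the sign/phase of that scalar by the explicit twist count.

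\textbf{Second identity.} I would express $F^{-1}$ using Theorem \ref{thm:Fourier_transform}. For $n=1$ the product over $j$ is empty, so $F=u^{-1}$ (up to normalization) and $F^{-1}=u$. This makes the second identity very close to the first, except that it is composed with $\iota_1$ and $\phi_0$. Two tools apply here:
\begin{itemize}
\item the zigzag relations $\phi_{2k-1}(\Id\otimes\iota_{2k})=\Id$ and their analogues;
\item Lemma \ref{lem:phi_F_and_far_comm}, which moves $\phi_0$ past $F$, turning $\phi_0F^{-1}$ into $F^{-1}$ in lower box space composed with a contraction.
\end{itemize}
The cleanest route is graphical: draw $\phi_0 u\,\iota_0\iota_1$ directly. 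The cap from $\iota_1$ and the contraction $\phi_0$ combine into a closed red loop encircling the twisted strand. A handle slide and the cutting property remove that loop, leaving $\iota_0$ multiplied by a scalar $\eta^{\frac m4}$, which cancels the prefactor.

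\textbf{Main obstacle.} The hard part is the scalar bookkeeping: tracking the powers of $\delta$ from each resolved red loop, the $\mu^{-1}$ factors from dotted loops, and the $\eta$ normalizations, so that the final exponent is exactly $\pm\frac m4$. I would first do the case $m=3$ explicitly to calibrate the signs, then argue that each of the $m$ layers contributes identically, which gives the factor $\eta^{\frac m4}$ for general $m$.
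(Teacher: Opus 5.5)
\textbf{There are two genuine problems with this proposal.} The paper itself only says the identities are a direct graphical check, but your plan, as written, would not produce that check.

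First, the scalar bookkeeping aims at the wrong target. The factors $\eta^{\pm\frac m4}$ in the statement are pure normalization. For $n=1$ the normalized $u$ is $\eta^{\frac m4}$ times the original one, and $F_{new}=\eta^{-\frac m4}F$ in every box space. So in terms of the original operators the lemma says exactly $u^{-1}\iota_0=\iota_0$ on $Conf(\CC)_{m,1}$ and $\phi_0F^{-1}\iota_0\iota_1=\iota_0$, with coefficient $1$.

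Your target, ``total scalar $\eta^{-\frac m4}$ relative to the unnormalized $u$'', contradicts the relation $u^2=\prod\theta^{-1}_{Y_{i,j}}=\Id$ on this space, which you yourself invoke. Also, for general $m$ (e.g.\ $m=3$) a fractional power such as $\eta^{-\frac34}$ cannot come out of resolving twisted Kirby loops, since each contributes an integer power of $\eta$. The same objection applies to the factor $\eta^{\frac m4}$ you expect from removing a loop in the second identity, and to ``each of the $m$ layers contributes identically''.

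What the first identity actually requires is the following:
\begin{itemize}
\item $Conf(\CC)_{m,1}$ is one-dimensional. All $Y_{i,0}=\mathbbm 1$ forces $X_{0,0}=X_{m-2,0}=\mathbbm 1$, and then $b_{i,0}\in\Hom(\mathbbm 1,\overline{X_{i-1,0}}X_{i,0})$ forces every $X_{i,0}=\mathbbm 1$.
\item Hence $u$ acts by a scalar. This is the reason it is a scalar; $u^2=\Id$ alone would not give that.
\item $u^2=\Id$ then makes the scalar $\pm1$.
\item The defining diagram of $u$ on the trivially labelled vector must be evaluated to show the sign is $+1$; any twisted red loops have to cancel.
\end{itemize}

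Second, the reduction $F^{-1}=u$ in the second identity is wrong. $F^{-1}$ is applied to $\iota_0\iota_1$, which lies in $Conf(\CC)_{m,2}$; that is exactly why $\phi_0$ is needed to return to $Conf(\CC)_{m,1}$. In that box space the product in Theorem \ref{thm:Fourier_transform} is not empty: $F=u^{-1}T_0\AA_0^{-1}$, so $F^{-1}=\AA_0T_0^{-1}u$. Drawing $\phi_0u\,\iota_0\iota_1$ therefore silently discards the braiding $\AA_0T_0^{-1}$.

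To get a correct argument you have two options:
\begin{itemize}
\item Evaluate $F^{-1}(\iota_0\iota_1)$ honestly, for instance through the pairing $\langle Fx,x'\rangle=LL(x,\Theta_2(x'))$, and then apply the contraction.
\item Use the contraction--Fourier relation of Lemma \ref{lem:phi_F_and_far_comm}, which you list and then abandon, to pass $\phi_0$ through $F^{-1}$ first. Then $F^{-1}$ acts on $Conf(\CC)_{m,1}$, where it genuinely equals $u$; the normalization of $F$ is the same for all $n$, so no new scalar appears. Finish with the zigzag relation and the first identity.
\end{itemize}
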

\begin{proof}
These identities are straightforward from the graphic calculus in the configuration space.
\end{proof}

\subsection{Structure constant}
We now proceed to compute the structure constants of the 2-box convolution product for the $m$-interval Jones-Wassermann subfactor planar algebra. The case $m=2$ has been discussed extensively in \cite{LR24}, where the corresponding structure constants can be derived via standard Verlinde formula. We will focus here on $m=3$, and the case for $m>3$ proceed analogously.

We first fix an orthogonal basis for $\Hom(\mathbbm{1},XYZ)$ for any $X,Y,Z\in \Irr(\CC)$, denoted again by $ONB(XYZ)$ or simply $ONB$. 

\begin{defn}
Given any $\alpha_1,\alpha_2\in ONB(X_1Y_1Z_1)$, we define the following operator and denote it by $\tilde{p}_{\alpha_1\alpha_2}$, the corresponding vector in $Conf(\CC)_{3,2}$ is denoted by $p_{\alpha_1\alpha_2}$. 

\[
\includegraphics[width=250pt]{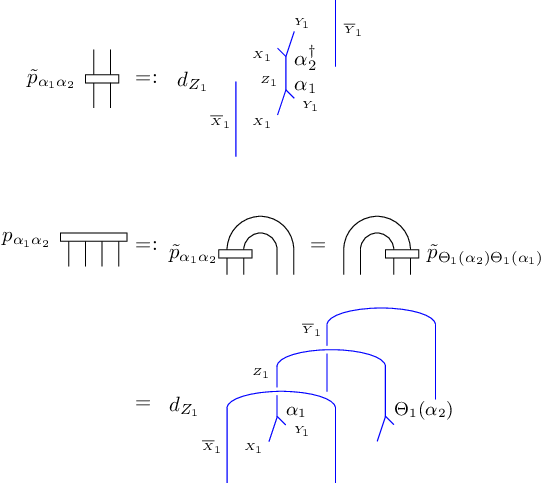}
\]
\end{defn}
Note $p_{\alpha_1\alpha_2}$ is a projection iff $\alpha_1=\alpha_2$, and it is straightforward that
\[
\sum_{X,Y,Z\in\Irr(\CC), \alpha\in ONB(XYZ)}\tilde{p}_{\alpha\alpha}=\Id_{Conf(\CC)_{3,n}}
\]
and we have $d^2_\alpha=:<p_{\alpha_1\alpha_2},p_{\alpha_1\alpha_2}>=d_{X_1}d_{Y_1}d_{Z_1}$, therefore, $\{\frac{1}{d_{\alpha}}p_{\alpha_1\alpha_2}\}_{\alpha_1,\alpha_2\in ONB(XYZ),X,Y,Z\in\Irr(\CC)}$ forms an orthonormal basis for $Conf(\CC)_{3,2}$, therefore we define the following unitary matrix $(L_{\alpha_1\alpha_2,\beta_1\beta_2})_{\alpha_i,\beta_i\in ONB}$ induced by the Fourier transformation $F$ on $Conf(\CC)_{3,2}$,
\[
L_{\alpha_1\alpha_2,\beta_1\beta_2}=:<F(\frac{1}{d_{\alpha}}p_{\alpha_1\alpha_2}),\frac{1}{d_{\beta}}p_{\beta_1\beta_2}>=LL(\frac{1}{d_{\alpha}}p_{\alpha_1\alpha_2},\Theta_2(\frac{1}{d_{\beta}}p_{\beta_1\beta_2}))
\]
The following lemma is immediately,
\begin{lem}
 Let $\alpha_1,\alpha_2,\alpha_3,\alpha_4\in ONB(XYZ)$, $\beta_1,\beta_2\in ONB(X'Y'Z')$ with $\delta_{XX'}\delta_{YY'}\delta_{ZZ'}=0$ we have 
 \begin{align*} \tilde{p}_{\alpha_1\alpha_2}\tilde{p}_{\alpha_3\alpha_4}&=\delta_{\alpha_2\alpha_3}\tilde{p}_{\alpha_1\alpha_4} \\
\tilde{p}_{\alpha_1\alpha_2}\tilde{p}_{\beta_1\beta_2}&=0
 \end{align*}
\end{lem}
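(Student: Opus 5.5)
The plan is to argue entirely in the graphical calculus. First, write $\tilde{p}_{\alpha_1\alpha_2}$ as a diagram: a morphism on $Conf(\CC)_{3,2}$ built from $\alpha_1$ on one side and $\alpha_2^{\dagger}$ on the other, with three through strands colored $X,Y,Z$. Composition of operators then means stacking diagrams vertically, so $\tilde{p}_{\alpha_1\alpha_2}\tilde{p}_{\alpha_3\alpha_4}$ is the diagram in which $\alpha_2^{\dagger}$ (from the first factor) meets $\alpha_3$ (from the second factor) along the middle, joined through the red cups and caps carrying $\bigoplus_V d_V^{1/2}V$.

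The first step is to isolate, in the stacked diagram, the closed component formed by $\alpha_2^{\dagger}$, $\alpha_3$ and the connecting strands. I would use the cutting property \eqref{eq:cutting_identity} on the dotted red loops that pass between the two layers. Each such loop encircles a bundle of strands, and cutting it projects that bundle onto the trivial channel. After this, the middle region becomes a closed network whose only free ends attach to the outer parts $\alpha_1$ and $\alpha_4$.

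The second step is to recognise that closed network as the pairing $\langle \alpha_3,\alpha_2\rangle$ in $\Hom(\mathbbm{1},XYZ)$, up to scalars. It is an inner product because the pieces $\alpha_2^{\dagger}$ and $\alpha_3$ are joined strand by strand along $X$, $Y$ and $Z$.

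There are then two cases.
\begin{itemize}
\item If the simple objects differ, i.e.\ $\delta_{XX'}\delta_{YY'}\delta_{ZZ'}=0$, some strand of color $X$ is forced to meet one of color $X'\neq X$. Equivalently, a morphism between non-isomorphic simples appears, so the diagram vanishes by Schur's lemma. This gives $\tilde{p}_{\alpha_1\alpha_2}\tilde{p}_{\beta_1\beta_2}=0$.
\item If the simples agree, orthonormality of $ONB(XYZ)$ gives the factor $\delta_{\alpha_2\alpha_3}$ times a normalization. What remains is exactly the diagram of $\tilde{p}_{\alpha_1\alpha_4}$.
\end{itemize}

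The main obstacle is the scalar bookkeeping in the second case. The factors that appear are the $d_V^{1/2}$ on the red cups and caps, the $\mu^{-1}$ from each cutting, and the loop values $d_X d_Y d_Z$. These must cancel against the normalization built into $\tilde{p}$ so that the coefficient is exactly $1$. I would fix the normalization using the already stated facts $\sum\tilde{p}_{\alpha\alpha}=\Id$ and $\langle p_{\alpha_1\alpha_2},p_{\alpha_1\alpha_2}\rangle=d_Xd_Yd_Z$. Together these force $\tilde{p}_{\alpha\alpha}$ to be idempotent, and the same constant then serves for general indices. This consistency check replaces tracking each factor separately.
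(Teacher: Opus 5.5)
Your argument is correct and is essentially the direct check the paper has in mind; the paper simply calls the lemma immediate from the definition of $\tilde p_{\alpha_1\alpha_2}$. Stacking produces the closed piece $\alpha_2^{\dagger}\circ\alpha_3=\langle\alpha_3,\alpha_2\rangle$, which is $\delta_{\alpha_2\alpha_3}$ for equal colours and zero across distinct simples, and your normalization shortcut is sound: once the product is $c_{XYZ}\langle\alpha_3,\alpha_2\rangle\tilde p_{\alpha_1\alpha_4}$ with $c_{XYZ}$ depending only on colours, $\tilde p_{\alpha_1\alpha_4}=\tilde p_{\alpha_1\alpha_4}\sum_{\gamma}\tilde p_{\gamma\gamma}=c_{XYZ}\tilde p_{\alpha_1\alpha_4}$ forces $c_{XYZ}=1$ (or makes the identity trivial), so the cutting step you anticipate is likely unnecessary.
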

\begin{defn}
 Let $\alpha_1,\alpha_2\in ONB(X_1Y_1Z_1),\ \beta_1,\beta_2\in ONB(X_2Y_2Z_2)$, we define the following convolution product of $p_{\alpha_1\alpha_2}$ and  $p_{\beta_1\beta_2}$, 
 \[
 \includegraphics[width=150pt]{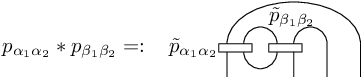}
 \]
\end{defn}

\begin{defn}
 Given $\alpha_1,\alpha_2,\beta_1,\beta_2,\gamma_1,\gamma_2\in ONB$, We denote the structure constant of the convolution product by $N^{\gamma_1\gamma_2}_{\alpha_1\alpha_2,\beta_1\beta_2}$, i.e.
\[
\frac{1}{d_\alpha}p_{\alpha_1\alpha_2}*\frac{1}{d_\beta}p_{\beta_1\beta_2}=\sum_{\gamma_1,\gamma_2\in ONB(XYZ), X,Y,Z\in \Irr(\CC)}N^{\gamma_1\gamma_2}_{\alpha_1\alpha_2,\beta_1\beta_2}\frac{1}{d_\gamma}p_{\gamma_1\gamma_2}
\]
\end{defn}

\begin{Thm}\label{thm:Verlinde_formula}
The structure constant $N^{\gamma_1\gamma_2}_{\alpha_1\alpha_2,\beta_1\beta_2}$ can be derived by the following generalized Verlinde formula  
\[
N^{\gamma_1\gamma_2}_{\alpha_1\alpha_2,\beta_1\beta_2}=\sum_{X,Y,Z\in \Irr(\CC)}\sum_{\lambda_1,\lambda_2,\lambda_3\in ONB(XYZ)}\frac{\overline{L_{\lambda_1\lambda_2,\alpha_1\alpha_2}}\overline{L_{\lambda_2\lambda_3,\beta_1\beta_2}}\overline{L_{\lambda_3\lambda_1,\Theta_1(\gamma_1)\Theta_1(\gamma_2)}}}{d_{\lambda}} 
\]
\end{Thm}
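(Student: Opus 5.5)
The plan is the standard mechanism behind every Verlinde formula: the Fourier transform $F$ intertwines the coproduct (convolution $*$) on $Conf(\CC)_{3,2}$ with the ordinary (vertical) product, up to normalization. Since the $\tilde p_{\lambda_1\lambda_2}$ multiply as matrix units, the structure constants can then be read off by conjugating with the unitary matrix $L$.

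\textbf{Step 1: $F$ converts convolution into composition.} Using the planar description of $*$ together with the relations of $\phi_k,\iota_k$ with the braiding operators (Lemma \ref{lem:phi_F_and_far_comm}, Lemma \ref{lem:u_F_phi_iota_rels}, relation \eqref{eq:phi_iota_AB_rels}), I would show
\[
F(x*y)=c\, F(x)\,F(y)
\]
for an explicit scalar $c$, a power of $\eta$ and $\delta$, where the product on the right is composition of the corresponding operators $\tilde{\ }$. Graphically, the convolution tangle is the rotation by one click of the multiplication tangle. By Theorem \ref{thm:Fourier_transform}, $F$ is a product of braiding operators, and by the parity-compatible braid invariance \eqref{eq:braid_topological_equvalence} these braidings can be slid through the caps and cups of the tangle. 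This step is the main obstacle. One has to track the normalizations $\eta^{\pm m/4}$ from Lemma \ref{lem:u_F_phi_iota_rels}, and the reversal of orientation, which is what introduces $\Theta_1$ on the $\gamma$ label through Lemma \ref{lem:ThetaT_rel} and $\Theta_2$ in the pairing. I would first verify the identity on basis vectors in the $m=3$ case by direct graphical calculus, reusing the argument of \cite[Sec.~5]{LR24} for $m=2$.

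\textbf{Step 2: expand in the matrix-unit basis.} By the definition of $L$,
\[
F\Big(\tfrac1{d_\alpha}p_{\alpha_1\alpha_2}\Big)=\sum_\lambda L_{\alpha_1\alpha_2,\lambda_1\lambda_2}\tfrac1{d_\lambda}p_{\lambda_1\lambda_2}.
\]
Multiply two such expansions using the matrix-unit lemma: $\tilde p_{\lambda_1\lambda_2}\tilde p_{\lambda_3\lambda_4}=\delta_{\lambda_2\lambda_3}\tilde p_{\lambda_1\lambda_4}$, and the product vanishes for distinct $(X,Y,Z)$. This gives
\[
F(x)F(y)=\sum_{\lambda_1,\lambda_2,\lambda_3}L_{\alpha,\lambda_1\lambda_2}L_{\beta,\lambda_2\lambda_3}\, d_\lambda^{-2}p_{\lambda_1\lambda_3}.
\]

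\textbf{Step 3: apply $F^{-1}$ and pair.} Take the inner product with $\tfrac1{d_\gamma}p_{\gamma_1\gamma_2}$. Since $F$ is unitary and $F^2=\eta^{-m/2}\rho_1$ (Theorem \ref{Thm:F'F_rels} after renormalization), write
\[
\langle F^{-1}p_{\lambda_1\lambda_3},p_{\gamma_1\gamma_2}\rangle=\overline{\langle Fp_{\gamma_1\gamma_2},p_{\lambda_1\lambda_3}\rangle}.
\]
Then convert $p_{\gamma_1\gamma_2}$ into $p_{\Theta_1(\gamma_1)\Theta_1(\gamma_2)}$ via $\rho_1$, which acts on the $3$-valent vertices as the rotation $\Theta_1$. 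Using the unitarity and symmetry of $L$ (from $LL(x,\Theta_2 x')$ being symmetric up to the involutions), the remaining $L$ factors are turned into conjugates with reversed index order. The scalars $c$, $\eta^{\pm m/2}$ and the $d_\lambda$ factors should then combine into the single $1/d_\lambda$ of the statement, which serves as the consistency check on the constant found in Step 1. Finally, the sum over $X,Y,Z$ and $\lambda_i\in ONB(XYZ)$ yields the stated formula.
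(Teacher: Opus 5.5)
Your overall mechanism is the paper's: use the double-string relation, braid invariance and the $\phi_k,\iota_k$ lemmas to turn convolution into composition of the matrix units $\tilde p$, then expand. However, the bookkeeping that fixes the exact shape of the formula fails as you describe it.

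\textbf{Step 3: the ``symmetry of $L$'' does not exist in the form you need.} Write $\hat p_\alpha=d_\alpha^{-1}p_{\alpha_1\alpha_2}$. Then $L_{\alpha,\lambda}=\langle F\hat p_\alpha,\hat p_\lambda\rangle$, but $\overline{L_{\lambda,\alpha}}=\langle F^{-1}\hat p_\alpha,\hat p_\lambda\rangle$. Here $F^{-1}=F^*$ differs from $F$ by the $\pi$-rotation $\rho_1$ (up to a scalar), and $\rho_1$ is not a scalar in general. Symmetry of the pairing $LL$ gives at most a transposition-type symmetry up to the involutions, never a complex conjugation. To turn $L_{\alpha,\lambda_1\lambda_2}L_{\beta,\lambda_2\lambda_3}\overline{L_{\gamma,\lambda_1\lambda_3}}$ into three conjugated entries, you must apply $F^{-1}\propto\rho_1^{-1}F$ to \emph{every} factor. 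You also need the explicit action of $\rho_1$ on matrix units. Since $\rho_1$ is an anti-automorphism, it sends $p_{\lambda_1\lambda_2}$ to a multiple of a matrix unit with the two labels interchanged and $\Theta_1$ applied. After reindexing the $\lambda$-sum, this reverses the cyclic order of the product. The phases this produces must be computed; you cannot fix them by requiring the answer to come out as $1/d_\lambda$.

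\textbf{Step 1: for the same reason, the order is wrong relative to the statement.} Because $\rho_1$ commutes with $F$ and reverses products, $F$ and $F^{-1}$ intertwine $*$ with composition in opposite orders. Use $\overline{L_{\lambda,\alpha}}=\langle F^{-1}\hat p_\alpha,\hat p_\lambda\rangle$ and the fact that $p_{\lambda_1\lambda_2}p_{\lambda_2\lambda_3}p_{\lambda_3\lambda_1}=p_{\lambda_1\lambda_1}$ has trace $d_\lambda^2$. Then the right-hand side of the theorem is exactly the cyclic trace of $F^{-1}(\hat p_\alpha)\,F^{-1}(\hat p_\beta)\,F^{-1}(\hat p_{\Theta_1(\gamma_1)\Theta_1(\gamma_2)})$. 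This closed diagram is where the paper's graphical reduction ends. So the identity behind the theorem is $F^{-1}(x*y)\propto F^{-1}(x)F^{-1}(y)$, equivalently $F(x*y)\propto F(y)F(x)$. If you start from your $F(x*y)=c\,F(x)F(y)$ and do the bookkeeping correctly, you get the formula with $\alpha$ and $\beta$ interchanged. That is a different number as soon as some $\Hom(\mathbbm{1},XYZ)$ has dimension at least $2$, since then $Conf(\CC)_{3,2}$ is noncommutative.

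\textbf{The fix is to work with $F^{-1}$ throughout, as the paper does.}
\begin{itemize}
\item Write $N^{\gamma_1\gamma_2}_{\alpha_1\alpha_2,\beta_1\beta_2}=\langle F^{-1}(\hat p_\alpha*\hat p_\beta),F^{-1}\hat p_\gamma\rangle$.
\item Show graphically that this equals the trace of $F^{-1}(\hat p_\alpha)F^{-1}(\hat p_\beta)F^{-1}(\hat p_{\Theta_1(\gamma_1)\Theta_1(\gamma_2)})$. Converting the pairing with $\hat p_\gamma$ into this trace is where $\Theta_1$ enters.
\item Expand in matrix units. No symmetry of $L$ is needed, and the factor $d_\lambda^{2}/d_\lambda^{3}=1/d_\lambda$ appears directly.
\end{itemize}
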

\begin{proof}
First of all, by using the planar isotopy identities we have the following:
\[
\includegraphics[width=400pt]{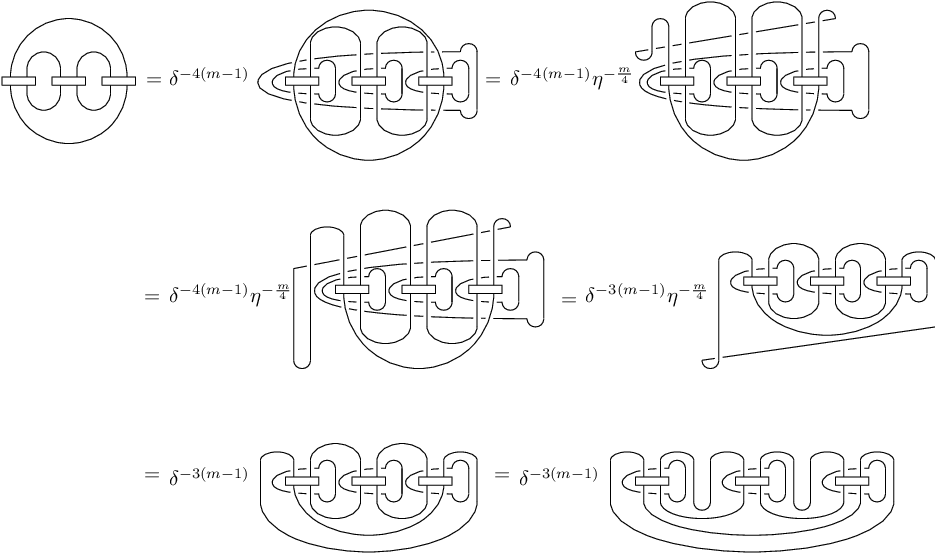}
\]
The first identity follows from relation \eqref{eq:double_string_box_through}, second identity is Lemma \ref{lem:u_F_phi_iota_rels} and \ref{lem:phi_F_and_far_comm}, the third identity is Lemma \ref{lem:phi_F_and_far_comm}, the forth identity is again relation \eqref{eq:double_string_box_through}, the fifth identity is again Lemma \ref{lem:u_F_phi_iota_rels} and the last identity follows from the Zigzag relation.

then we zoom in and observe the following
\[
\includegraphics[width=400pt]{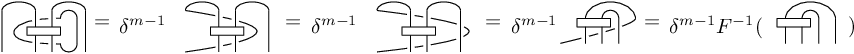}
\]
Here the first three equalities follow from relation \eqref{eq:double_string_box_through}, \eqref{eq:braid_topological_equvalence} and Lemma \ref{lem:phi_F_and_far_comm}, \ref{lem:u_F_phi_iota_rels} respectively. 

Therefore, we have the following
\[
\includegraphics[width=400pt]{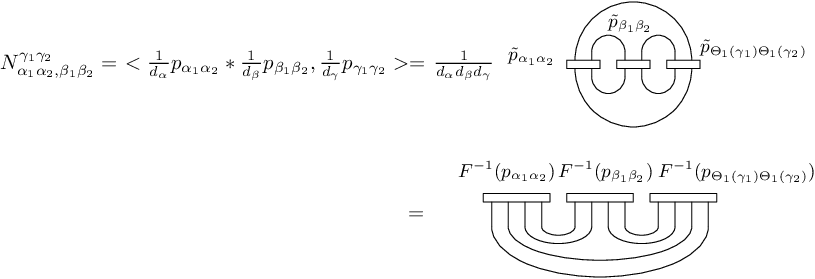}
\]

Now, since $<F^{-1}(p_{\alpha_1\alpha_2}),\frac{1}{\sqrt{d_\lambda}}p_{\lambda_1\lambda_2}>=d_\alpha<F^{-1}\frac{1}{\sqrt{d_\alpha}}p_{\alpha_1\alpha_2},\frac{1}{\sqrt{d_\lambda}}p_{\lambda_1\lambda_2}>=d_a\overline{L_{\lambda_1\lambda_2,\alpha_1\alpha_2}}$, evaluating the right-hand side, we have
\begin{align*}
N^{\gamma_1\gamma_2}_{\alpha_1\alpha_2,\beta_1\beta_2}&=\frac{1}{d_\alpha d_\beta d_\gamma}\sum_{X,Y,Z\in \Irr(\CC)}\sum_{\lambda_1,\lambda_2,\lambda_3\in ONB(XYZ)} \frac{d_\alpha d_\beta d_\gamma}{(d_{\lambda})^3}\overline{L_{\lambda_1\lambda_2,\alpha_1\alpha_2}}\overline{L_{\lambda_2\lambda_3,\beta_1\beta_2}}\overline{L_{\lambda_3\lambda_1,\Theta_1(\gamma_1)\Theta_1(\gamma_2)}}d^2_\lambda\\
&=\sum_{X,Y,Z\in \Irr(\CC)}\sum_{\lambda_1,\lambda_2,\lambda_3\in ONB(XYZ)}\frac{\overline{L_{\lambda_1\lambda_2,\alpha_1\alpha_2}}\overline{L_{\lambda_2\lambda_3,\beta_1\beta_2}}\overline{L_{\lambda_3\lambda_1,\Theta_1(\gamma_1)\Theta_1(\gamma_2)}}}{d_{\lambda}} 
\end{align*}
\end{proof}

\bibliographystyle{abbrv}
\bibliography{Reference}
\end{document}